\theoremstyle{plain}
\declaretheorem[title=Theorem, parent=section]{theorem}
\declaretheorem[title=Lemma,sibling=theorem]{lemma}
\declaretheorem[title=Proposition,sibling=theorem]{proposition}
\declaretheorem[title=Corollary,sibling=theorem]{corollary}
\declaretheorem[title=Question,sibling=theorem]{question}
\theoremstyle{definition}
\declaretheorem[title=Definition,sibling=theorem]{definition}
\declaretheorem[title=Remark,sibling=theorem]{remark}
\declaretheorem[title=Remark, numbered=no]{remark*}
\declaretheorem[title=Assumption, numbered=no]{assumption*}
\numberwithin{equation}{section}
\newcommand{\N}{\mathbb{N}}
\newcommand{\R}{\mathbb{R}}
\newcommand{\cE}{\mathcal{E}}
\newcommand{\eps}{\varepsilon}
\newcommand{\loc}{\mathrm{loc}}
\newcommand{\1}{\mathbbm{1}}
\DeclareMathOperator{\dist}{dist}
\DeclareMathOperator{\diam}{diam}
\DeclareMathOperator{\dvg}{div}
\DeclareMathOperator{\tail}{Tail}
\renewcommand{\d}{\textnormal{\,d}}
\newcommand{\average}{{\mathchoice {\kern1ex\vcenter{\hrule height.4pt
width 6pt depth0pt} \kern-9.7pt} {\kern1ex\vcenter{\hrule
height.4pt width 4.3pt depth0pt} \kern-7pt} {} {} }}
\newcommand{\dashint}{\average\int}
\begin{document}
\allowdisplaybreaks
\title[Optimal boundary regularity and Green function estimates]{Optimal boundary regularity and Green function estimates for nonlocal equations in divergence form}

\author{Minhyun Kim}
\address{Department of Mathematics \& Research Institute for Natural Sciences, Hanyang University, 04763 Seoul, Republic of Korea}
\email{minhyun@hanyang.ac.kr}
\urladdr{https://sites.google.com/view/minhyunkim/}

\author{Marvin Weidner}
\address{Institute for Applied Mathematics, University of Bonn, Endenicher Allee 60, 53115, Bonn, Germany}
\email{mweidner@uni-bonn.de}
\urladdr{https://sites.google.com/view/marvinweidner/}

\keywords{Boundary regularity, Campanato--Morrey, Green function, nonlocal operator}

\subjclass[2020]{47G20, 35B65, 35J08}


\allowdisplaybreaks

\begin{abstract}
In this article we prove for the first time the $C^s$ boundary regularity for solutions to nonlocal elliptic equations with H\"older continuous coefficients in divergence form in $C^{1,\alpha}$ domains. So far, it was only known that solutions are H\"older continuous up to the boundary, and establishing their optimal regularity has remained an open problem in the field. Our proof is based on a delicate higher order Campanato-type iteration at the boundary, which we develop in the context of nonlocal equations and which is quite different from the local theory.

As an application of our results, we establish sharp two-sided Green function estimates in $C^{1,\alpha}$ domains for the same class of operators. Previously, this was only known under additional structural assumptions on the coefficients and in more regular domains.
\end{abstract}
\maketitle

\section{Introduction}

The aim of this work is to establish sharp two-sided Green function estimates and the optimal $C^s$ boundary regularity for solutions to nonlocal elliptic equations posed in $C^{1,\alpha}$ domains. We are interested in integro-differential operators
\begin{align}\label{eq-op}
Lu(x) = 2\, \text{p.v.} \int_{\R^n} (u(x) - u(y)) K(x,y) \d y
\end{align}
with kernels $K : \R^n \times \R^n \to [0,\infty]$ satisfying a natural symmetry and uniform ellipticity condition of the form
\begin{align}
\label{eq:Kcomp}
\lambda |x-y|^{-n-2s} \le K(x,y) = K(y,x) \le \Lambda |x-y|^{-n-2s} 
\end{align}
for some $s \in (0,1)$ with $2s < n$ and $\Lambda \geq \lambda > 0$. We assume throughout this paper that the kernels $K$ are locally H\"older continuous of order $\sigma \in (0,s)$ in the following sense: given a set $\mathcal{A} \subset \R^n$, (which will typically be the domain where the equation is posed)
\begin{align}\label{eq-K-cont} 
|K(x+h, y+h) - K(x, y)| \leq \Lambda \frac{|h|^{\sigma}}{|x-y|^{n+2s}} \quad\text{for all } x,y \in \mathcal{A}\text{ and } h \in B_1.
\end{align}
Due to the symmetry of $K$, such operators $L$ are naturally associated to variational functionals and often referred to as \textit{nonlocal operators in divergence form}. They can be considered as nonlocal counterparts of second-order operators in divergence form $-\dvg(A(x)\nabla u(x))$ with $A \in C^{\sigma}$.

\subsection{Two-sided Green function estimates}

Solutions to equations of the form
\begin{align}
\label{eq:dirichlet-intro}
\left\{
\begin{aligned}
    L u &= f &&\text{in } \Omega,\\
    u &= 0 &&\text{in } \R^n \setminus \Omega,
\end{aligned}
\right.
\end{align}
for some bounded open set $\Omega \subset \R^n$, enjoy the following representation formula:
\begin{align*}
    u(x) = \int_{\Omega} G(x,y) f(y) \d y,
\end{align*}
where $G : \R^n \times \R^n \to [0,\infty]$ denotes the Green function associated to $L$ and $\Omega$. The Green function is an important object in partial differential equations, potential analysis, and probability theory. It is a crucial task in all of these fields to gain a good understanding of fine properties of $G$. 

The study of the Green function for nonlocal operators $L$ was initiated in the works \cite{Rie38,BGR61,Lan72} (see also \cite{Buc16}), where an explicit formula was derived for the Green function of the fractional Laplacian in the unit ball. An important extension of these results was carried out in the influential papers \cite{ChSo98,Kul97} (see also \cite{CKS10}), where the following sharp two-sided estimates for the Green function of the fractional Laplacian were established in general bounded $C^{1,1}$ domains:
    \begin{align}
    \label{eq:green-est}
        c^{-1}\left( \frac{d_{\Omega}(x)}{|x-y|} \wedge 1 \right)^s\left( \frac{d_{\Omega}(y)}{|x-y|} \wedge 1 \right)^s \le \frac{G(x,y)}{|x-y|^{2s-n}} \le c \left( \frac{d_{\Omega}(x)}{|x-y|} \wedge 1 \right)^s\left( \frac{d_{\Omega}(y)}{|x-y|}  \wedge 1 \right)^s,
    \end{align}
where $c=c(n, s, \Omega)\geq1$ and $d_{\Omega}(x) = \dist(x,\R^n \setminus \Omega)$.

There are two natural ways in which the results of \cite{ChSo98,Kul97,CKS10} can be generalized:
\begin{itemize}
    \item[(i)] Establish the bounds \eqref{eq:green-est} for more general nonlocal operators than $L = (-\Delta)^s$.
    \item[(ii)] Establish the bounds \eqref{eq:green-est} in more general domains than $\partial \Omega \in C^{1,1}$.
\end{itemize}

In the last 15 years, the problems (i) and (ii) have attracted a lot of attention and a huge amount of research has been devoted to generalizing the results in \cite{ChSo98,Kul97,CKS10}. Significant contributions to this area of research include \cite{CKS12,CKS14,BGR14,KiMi18}, \cite{KiKi14,GKK20,CKW22}, \cite{KSV25,CKSV24,KSV24}, and \cite{CHZ25}, which mainly use tools from probability theory and potential analysis. However, despite significant progress, previous results leave large parts of (i) and (ii) unanswered, as we explain in more detail below.

In this article, we fully resolve both questions (i) and (ii) at the same time, using an approach which is entirely based on analytic techniques. In fact, we establish two-sided global estimates of the Green function associated with a general nonlocal operator $L$ in divergence form \eqref{eq-op}, \eqref{eq:Kcomp}, \eqref{eq-K-cont} in $C^{1,\alpha}$ domains. 

Our first main result reads as follows:

\begin{theorem}
\label{thm:main-1}
    Let $s \in (0,1)$ and $\alpha,\sigma \in (0,s)$. Let $\Omega \subset \R^n$ be a bounded $C^{1,\alpha}$ domain and let $L,K, \lambda, \Lambda$ be as in \eqref{eq-op}, \eqref{eq:Kcomp}. Assume that \eqref{eq-K-cont} holds with $\mathcal{A} \Supset \Omega$. Let $G$ be the Green function associated with $L$ and $\Omega$. Then, the estimate \eqref{eq:green-est} holds true with a constant $c > 0$ depending only on $n,s,\lambda,\Lambda,\alpha,\sigma$, and $\Omega$.
\end{theorem}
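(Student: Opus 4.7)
The plan is to derive Theorem~\ref{thm:main-1} from the optimal $C^s$ boundary regularity of $L$-harmonic functions (the main technical result of the paper) together with global interior bounds on the Green function. Since $G(\cdot,y)$ solves $Lu=0$ in $\Omega\setminus\{y\}$ and vanishes outside $\Omega$, it is exactly the kind of object to which the boundary regularity theory applies, and I plan to exploit the symmetry $G(x,y)=G(y,x)$ to treat the two variables in turn.

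As a preliminary step, the free-space Green function $G^{\R^n}$ of $L$ satisfies two-sided bounds $c^{-1}|x-y|^{2s-n}\leq G^{\R^n}(x,y)\leq C|x-y|^{2s-n}$ via Nash--De Giorgi--Moser heat kernel estimates, which rely only on \eqref{eq:Kcomp}. Comparison with $G^{\R^n}$ yields the global bound $G(x,y)\leq C|x-y|^{2s-n}$, which already matches \eqref{eq:green-est} in the interior regime $|x-y|\leq \tfrac{1}{4}(d_\Omega(x)\wedge d_\Omega(y))$, and a Nash-type argument produces the matching lower bound $G(x,y)\geq c|x-y|^{2s-n}$ when both $x,y$ lie sufficiently deep in $\Omega$.

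For the \emph{boundary upper bound}, fix $y\in\Omega$ and set $u:=G(\cdot,y)$. Assuming $d_\Omega(x)\leq\tfrac{1}{2}|x-y|$, pick $x_0\in\partial\Omega$ with $|x-x_0|=d_\Omega(x)$ and apply the $C^s$ boundary estimate to $u$ on $B_r(x_0)$ with $r\sim|x-y|$, where $u$ is $L$-harmonic. This delivers
\[
G(x,y)=u(x)\leq C\Bigl(\tfrac{d_\Omega(x)}{r}\Bigr)^{s}\Bigl(\|u\|_{L^\infty(B_r(x_0)\cap\Omega)}+r^{2s}\tail(u;x_0,r)\Bigr),
\]
and both quantities on the right are controlled by $C|x-y|^{2s-n}$ via the global interior bound and a dyadic decomposition of the tail. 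Symmetry and the same argument in the $y$ variable produce the desired product factor $\bigl(d_\Omega(x)/|x-y|\bigr)^s\bigl(d_\Omega(y)/|x-y|\bigr)^s$. For the \emph{boundary lower bound}, I would combine the interior lower bound with a non-degeneracy statement for $L$-harmonic functions: in $C^{1,\alpha}$ domains the function $d_\Omega(\cdot)^s$ acts (up to lower-order corrections absorbable by the H\"older continuity of $K$) as a subsolution barrier, and the optimal boundary behavior together with a Harnack chain connecting boundary strips to the interior yields $G(x,y)\geq c\,d_\Omega(x)^s|x-y|^{s-n}$ when $d_\Omega(x)\leq\tfrac{1}{2}|x-y|$ and $y$ is in the interior; applying the argument again in the $y$ variable completes the estimate.

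The main obstacle I expect is the tail term appearing in the boundary upper bound: the $C^s$ estimate involves values of $G(\cdot,y)$ on arbitrarily large scales, which ideally should be controlled by the very bound we are trying to prove. This circularity is broken by a bootstrap on the scale $r$, using at each step only the weaker preliminary bound $G(\cdot,y)\leq C|\cdot-y|^{2s-n}$ and the non-optimal $C^{\beta}$ boundary regularity available from previous works; the optimal $C^s$ regularity is then invoked to sharpen the final outcome to the claimed $s$-power, while careful attention must be paid to ensure the constants depend only on $n,s,\lambda,\Lambda,\alpha,\sigma,\Omega$ and not on the scale $r$.
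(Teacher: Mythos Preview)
Your upper–bound strategy is essentially the paper's: apply the optimal $C^s$ boundary regularity to $G(\cdot,y)$ near a boundary point at scale $r\sim|x-y|$, then iterate in the other variable using symmetry. The paper implements this slightly differently: instead of applying the regularity directly to $G(\cdot,y)$ and wrestling with its tail, it truncates $G(\cdot,y)$ to $G(\cdot,y)\1_{B_{2R}(x_0)}$, observes that this truncated function is a subsolution to $Lu\le cR^{-n}$ in $\Omega_R(x_0)$ (the nonlocal term coming from the removed part is bounded pointwise), and then compares with an explicit solution $u$ of a problem with \emph{bounded} data and applies the $C^s$ estimate to $u$. This bypasses the tail discussion entirely and makes your proposed ``bootstrap on the scale $r$'' unnecessary: the interior bound $G(\cdot,y)\le c|\cdot-y|^{2s-n}$ already controls both the truncation error and the barrier data in one shot. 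Your direct approach would also work, but the truncation is cleaner and avoids the circularity you flagged.

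The real gap is in your lower bound. You write that ``$d_\Omega(\cdot)^s$ acts (up to lower-order corrections absorbable by the H\"older continuity of $K$) as a subsolution barrier.'' For divergence–form operators \eqref{eq-op}, \eqref{eq:Kcomp} this is precisely the step that fails: one cannot evaluate $Ld_\Omega^s$ in any classical sense, so there is no way to verify a subsolution inequality for $d_\Omega^s$ directly. The paper's lower bound instead proceeds through the Hopf lemma (Theorem~\ref{thm-hopf}), which is a separate and substantial result requiring the full Campanato machinery of Sections~\ref{sec:Cs}--\ref{sec:Hopf}. Concretely: fix $y$, take $x_0\in\partial\Omega$ the projection of $x$, $R\sim|x-y|$, and an interior ball $B_{\kappa R}(z)\subset\Omega$ at distance $\sim R$ from $x_0$; let $u$ solve $Lu=0$ in $\Omega_{R/4}(x_0)$ with $u=\1_{B_{\kappa R}(z)}$ outside. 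The Hopf lemma (in its scale-uniform form, \autoref{lem-Hopf}) gives $u\ge cR^{-s}d_\Omega^s$ in $\Omega_{R/8}(x_0)$. Since the interior lower bound gives $G(\cdot,y)\ge cR^{2s-n}$ on $B_{\kappa R}(z)$, comparison yields $G(\cdot,y)\ge cR^{2s-n}u\ge cR^{s-n}d_\Omega^s$ near $x_0$. The point is that $u$, not $d_\Omega^s$, is the barrier; the Hopf lemma is what transfers the growth of $u$ to $d_\Omega^s$.
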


Let us comment on the novelty of \autoref{thm:main-1}.

\begin{itemize}
    \item We establish \eqref{eq:green-est} for the first time in $C^{1,\alpha}$ domains with $\alpha \in (0,s)$. This generality appears to be new even for the fractional Laplacian. Previous results consider $C^{1,1}$ domains (and their proofs rely on the domain satisfying the interior and exterior ball condition), except for \cite{KiKi14}, where $C^{1,s+\eps}$ domains are treated.
    \item Our result seems to be new already for translation invariant kernels, i.e.\ when $K$ only depends on $x-y$. Previous articles either assume rotational symmetry of the kernels (see \cite{CKS14,BGR14,KiMi18}) or work under additional structural assumptions on the coefficients, which allow to rewrite $L$ as a homogeneous non-divergence form operator with a lower order term\footnote{They assume $K(x,y) := \kappa(x,y)|x-y|^{-n-2s}$, where $|\kappa(x,y) - \kappa(x,x)| \le C |x-y|^{\theta}$ for some $\theta \in (s,1)$. In this case, one can write $K = K_1 + K_2$, where $K_1(x,y) = \kappa(x,x) |x-y|^{-n-2s}$ is homogeneous and $K_2$ has a lower order singularity.} (see \cite{KiKi14,Kim15,GKK20,KSV25,CKSV24,KSV24}). This is not possible in our setup.
    \item Our assumptions on $\partial \Omega$ and $K$ are optimal in the sense that \eqref{eq:green-est} fails to hold in $C^1$ or Lipschitz domains and also when $K$ is not H\"older continuous. In these cases, solutions are in general not $C^s$ up to the boundary.
\end{itemize}

The results in \cite{CKS14,BGR14,KiMi18,CHZ25} include L\'evy operators which might be of variable order. Such operators have kernels which satisfy two-sided bounds as \eqref{eq:Kcomp} but with another asymptotic behavior near the diagonal or the boundary. The articles \cite{KSV25,CKSV24,KSV24} focus on kernels that might decay or explode at the boundary of $\Omega$. Such properties imply Green function estimates of a different shape than \eqref{eq:green-est}. The study of such kernels goes beyond the scope of this article. Moreover, several of the aforementioned articles also establish estimates for the Dirichlet heat kernel, i.e.\ for parabolic problems. It is an intriguing problem for future work to establish Dirichlet heat kernel estimates in our general setup \eqref{eq-op}, \eqref{eq:Kcomp}, and \eqref{eq-K-cont}.

Moreover, as an application of \autoref{thm:main-1}, we prove gradient estimates for the Green function.

\begin{corollary}
\label{cor:gradient-bounds}
Let $s \in (\frac{1}{2},1)$ and $\alpha,\sigma \in (0,s)$. Let $\Omega \subset \R^n$ be a bounded $C^{1,\alpha}$ domain and let $L,K, \lambda, \Lambda$ be as in \eqref{eq-op}, \eqref{eq:Kcomp}. Assume that \eqref{eq-K-cont} holds with $\mathcal{A} \Supset \Omega$. Let $G$ be the Green function associated with $L$ and $\Omega$. Then
\begin{align*}
|\nabla_x G(x,y)| 
\le c \max\left\{ |x-y|^{-1} , d_{\Omega}(x)^{-1} \right\} G(x,y)  ~~ \forall x,y \in \Omega,
\end{align*}
where $c > 0$ depends only on $n,s,\lambda,\Lambda,\alpha,\sigma,\gamma$, and $\Omega$.
\end{corollary}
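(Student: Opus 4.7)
The plan is to combine the two-sided Green function estimates from \autoref{thm:main-1} with interior $C^{1,\beta}$ regularity for nonlocal equations with H\"older continuous kernels, exploiting that such estimates are available whenever $s>\frac12$ (so that $2s-1>0$).

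Fix $x,y \in \Omega$ with $x\neq y$ and set
$$r := \tfrac{1}{4}\min\{|x-y|,\, d_\Omega(x)\}.$$
Then $B_{2r}(x)\subset\Omega$ and $y\notin\overline{B_{2r}(x)}$, so $u(\cdot):=G(\cdot,y)$ is a nonnegative weak solution of $Lu=0$ in $B_{2r}(x)$, extended by $0$ outside $\Omega$. Interior regularity for nonlocal divergence-form equations with kernels satisfying \eqref{eq:Kcomp} and \eqref{eq-K-cont} yields, for some $\beta=\beta(n,s,\sigma)>0$ with $2s-1-\beta>0$, a scaled estimate of the form
\begin{align*}
\|\nabla u\|_{L^\infty(B_r(x))} \;\le\; \frac{C}{r}\Bigl(\|u\|_{L^\infty(B_{2r}(x))} + \tail(u;x,2r)\Bigr),
\end{align*}
where $\tail(u;x,2r) = r^{2s}\int_{\R^n\setminus B_{2r}(x)} |u(z)|\,|z-x|^{-n-2s}\d z$. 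This is exactly the point where $s>\tfrac12$ is used.

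Next I would bound the $L^\infty$ norm on $B_{2r}(x)$ by $G(x,y)$. For every $z\in B_{2r}(x)$, the choice of $r$ gives $|z-y|\asymp|x-y|$ and $d_\Omega(z)\asymp d_\Omega(x)$, so \autoref{thm:main-1} directly yields $G(z,y)\asymp G(x,y)$, and hence $\|u\|_{L^\infty(B_{2r}(x))}\le C\,G(x,y)$.

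The remaining, and technically most delicate, step is the tail bound $\tail(u;x,2r)\le C\, G(x,y)$. I would write
$$\tail(u;x,2r) \;=\; r^{2s}\int_{\Omega\setminus B_{2r}(x)} \frac{G(z,y)}{|z-x|^{n+2s}}\d z,$$
use the upper bound from \autoref{thm:main-1} for $G(z,y)$ (including the boundary factor $(d_\Omega(z)/|z-y|\wedge 1)^s$), and split the region of integration according to whether $|z-y|\le \tfrac12|x-y|$, $|z-y|\in[\tfrac12|x-y|, \diam(\Omega)]$, and whether $d_\Omega(z)$ is small or comparable to $|z-y|$. In each piece the resulting integral is elementary; one compares the outcome to the corresponding two-sided form of $G(x,y)$, treating separately the regimes $d_\Omega(x)\ge|x-y|$ (interior case, $r\asymp|x-y|$) and $d_\Omega(x)<|x-y|$ (boundary case, $r\asymp d_\Omega(x)$). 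Combining the three estimates and recalling $r^{-1}\asymp \max\{|x-y|^{-1},d_\Omega(x)^{-1}\}$ yields the claim; the main obstacle is ensuring the tail integral does not lose a power of $d_\Omega(y)/|x-y|$ in the boundary regime, which is exactly the factor captured by the sharp form of \eqref{eq:green-est}.
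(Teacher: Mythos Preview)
Your proposal is correct and matches the paper's approach: apply the interior gradient estimate (\autoref{lemma:int-gradient-est}, taken from \cite{KNS22}) to $G(\cdot,y)$ on a ball of radius $\asymp\min\{|x-y|,d_\Omega(x)\}$, then bound the local term and the tail using the two-sided bounds of \autoref{thm:main-1}. The paper organizes this as two separate cases ($|x-y|\le d_\Omega(x)$ versus $|x-y|>d_\Omega(x)$) and uses an $L^1$ norm rather than $L^\infty$ for the local term, but the substance---including the careful tail splitting to retain the $d_\Omega(y)$ factor---is the same.
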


Gradient estimates for the Green function were established in \cite{ BKN02,KuRy16,GJZ22} for a class of nonlocal translation invariant operators. Interior gradient estimates for the Green function associated to a nonlocal operator \eqref{eq-op} with \eqref{eq:Kcomp}, \eqref{eq-K-cont} follow for instance from the gradient potential estimates in \cite{KNS22}. However, in this paper we provide an approach that is completely different from \cite{KNS22} and our result also holds true up to the boundary.

\begin{remark}
    In principle, our proof of \autoref{cor:gradient-bounds} remains valid for $s \le \frac{1}{2}$. The only reason why we exclude this case is due to the lack of an appropriate higher order interior regularity estimate in the literature. We refer to \autoref{remark:Green-gradient} for a more detailed discussion on this matter.
\end{remark}

As another application of \autoref{thm:main-1}, we establish Poisson kernel estimates for the same class of operators in $C^{1,\alpha}$ domains (see \autoref{cor:Poisson}). Moreover, in Section \ref{sec:Green} we prove upper Green function estimates in Lipschitz (and more general) domains (see \autoref{remark:Green-function-general-domains}). Note that similar results, in slightly different setups, have already been proved in \cite{Bog00,Jak02} (see also \cite{BGR10,ABR25}).

\begin{remark}
    Our main results \autoref{thm:main-1} and \autoref{cor:gradient-bounds} have several direct applications. 
    \hspace{-0.8cm}
   \begin{itemize}
       \item Pointwise global estimates for the Green function and its gradient are the main tool in \cite{AFLY24}, where Calder\'on--Zygmund boundary regularity of solutions to \eqref{eq:dirichlet-intro} is established in case $L = (-\Delta)^s$ in $C^2$ domains. Using our estimates, one can extend the results in \cite{AFLY24} to more general operators and rougher domains.
       \item Boundary Green function estimates are a common tool to establish the sharp boundary behavior of solutions to nonlocal porous medium or fast diffusion equations in domains (see for instance \cite{BoVa15,BFV18,BII23}). \autoref{thm:main-1} directly verifies one of the main assumptions in these articles and extends the class of admissible operators in their results.
   \end{itemize} 
\end{remark}

The Green function was formally defined as an analytic object in \cite{KLL23}, where also existence and uniqueness were established. In this paper, we use the same definition of $G$ (see also \autoref{def:Green}). 
The estimate \eqref{eq:green-est} in our main result \autoref{thm:main-1} reflects both, the behavior of the Green function at the boundary of $\Omega$, and also near the diagonal $\{ x= y \}$.  It was already shown in \cite{KKL23} (see also \cite{CaSi18}) that $G$ enjoys the following bounds near the diagonal
\begin{align}
\label{eq:int-upper}
    G(x,y) &\le c |x-y|^{-n+2s} ~~\quad \forall x,y \in \Omega,\\
    \label{eq:int-lower}
    G(x,y) &\ge c^{-1} |x-y|^{-n+2s} ~~ \forall x,y \in \Omega ~\text{ with } ~ |x-y| \le \min\{ d_{\Omega}(x) , d_{\Omega}(y) \}
\end{align}
for some constant $c \geq 1$, depending only on $n,s,\lambda$, and $\Lambda$.
These estimates are interior in nature, since they do not contain any information on the growth of $G$ near the boundary $\partial \Omega$. However, note that the constant $c \geq 1$ is also independent of $\Omega$, and in fact, no assumption needs to be made on the regularity of $\partial\Omega$ and $K$, apart from ellipticity \eqref{eq:Kcomp}.

Our proof of \eqref{eq:green-est} makes use of the near diagonal bounds \eqref{eq:int-upper} and \eqref{eq:int-lower}. The main ingredients in order to obtain the sharp boundary behavior of $G$ are the $C^s$ regularity and the Hopf lemma for solutions to \eqref{eq:dirichlet-intro}, which we establish here for the first time. We explain these results in the sequel.

\subsection{Optimal boundary regularity}

The regularity theory for nonlocal equations in divergence form \eqref{eq:dirichlet-intro} has been extensively studied in recent years and in particular the interior regularity of solutions is by now well understood. A nonlocal counterpart of the De Giorgi--Nash--Moser theory has been developed in \cite{BaLe02,Kas09,DyKa20,KaWe24} and \cite{DKP14,DKP16,Coz17}, where interior $C^{\gamma}$ regularity and a Harnack inequality are established.
Moreover, under additional regularity assumptions on $K$, higher order interior regularity of Schauder-, Cordes--Nirenberg-, and Calder\'on--Zygmund-type has been proved for instance in \cite{Coz17b}, \cite{Brli17,BLS18}, \cite{Now20,Now21,Now23}, \cite{Fal20}, \cite{MSY21,FMSY22}, \cite{FeRo24b}, \cite{KNS22}, \cite{DiNo25}, and \cite{Loh23}.

In contrast to the interior regularity, the boundary behavior of solutions to \eqref{eq:dirichlet-intro} is much less understood. In fact, until now it was only known that solutions to \eqref{eq:dirichlet-intro} with H\"older continuous kernels $K$ \eqref{eq:Kcomp} are $C^{\gamma}$ up to the boundary for some small $\gamma$, even when $\Omega = B_1$ (see \cite{KKP16, KiLe23}).

For the fractional Laplacian $L = (-\Delta)^s$, the study of the boundary behavior was initiated in the works \cite{ChSo98,CKS10,RoSe14}. More general translation invariant kernels were considered in \cite{Gru15,RoSe16,RoSe16b,RoSe17,AbRo20}, and finally in \cite{RoWe24},  where it was shown that solutions to \eqref{eq:dirichlet-intro} are $C^s(\overline{\Omega})$ when $\partial \Omega \in C^{1,\alpha}$ and $K$ is a translation invariant kernel satisfying \eqref{eq:Kcomp}. The $C^s(\overline{\Omega})$ regularity is optimal, even for the fractional Laplacian $(-\Delta)^s$ in smooth domains. This phenomenon is purely nonlocal and distinguishes the boundary behavior for nonlocal equations from that for local equations.

The following natural question has remained open since the pioneering works \cite{RoSe14,Gru15,RoSe16b} on the boundary regularity for (translation invariant) nonlocal equations:
\begin{question}\label{question}
Does the $C^s(\overline{\Omega})$ boundary regularity for \eqref{eq:dirichlet-intro} remain true when $L$ has (H\"older continuous\footnote{Note that without any regularity assumption on $K$, the $C^{\gamma}$ boundary regularity for some small $\gamma \in (0,s)$ is optimal.}) coefficients?
\end{question}
One central purpose of this paper is to answer this question affirmatively for nonlocal operators $L$ in divergence form \eqref{eq-op} and \eqref{eq:Kcomp}.

Note that for operators $L$ in non-divergence form, i.e.\ when $K(x,x+h) = K(x,x-h)$, there exist several works investigating the fine boundary regularity, see \cite{Gru15,AbGr23} and the references therein. Moreover, in this case one can still evaluate the operator in the classical sense for smooth functions. Since the proofs in most of the previously mentioned articles (see for instance \cite{RoSe16,RoSe16b,RoSe17,RoWe24}) addressing the translation invariant case heavily rely on the construction of explicit barrier functions, one could resolve \autoref{question} for non-divergence form operators by similar techniques. 

However, for operators in divergence form \eqref{eq:Kcomp}, it is not possible to compute $Lv(x)$ in the classical sense, even if $v \in C_c^{\infty}(\R^n)$. Consequently, proving fine boundary regularity results becomes significantly more challenging. 

\autoref{question} has remained \textit{wide open for divergence form operators}. The only existing result in the literature is \cite{Fal19}, which holds true under the restrictive assumption that $L d_{\Omega}^s$ can be evaluated in the classical sense and has nice enough regularity properties. Moreover, we mention \cite{IMS16,IMS20,IaMo24}, where fine boundary regularity results for the fractional $p$-Laplacian were established. All of these articles heavily rely on the construction of barrier functions, too. Hence, significant new ideas would be required to adapt their techniques to operators with coefficients.

In this article, we finally answer \autoref{question} affirmatively. Our main result establishes the optimal $C^s$ boundary regularity and a Hopf lemma for nonlocal operators \eqref{eq-op}, \eqref{eq:Kcomp}, \eqref{eq-K-cont}, and it reads as follows. 

\begin{theorem}
\label{thm:main-2}
    Let $s \in (0,1)$ and $\alpha,\sigma \in (0,s)$. Let $\Omega \subset \R^n$ be a $C^{1,\alpha}$ domain and let $L,K, \lambda, \Lambda$ be as in \eqref{eq-op}, \eqref{eq:Kcomp}. Assume that \eqref{eq-K-cont} holds with $\mathcal{A} = B_1$. Let $f$ be such that $d_{\Omega}^{s-p} f \in L^q(\Omega \cap B_1)$ for some $p \in (0,s]$ and $q \in (\frac{n}{p},\infty]$. Let $u$ be a weak solution to
    \begin{align*}
    \left\{
    \begin{aligned}
    Lu&=f &&\text{in }\Omega \cap B_1, \\
    u&=0 &&\text{in } B_1 \setminus \Omega.
    \end{aligned}
    \right.
    \end{align*}
    Then, $u \in C^s_{\mathrm{loc}}(\overline{\Omega} \cap B_{1})$ and
    \begin{align*}
        \Vert u \Vert_{C^s(\overline{\Omega \cap B_{1/2}})} \le c \left( \Vert u \Vert_{L^1_{2s}(\R^n)} + \Vert d^{s-p}_{\Omega} f \Vert_{L^q(\Omega \cap B_1)} \right)
    \end{align*}
    for some constant $c > 0$, depending only on $n,s,\lambda,\Lambda,\alpha,\sigma,p,q$, and $\Omega$.

    Moreover, if $u \ge 0$ in $\R^n$ and $f \ge 0$ in $\Omega \cap B_1$ for $f \in L^\infty(\Omega \cap B_1)$, then either $u \equiv 0$ or 
    \begin{align}
    \label{eq:Hopf-intro}
        u \ge c d^s_{\Omega} ~~ \text{ in } \Omega \cap B_{1/2}
    \end{align}
    for some $c > 0$.
\end{theorem}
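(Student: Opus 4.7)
My plan is to prove the sharp $C^s$ boundary regularity by a Campanato--Morrey-type iteration against the model profile $(x_n)_+^s$ after flattening $\partial\Omega$, and then to deduce the Hopf bound \eqref{eq:Hopf-intro} from this iteration via a comparison argument. I would first fix a boundary point $x_0 \in \partial\Omega \cap B_{1/2}$ and use a local $C^{1,\alpha}$ diffeomorphism $\Phi$ that sends a neighborhood of $x_0$ to a neighborhood of $0$ in $\{x_n > 0\}$. Conjugation by $\Phi$ turns $L$ into a divergence-form operator $\widetilde L$ whose kernel $\widetilde K$ still satisfies \eqref{eq:Kcomp} and \eqref{eq-K-cont}, but now with effective exponent $\sigma' = \min\{\sigma,\alpha\}$, because the new perturbation is of class $C^{1,\alpha}$. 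After truncating $u$ away from $x_0$ I may assume the normalization $\|u\|_{L^1_{2s}(\R^n)} + \|d^{s-p}_\Omega f\|_{L^q} \le 1$ and work on the flattened configuration.

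\textbf{One-step decay.} The central step is a decay lemma against the boundary model $(x_n)_+^s$: there exist $\theta \in (0,1)$ and $\beta \in (0,\sigma' )$ such that for every $r \in (0,1)$ one can choose $c_r \in \R$ with
\begin{align*}
\Bigl(\dashint_{B^+_{\theta r}} |u - c_{r}(x_n)_+^s|^2 \d x\Bigr)^{1/2}
\le \theta^{s+\beta}\Bigl(\dashint_{B^+_{r}} |u - c_{r/\theta}(x_n)_+^s|^2 \d x\Bigr)^{1/2} + C r^{s+\beta}.
\end{align*}
I would obtain this by compactness in the spirit of \cite{RoSe16b,RoWe24}. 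If the lemma failed, rescaling and normalizing would produce a sequence of solutions whose kernels converge, thanks to \eqref{eq-K-cont} with exponent $\sigma'$, to a translation-invariant limit $K_0$; whose domains straighten to the exact half-space by the $C^{1,\alpha}$ regularity of $\partial\Omega$; and whose right-hand sides and nonlocal tails vanish. The uniform boundary $C^\gamma$ estimates from \cite{KaWe23,KKP16,KiLe23}, together with standard tail control, yield precompactness in $C^\gamma_\loc$. The limit $u_\infty$ solves a translation-invariant divergence-form equation in the half-space with zero exterior data, so the sharp $C^s$ boundary regularity for such problems, which is the main result of \cite{RoWe24}, forces $u_\infty(x) = c_\infty(x_n)_+^s$ and contradicts the normalization. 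Iterating the decay across dyadic scales and invoking a boundary Campanato-type characterization adapted to the profile $(x_n)_+^s$ yields a pointwise expansion $u(x) = c(x_0)d_\Omega(x)^s + O(|x-x_0|^{s+\beta})$ at each boundary point; combining this with interior $C^s$ theory and a covering argument promotes the conclusion to $u \in C^s_\loc(\overline\Omega\cap B_1)$ with the claimed estimate.

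\textbf{Hopf bound.} For the second part, assume $u \ge 0$, $u \not\equiv 0$, and $f \ge 0$. By the weak Harnack inequality \cite{DKP14,Coz17} there exist $z \in \Omega \cap B_{3/4}$ and $\rho,\eta > 0$ with $u \ge \eta$ on $B_\rho(z)$. Let $v$ solve $Lv = 0$ in $\Omega \setminus \overline{B_\rho(z)}$, $v = \eta$ in $\overline{B_\rho(z)}$, and $v = 0$ in $\R^n \setminus \Omega$; weak comparison yields $u \ge v$. Applying the one-step decay to $v$ gives the expansion $v(x) = c_v(x_0)d_\Omega(x)^s + o(d_\Omega(x)^s)$ at every $x_0 \in \partial\Omega \cap B_{1/2}$, and the strict positivity $c_v(x_0) > 0$ comes from the same blow-up analysis: the rescaled functions $v_r(x) := r^{-s}v(x_0+rx)$ are uniformly bounded from below on interior annuli by Harnack chaining back to $B_\rho(z)$, and converge to a nonzero translation-invariant half-space solution which, by the Liouville classification underlying \cite{RoWe24}, must be a strictly positive multiple of $(x_n)_+^s$. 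Continuity of $c_v$ on the compact set $\partial\Omega \cap B_{1/2}$ yields $v \ge c\, d_\Omega^s$, whence \eqref{eq:Hopf-intro}.

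\textbf{Main obstacle.} The hard part is the one-step decay. In local Campanato iterations one subtracts affine functions which lie in the classical domain of the operator; here the natural boundary profile $(x_n)_+^s$ cannot be evaluated by the divergence-form $L$ pointwise, which rules out the barrier strategy of \cite{RoSe14,RoSe16,RoWe24,Fal19}. This forces the compactness framework described above, in which the simultaneous quantitative control of the kernel perturbation (exponent $\sigma$), the domain straightening error (exponent $\alpha$), the nonlocal tails, and the extraction of a strictly positive decay exponent $\beta$ forms the core technical difficulty, and is precisely what makes the iteration \emph{higher order} at the boundary in the nonlocal setting.
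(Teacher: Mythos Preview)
Your compactness scheme has a genuine gap at its core: the claim that the blow-up limit satisfies $u_\infty(x) = c_\infty (x_n)_+^s$ is false for the class of kernels under consideration. The main result of \cite{RoWe24} that you invoke says only that half-space solutions for a translation-invariant kernel $K_0$ satisfying \eqref{eq:Kcomp} are multiples of a one-dimensional barrier $b_\nu$; this barrier is merely \emph{comparable} to $(x_n)_+^s$, not equal to it, unless $K_0$ is homogeneous. In fact \cite{RoWe24} shows that $u_\infty/(x_n)_+^s$ need not even be continuous. Hence subtracting $c_r(x_n)_+^s$ cannot yield the improved decay exponent $s+\beta$, and the iteration collapses. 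The same mistake reappears in your Hopf argument, where you again assert that the limit is a multiple of $(x_n)_+^s$.

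Replacing $(x_n)_+^s$ by the correct barrier $b_\nu$ does not fix the argument either, because the limiting kernel $K_0$ is the \emph{frozen} kernel $K_{x_0}$ and therefore depends on the boundary point $x_0$; so does $b_\nu$. You would need a single model profile valid at all boundary points, or else a quantitative control on how the profiles vary with $x_0$. The paper faces exactly this obstruction and resolves it by constructing, for each $x_0$, a domain-adapted barrier $\psi_{x_0}$ solving a global problem for the frozen operator (\autoref{prop:inhom-Cs}), running a Campanato iteration for $u/\psi_{x_0}$ via an $s+\varepsilon$ expansion of the frozen replacement (\autoref{lem-u-Holder-inhom}), and then proving the key closeness estimate $[\psi_{x_0}-\psi_{y_0}]_{C^{s-\varepsilon}} \le c|x_0-y_0|^\sigma$ (\autoref{lemma:w-regularity}) to glue the pointwise information into $C^s$ regularity. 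Your flattening step is a secondary concern, but note that the paper explicitly avoids change of variables (see the discussion in Section~1.3) because conjugation by a $C^{1,\alpha}$ diffeomorphism does not preserve the translation-regularity structure \eqref{eq-K-cont} in a usable way.
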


Note that \autoref{thm:main-2} is a localized result in the sense that it only requires information about $\Omega,f$, and the solution in $B_1$. Moreover, we only require H\"older continuity of $K$ in \eqref{eq-K-cont} with $\mathcal{A} = B_1$. Considering localized problems instead of \eqref{eq:dirichlet-intro} is an important feature of this theorem and we use it heavily in the proof of the Green function estimates in \autoref{thm:main-1}.

As we explained before, the $C^s$ regularity up to the boundary and the boundary growth \eqref{eq:Hopf-intro} are optimal, even for the fractional Laplacian. \autoref{thm:main-2} is the first higher order regularity result for the general class of nonlocal operators in divergence form \eqref{eq-op}, \eqref{eq:Kcomp}, and \eqref{eq-K-cont}. In particular, we do not assume any structural assumptions on the coefficients, such as homogeneity or additional (non-divergence type) symmetry of the kernels. Recall that prior to our result it was only known that solutions are $C^{\gamma}$ for some $\gamma \in (0,s)$ (see \cite{KKP16,KiLe23}). 

The Hopf lemma \eqref{eq:Hopf-intro} seems to be entirely new for nonlocal operators in divergence form. Note that even in the local case, Hopf lemmas  have only been established quite recently for operators in divergence form with H\"older continuous coefficients (at least for general dimension $n > 2$, see \cite{Sab15,Ros19}). 

Let us point out that also the assumptions on $f$ in \autoref{thm:main-2} are quite general and take into account different effects. On the one hand, we allow for $f \in L^q(\Omega)$ for some $q \in (\frac{n}{s}, \infty]$ (when $p = s$), which seems to be new even for translation invariant operators. On the other hand, we also cover source terms $f$ that can explode at the boundary like $d_{\Omega}^{s-p}$ for some $p \in (0,s]$ (when $q = \infty$) as in \cite{FeRo24}. We refer to \autoref{remark:f-assumptions} for a possible extension of our main result to even more general source terms.

We also establish $C^{s-\eps}$ regularity in flat Lipschitz domains, as well as optimal regularity estimates of lower order in case $\frac{n}{s+p}<q \leq \frac{n}{p}$ (see \autoref{thm:Cs-eps}). Moreover, under additional structural assumptions on the kernels, we establish fine regularity results for $u/d_{\Omega}^s$ (see \autoref{thm-Campanato-Holder}). Note that without any additional assumption, $u/d_{\Omega}^s$ does not even need to be continuous in \autoref{thm:main-2} (see \cite{RoWe24}).

To prove \autoref{thm:main-2} we develop a new approach to the boundary regularity for nonlocal equations with coefficients. Indeed, we establish a nonlocal version of the celebrated Morrey--Campanato theory at the boundary. As we will explain below in detail, since the boundary behavior of nonlocal and local equations differs quite substantially, several new ideas are required in order to make this approach work for nonlocal problems.

\subsection{Strategy of the proof}

Our proof of the $C^s$ boundary regularity in \autoref{thm:main-2} is based on a nonlocal version of the Morrey--Campanato theory at the boundary. Such technique has already been established for nonlocal equations in the context of interior regularity (see for instance \cite{KMS15,KNS22}) and in this paper we prove  similar estimates at the boundary. 

Obtaining boundary regularity estimates for nonlocal problems is much more difficult than for local equations due to the following reasons:
\begin{itemize}
    \item For local equations it is sufficient to establish the boundary regularity in flat domains and to deduce it in general domains with a suitable (local) change of variables. Such an argument does not work for nonlocal equations.
    \item Solutions to nonlocal equations behave like $d_{\Omega}^s$ near $\partial \Omega$. Thus, derivatives of solutions explode at the boundary, and hence it seems hopeless to obtain higher order regularity estimates (of order $s$) by differentiation of the equation. Due to the same reason, one cannot approximate solutions by linear functions at the boundary.
\end{itemize}

Our proof of \autoref{thm:main-2} consists of several steps:

\subsubsection{Almost optimal regularity}

The first step in the proof of the optimal $C^s$ boundary regularity in \autoref{thm:main-2} is to establish an almost optimal regularity result of order $s-\eps$. We prove this result in the more general setting of flat Lipschitz domains. Moreover, our technique allows to prove optimal regularity estimates of lower order for less regular source terms. Both of these results are new in our setting and we believe them to be of independent interest.

\begin{theorem}
\label{thm:Cs-eps}
Let $s \in (0,1)$, $\sigma \in (0,s)$, and $\varepsilon \in (0, \sigma)$. Let $\Omega \subset \R^n$ be a Lipschitz domain with Lipschitz constant $\delta > 0$ and let $L,K,\lambda,\Lambda$ be as in \eqref{eq-op}, \eqref{eq:Kcomp}. Assume that \eqref{eq-K-cont} holds with $\mathcal{A} = B_1$. Let $f$ be such that $d_{\Omega}^{s-p}f \in L^q(\Omega \cap B_1)$ for some $p \in (0, s]$ and $q \in (\frac{n}{s+p}, \infty]$.
Let $u$ be a weak solution to
\begin{align*}
\left\{
\begin{aligned}
Lu&=f &&\text{in }\Omega \cap B_1, \\
u&=0 &&\text{in } B_1 \setminus \Omega.
\end{aligned}
\right.
\end{align*}
Then, there exists $\delta_0 > 0$, depending only on $n,s,\sigma,\lambda,\Lambda,p,q,\eps$, and $\Omega$, such that if $\delta \le \delta_0$, then the following holds true:
\begin{itemize}
\item If $q\geq \frac{n}{p}$, then $u \in C^{s-\varepsilon}_{\mathrm{loc}}(\overline{\Omega} \cap B_1)$, and
\begin{align*}
\|u\|_{C^{s-\varepsilon}(\overline{\Omega \cap B_{1/2}})} \le c \left( \Vert u \Vert_{L^{1}_{2s}(\R^n)}+ \Vert d_{\Omega}^{s-p}f \Vert_{L^q(\Omega \cap B_1)} \right)
\end{align*}
for some constant $c > 0$, depending only on $n, s, \sigma, \lambda, \Lambda, p, q, \varepsilon$, and $\Omega$.
\item
If $q< \frac{n}{p}$, then $u \in C^{s+p-\frac{n}{q}}_{\mathrm{loc}}(\overline{\Omega} \cap B_1)$ and
\begin{align*}
\|u\|_{C^{s+p-\frac{n}{q}}(\overline{\Omega \cap B_{1/2}})} \le c \left( \Vert u \Vert_{L^{1}_{2s}(\R^n)}  + \Vert d_{\Omega}^{s-p}f \Vert_{L^q(\Omega \cap B_1)} \right)
\end{align*}
for some constant $c > 0$, depending only on $n, s, \sigma, \lambda, \Lambda, p, q$, and $\Omega$.
\end{itemize}
\end{theorem}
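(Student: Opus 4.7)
My plan is to prove \autoref{thm:Cs-eps} by a Campanato-type iteration at the boundary, with target exponent
\[
\beta := \min\{s - \varepsilon,\, s + p - n/q\},
\]
covering both stated cases simultaneously. The concrete goal is a uniform decay at every boundary-touching point of the form
\[
\Bigl(\dashint_{\Omega \cap B_r(x_0)} u^2 \d x \Bigr)^{1/2} \le C r^{\beta}, \qquad x_0 \in \overline{\Omega}\cap \overline{B}_{1/2},\ r \in (0,r_0),
\]
which, combined with the interior regularity of solutions (via De Giorgi--Nash--Moser-type results together with higher-order interior Schauder estimates, e.g.\ \cite{KaWe23,Coz17,KNS22}), yields the global Hölder estimate through the Campanato characterization of $C^\beta$. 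Crucially, because $u\equiv 0$ outside $\Omega$, the natural reference in the Campanato average at a boundary point is the constant $0$ and no affine subtraction is needed; this matches the fact that derivatives of $u$ typically blow up at $\partial \Omega$, making local-style linear approximation impossible.

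The iteration is set up at a boundary point $x_0 \in \partial \Omega \cap \overline{B}_{1/2}$ and scale $r$ by the standard splitting $u = v + w$, where $w$ solves $L w = f$ in $\Omega \cap B_r(x_0)$ with vanishing exterior data and $v := u - w$ satisfies $L v = 0$ in $\Omega \cap B_r(x_0)$ with $v = u$ outside. The inhomogeneous piece $w$ is bounded by a weighted nonlocal Caccioppoli/Moser estimate tuned to $d_\Omega^{s-p}$, yielding
\[
\|w\|_{L^\infty(B_{r/2}(x_0))} \le C r^{s+p-n/q} \|d_\Omega^{s-p} f\|_{L^q(\Omega \cap B_r(x_0))},
\]
which has exactly the target scaling and is therefore harmless for the iteration. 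The real work is to establish, for the homogeneous piece $v$, an excess-decay inequality
\[
\Bigl(\dashint_{\Omega \cap B_{\theta r}(x_0)} v^2 \Bigr)^{1/2} \le \theta^{\beta} \Bigl(\dashint_{\Omega \cap B_r(x_0)} v^2 \Bigr)^{1/2} + \theta^\beta\, \tail_v(x_0, r),
\]
for some structural $\theta \in (0,1)$, where $\tail_v$ absorbs the nonlocal contribution from outside $B_r$. I would prove this by a blow-up/compactness contradiction: if it fails, rescaling a counterexample sequence and using the preliminary $C^\gamma$ boundary estimate of \cite{KKP16,KiLe23} for compactness produces a nontrivial bounded limit $v_\infty$ that vanishes outside a half-space $\{x_n > 0\}$ (here the smallness $\delta \le \delta_0$ is essential so that the rescaled domains flatten to this half-space) and satisfies $L_\infty v_\infty = 0$ for some translation-invariant limit operator $L_\infty$ of the class \eqref{eq-op}--\eqref{eq:Kcomp}, produced from \eqref{eq-K-cont} under rescaling. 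The sharp $C^s$ boundary regularity already available in the translation-invariant setting (e.g.\ \cite{RoSe16b,RoWe24}) then forces $|v_\infty(x)| \le C x_n^s$, giving an $L^2$-average decay of rate $\theta^s$ on $B_\theta$, which for $\beta<s$ contradicts the assumed failure of the $\theta^\beta$ decay. Combining this with the estimate for $w$ delivers the Campanato iteration, whose iteration across dyadic scales yields the pointwise decay and hence $C^\beta$ up to the boundary.

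The main obstacle I anticipate is the identification of the limit problem in the compactness step. Because the coefficients are only Hölder continuous and non-translation-invariant, and the domain is merely Lipschitz, one must simultaneously control convergence of the rescaled kernels (using \eqref{eq-K-cont} to produce a translation-invariant $L_\infty$), convergence of the rescaled Lipschitz domains to the half-space (via $\delta \le \delta_0$), and convergence of the weak formulation of the nonlocal equation; getting all three limits compatibly, while keeping uniform tail estimates on the blow-up sequence so that the compactness given by $C^\gamma$ boundary regularity is not spoiled by contributions at infinity, is the technical heart of the argument. The remaining ingredients, namely the treatment of the nonlocal tails in the excess-decay iteration (absorbed by choosing $\theta$ small relative to $\beta$ and iterating over dyadic scales) and the passage from the boundary decay to a global $C^\beta$ estimate via interior regularity and Campanato theory, are expected to follow standard patterns once the blow-up argument is in place.
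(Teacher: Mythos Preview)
Your blow-up/compactness route is genuinely different from the paper's proof. The paper proceeds by a \emph{direct freezing argument}: at each $x_0$ and scale $R$ it compares $u$ with the harmonic replacement $v$ for the frozen translation-invariant operator $L_{x_0}$ (kernel \eqref{eq-frozen}), controls $w=u-v$ via explicit energy estimates exploiting the H\"older continuity of $K$ (the ``freezing estimates'' of Section~\ref{sec:freezing}), and then transfers the known $C^{s-\eps}$ boundary regularity for translation-invariant operators in flat Lipschitz domains (Proposition~\ref{lemma:T2}, from \cite{RoWe24}) directly to $u$. The iteration runs not on an $L^2$ average but on a tailored excess $\Phi_\sigma$ \eqref{eq-excess-Phi} that combines $\int |u/d_\Omega^s|$ with a $\sigma$-tail whose weight matches the kernel regularity; this choice makes the tail self-improve along the iteration (Lemma~\ref{lemma:Morrey-boundary}) and no compactness is used anywhere.

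Your approach is viable in outline, but one step is wrong as stated: the blow-up of a merely Lipschitz domain at a boundary point is a $\delta$-Lipschitz epigraph, \emph{not} a half-space---small $\delta$ does not make the rescaled domains flatten---so the bound $|v_\infty(x)|\le C x_n^s$ via sharp half-space $C^s$ regularity is unjustified. The fix is to take $\delta\le\delta_0(\eps/2)$ from the start and invoke the $C^{s-\eps/2}$ estimate for translation-invariant operators in $\delta$-Lipschitz domains (this is exactly Proposition~\ref{lemma:T2}); since $s-\eps/2>\beta$ the contradiction still closes. With that correction your strategy should go through, though the paper's freezing approach sidesteps in one stroke the half-space issue, the stability of weak solutions under simultaneous kernel/domain limits, and the uniform tail control needed in the blow-up.
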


The proof of \autoref{thm:Cs-eps} relies on a Morrey-type iteration argument. Given any point $x_0 \in \Omega \cap B_{1/2}$, we consider the ``frozen'' operator $L_{x_0}$ which is of the same shape as \eqref{eq-op} but has the kernel
\begin{align}\label{eq-frozen}
    K_{x_0}(x,y) = \frac{1}{2}[K(x_0 + x - y , x_0) + K(x_0 + y - x , x_0)].
\end{align}
Note that $K_{x_0}$ only depends on $x-y$ and therefore, $L_{x_0}$ is translation invariant. Then, we compare the solution $u$ with its $L_{x_0}$-harmonic replacements $v$ in $\Omega \cap B_R(x_0)$ for all scales $R \in (0,1/2)$. For a fixed $R \in (0,1/2)$, $v$ is given as the solution to
\begin{align}
\label{eq:frozen-intro}
\left\{
\begin{aligned}
L_{x_0}v&=0 &&\text{in }\Omega \cap B_R(x_0),\\
v&=u &&\text{in }\R^n \setminus (\Omega \cap B_R(x_0)).
\end{aligned}
\right.
\end{align}
Since the $C^s$ boundary regularity for $v$ is already known (see \cite{FeRo24,RoWe24}), we can transfer the regularity from $v$ to $u$ by establishing suitable comparison estimates for $[u-v]_{H^s(\R^n)}$ (see \autoref{lemma:freezing}). This way, we derive the following nonlocal Morrey-type estimate, which we state here for simplicity only in case $f = 0$ (see also \autoref{lemma:Morrey-boundary} and \eqref{eq:Morrey-boundary-help-0}):
\begin{align}
\label{eq:Morrey-type-intro}
\Phi_\sigma(u; \rho,x_0) \le c \left[\left( \frac{\rho}{R} \right)^{n-\eps} + R^{\sigma} \right] \Phi_\sigma(u; R,x_0) + c R^{n+s} \Vert u \Vert_{L^1_{2s}(\R^n)} ~~ \forall ~ 0 < \rho < R.
\end{align}
Here, $\Phi_{\sigma}(u;\rho,x_0)$ denotes the nonlocal excess functional
\begin{align}\label{eq-excess-Phi}
    \Phi_\sigma(u; \rho,x_0) := \int_{\Omega \cap B_{\rho}(x_0)} \left| \frac{u}{d_{\Omega}^s} \right| \d x + \max\{\rho,d_{\Omega}(x_0)\}^{-s} \rho^n \tail_{\sigma,B_1}(u;\rho,x_0).
\end{align}
The first summand takes into account the behavior of solutions to nonlocal equations at the boundary. The second summand is defined as
\begin{align*}
    \tail_{\sigma,B_1}(u;\rho,x_0) = \rho^{2s-\sigma} \int_{B_1 \setminus B_{\rho}(x_0)} |u(y)||y-x_0|^{-n-2s+\sigma} \d y,
\end{align*}
and it is more difficult to grasp since it reflects upon three different aspects of the problem at hand:
\begin{itemize}
    \item The position of the ball $B_{\rho}(x_0)$ relative to $\partial\Omega$ is taken into account through the prefactor $\max\{ \rho , d_{\Omega}(x_0) \}^{-s}$.
    \item The tail captures the information on $u$ at intermediate scales $B_1 \setminus B_{\rho}(x_0)$, thereby reflecting the nonlocality of the problem.
    \item The weight and scaling of the tail keep track of the $C^{\sigma}$ regularity of the kernel $K$.
\end{itemize}

A key difficulty in the proof of \eqref{eq:Morrey-type-intro} is to incorporate the tail term into the estimate, making it a purely nonlocal result. Although tail terms commonly appear in the study of nonlocal equations (see \cite{DKP14,DKP16,KaWe24}) and have also been treated before in iterative schemes (see \cite{KNS22,BDLMS25}), we believe that $\tail_{\sigma,B_1}$ is quite special in the sense that the order of the weight matches the regularity of the coefficients. This aspect is crucial to us since it allows us to profit from the regularity of $K$ also on intermediate scales. To prove interior regularity, there is no need to introduce $\tail_{\sigma,B_1}$ since solutions enjoy regularity of an order higher than $C^s$ (see \cite[Lemma 3.5]{KNS22}).

The Morrey-type estimate \eqref{eq:Morrey-type-intro} is a key component in the proof of \autoref{thm:Cs-eps}. Indeed, by combining it with interior regularity results, a standard iteration argument directly establishes the $C^{s-\eps}$ regularity of $u$.

\subsubsection{Upgrade to $C^s$ regularity}

\autoref{thm:Cs-eps} is a crucial ingredient in the proof of \autoref{thm:main-2}. However, upgrading the $C^{s-\eps}$ regularity to the optimal $C^s$ regularity is far from trivial (and this is crucially needed to get the sharp Green function estimates). In fact, it lies in the nature of perturbation arguments such as the ones we develop in this article, that through the comparison with the prototype (translation invariant) equation one always loses a (fractional) derivative of arbitrarily small order $\eps$. In other words, it is impossible to improve the exponent $n-\eps$ in \eqref{eq:Morrey-type-intro} (or any other excess decay estimate of similar type) \textit{unless $v$ is more regular than $C^s$}. However, since the \textit{$C^s$ regularity is optimal} for translation invariant problems, in fact, $v/d_{\Omega}^s$ does not even have to be continuous (see \cite{RoWe24}), new ideas are required in order to prove \autoref{thm:main-2}. This is in stark contrast to the local case.

The first key idea to establish the optimal $C^s$ regularity in \autoref{thm:inhom-Cs} is to prove an \textit{expansion of order $s+\eps$ at boundary points} of $\Omega$ with $\eps > 0$ for translation invariant equations.
Such an expansion was already proved in \cite{RoWe24} but only in terms of a one-dimensional barrier $b_{x_0}$ that solves an equation in the half-space and heavily depends on its orientation. Since this would not allow us to prove $C^s$ regularity in more general domains, we slightly improve the result in \cite{RoWe24} as follows.

For any frozen operator $L_{x_0}$ (more precisely, for any locally frozen operator $\widetilde{L}_{x_0}$ given by \eqref{eq:locally-freeze}) we construct a barrier function $\psi_{x_0} \in C^s$ that takes into account the geometry of $\Omega$ and is comparable to $d_{\Omega}^s$, by an application of the results in \cite{RoWe24}. Then, we show that solutions $v$ to \eqref{eq:frozen-intro} satisfy an expansion in terms of $\psi_{x_0}$ (see \autoref{prop:inhom-Cs}) of the following form for some $q_z \in \R$:
\begin{align*}
|v(x) - q_z \psi_{x_0}(x)| \le c |x-z|^{s+\eps} ~~ \forall z \in B_{R/2}(x_0) \cap \partial \Omega \text{ and } x \in \Omega \cap B_{R/2}(x_0).
\end{align*}

A main achievement of the current article is to show that this expansion provides enough higher order information about the boundary behavior of $v$ and that it actually implies $C^s$ regularity for $u$. This goes by a higher order nonlocal Campanato-type estimate in terms of the function $u/\psi_{x_0}$, which we state here, again, for simplicity only in case $f = 0$ (see also \autoref{lem-u-Holder-inhom}):
\begin{align}
\label{eq:Campanato-type-intro}
\Psi_{\sigma}(u;\rho,x_0) \leq c \left( \frac{\rho}{R} \right)^{n+\eps} \Psi_{\sigma}(u;R,x_0) + c R^{\sigma} \Phi_{\sigma}(u;R,x_0) + c R^{n + s } \Vert u \Vert_{L^1_{2s}(\R^n)}.
\end{align}
Here, $\Psi_{\sigma}(u;\rho,x_0)$ denotes the higher order nonlocal excess functional
\begin{align}\label{eq-excess-Psi}
\begin{split}
    \Psi_{\sigma}(u;\rho,x_0) &:= \int_{\Omega \cap B_{\rho}(x_0)} \left|\frac{u}{\psi_{x_0}} - \left( \frac{u}{\psi_{x_0}} \right)_{\Omega \cap B_{\rho}(x_0)} \right| \d x \\
    &\quad + \max \{ \rho,d_{\Omega}(x_0) \}^{-s} \rho^n \mathrm{Tail}_{\sigma,B_1}\left(u - \psi_{x_0} \left(\frac{u}{\psi_{x_0}} \right)_{\Omega \cap B_{\rho}(x_0)}; \rho, x_0 \right).
\end{split}
\end{align}
Note that the quotients $u/\psi_{x_0}$ and $v/\psi_{x_0}$ do not solve any reasonable equation, which is why the proof of \eqref{eq:Campanato-type-intro} is quite involved and differs significantly from the ones in \cite{KNS22}.

Another key difficulty is to deduce the $C^s$ regularity of $u$ from the Campanato-type estimate \eqref{eq:Campanato-type-intro}. In fact, since the functions $\psi_{x_0}$ heavily depend on $x_0$ and are themselves only in $C^s$ it is not possible to deduce any regularity of $u$ directly from \eqref{eq:Campanato-type-intro}. We solve this problem by proving in \autoref{lemma:w-regularity} that for any $x_0,y_0 \in \Omega \cap B_{1/2}$, it holds
\begin{align}
\label{eq:phi-closeness-intro}
[\psi_{x_0} - \psi_{y_0}]_{C^{s-\eps}(\overline{\Omega \cap B_{1/2}})} \le c |x_0 - y_0|^{\sigma}.
\end{align}
The proof of this result requires a considerable amount of work and goes by  extending some of the arguments in the proof of \autoref{thm:Cs-eps} to equations with nonlocal operators on the right-hand side. 

Thanks to \eqref{eq:phi-closeness-intro} we are able to replace $u/\psi_{x_0}$ in the Campanato excess decay estimate \eqref{eq:Campanato-type-intro} by $u(x)/\psi_{x}(x)$. This way, we obtain
\begin{align*}
   \left( x \mapsto \frac{u(x)}{\psi_{x}(x)} \right) \in C^{\eps},
\end{align*}
see \autoref{thm:Cs-inhom-flat}, and in particular it implies the optimal $C^s$ regularity of $u$.

\subsubsection{Higher regularity for homogeneous kernels}

Finally, let us mention that we also obtain higher order fine boundary asymptotics under additional structural assumptions on $K$. Recall that for translation invariant operators one can prove higher regularity of $u/d_{\Omega}^s$ if $K$ is homogeneous, i.e.\ $K(h) = a(h/|h|)|h|^{-n-2s}$ for some function $a : \mathbb{S}^{n-1} \to [\lambda,\Lambda]$ (see \cite{RoSe16,RoSe16b,RoSe17,AbRo20}). The following result shows that the same higher order asymptotics hold true when the frozen kernels $K_{x_0}$ are homogeneous:

\begin{theorem}\label{thm-Campanato-Holder}
Let $s \in (0,1)$, $\alpha,\sigma \in (0,s)$, and $\eps \in (0,\alpha)$. Let $\Omega \subset \R^n$ be a $C^{1,\alpha}$ domain and let $L,K,\lambda,\Lambda$ be as in \eqref{eq-op}, \eqref{eq:Kcomp}. Assume that \eqref{eq-K-cont} holds with $\mathcal{A} = B_1$ and that $K_{x_0}$ is homogeneous for any $x_0 \in \Omega \cap B_{1}$. Let $f$ be such that $d_{\Omega}^{s-p}f \in L^q(\Omega \cap B_1)$ for some $p \in (0,s]$ and $q \in (\frac{n}{p},\infty]$.
Let $u$ be a weak solution to
\begin{align*}
\left\{
\begin{aligned}
Lu&=f &&\text{in }\Omega \cap B_1, \\
u&=0 &&\text{in } B_1 \setminus \Omega.
\end{aligned}
\right.
\end{align*}
Then $u/d_\Omega^s \in C^{\min\{\alpha - \eps ,\sigma,p - \frac{n}{q}\}}_{\mathrm{loc}}(\overline{\Omega} \cap B_1)$ and
\begin{align*}
\|u/d_\Omega^s\|_{C^{\min\{\alpha - \eps ,\sigma,p - \frac{n}{q}\}}(\overline{\Omega \cap B_{1/2}})} \leq c \left( \|u\|_{L^1_{2s}(\R^n)} + \|d_{\Omega}^{s-p} f\|_{L^q(\Omega \cap B_1)} \right)
\end{align*}
for some constant $c > 0$, depending only on $n, s, \lambda, \Lambda, \alpha, \sigma, p, q, \eps$, and $\Omega$.
\end{theorem}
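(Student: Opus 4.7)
The plan is to adapt the Campanato-type iteration developed in the proof of \autoref{thm:main-2} to the present setting, exploiting the homogeneity of the frozen kernel $K_{x_0}$ to obtain a sharper boundary expansion. Fix $x_0 \in \overline{\Omega} \cap B_{1/2}$, $R \in (0,1/2)$, and let $v$ solve the frozen Dirichlet problem \eqref{eq:frozen-intro} with the translation invariant operator $L_{x_0}$. The key new ingredient is that, since $K_{x_0}$ is homogeneous, the higher order boundary regularity results of \cite{RoSe16,RoSe16b,RoSe17,AbRo20} --- combined with the $C^{1,\alpha}$ flattening of $\partial \Omega$ near $x_0$ --- imply that $v/d_\Omega^s$ is H\"older continuous of order $\min\{\alpha-\eps,\sigma,s\}$ up to $\partial \Omega \cap B_{R/2}(x_0)$, with a quantitative expansion
\begin{equation*}
\bigl| v(x) - q_z\, d_\Omega^s(x) \bigr| \leq c\,|x-z|^{s+\min\{\alpha-\eps,\sigma\}} \bigl( \|u\|_{L^1_{2s}(\R^n)} + \Phi_\sigma(u;R,x_0) \bigr)
\end{equation*}
at each $z \in \partial \Omega \cap B_{R/2}(x_0)$, for some $q_z \in \R$ determined by $v$. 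This replaces the weaker $s+\eps$ expansion in terms of the $x_0$-dependent $\psi_{x_0}$ from the general case, and the reference function $d_\Omega^s$ no longer depends on $x_0$, which will simplify the final iteration considerably.

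Next, we run a higher order nonlocal Campanato-type decay for the excess of $u/d_\Omega^s$ in the spirit of \autoref{lem-u-Holder-inhom}. Working with
\begin{equation*}
\Psi^d_\sigma(u;\rho,x_0) := \int_{\Omega \cap B_\rho(x_0)} \left| \frac{u}{d_\Omega^s} - \Bigl( \frac{u}{d_\Omega^s}\Bigr)_{\Omega \cap B_\rho(x_0)} \right| \d x + \max\{\rho,d_\Omega(x_0)\}^{-s}\rho^n\, \tail_{\sigma,B_1}\!\left( u - d_\Omega^s \Bigl(\tfrac{u}{d_\Omega^s}\Bigr)_{\Omega\cap B_\rho(x_0)};\, \rho,\, x_0\right),
\end{equation*}
and comparing $u$ with $v$ through the freezing estimate behind \eqref{eq:Campanato-type-intro} (which incorporates the $C^\sigma$ freezing error for $K$ and the contribution of $f$), we obtain
\begin{equation*}
\Psi^d_\sigma(u;\rho,x_0) \leq c\Bigl(\frac{\rho}{R}\Bigr)^{n+\beta} \Psi^d_\sigma(u;R,x_0) + c R^{\sigma}\Phi_\sigma(u;R,x_0) + c R^{n+s+p-\frac{n}{q}}\|d_\Omega^{s-p}f\|_{L^q(\Omega\cap B_1)}
\end{equation*}
for all $0 < \rho < R$, with $\beta := \min\{\alpha-\eps,\sigma\}$. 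The first term reflects the improved $v$-expansion; the second is the kernel-freezing error; the third is a Morrey contribution using $q > n/p$, which reads as an effective $C^{p-n/q}$ regularity of the right-hand side.

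Iterating this three-term inequality by a standard Campanato iteration lemma (as in \cite{KMS15,KNS22}), and inserting the $\Phi_\sigma$ bound already provided by \autoref{thm:main-2}, yields $\Psi^d_\sigma(u;\rho,x_0) \leq c\,\rho^{n + \min\{\alpha-\eps,\sigma,p-n/q\}}\bigl( \|u\|_{L^1_{2s}(\R^n)} + \|d_\Omega^{s-p}f\|_{L^q(\Omega \cap B_1)} \bigr)$. Since $d_\Omega^s$ is independent of $x_0$, a Campanato characterization of H\"older spaces applied to $u/d_\Omega^s$ on $\overline{\Omega} \cap B_{1/2}$ upgrades this uniform oscillation decay to the claimed estimate. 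The main obstacle is the first step: promoting the half-space expansion of \cite{RoSe16b,RoSe17,AbRo20} for $v$ to a $C^{1,\alpha}$ domain \emph{in terms of $d_\Omega^s$}. The mismatch between $d_\Omega^s(x)$ and $((x-z)\cdot \nu_z)_+^s$, the power of the distance to the half-space tangent to $\partial\Omega$ at $z$, is of order $|x-z|^{s+\alpha}$ (via $|a^s - b^s| \leq c\max(a,b)^{s-1}|a-b|$ together with $|d_\Omega(x) - (x-z)\cdot \nu_z| \leq c|x-z|^{1+\alpha}$); combined with the $C^\sigma$ dependence of $K_{x_0}$ on $x_0$ coming from \eqref{eq-K-cont}, this yields exactly the exponent $s + \min\{\alpha-\eps,\sigma\}$ in the expansion of $v$. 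Once this expansion is secured, the remainder of the proof parallels the Campanato scheme already developed for \autoref{thm:main-2}.
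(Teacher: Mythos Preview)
Your overall strategy is close to the paper's, but there is a real gap in how you pass from the boundary expansion of $v$ to the three-term Campanato inequality. The estimate you want, $\Psi^d_\sigma(u;\rho,x_0) \leq c(\rho/R)^{n+\beta}\Psi^d_\sigma(u;R,x_0) + \ldots$, requires bounding the $C^\beta$ seminorm of $(v-c_0 d_\Omega^s)/d_\Omega^s$ in terms of $L^\infty$ and tail data of $v - c_0 d_\Omega^s$, with $c_0 = (v/d_\Omega^s)_{\Omega_R(x_0)}$. But $v - c_0 d_\Omega^s$ does \emph{not} solve $L_{x_0}(v - c_0 d_\Omega^s) = 0$, because $d_\Omega^s$ is not $L_{x_0}$-harmonic in a $C^{1,\alpha}$ domain. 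The expansion you quote only controls $v$ in terms of $\Phi_\sigma(u;R,x_0)$, not $\Psi^d_\sigma(u;R,x_0)$; plugging this into the iteration yields at best $\Psi^d_\sigma(u;\rho) \le c(\rho/R)^{n+\beta}\Phi_\sigma(u;R) + \ldots$, which does not self-improve under iteration. (As a separate minor point, the exponent $\sigma$ has no business appearing in the expansion of $v$: the frozen problem is translation invariant and the only obstruction there is the domain regularity $\alpha$.)

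The paper fixes exactly this issue by introducing an auxiliary barrier $\phi = \phi_{x_0}$ that \emph{does} solve $L_{x_0}\phi = 0$ in $\Omega_1$ (with nontrivial data outside). Then $v - c_0\phi$ is again a solution, so the higher-order boundary estimate (\autoref{lemma:T3}) applies verbatim and delivers the Campanato decay $\overline{\Psi}_\sigma(u;\rho) \le c(\rho/R)^{n+\alpha}\overline{\Psi}_\sigma(u;R) + cR^\sigma\Phi_\sigma(u;R) + \ldots$ with the excess built from $u/\phi$ rather than $u/d_\Omega^s$ (\autoref{lemma:T4}, \autoref{lem-u-Holder}). The homogeneity of $K_{x_0}$ enters only through the fact that $\phi_{x_0}/d_\Omega^s \in C^\alpha$ with a uniform bound (this is where \cite{RoSe17} is used), which at the end allows one to convert the Campanato estimate for $u/\phi_{x_0}$ into one for $u/d_\Omega^s$ via the product rule. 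So the conceptual outline you sketched is right, but the reference function in the excess must be an actual $L_{x_0}$-solution; the passage to $d_\Omega^s$ happens only after the iteration, not inside it.
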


The following is a natural class of kernels satisfying the assumptions of \autoref{thm-Campanato-Holder}. Take
\begin{align*}
    K(x,y) = |x-y|^{-n-2s} a_x((x-y)/|x-y|) + |x-y|^{-n-2s} a_y((x-y)/|x-y|) 
\end{align*}
and $a_x(h) = a_x(-h)$ with $0 < \lambda \le a(h) \le \Lambda < \infty$ for every $h \in \mathbb{S}^{n-1}$. Note that $K$ does not possess the non-divergence form symmetry. Moreover, we believe this class of kernels to be natural since kernels of similar structure have already been considered in the literature (see \cite{BKS19,FeRo24b}).

The proof of \autoref{thm-Campanato-Holder} is less technical than the one of \autoref{thm:inhom-Cs}. In fact, in the framework of \autoref{thm-Campanato-Holder} one can establish a similar higher order Campanato-type estimate as in \eqref{eq:Campanato-type-intro} and directly deduce regularity of $u/d_{\Omega}^s$ from it since in this case the corresponding barrier functions $\phi_{x_0}$ satisfy $\phi_{x_0}/d_{\Omega}^s \in C^{\alpha}$ due to \cite{RoSe17}, see Section~\ref{sec:homCs}.

\subsection{Outline}
This article is structured as follows. In Section~\ref{sec:prelim} we introduce several function spaces, list functional inequalities, and provide auxiliary lemmas. In Section~\ref{sec:aux} we collect several well-known regularity results for weak solutions to nonlocal equations in divergence form. Comparison estimates between translation invariant equations and equations with coefficients are established in Section~\ref{sec:freezing}. Section~\ref{sec:Morrey} contains the almost optimal $C^{s-\eps}$ regularity and the proof of \autoref{thm:Cs-eps}. In Sections~\ref{sec:Cs} and \ref{sec:Hopf} we establish the higher order Campanato theory and show the Hopf lemma, thereby proving our main result \autoref{thm:main-2}. Section~\ref{sec:homCs} is devoted to the proof of \autoref{thm-Campanato-Holder}. Finally, in Section~\ref{sec:Green} we establish the Green function estimates from \autoref{thm:main-1} and \autoref{cor:gradient-bounds}.

\subsection{Acknowledgments}

Minhyun Kim was supported by the National Research Foundation of Korea (NRF) grant funded by the Korean government (MSIT) (RS-2023-00252297). Marvin Weidner was supported by the European Research Council under the Grant Agreement No. 101123223 (SSNSD), by the AEI project PID2021-125021NA-I00 (Spain), and by the Deutsche Forschungsgemeinschaft (DFG, German Research Foundation) under Germany's Excellence Strategy - EXC-2047/1 - 390685813.

\section{Preliminaries}
\label{sec:prelim}

In this section, we collect several definitions of function spaces, define weak solutions, introduce some notions, list functional inequalities, and provide auxiliary lemmas that will be used throughout the paper. Recall that we always assume  $s \in (0,1)$, $n>2s$, $\Lambda \geq \lambda >0$, and that $\Omega$ is a domain in $\R^n$.

Throughout this paper, we use the following notation:
\begin{align*}
    \Omega_\rho(x_0) = \Omega \cap B_\rho(x_0), \qquad (u)_\Omega = \dashint_{\Omega} |u| \d x = |\Omega|^{-1} \int_\Omega |u| \d x, \qquad (u)_{\rho,x_0} = (u)_{B_{\rho}(x_0)}.
\end{align*}

\subsection{Function spaces and weak solutions}
Let $\Omega' \subset \R^n$ be with $\Omega \subset \Omega'$ and define
\begin{align*}
    H^s(\Omega | \Omega')=\left\{ u \in L^2(\Omega): [u]_{H^s(\Omega | \Omega')}^2 := \int_{\Omega} \int_{\Omega'} \frac{|u(x)-u(y)|^2}{|x-y|^{n+2s}} \d y \d x < \infty \right\}
\end{align*}
to be a fractional Sobolev space equipped with the norm
\begin{align*}
    \|u\|_{H^s(\Omega | \Omega')} = \left( \|u\|_{L^2(\Omega)}^2 + [u]_{H^s(\Omega | \Omega')}^2 \right)^{1/2}.
\end{align*}
If $\Omega = \Omega'$, we write $H^s(\Omega | \Omega') = H^s(\Omega)$ and define $H^s_\Omega(\R^n)= \{ u \in H^s(\R^n): u \equiv 0 \text{ in } \R^n \setminus \Omega \}$. Given a kernel $K:\R^n \to [0, \infty]$ satisfying \eqref{eq:Kcomp}, we define for measurable functions $u, v: \R^n \to \R$
\begin{align*}
    \mathcal{E}^K(u, v) = \int_{\R^n} \int_{\R^n} (u(x)-u(y))(v(x)-v(y)) K(x, y) \d y \d x,
\end{align*}
provided that it is finite. Note that $\mathcal{E}^K(u, v)$ is finite when $u, v \in H^s(\R^n)$, and that it gives rise to an integro-differential operator $L$ given by \eqref{eq-op} via the relation
\begin{align}\label{eq-weak-formulation}
    \mathcal{E}^K(u, \varphi) = \langle Lu, \varphi \rangle \quad\text{for all }\varphi \in H^s_\Omega(\R^n).
\end{align}

In order to define weak solutions, we also recall the so-called \emph{tail space} $L^1_{2s}(\R^n)$, which is given by
\begin{align*}
    L^1_{2s}(\R^n)&=\left\{ u \in L^1_{\mathrm{loc}}(\R^n): \|u\|_{L^1_{2s}(\R^n)} := \int_{\R^n} \frac{|u(y)|}{(1+|y|)^{n+2s}} \d y < \infty \right\}.
\end{align*}
It is well known that $H^s(\R^n) \subset L^1_{2s}(\R^n)$ and that the \emph{tail}, defined by
\begin{align*}
    \tail(u; R, x_0) = R^{2s} \int_{\R^n \setminus B_R(x_0)} \frac{|u(y)|}{|y-x_0|^{n+2s}} \d y,
\end{align*}
is finite for any $x_0 \in \R^n$ and $R>0$ if $u \in L^1_{2s}(\R^n)$. The tail is introduced to capture the nonlocality of solutions to nonlocal equations with kernels $K$. 

By using the relation \eqref{eq-weak-formulation}, we define weak solutions to nonlocal equations with respect to $L$ as follows.

\begin{definition}
Let $f \in (H_\Omega(\R^n))^\ast$. We say that $u$ is a (weak) \emph{subsolution} to $Lu\leq f$ in $\Omega$ if there exists a domain $\Omega' \subset \R^n$ with $\Omega \Subset \Omega'$ such that $u \in H^s(\Omega | \Omega') \cap L^1_{2s}(\R^n)$ and
\begin{align}\label{eq-weaksol}
    \mathcal{E}^K(u, \varphi) \leq \langle f, \varphi \rangle
\end{align}
for all nonnegative functions $\varphi \in H^s_\Omega(\R^n)$. We say that $u$ is a (weak) \emph{supersolution} to $Lu\geq f$ in $\Omega$ if \eqref{eq-weaksol} holds with $u$ replaced by $-u$. Moreover, we say that $u$ is a (weak) \emph{solution} to $Lu=f$ in $\Omega$ if it is both a subsolution and a supersolution.
\end{definition}

We require weak solutions to be in $H^s(\Omega | \Omega')$ instead of $H^s_{\mathrm{loc}}(\Omega)$ in order to be able to analyze their boundary regularity. This condition is very natural, see also the discussion in \cite[p.5-6]{KiLe23}. Note also that if $u$ is a solution to $Lu=f$ in $\Omega$ with zero exterior condition $u=0$ in $\R^n \setminus \Omega$, then $u \in H^s_\Omega(\R^n)$, and so $u$ itself is an admissible test function.

As we mentioned in the introduction, we need another type of tail capturing the decay of $K-K_{x_0}$. For this purpose, we introduce the following \emph{$\sigma$-tail} for $\sigma \in (0, s)$:
\begin{align*}
    \tail_{\sigma,B_1}(u;R,x_0) = R^{2s-\sigma} \int_{B_1 \setminus B_R(x_0)} |u(y)| |y-x_0|^{-n-2s+\sigma} \d y.
\end{align*}
Note that $\tail_{\sigma,B_1}(u;R,x_0) \leq R^{-\sigma}\tail(u;R,x_0)<\infty$ if $u \in L^1_{2s}(\R^n)$ and that we have the following trivial estimate:
\begin{align}
\label{eq:sigma-tail-est}
    \tail(u;R,x_0) \le \tail_{\sigma,B_1}(u;R,x_0) + R^{2s} \tail(u;1,0).
\end{align}

\subsection{Functional inequalities}

This section is devoted to several functional inequalities that will be useful throughout the paper. We begin with the (fractional) Poincar\'e inequality.

\begin{proposition}\label{prop-Poincare}\cite[Corollary~2.1]{Pon04}
Let $x_0 \in \R^n$ and $R>0$. Then there exists $c=c(n, s)>0$ such that
\begin{align*}
\int_{B_R(x_0)} |u-(u)_{R, x_0}|^2 \d x \le c R^{2s} [u]^2_{H^s(B_R(x_0))}
\end{align*}
for any $u \in H^s(B_R(x_0))$.
\end{proposition}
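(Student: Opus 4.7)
The plan is to reduce to the unit ball at the origin by an affine change of variables and then prove the inequality there by a direct Jensen-plus-bounded-diameter argument.

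First I would set $v(z) := u(x_0 + Rz)$ for $z \in B_1$. A change of variables shows
\begin{align*}
\int_{B_1} |v - (v)_{B_1}|^2 \d z = R^{-n} \int_{B_R(x_0)} |u - (u)_{R,x_0}|^2 \d x, \qquad [v]_{H^s(B_1)}^2 = R^{2s-n} [u]_{H^s(B_R(x_0))}^2,
\end{align*}
so the claimed inequality for $u$ on $B_R(x_0)$ is equivalent to the claim
\begin{align*}
\int_{B_1} |v - (v)_{B_1}|^2 \d z \le c \, [v]_{H^s(B_1)}^2 \qquad \forall v \in H^s(B_1),
\end{align*}
with $c = c(n,s)$. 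Hence it suffices to treat $R=1$ and $x_0=0$.

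On $B_1$, I would apply Jensen's inequality to the definition of the mean to get, for every $x \in B_1$,
\begin{align*}
|v(x) - (v)_{B_1}|^2 = \left| \dashint_{B_1} (v(x) - v(y)) \d y \right|^2 \le \dashint_{B_1} |v(x) - v(y)|^2 \d y.
\end{align*}
Integrating in $x \in B_1$ and using Fubini yields
\begin{align*}
\int_{B_1} |v - (v)_{B_1}|^2 \d x \le \frac{1}{|B_1|} \int_{B_1}\int_{B_1} |v(x) - v(y)|^2 \d y \d x.
\end{align*}
Since $|x-y| \le 2$ whenever $x,y \in B_1$, we have $1 \le 2^{n+2s} |x-y|^{-n-2s}$ on $B_1 \times B_1$, and therefore
\begin{align*}
\int_{B_1}\int_{B_1} |v(x) - v(y)|^2 \d y \d x \le 2^{n+2s} \int_{B_1}\int_{B_1} \frac{|v(x) - v(y)|^2}{|x-y|^{n+2s}} \d y \d x = 2^{n+2s} [v]_{H^s(B_1)}^2.
\end{align*}
Combining these two estimates gives the desired inequality on $B_1$ with $c = 2^{n+2s}/|B_1|$, and the rescaling above then yields the statement on $B_R(x_0)$.

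There is no real obstacle here: the only tension in the argument is that Jensen produces $|v(x)-v(y)|^2$ without the singular weight $|x-y|^{-n-2s}$, but the bounded diameter of $B_1$ lets us reintroduce that weight for free at the cost of a dimensional constant. This is of course why the argument is confined to a single ball and yields only the local Poincaré inequality, rather than a global one.
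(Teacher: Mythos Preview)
Your argument is correct. The scaling reduction is clean and the Jensen-plus-bounded-diameter step on $B_1$ is valid: Jensen gives the pointwise bound, integration and the trivial estimate $1 \le 2^{n+2s}|x-y|^{-n-2s}$ on $B_1 \times B_1$ produce the seminorm, and the constant $2^{n+2s}/|B_1|$ depends only on $n$ and $s$.

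As for comparison with the paper: the paper does not prove this proposition at all but simply cites \cite[Corollary~2.1]{Pon04}. Your proof is therefore strictly more self-contained. Ponce's result is more general (it covers a family of mollifier-type functionals, of which the Gagliardo seminorm on a ball is a special instance), but for the specific statement needed here your direct argument is both shorter and avoids any external reference. The only thing your approach does not give is sharpness of the constant or robustness as $s \to 1$, neither of which the paper needs.
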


The following property, which is an easy consequence of the triangle inequality, will also be used: for any $u \in H^s(\Omega)$ and $c \in \R$:
\begin{align}\label{eq-E-inf}
\dashint_{\Omega} |u-(u)_{\Omega}| \d x \le 2  {\dashint_{\Omega} |u-c| \d x}.
\end{align}

The (fractional) Poincar\'e--Wirtinger-type inequality for functions having fat zero level sets is known.

\begin{lemma}\cite[Lemma~4.7]{Coz17}
\label{lemma:Poincare-Wirtinger}
Let $x_0 \in \R^n$ and $R > 0$. Let $u \in H^s(B_R(x_0))$ be such that $|\{ u = 0\} \cap B_R(x_0)| \ge c_0 |B_R(x_0)|$ for some $c_0 > 0$. Then
\begin{align*}
\|u\|_{L^2(B_R(x_0))} \le c R^s [u]_{H^s(B_R(x_0))}
\end{align*}
for some $c =c(n, s, c_0)> 0$.
\end{lemma}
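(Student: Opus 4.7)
My plan is to give a direct, elementary proof of the inequality that exploits the fat zero set to compare $|u(x)|^2$ pointwise with the Gagliardo integrand. The key observation is that for any $x \in B_R(x_0)$ and any $y$ in the zero set, $|u(x)|^2 = |u(x)-u(y)|^2$ trivially; averaging this identity over a set of definite measure then forces $u$ itself to be controlled by the seminorm.

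First I would set $A := \{u = 0\} \cap B_R(x_0)$, so that by hypothesis $|A| \ge c_0 |B_R(x_0)|$. For a fixed $x \in B_R(x_0)$, since $u(y) = 0$ for a.e.\ $y \in A$, one has the trivial pointwise identity
\begin{align*}
|u(x)|^2 = \dashint_A |u(x) - u(y)|^2 \d y.
\end{align*}
Integrating this over $x \in B_R(x_0)$ and using $|x-y| \le 2R$ whenever $x, y \in B_R(x_0)$, I would obtain
\begin{align*}
\int_{B_R(x_0)} |u(x)|^2 \d x
&\le \frac{1}{|A|} \int_{B_R(x_0)} \int_{A} \frac{|u(x)-u(y)|^2}{|x-y|^{n+2s}} \, |x-y|^{n+2s} \d y \d x \\
&\le \frac{(2R)^{n+2s}}{c_0 |B_R(x_0)|} [u]_{H^s(B_R(x_0))}^2
\le c(n,s,c_0) R^{2s} [u]_{H^s(B_R(x_0))}^2,
\end{align*}
which is the claimed inequality upon taking square roots.

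There is essentially no obstacle in this approach; the only mild technicality is ensuring that the a.e.\ vanishing of $u$ on $A$ allows one to identify $|u(x)|^2$ with the integrand $|u(x) - u(y)|^2$ for $y \in A$, which is automatic as the formula only needs to hold for a.e.\ $x$. An alternative route would be a compactness-and-contradiction argument combining the fractional Poincar\'e inequality (Proposition~\ref{prop-Poincare}) with Rellich-type compactness of $H^s(B_R)$ in $L^2(B_R)$, but the direct argument above is both quantitative and shorter, and makes the explicit dependence $c \sim c_0^{-1/2}$ transparent. By translation and scaling invariance of the estimate, one may of course also reduce to $x_0 = 0$ and $R = 1$, but this reduction is not really needed since the constants in the computation above are explicit in $R$.
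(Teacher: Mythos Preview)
Your argument is correct and self-contained. The paper does not actually prove this lemma; it simply cites \cite[Lemma~4.7]{Coz17} and states the result, so there is no in-paper proof to compare against. Your direct approach---averaging the identity $|u(x)|^2=|u(x)-u(y)|^2$ over the fat zero set and bounding $|x-y|^{n+2s}\le (2R)^{n+2s}$---is exactly the standard elementary proof, and it even yields the explicit dependence $c\sim c_0^{-1/2}$, which the bare citation of course does not.
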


We have the following version of the (fractional) Hardy inequality, where $W^{s, p}(\R^n)$ and $W^{s, p}_\Omega(\R^n)$ are the usual fractional Sobolev spaces with integrability $p\geq 1$.

\begin{lemma}
\label{lemma:Hardy}
Let $p\geq 1$. Let $\Omega \subset \R^n$ be a domain such that $\Omega^c$ satisfies the measure density condition, i.e.\ there exists $c_0>0$ such that
\begin{align}
\label{eq:measure-density}
    \inf_{x_0 \in \partial \Omega} \inf_{r>0} \frac{|B_r(x_0) \setminus \Omega|}{|B_r(x_0)|} \geq c_0.
\end{align}
(In particular, $\Omega$ can be any bounded Lipschitz domain.)
Then
\begin{align*}
    \int_{\Omega} \left| \frac{u}{d_{\Omega}^s} \right|^p \d x \le c [u]^p_{W^{s, p}(\R^n)}
\end{align*}
for any $u \in W^{s, p}_\Omega(\R^n)$, where $c=c(n, s, p, c_0)>0$.
\end{lemma}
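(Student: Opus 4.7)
The statement is the classical fractional Hardy inequality on domains whose complement satisfies an exterior measure density condition. My plan is to prove it via a pointwise bound, the only essential input being that, for each $x \in \Omega$, the complement $\Omega^c$ has a definite mass in a ball of radius comparable to $d_\Omega(x)$ located nearby, combined with the fact that $u$ vanishes on $\Omega^c$.

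First I would fix $x \in \Omega$, set $d := d_\Omega(x)$, and choose a nearest boundary point $\bar x \in \partial \Omega$ so that $|x - \bar x| = d$. Applying \eqref{eq:measure-density} to the ball $B_{2d}(\bar x)$ gives
\begin{align*}
|A_x| \geq c_0\, |B_{2d}(\bar x)| = c_0\, \omega_n (2d)^n, \qquad A_x := B_{2d}(\bar x) \setminus \Omega,
\end{align*}
and for every $y \in A_x$ one has $|x-y| \le |x-\bar x| + |\bar x - y| \le 3d$. Since $u \in W^{s,p}_\Omega(\R^n)$ vanishes on $\Omega^c \supset A_x$, Jensen's inequality yields
\begin{align*}
|u(x)|^p = \dashint_{A_x} |u(x) - u(y)|^p \d y \le \frac{1}{c_0\, \omega_n (2d)^n} \int_{A_x} |u(x) - u(y)|^p \d y.
\end{align*}

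Next I would insert the factor $|x-y|^{n+sp}$ and use $|x-y| \le 3d$ to obtain the pointwise Hardy bound
\begin{align*}
|u(x)|^p \le \frac{(3d)^{n+sp}}{c_0\, \omega_n (2d)^n} \int_{A_x} \frac{|u(x)-u(y)|^p}{|x-y|^{n+sp}} \d y \le C\, d_\Omega(x)^{sp} \int_{\R^n} \frac{|u(x)-u(y)|^p}{|x-y|^{n+sp}} \d y,
\end{align*}
with $C = C(n,s,p,c_0)$. Dividing by $d_\Omega(x)^{sp}$, integrating in $x$ over $\Omega$, and applying Fubini gives exactly
\begin{align*}
\int_\Omega \left|\frac{u(x)}{d_\Omega(x)^s}\right|^p \d x \le C \int_{\R^n} \int_{\R^n} \frac{|u(x)-u(y)|^p}{|x-y|^{n+sp}} \d y \d x = C\, [u]^p_{W^{s,p}(\R^n)},
\end{align*}
which is the desired inequality.

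The argument is essentially routine and I do not foresee a serious obstacle; the only point requiring care is the choice to center the auxiliary ball at the boundary point $\bar x$ rather than at $x$ itself, since this is what allows the measure density condition \eqref{eq:measure-density} to supply the lower bound on $|A_x|$. A measurable selection of $\bar x = \bar x(x)$ is not actually needed for the argument, because the resulting pointwise estimate only uses $|A_x|$ and the uniform bound $|x-y| \le 3d_\Omega(x)$, both of which are intrinsic to $x$.
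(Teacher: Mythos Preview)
Your proof is correct and follows essentially the same approach as the paper: both arguments pick a nearest boundary point, use the measure density condition \eqref{eq:measure-density} to guarantee that $\Omega^c$ has mass $\gtrsim d_\Omega(x)^n$ in a ball of radius comparable to $d_\Omega(x)$, and exploit $u=0$ on $\Omega^c$ to bound $|u(x)|^p/d_\Omega(x)^{sp}$ by the inner integral of the Gagliardo seminorm. The only cosmetic difference is that the paper first bounds $d_\Omega(x)^{-sp}$ from above by $\int_{\Omega^c}|x-y|^{-n-sp}\,\mathrm{d}y$ and then multiplies by $|u(x)|^p$, whereas you average $|u(x)-u(y)|^p$ over $A_x$ first; the underlying estimate is identical.
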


\begin{proof}
For any $x \in \Omega$ there exists $x_0 \in \partial \Omega$ such that $d_\Omega(x)=|x-x_0|$. We then have
\begin{align*}
    \int_{\Omega^c} \frac{\d y}{|x-y|^{n+sp}} \ge \int_{B_{d_\Omega(x)}(x_0) \setminus \Omega} \frac{\d y}{|x-y|^{n+sp}} \ge \frac{|B_{d_\Omega(x)}(x_0) \setminus \Omega|}{(2d_\Omega(x))^{n+sp}} \ge c d_{\Omega}^{-sp}(x)
\end{align*}
for some $c=c(n, s, p, c_0)>0$. Since $u \equiv 0$ in $\Omega^c$, we conclude
\begin{align*}
    \int_{\Omega} \left| \frac{u}{d_{\Omega}^s} \right|^p \d x \le c\int_{\Omega} |u(x)|^p \left( \int_{\Omega^c} \frac{\d y}{|x-y|^{n+sp}} \right) \d x = c \int_{\Omega} \int_{\Omega^c} \frac{|u(x) - u(y)|^p}{|x-y|^{n+sp}} \d y \d x \le c [u]_{W^{s,p}(\R^n)}^p,
\end{align*}
as desired.
\end{proof}

\subsection{Auxiliary lemmas}

We also have the following auxiliary estimate.

\begin{lemma}
\label{lemma:distance-integral}
Let $\Omega \subset \R^n$ be a Lipschitz domain, $x_0 \in \Omega$, $R > 0$, and $\varepsilon \in (0,1)$. Then
\begin{align*}
    \int_{\Omega_R(x_0)} d_{\Omega}^{-\eps} \d x \le c R^{n -\eps}
\end{align*}
for some $c=c(n,\eps,\Omega)>0$.
\end{lemma}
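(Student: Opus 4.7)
The plan is to bound the integral by combining an elementary layer-cake decomposition with a standard measure estimate for the sub-level sets of $d_\Omega$ in a Lipschitz domain. Two regimes appear: $x_0$ sits deep inside $\Omega$, or $x_0$ is close to $\partial\Omega$. The first regime is trivial, while the second reduces to controlling the volume of narrow tubular neighborhoods of $\partial\Omega$.

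First I would dispose of the easy case. If $d_\Omega(x_0) \ge 2R$, then every $x \in B_R(x_0)$ satisfies $d_\Omega(x) \ge d_\Omega(x_0) - R \ge R$, so
\begin{align*}
\int_{\Omega_R(x_0)} d_\Omega^{-\varepsilon} \d x \le |B_R(x_0)|\, R^{-\varepsilon} \le c\, R^{n-\varepsilon},
\end{align*}
and there is nothing more to prove. So assume $d_\Omega(x_0) < 2R$, in which case $d_\Omega(x) \le 3R$ on $\Omega_R(x_0)$. The key ingredient here is the measure estimate
\begin{align*}
\bigl|\{ x \in B_{3R}(x_0) : d_\Omega(x) < t \}\bigr| \le c_\Omega\, t\, R^{n-1} \quad \text{for all } 0 < t \le 3R.
\end{align*}
This is where the Lipschitz hypothesis is used: flattening $\partial\Omega$ locally by a bi-Lipschitz chart and noting that $\partial\Omega \cap B_{3R}(x_0)$ projects onto a set of $(n-1)$-measure $\lesssim R^{n-1}$, the tubular neighborhood of width $t$ has volume $\lesssim t\, R^{n-1}$; the constant absorbs the Lipschitz norm of the boundary parametrization and the number of charts needed.

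With this in hand I would decompose $\Omega_R(x_0)$ into dyadic shells
\begin{align*}
A_k := \bigl\{ x \in \Omega_R(x_0) : 2^{-k-1} R < d_\Omega(x) \le 2^{-k} R \bigr\}, \qquad k \ge 0,
\end{align*}
so that $|A_k| \le c_\Omega\, 2^{-k} R\, R^{n-1} = c_\Omega\, 2^{-k} R^n$ by the measure estimate, and therefore
\begin{align*}
\int_{\Omega_R(x_0)} d_\Omega^{-\varepsilon} \d x \le \sum_{k=0}^{\infty} (2^{-k-1} R)^{-\varepsilon} |A_k| \le c\, R^{n-\varepsilon} \sum_{k=0}^{\infty} 2^{-k(1-\varepsilon)}.
\end{align*}
Since $\varepsilon \in (0,1)$, the geometric series converges to a constant depending only on $\varepsilon$, yielding the desired bound $c\, R^{n-\varepsilon}$.

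The only non-trivial step is the linear volume growth of the boundary layer in the second paragraph; this is the main (and essentially the only) obstacle, and the dependence of the constant on $\Omega$ enters precisely through the Lipschitz charts needed to prove it. Everything else is summation of a geometric series.
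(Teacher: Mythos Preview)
Your argument is correct and takes a genuinely different route from the paper. The paper invokes an external estimate \cite[Lemma~B.2.4]{FeRo24} for integrals of the form $\int d_\Omega^{\gamma} |x-x_0|^{\beta}\,dx$ on $\Omega_1(x_0)$, and then reduces the general radius to $R=1$ by the rescaling $\tilde\Omega = R^{-1}(\Omega - x_0)$. Your proof instead uses the tubular-neighborhood volume bound $|\{d_\Omega < t\}\cap B_{3R}(x_0)| \le c_\Omega\, t\, R^{n-1}$ directly and sums dyadic shells; this is more self-contained and makes explicit where the Lipschitz assumption enters, at the cost of spelling out a standard measure-theoretic fact rather than citing it.

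One small slip: your shells $A_k$ with $k\ge 0$ only cover $\{0<d_\Omega \le R\}$, while you noted $d_\Omega \le 3R$ on $\Omega_R(x_0)$. The missing region $\{R < d_\Omega \le 3R\}$ contributes at most $R^{-\varepsilon}|B_R| \le c R^{n-\varepsilon}$ trivially, so just add a sentence to that effect (or start the sum at $k=-2$).
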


\begin{proof}
Note that in case $d_\Omega(x_0) \geq 2R$, the result is immediate since $d_{\Omega}^{-\eps} \le R^{-\eps}$ in $\Omega_R(x_0)$. Hence, we assume from now on that $d_\Omega(x_0) < 2R$.
    By applying \cite[Lemma B.2.4]{FeRo24} with $\gamma : = -\eps$ and $\beta := -n$, we obtain
    \begin{align*}
        \int_{\Omega_1(x_0)} d^{-\eps}_{\Omega} \d x = \int_{\Omega_1(x_0) \setminus B_{d_{\Omega}(x_0)/2}(x_0)} d^{-\eps}_{\Omega} \d x + \int_{B_{d_{\Omega}(x_0)/2}(x_0)} d^{-\eps}_{\Omega} \d x \le c(1 + d_{\Omega}(x_0)^{n-\eps})
    \end{align*}
    for some constant $c > 0$, depending only on $n,\eps$, and $\Omega$.
    Hence, an application of the previous estimate to $\tilde{\Omega} = R^{-1}(\Omega - x_0)$ shows that
    \begin{align*}
        \int_{\Omega_R(x_0)} d^{-\eps}_{\Omega}(x) \d x &= R^n \int_{\tilde{\Omega} \cap B_1} d^{-\eps}_{\Omega-x_0}(Rx) \d x = R^{n-\eps} \int_{\tilde{\Omega} \cap B_1} d^{-\eps}_{\tilde{\Omega}}(x) \d x \\
        &\le c R^{n-\eps} (1 + d_{\tilde{\Omega}}(0)^{n-\eps}) \le c R^{n-\eps} (1 + R^{\eps - n}d_{\Omega}(x_0)^{n-\eps}) \le c R^{n-\eps},
    \end{align*}
    where we used that $d_{\Omega - x_0}(R \cdot) = R d_{\tilde{\Omega}}$ and $d_{\Omega}(x_0) < 2R$.
\end{proof}

The following iteration lemma is crucial in the Morrey--Campanato theory.

\begin{lemma}
\label{lem-iteration}
    Let $A,B,\alpha,\beta,R_0>0$, $\varepsilon \geq 0$, and assume $\alpha>\beta$.
    Let $\Phi: [0, \infty) \to [0, \infty)$ be a function satisfying
    \begin{align}\label{eq-iteration}
        \Phi(\rho) \leq A \left[ \left(\frac{\rho}{R} \right)^{\alpha} + \varepsilon \right] \Phi(R) + BR^{\beta} \quad\text{for all }0<\rho < R \leq R_0.
    \end{align}
    Assume that for any $\tau \in (0,1)$, there exists $c_0=c_0(\tau)>0$ such that
    \begin{align}\label{eq-almost-incr}
        \Phi(\rho) \leq c_0 \Phi(\bar{\rho}) \quad\text{for all }0 <\bar{\rho} \le R_0 \text{ and } \rho \in [\tau \bar{\rho}, \bar{\rho}].
    \end{align}
    Then there exist $\varepsilon_0,c>0$ such that if $\varepsilon\leq \varepsilon_0$, then
    \begin{align*}
        \Phi(\rho) \leq c \left[ \frac{\Phi(R)}{R^{\beta}} + B \right] \rho^{\beta} \quad\text{for all }0 <\rho<R \leq R_0.
    \end{align*}
    The constants $\varepsilon_0$ and $c$ depend on $A,\alpha,\beta$, and $c_0(\tau)$ for some $\tau=\tau(A,\alpha,\beta)$.
\end{lemma}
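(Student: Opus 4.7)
\textbf{Proof plan for \autoref{lem-iteration}.} The strategy is the standard Campanato/Giaquinta iteration: pick an auxiliary exponent $\gamma\in(\beta,\alpha)$, choose $\tau$ and $\varepsilon_{0}$ so that \eqref{eq-iteration} at the geometric scale $R\mapsto \tau R$ gives a one-step contraction with rate $\tau^{\gamma}$, iterate along the geometric sequence $\tau^{k}R$, and finally interpolate to arbitrary $\rho$ by using the almost-monotonicity assumption \eqref{eq-almost-incr}.

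\textbf{Step 1: one-step contraction.} Fix $\gamma:=\tfrac{\alpha+\beta}{2}\in(\beta,\alpha)$. Since $\alpha>\gamma$, one can choose $\tau=\tau(A,\alpha,\beta)\in(0,1)$ so small that $2A\tau^{\alpha}\leq \tau^{\gamma}$. Then set $\varepsilon_{0}:=\tau^{\gamma}/(2A)$. For any $\varepsilon\leq\varepsilon_{0}$ and any $R\in(0,R_{0}]$, applying \eqref{eq-iteration} with the choice $\rho=\tau R$ yields
\begin{equation*}
\Phi(\tau R)\;\leq\; A\bigl[\tau^{\alpha}+\varepsilon\bigr]\Phi(R)+BR^{\beta}
\;\leq\;\tau^{\gamma}\Phi(R)+BR^{\beta}.
\end{equation*}

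\textbf{Step 2: geometric iteration.} Applying the previous inequality with $R$ replaced by $\tau^{j}R$ and iterating, one obtains by induction, for every $k\in\N$,
\begin{equation*}
\Phi(\tau^{k}R)\;\leq\;\tau^{k\gamma}\Phi(R)+BR^{\beta}\sum_{j=0}^{k-1}\tau^{j\beta}\tau^{(k-1-j)\gamma}.
\end{equation*}
Because $\gamma>\beta$, the geometric sum is dominated by its last term, giving
\begin{equation*}
\Phi(\tau^{k}R)\;\leq\;\tau^{k\gamma}\Phi(R)+\frac{\tau^{-\beta}}{1-\tau^{\gamma-\beta}}\,B(\tau^{k}R)^{\beta}
\;\leq\;c_{1}\Bigl[\tau^{k\beta}\Phi(R)+B(\tau^{k}R)^{\beta}\Bigr],
\end{equation*}
where $c_{1}=c_{1}(A,\alpha,\beta)$ and we used $\tau^{k\gamma}\leq \tau^{k\beta}$. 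Rewriting with $\rho_{k}:=\tau^{k}R$, this is exactly the desired estimate at the discrete scales $\rho_{k}$.

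\textbf{Step 3: interpolation to arbitrary $\rho$.} Given any $\rho\in(0,R)$ with $R\leq R_{0}$, choose $k\geq 0$ with $\tau^{k+1}R<\rho\leq \tau^{k}R$, so that $\rho\in[\tau\cdot\tau^{k}R,\tau^{k}R]$. Hypothesis \eqref{eq-almost-incr} with $\bar{\rho}=\tau^{k}R$ yields $\Phi(\rho)\leq c_{0}(\tau)\Phi(\tau^{k}R)$. Combining with Step~2 and using $\tau^{k}R<\tau^{-1}\rho$, we conclude
\begin{equation*}
\Phi(\rho)\;\leq\; c_{0}(\tau)c_{1}\bigl[\tau^{k\beta}\Phi(R)+B(\tau^{k}R)^{\beta}\bigr]
\;\leq\; c\,\tau^{-\beta}\Bigl[\bigl(\tfrac{\rho}{R}\bigr)^{\beta}\Phi(R)+B\rho^{\beta}\Bigr]
\;=\; c\Bigl[\frac{\Phi(R)}{R^{\beta}}+B\Bigr]\rho^{\beta},
\end{equation*}
with a final constant $c$ depending only on $A,\alpha,\beta$ and $c_{0}(\tau)$ for the specific $\tau=\tau(A,\alpha,\beta)$ fixed in Step~1.

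\textbf{Main obstacle.} The argument is essentially bookkeeping; the only point deserving care is the \emph{order} of choices (first $\gamma$, then $\tau$, then $\varepsilon_{0}$), which must be made so that $\tau$ and $\varepsilon_{0}$ depend only on $A,\alpha,\beta$ (and not on $B$ or on $\Phi$). A second subtlety is that $\Phi$ is not assumed monotone, so for the non-dyadic interpolation step one genuinely needs the almost-monotonicity \eqref{eq-almost-incr}; without it, the conclusion could only be stated along the geometric scales $\tau^{k}R$. Everything else reduces to summing a geometric series and tracking powers of $\tau$.
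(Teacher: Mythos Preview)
Your argument is correct and is precisely the proof the paper has in mind: it defers to \cite[Lemma~5.13]{GiMa12} and notes that the only change is using the almost-monotonicity \eqref{eq-almost-incr} in place of the monotonicity assumption in the final interpolation step, which is exactly your Step~3. The choice of $\gamma$, $\tau$, and $\varepsilon_0$ and the geometric summation match the standard argument there.
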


\begin{proof}
The proof goes as in \cite[Lemma~5.13]{GiMa12}, with the only difference that we use \eqref{eq-almost-incr} instead of the assumption that $\Phi$ is non-decreasing in the last display.
\end{proof}

Throughout this paper, we will apply \autoref{lem-iteration} with $\Phi_{\sigma}(u,R):=\Phi_{\sigma}(u,R,x_0)$ and $\Psi_{\sigma}(u,R):=\Psi_{\sigma}(u,R,x_0)$, which are defined in \eqref{eq-excess-Phi} and \eqref{eq-excess-Psi}, respectively. It will require a significant amount of work to verify that $\Phi_{\sigma}(u,R)$ and $\Psi_{\sigma}(u,R)$ satisfy \eqref{eq-iteration} when $u$ is a solution to \eqref{eq:dirichlet-intro}. However, verifying \eqref{eq-almost-incr} is a straightforward consequence of the triangle inequality:

\begin{lemma}
    Let $\Omega \subset \R^n$ be a Lipschitz domain and $u, \psi \in H^s(\Omega | \Omega') \cap L^1_{2s}(\R^n)$. Assume that $0 \le \psi \le c_0 d_{\Omega}^s$ in $\Omega_{1/4}$. Let $x_0 \in \Omega_{1/8}$ and $0 < \bar{\rho} < \frac{1}{8}$. Then, for any $\tau \in (0,1)$ and $\rho \in [\tau \bar{\rho} , \bar{\rho}]$ it holds
    \begin{align*}
        \Phi_{\sigma}(u; \rho, x_0) \le c \Phi_{\sigma}(u; \bar{\rho}), \qquad \Psi_{\sigma}(u; \rho, x_0) \le c \Psi_{\sigma}(u; \bar{\rho})
    \end{align*}
    for some $c=c(n,s,c_0,\tau) > 0$.
\end{lemma}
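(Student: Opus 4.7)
\emph{Strategy.} The geometric key is that $\rho\geq\tau\bar{\rho}$ forces
\[ \max\{\rho,d_\Omega(x_0)\}\geq \tau\max\{\bar{\rho},d_\Omega(x_0)\}, \]
as one verifies by splitting into the three cases according to the position of $d_\Omega(x_0)$ relative to $\rho$ and $\bar{\rho}$. Consequently, the weight $\max\{\rho,d_\Omega(x_0)\}^{-s}$ entering the tail portions of $\Phi_\sigma$ and $\Psi_\sigma$ can be replaced by the corresponding $\bar{\rho}$-weight at the cost of $\tau^{-s}$. The rest of the argument is then a careful splitting of the tail integration domain $B_1\setminus B_\rho(x_0) = (B_{\bar\rho}(x_0)\setminus B_\rho(x_0))\cup(B_1\setminus B_{\bar\rho}(x_0))$, combined with \eqref{eq-E-inf} for the solid summand of $\Psi_\sigma$.

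\emph{The $\Phi_\sigma$-bound.} The solid summand $\int_{\Omega_\rho(x_0)}|u/d_\Omega^s|\,dx$ is trivially monotone in $\rho$. For the tail summand, on the outer annulus $B_1\setminus B_{\bar\rho}(x_0)$ the integrand is unchanged and $\rho^{n+2s-\sigma}\leq\bar\rho^{n+2s-\sigma}$, so this piece is absorbed (with the extra $\tau^{-s}$) into the tail summand of $\Phi_\sigma(u;\bar\rho,x_0)$. On the inner annulus $B_{\bar\rho}(x_0)\setminus B_\rho(x_0)$, I would use $|y-x_0|^{-n-2s+\sigma}\leq\rho^{-n-2s+\sigma}$ to cancel the $\rho^{n+2s-\sigma}$ prefactor, write $|u|=|u/d_\Omega^s|\,d_\Omega^s$, and exploit $d_\Omega(y)\leq 2\max\{\bar\rho,d_\Omega(x_0)\}$ on $\Omega_{\bar\rho}(x_0)$; combined with $\max\{\rho,d_\Omega(x_0)\}^{-s}$, the powers of $\max\{\bar\rho,d_\Omega(x_0)\}$ cancel, leaving a constant multiple of the solid summand of $\Phi_\sigma(u;\bar\rho,x_0)$.

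\emph{The $\Psi_\sigma$-bound.} The solid summand is handled by applying \eqref{eq-E-inf} with $c=(u/\psi)_{\Omega_{\bar\rho}(x_0)}$ together with the inclusion $\Omega_\rho(x_0)\subset\Omega_{\bar\rho}(x_0)$. For the tail term I decompose
\[ u-\psi\,(u/\psi)_{\Omega_\rho(x_0)} = \bigl(u-\psi\,(u/\psi)_{\Omega_{\bar\rho}(x_0)}\bigr) + \psi\,\Delta a, \qquad \Delta a:=(u/\psi)_{\Omega_{\bar\rho}(x_0)}-(u/\psi)_{\Omega_\rho(x_0)}. \]
The first bracket is treated exactly as $u$ was in the $\Phi_\sigma$ step, with $\psi\leq c_0 d_\Omega^s$ on $\Omega_{1/4}\supset\Omega_{\bar\rho}(x_0)$ playing the role of $d_\Omega^s$; this produces a multiple of the first summand of $\Psi_\sigma(u;\bar\rho,x_0)$ together with its $\bar\rho$-tail. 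The Lipschitz measure-density estimate $|\Omega_\rho(x_0)|\geq c\rho^n$ and a simple averaging yield $|\Delta a|\leq c(\tau)\bar\rho^{-n}\int_{\Omega_{\bar\rho}(x_0)}|u/\psi-(u/\psi)_{\Omega_{\bar\rho}(x_0)}|\,dx$.

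\emph{Main obstacle.} The only nontrivial remaining step is the auxiliary bound
\[ \tail_{\sigma,B_1}(\psi;\bar\rho,x_0)\leq c\max\{\bar\rho,d_\Omega(x_0)\}^s, \]
which is needed to re-absorb the leftover term $|\Delta a|\cdot\bar\rho^n\,\tail_{\sigma,B_1}(\psi;\bar\rho,x_0)$ into the first summand of $\Psi_\sigma(u;\bar\rho,x_0)$. I would prove it by splitting $B_1\setminus B_{\bar\rho}(x_0)=(\Omega_{1/4}\setminus B_{\bar\rho}(x_0))\cup(B_1\setminus\Omega_{1/4})$. On the first piece I invoke $\psi\leq c_0 d_\Omega^s$ and the elementary estimate $d_\Omega(y)^s\leq |y-x_0|^s+d_\Omega(x_0)^s$; the resulting integrals converge because $\sigma<s<2s$ and yield the claimed order. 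On the second piece, $x_0\in\Omega_{1/8}$ forces $|y-x_0|\geq 1/8$, so the contribution is bounded by a constant times $\|\psi\|_{L^1_{2s}(\R^n)}$, which is finite by hypothesis. Combining these ingredients yields both stated inequalities with constants depending only on $n,s,c_0,\tau$, as required.
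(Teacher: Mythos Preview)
Your approach is essentially the same as the paper's: both split the tail integration domain into the inner annulus $B_{\bar\rho}(x_0)\setminus B_\rho(x_0)$ and the outer piece $B_1\setminus B_{\bar\rho}(x_0)$, exploit the comparability $\max\{\rho,d_\Omega(x_0)\}\asymp_\tau\max\{\bar\rho,d_\Omega(x_0)\}$, and for $\Psi_\sigma$ handle the change of average via the decomposition $u-\psi\,(u/\psi)_{\Omega_\rho}=(u-\psi\,(u/\psi)_{\Omega_{\bar\rho}})+\psi\,\Delta a$. You are in fact slightly more explicit than the paper about the outer-annulus contribution of $\psi\,\Delta a$, which the paper sweeps into the term $2\Psi_\sigma(u;\bar\rho)$ in its first displayed line without further comment.

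There is one concrete slip in your auxiliary bound on $\tail_{\sigma,B_1}(\psi;\bar\rho,x_0)$. You assert that $x_0\in\Omega_{1/8}$ forces $|y-x_0|\geq 1/8$ for every $y\in B_1\setminus\Omega_{1/4}$, but this fails when $y\in B_{1/4}\setminus\Omega$: such $y$ can lie arbitrarily close to $x_0$ if $x_0$ is near $\partial\Omega$. For the barrier functions $\psi_{x_0}$ actually used later in the paper this is harmless, since they vanish on $B_1\setminus\Omega$; but under the hypotheses of the lemma as stated (only $0\le\psi\le c_0 d_\Omega^s$ in $\Omega_{1/4}$ together with $\psi\in L^1_{2s}$), you would either need to add this vanishing hypothesis or accept that your constant depends on $\|\psi\|_{L^1_{2s}}$ and not merely on $(n,s,c_0,\tau)$.
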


\begin{proof}
For $\Phi_{\sigma}$, we compute
\begin{align}
\label{eq-almost-incr-Phi}
\begin{split}
\Phi_{\sigma}(u; \rho)
&\leq \Phi_{\sigma}(u; \bar{\rho}) + \max\{\rho,d_{\Omega}(x_0)\}^{-s} \rho^{n+2s-\sigma} \int_{B_{\bar{\rho}}(x_0) \setminus B_\rho(x_0)} \frac{|u(y)|}{|y-x_0|^{n+2s-\sigma}} \d y \\
&\leq \Phi_{\sigma}(u; \bar{\rho}) + \frac{1}{\max\{\rho, d_\Omega(x_0)\}^s} \int_{\Omega_{\bar{\rho}}(x_0)} |u(y)| \d y \\
&\leq \Phi_{\sigma}(u; \bar{\rho}) + c\frac{\max\{\bar{\rho}, d_\Omega(x_0)\}^s}{\max\{\rho, d_\Omega(x_0)\}^s} \int_{\Omega_{\bar{\rho}}(x_0)} \left| \frac{u}{d_\Omega^s} \right| \d y \\
&\leq c\Phi_{\sigma}(u; \bar{\rho})
\end{split}
\end{align}
for some $c=c(s,\tau)>0$.
    For $\Psi_{\sigma}$, we proceed as follows,  using \eqref{eq-E-inf}:
    \begin{align}
    \label{eq-almost-incr-Psi}
    \begin{split}
        \Psi_{\sigma}(u; \rho)
        &\leq 2\Psi_{\sigma}(u; \bar{\rho}) + \max\{\rho, d_\Omega(x_0)\}^{-s} \rho^{n+2s-\sigma} \int_{B_{\bar{\rho}}(x_0) \setminus B_\rho(x_0)} \frac{|u(y)-\psi(y)(u/\psi)_{\Omega_\rho(x_0)}|}{|y-x_0|^{n+2s-\sigma}} \d y \\
        & \leq 2\Psi_{\sigma}(u; \bar{\rho}) + \max\{\rho, d_\Omega(x_0)\}^{-s} \int_{\Omega_{\bar{\rho}}(x_0)} |u(y)-\psi(y)(u/\psi)_{\Omega_\rho(x_0)}| \d y \\
        &\leq 2\Psi_{\sigma}(u; \bar{\rho}) + c\frac{\max\{\bar{\rho}, d_\Omega(x_0)\}^s}{\max\{\rho, d_\Omega(x_0)\}^s} \int_{\Omega_{\bar{\rho}}(x_0)} \frac{|u(y)-\psi(y)(u/\psi)_{\Omega_{\bar{\rho}}(x_0)}|}{\psi(y)} \d y \\
        &\quad + \left( \int_{\Omega_{\bar{\rho}}(x_0)} \frac{\psi(y)}{\max\{\rho, d_\Omega(x_0)\}^{s}} \d y \right) \dashint_{\Omega_\rho(x_0)} \left|\frac{u}{\psi} - \left(\frac{u}{\psi} \right)_{\Omega_{\bar{\rho}}(x_0)} \right|\d x\\
        &\leq c\Psi_{\sigma}(u; \bar{\rho}),
        \end{split}
    \end{align}
    where $c=c(n, s, c_0, \tau)>0$.
\end{proof}

\section{Auxiliary results for weak solutions}
\label{sec:aux}

In this section, we provide several auxiliary results for weak solutions to $Lu=f$ in $\Omega$. The following lemma contains an important estimate for source terms $f$, satisfying $d^{s-p}_{\Omega} f \in L^q(\Omega_1)$, which will be used throughout the paper. Let us point out that such a class in particular includes source terms $f$ that are in $L^q(\Omega_1)$, and also source terms that might explode near $\partial \Omega \cap B_1$ like $d_{\Omega}^{p-s}$.

\begin{lemma}
\label{lemma:fw-integral}
Let $\Omega \subset \R^n$ be a domain such that $\Omega^c$ satisfies the measure density condition \eqref{eq:measure-density} with $c_0>0$. Let $x_0 \in \Omega$, $R>0$, and let $f$ be such that $d^{s-p}_{\Omega} f \in L^{q}(\Omega_R(x_0))$ for some $p \in (0,s]$ and $q \in (\frac{n}{s+p}, \infty]$. Then
\begin{align}\label{eq-fw-integral-2}
        \int_{\Omega_R(x_0)} |fw| \d x \le c |\{ w \not = 0 \} \cap B_R(x_0)|^{\frac{1}{2} + \frac{p}{n} - \frac{1}{q}} \Vert d_{\Omega}^{s-p} f \Vert_{L^q(\Omega_R(x_0))} [w]_{H^s(\R^n)}
\end{align}
for all $w \in H^s_{\Omega_R(x_0)}(\R^n)$, where $c=c(n,s,p,q,c_0) > 0$.
In particular, we have
\begin{align}\label{eq-fw-integral}
    \int_{\Omega_R(x_0)} |fw| \d x \le c R^{\frac{n}{2} + p - \frac{n}{q}} \Vert d_{\Omega}^{s-p} f \Vert_{L^q(\Omega_R(x_0))} [w]_{H^s(\R^n)}
\end{align}
for all $w \in H^s_{\Omega_R(x_0)}(\R^n)$, where $c=c(n,s,p,q,c_0) > 0$.
\end{lemma}

\begin{proof}
By using H\"older's inequality and the Hardy inequality (\autoref{lemma:Hardy}), we obtain
\begin{align*}
    \int_{\Omega_R(x_0)} fw \d x
&\leq \|d_{\Omega}^{s-p}f\|_{L^q(\Omega_R(x_0))} \left( \int_{\Omega_R(x_0)} \left|\frac{w}{d_{\Omega_R(x_0)}^{s-p}}\right|^{\frac{2n}{n-2p}} \d x \right)^{\frac{n-2p}{2n}} |\{ w \not = 0 \} \cap B_R(x_0)|^{\frac{1}{2}+\frac{p}{n}-\frac{1}{q}} \\
&\leq c \|d_{\Omega}^{s-p}f\|_{L^q(\Omega_R(x_0))} [w]_{W^{s-p, \frac{2n}{n-2p}}(\R^n)}  |\{ w \not = 0 \} \cap B_R(x_0)|^{\frac{1}{2}+\frac{p}{n}-\frac{1}{q}},
\end{align*}
Here, we also used that that $q > \frac{n}{s+p} > \frac{2n}{n+2p}$, since $2s < n$. Then, since $w=0$ outside $\Omega_R(x_0)$, we have
\begin{align*}
[w]_{W^{s-p, \frac{2n}{n-2p}}(\R^n)}
&\leq c[w]_{W^{s-p, \frac{2n}{n-2p}}(B_{2R}(x_0))} + c \left( \int_{B_R(x_0)} \int_{B_{2R}(x_0)^c} \frac{|w(x)|^{\frac{2n}{n-2p}}}{|x-y|^{n+(s-p)\frac{2n}{n-2p}}} \d y \d x \right)^{\frac{n-2p}{2n}} \\
&\leq c[w]_{W^{s-p, \frac{2n}{n-2p}}(B_{2R}(x_0))} + cR^{-s+p} \|w\|_{L^{\frac{2n}{n-2p}}(B_R(x_0))}.
\end{align*}
Moreover, we have the embedding
\begin{align}\label{eq-embedding2}
    H^s(B_{2R}(x_0)) \hookrightarrow W^{s-p, \frac{2n}{n-2p}}(B_{2R}(x_0)),
\end{align}
which follows from \cite[Equation~(1.301)]{Tri06}, using that the indices of the two fractional Sobolev spaces coincide, i.e. $s - p - ( n/ \frac{2n}{n-2p} ) = s - \frac{n}{2}$. Thus, the embedding \eqref{eq-embedding2} and \autoref{lemma:Poincare-Wirtinger} show that
\begin{align*}
&[w]_{W^{s-p, \frac{n-2p}{2n}}(B_{2R}(x_0))} + R^{-s+p} \|w\|_{L^{\frac{n-2p}{2n}}(B_R(x_0))} \\
&\quad \leq c \left( [w]_{H^s(B_{2R}(x_0))} + R^{-s} \|w\|_{L^2(B_{2R}(x_0))} \right) \leq c [w]_{H^s(\R^n)},
\end{align*}
which proves \eqref{eq-fw-integral-2}. The second claim \eqref{eq-fw-integral} follows once again from the fact that $q > \frac{n}{s+p} > \frac{2n}{n+2p}$.
\end{proof}

\begin{remark}
\label{remark:f-assumptions}
The only property we need for $f$ throughout this paper is the inequality \eqref{eq-fw-integral}. Thus, one may introduce a function space
\begin{align*}
X(\Omega_1)=\left\{ f \in (H_{\Omega_1}^s(\R^n))^\ast: \|f\|_{X(\Omega_1)} < \infty \right\},
\end{align*}
\begin{align*}
    \|f\|_{X(\Omega_1)} = \sup_{x_0 \in \Omega_{1/2}, 0<R<\frac{1}{2}} \, \sup_{w \in H^s_{\Omega_R(x_0)}(\R^n)} |\{ w \not = 0 \} \cap B_R(x_0)|^{\frac{1}{2} + \frac{p}{n} - \frac{1}{q}} \frac{\int_{\Omega_R(x_0)} fw \d x}{[w]_{H^s(\R^n)}},
\end{align*}
and take $f \in X(\Omega_1)$ instead of $f$ with $d_\Omega^{s-p}f \in L^q(\Omega_1)$ in our main theorems.
\end{remark}

We have the following Caccioppoli-type inequality up to the boundary.

\begin{lemma}
\label{lemma:Cacc-bdry}
Let $\Omega$ and $f$ be given as in \autoref{lemma:fw-integral}. Let $x_0 \in \Omega$ and $R>0$. Assume that $K$ satisfies \eqref{eq:Kcomp} and let $u$ be a solution to
\begin{align}\label{eq-u-Cacc}
\left\{
\begin{aligned}
Lu&=f &&\text{in }\Omega_R(x_0), \\
u&=0 &&\text{in }B_R(x_0) \setminus \Omega.
\end{aligned}
\right.
\end{align}
Then
\begin{align*}
    [u]_{H^s(B_{R/2}(x_0))} \le c R^{-s} \|u\|_{L^2(B_R(x_0))} + c R^{\frac{n}{2}-s} \tail(u;R,x_0) + c R^{\frac{n}{2} + p - \frac{n}{q}} \Vert d^{s-p}_{\Omega} f \Vert_{L^q(\Omega_R(x_0))}
\end{align*}
for some $c=c(n, s, \lambda, \Lambda, p, q, c_0)>0$.
\end{lemma}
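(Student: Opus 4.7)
The plan is to apply the weak formulation of $Lu = f$ against the localized test function $\varphi = u\eta^2$, where $\eta \in C_c^\infty(B_{3R/4}(x_0))$ is a cutoff with $\eta \equiv 1$ on $B_{R/2}(x_0)$ and $|\nabla\eta| \le c/R$. Since $u = 0$ on $B_R(x_0) \setminus \Omega$ by hypothesis and $\supp\eta \subset B_{3R/4}(x_0) \subset B_R(x_0)$, the product $u\eta^2$ vanishes outside $\Omega_R(x_0)$ and hence belongs to $H^s_{\Omega_R(x_0)}(\R^n)$, so the identity $\mathcal{E}^K(u, u\eta^2) = \int f u\eta^2 \d x$ is legitimate.

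For the bilinear form on the left I would exploit the pointwise algebraic identity
\begin{align*}
(a-b)(a\eta_a^2 - b\eta_b^2) = (a\eta_a - b\eta_b)^2 - ab(\eta_a - \eta_b)^2
\end{align*}
to rewrite $\mathcal{E}^K(u, u\eta^2) = A - B$, where $A = \int\int (u\eta(x) - u\eta(y))^2 K(x,y) \d y \d x$ and $B = \int\int u(x)u(y)(\eta(x) - \eta(y))^2 K(x,y) \d y \d x$. The ellipticity \eqref{eq:Kcomp} together with $u\eta \equiv u$ on $B_{R/2}(x_0)$ yield the desired lower bound $A \ge \lambda[u\eta]^2_{H^s(\R^n)} \ge \lambda [u]^2_{H^s(B_{R/2}(x_0))}$.

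The main technical obstacle is to control the remainder $B$ by the right-hand side of the claim. I would partition the integration domain according to the position of $(x,y)$ relative to $B_R(x_0)$: pairs with both $x, y \notin B_R(x_0)$ contribute zero since $\eta(x) = \eta(y) = 0$. For the diagonal case $x, y \in B_R(x_0)$, the Lipschitz bound $(\eta(x)-\eta(y))^2 \le c|x-y|^2/R^2$ together with $2|u(x)u(y)| \le u(x)^2 + u(y)^2$ and \eqref{eq:Kcomp} produce a contribution of order $cR^{-2s}\|u\|^2_{L^2(B_R(x_0))}$. In the cross case $x \in \supp\eta \subset B_{3R/4}(x_0)$ and $y \notin B_R(x_0)$ we have $(\eta(x)-\eta(y))^2 = \eta(x)^2$, so the integrand reduces to $u(x)u(y)\eta(x)^2 K(x,y)$; the elementary geometric inclusion $|x - y| \ge |y - x_0|/4$ gives $\int_{B_R(x_0)^c} |u(y)| K(x,y) \d y \le cR^{-2s} \tail(u; R, x_0)$ uniformly in $x \in \supp\eta$, and combining this with the Cauchy--Schwarz bound $\int_{B_{3R/4}(x_0)} |u(x)| \d x \le cR^{n/2}\|u\|_{L^2(B_R(x_0))}$ and Young's inequality delivers the contribution $cR^{-2s}\|u\|^2_{L^2(B_R(x_0))} + cR^{n-2s}\tail(u;R,x_0)^2$. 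The delicate point avoided by this splitting is the appearance of a $u(y)^2$-weighted tail, which would arise from a careless symmetrization $|u(x)u(y)| \le \tfrac{1}{2}(u(x)^2 + u(y)^2)$ applied globally.

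For the source term I invoke \autoref{lemma:fw-integral} with $w = u\eta^2$, getting $|\int f u\eta^2 \d x| \le cR^{\frac{n}{2} + p - \frac{n}{q}} \|d_\Omega^{s-p} f\|_{L^q(\Omega_R(x_0))} [u\eta^2]_{H^s(\R^n)}$. A product-rule computation using the cutoff properties together with the elementary bound $\eta(x)^4 \dist(x, \partial B_{3R/4}(x_0))^{-2s} \le cR^{-2s}$ (which handles the self-tail of $u\eta^2$) yields $[u\eta^2]^2_{H^s(\R^n)} \le c[u\eta]^2_{H^s(\R^n)} + cR^{-2s}\|u\|^2_{L^2(B_R(x_0))}$. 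An application of Young's inequality with a sufficiently small parameter then absorbs the $[u\eta]^2_{H^s(\R^n)}$ factor into the good term $A$ on the left. Collecting all estimates produces the squared form of the claim, and taking square roots via $\sqrt{a + b + c} \le \sqrt{a} + \sqrt{b} + \sqrt{c}$ completes the proof.
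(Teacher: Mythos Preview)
Your proposal is correct and follows essentially the same route as the paper: the paper's proof simply says to test the equation with $u\eta^2$ as in \cite[Lemma~3.5]{KiLe23}, then handle the source term via \autoref{lemma:fw-integral} and a standard absorption argument. You have written out precisely these details---the algebraic identity, the diagonal/off-diagonal splitting for the remainder $B$, and the Young-inequality absorption---so nothing substantive differs.
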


\begin{proof}
The proof goes as in \cite[Lemma~3.5]{KiLe23}, however since $u$ is already a solution, we can simply test the equation with $u \eta^2$. The result with $f \not= 0$ can be obtained by an application of \eqref{eq-fw-integral} together with a standard absorption argument.
\end{proof}

The local boundedness estimate of solutions $u$ to \eqref{eq-u-Cacc} up to the boundary can be obtained by following the proof of \cite[Theorem~3.1]{KiLe23} or \cite[Theorem~5]{KKP16}. We state this result in the following lemma.

\begin{lemma}
\label{lemma:locbd-bdry}
Let $\Omega, f, K, u$ be given as in \autoref{lemma:Cacc-bdry}. Then
\begin{align*}
\|u\|_{L^\infty(B_{R/2}(x_0))} \le c R^{-n} \|u\|_{L^1(B_R(x_0))} + \tail(u; R, x_0) + c R^{2s - (s-p) - \frac{n}{q}} \Vert d^{s-p}_{\Omega} f \Vert_{L^q(\Omega_R(x_0))}
\end{align*}
for some $c=c(n, s, \lambda, \Lambda, p, q, c_0)>0$.
\end{lemma}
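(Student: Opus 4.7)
The plan is to execute a standard De Giorgi--Moser iteration scheme up to the boundary, following the structure of the proof of \cite[Theorem~3.1]{KiLe23} but substituting the Caccioppoli-type inequality from \autoref{lemma:Cacc-bdry} (which already absorbs the source term via \autoref{lemma:fw-integral}) in place of the ones used there. The zero exterior condition $u=0$ in $B_R(x_0)\setminus\Omega$ is crucial, because it ensures that $u^\pm$ are both subsolutions on the whole ball $B_R(x_0)$, so the truncations $(u-\ell)^+$ are admissible after localization by a cutoff.

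First I would rescale to $R=1$ and, by symmetry, reduce to bounding $u^+$. Next I would set up the iteration with shrinking radii $r_k = \tfrac{1}{2}+2^{-k-1}$ and increasing levels $\ell_k = M(1-2^{-k})$ for a parameter $M>0$ to be chosen. Applying \autoref{lemma:Cacc-bdry} to the truncation $w_k := (u-\ell_k)^+$ on the pair $B_{r_{k+1}}(x_0) \subset B_{r_k}(x_0)$, combined with the fractional Sobolev embedding $H^s \hookrightarrow L^{2n/(n-2s)}$ and Chebyshev's inequality on the sublevel set $\{w_{k-1} \geq \ell_k-\ell_{k-1}\}$, produces a recursive relation
\begin{align*}
    Y_{k+1} \leq C\, b^{k}\, Y_k^{1+\theta}, \qquad \theta = \frac{2s}{n-2s},
\end{align*}
where $Y_k$ is a normalized version of $\|w_k\|_{L^2(B_{r_k}(x_0))}$. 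Choosing $M$ large enough to dominate the $L^2$-mass of $u$, the tail $\tail(u;1,x_0)$, and the source quantity $\|d_\Omega^{s-p}f\|_{L^q(\Omega_1(x_0))}$ forces $Y_k \to 0$ geometrically, yielding $u \leq M$ in $B_{1/2}(x_0)$.

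At this stage I would have an $L^\infty$--$L^2$ estimate; to upgrade the $L^2$-dependence on $u$ to an $L^1$-dependence, I would run the classical interpolation argument on an expanding family of balls $B_\rho(x_0)$ with $\rho \in (R/2,R)$: combining $\|u\|_{L^2}^2 \leq \|u\|_{L^\infty}\,\|u\|_{L^1}$ with a Young-type inequality and a standard iteration lemma (as in \cite[Theorem~3.1]{KiLe23}) absorbs the $L^\infty$ term. Undoing the rescaling then restores the prescribed powers of $R$: the first term picks up $R^{-n}$, the tail is scale invariant, and the source contribution acquires the exponent $s+p-\tfrac{n}{q} = 2s-(s-p)-\tfrac{n}{q}$, as required.

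The main obstacle is the bookkeeping of scaling factors through the iteration: one must verify that the $R^{-s}$ and $2^{ks}$ losses from the cutoff at each step combine to give a summable geometric series, and that the source term---evaluated at the fixed scale $r_k \asymp 1$---does not introduce an extra $k$-dependent factor capable of destroying convergence. The positivity of the source exponent $s+p-\tfrac{n}{q}$, guaranteed by the hypothesis $q>\tfrac{n}{s+p}$ that underlies \autoref{lemma:fw-integral}, is precisely what makes the iteration close.
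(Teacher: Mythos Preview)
Your proposal is correct and matches the paper's approach exactly: the paper does not give a detailed proof but simply states that the result follows by running the proof of \cite[Theorem~3.1]{KiLe23} with \autoref{lemma:Cacc-bdry} replacing \cite[Lemmas~3.5~and~3.6]{KiLe23}, which is precisely the De Giorgi iteration plus $L^2$-to-$L^1$ interpolation you outline. One small point of care: \autoref{lemma:Cacc-bdry} as stated is for the solution $u$ itself rather than for truncations $(u-\ell)^+$, so in the iteration you implicitly need the corresponding estimate for level-set truncations of the subsolution, obtained by testing with $(u-\ell)^+\eta^2$ instead of $u\eta^2$ and handling the source term via \autoref{lemma:fw-integral} exactly as before.
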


\begin{proof}
    The proof goes by a standard nonlocal De Giorgi iteration at the boundary, following for instance \cite[Theorem~5]{KKP16} (see also \cite[Theorem~3.1]{KiLe23} for a proof via Moser iteration). First, from H\"older's inequality,  the fractional Sobolev embedding, the Caccioppoli inequality (see \cite[Lemma 4]{KKP16}), and \eqref{eq-fw-integral-2}, we obtain the following gain of integrability estimate:
    \begin{align*}
        \Vert w_{\pm,k}^2 \Vert_{L^{1}(B_{r}(x_0))} & \le |A_{\pm,k}(r+\rho,x_0)|^{\frac{2s}{n}} \Vert w_{\pm,k}^2 \Vert_{L^{\frac{n}{n-2s}}(B_{r}(x_0))} \\
        &\le c |A_{\pm,k}(r+\rho,x_0)|^{\frac{2s}{n}}  \rho^{-2s} \|w_{\pm,k}^2\|_{L^1(B_{r+\rho}(x_0))} \\
        &\quad + c |A_{\pm,k}(r+\rho,x_0)|^{\frac{2s}{n}}  \left( \frac{r+\rho}{\rho} \right)^{n} \rho^{-2s} \|w_{\pm,k}\|_{L^1(B_{r+\rho}(x_0))}\tail(w_{\pm,k};r+\rho,x_0) \\
        &\quad + c |A_{\pm,k}(r+\rho,x_0)|^{1+2 \left(\frac{s+p}{n}-\frac{1}{q} \right)} \Vert d^{s-p}_{\Omega} f \Vert_{L^q(\Omega_R(x_0))}^2,
    \end{align*}
    where we denote $w_{+,k} = (u - k_+)_+$ and $w_{-,k} = (k_- - u)_+$, where $k \in \R$, $\frac{R}{2} \le r \le r+\rho \le R$, and $\rho \in (0,r]$, and set $A_{\pm,k}(r+\rho,x_0) = B_{r+\rho}(x_0) \cap \{ w_{\pm,k} \not=0 \}$. From here, we can derive an iterative scheme, following \cite[Theorem~5]{KKP16}, where the additional summand, coming from the source term $f$ can be treated in a standard way (see for instance \cite[Chapter 11]{Sch20}), using only that $\frac{s+p}{n} - \frac{1}{q} > 0$, by assumption. This yields the desired result.
\end{proof}

We close this section by mentioning an interior regularity result for solutions to nonlocal equations with H\"older continuous coefficients in divergence form.

\begin{proposition}
\label{prop:interior-regularity}
Let $\sigma \in (0,s)$, $x_0 \in \R^n$, and $R > 0$. Assume that $K$ satisfies \eqref{eq:Kcomp}, and \eqref{eq-K-cont} with $\mathcal{A} = B_R(x_0)$.
Let $u$ be a solution to
\begin{align*}
Lu = f ~~ \text{ in } B_R=B_R(x_0),
\end{align*}
where $f \in L^q(B_R)$ for some $q \in (\frac{n}{2s}, \infty]$. Then, $u \in C^{ \min \{2s - \frac{n}{q} , 1 + \sigma - \eps \} }_{\mathrm{loc}}(B_R)$ for any $\varepsilon \in (0, \sigma)$ and
\begin{align*}
    [u]_{C^{ \min \{2s - \frac{n}{q} , 1 + \sigma - \eps \} }(\overline{B_{R/2}})} \le c R^{ - \min \{2s - \frac{n}{q} , 1 + \sigma - \eps \} } \left( R^{-n} \Vert u \Vert_{L^1(B_R)} + \tail(u;R,x_0) + R^{2s - \frac{n}{q}} \Vert f \Vert_{L^q(B_R)} \right),
\end{align*}
where $c=c(n,s,\lambda,\Lambda,\sigma,q,\eps) > 0$.
\end{proposition}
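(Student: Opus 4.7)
The plan is to run the interior version of the freezing-plus-Campanato iteration that underpins the boundary analysis developed in Sections \ref{sec:Morrey}--\ref{sec:Cs} of this paper. After a translation and rescaling, I may assume $x_0 = 0$ and $R = 1$. The result lies in the spirit of the interior Schauder and Calder\'on--Zygmund theory for nonlocal divergence form equations worked out in \cite{Coz17b, FeRo23b, KNS22}, and I will treat the higher order interior estimates for translation invariant operators as a black box.

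The first step freezes the coefficients. For each $z \in \overline{B_{1/2}}$, consider the translation invariant operator $L_z$ with kernel $K_z$ as in \eqref{eq-frozen}. For every scale $\rho \in (0,1/4)$, let $v$ be the $L_z$-harmonic replacement of $u$ on $B_\rho(z)$, i.e.\ $L_z v = 0$ in $B_\rho(z)$ and $v \equiv u$ in $\R^n \setminus B_\rho(z)$. Since $L_z$ is translation invariant, difference quotients of $v$ satisfy the same equation, so by bootstrapping the available interior regularity theory for translation invariant divergence form operators, $v$ enjoys, for every $\gamma$ slightly below $\min\{2s,\, 1+\sigma\}$, the Campanato-type excess decay
\[
\int_{B_r(z)} |v - P_r|^2 \d x \leq c \left(\frac{r}{\rho}\right)^{n + 2\gamma} \int_{B_\rho(z)} |v - P_\rho|^2 \d x, \qquad 0 < r < \rho/2,
\]
where $P_r$ is a constant if $\gamma \leq 1$ and the first order Taylor polynomial of $v$ at $z$ if $\gamma > 1$.

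The second step is the comparison. The difference $w := u - v \in H^s_{B_\rho(z)}(\R^n)$ solves $Lw = f + (L - L_z)v$. Testing with $w$ and using the H\"older continuity \eqref{eq-K-cont} to bound $|K - K_z| \leq c\,\rho^\sigma |x-y|^{-n-2s}$ on $B_\rho(z) \times B_\rho(z)$ (plus an analogous cross tail), together with \autoref{lemma:fw-integral} applied with $p = s$ to handle the source term, yields after a standard absorption
\[
[w]_{H^s(\R^n)}^2 \leq c\, \rho^{2\sigma}\Bigl([u]_{H^s(B_\rho(z))}^2 + \rho^n \tail(u;\rho,z)^2\Bigr) + c\, \rho^{n + 2(2s - n/q)} \|f\|_{L^q(B_\rho(z))}^2,
\]
where the energy of $v$ is traded for that of $u$ through the minimizing property $v - u \in H^s_{B_\rho(z)}(\R^n)$ combined with \autoref{lemma:Cacc-bdry}.

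Combining the decay of $v$ with the comparison estimate via the triangle inequality produces, for a suitable interior Campanato excess $\Phi(r)$ built from $u$ minus its Taylor polynomial on $B_r(z)$ together with an appropriate tail contribution,
\[
\Phi(r) \leq c \left[\left(\frac{r}{\rho}\right)^{n + 2\gamma} + \rho^{2\sigma}\right] \Phi(\rho) + c\, \rho^{n + 2\gamma^*}, \qquad \gamma^* := \min\Bigl\{2s - \frac{n}{q},\, 1 + \sigma - \eps\Bigr\}.
\]
An application of the iteration lemma \autoref{lem-iteration} with $\alpha = 2\gamma$, $\beta = 2\gamma^*$, on balls small enough that $\rho^{2\sigma} \leq \eps_0$, then gives $\Phi(r) \leq c\, r^{n + 2\gamma^*}$ at all small scales. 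Campanato's characterization of H\"older spaces (with constants when $\gamma^* \leq 1$, affine functions when $\gamma^* > 1$) converts this into $u \in C^{\gamma^*}_{\mathrm{loc}}(B_1)$ with the asserted estimate. The main obstacle is producing the higher-order excess decay for $v$ with $\gamma > 1$; this relies crucially on translation invariance of $L_z$, which lets one differentiate the frozen equation so that $\partial_i v$ inherits the base H\"older estimate and feeds back into the bootstrap.
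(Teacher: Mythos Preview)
Your proposal is correct in spirit: the freezing-plus-Campanato iteration you sketch is precisely the mechanism behind the interior regularity results in \cite{KNS22}, \cite{Now23}, and \cite{FeRo23b}. However, the paper does not carry out this argument at all --- its proof consists solely of the scaling reduction to $x_0 = 0$, $R = 1$ followed by a direct citation of \cite{KNS22} and \cite{Now23} (with a cross-reference to \cite{FeRo23b}). So you have written out a sketch of what those references prove, whereas the paper treats \autoref{prop:interior-regularity} as a known black box from the existing interior theory. Your sketch is consistent with that literature, though some details (the precise form of the excess when $\gamma^* > 1$, and the handling of the tail term in the iteration) would need care to make rigorous; for the purposes of this paper, simply invoking the cited results suffices.
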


\begin{proof}
    Note that by a scaling argument, it suffices to prove the result for $x_0 = 0$ and $R = 1$. Then, the result follows from \cite{KNS22} and \cite{Now23}. See also \cite{FeRo24b}.
\end{proof}

\section{Freezing estimates}
\label{sec:freezing}

Given $x_0 \in \R^n$ and an operator $L$ with a kernel $K$ satisfying \eqref{eq:Kcomp}, we recall the definition of the frozen operator $L_{x_0}$ at  $x_0$ with kernel $K_{x_0}$ from \eqref{eq-frozen}. $K_{x_0}$ is translation invariant and satisfies \eqref{eq:Kcomp}.

A crucial tool is the comparison estimate between solutions $u$ to
\begin{align}\label{eq-u-Sect4}
\left\{
\begin{aligned}
Lu&=f &&\text{in }\Omega_1, \\
u&=0 &&\text{in } B_1 \setminus \Omega,
\end{aligned}
\right.
\end{align}
and its $L_{x_0}$-harmonic replacement $v$. More precisely, we fix $x_0 \in \Omega_{1/2}$ and $R \in (0, \frac{1}{16})$ so that $\Omega_{8R}(x_0) \subset \Omega_1$, and consider the solution $v$ (which exists by \cite[Theorem~4.3]{KiLe23}) to
\begin{align}\label{eq-v-Sect4}
\left\{
\begin{aligned}
L_{x_0}v&=0 &&\text{in }\Omega_R(x_0), \\
v&=u &&\text{in } \R^n \setminus \Omega_R(x_0).
\end{aligned}
\right.
\end{align}
Then the difference $w:=u-v$ of $u$ and $v$ satisfies
\begin{align}\label{eq-w-Sect4}
\left\{
\begin{aligned}
L_{x_0}w&=(L_{x_0}-L)u+f &&\text{in } \Omega_R(x_0), \\
w&=0 &&\text{in } \R^n \setminus \Omega_R(x_0).
\end{aligned}
\right.
\end{align}

We provide three estimates for $w$ in the rest of this section. Let us begin with the energy estimate.

\begin{lemma}
\label{lemma:freezing}
Let $\Omega \subset \R^n$ be a domain such that $\Omega^c$ satisfies the measure density condition \eqref{eq:measure-density} with $c_0>0$. Let $\sigma \in (0,s)$ and assume that $K$ satisfies \eqref{eq:Kcomp}, and \eqref{eq-K-cont} with $\mathcal{A} = B_1$.
Let $u, v, w$ be solutions of \eqref{eq-u-Sect4}, \eqref{eq-v-Sect4}, \eqref{eq-w-Sect4}, respectively, where $f$ is such that $d_{\Omega}^{s-p}f \in L^q(\Omega_1)$ for some $p \in (0,s]$ and $q\in(\frac{n}{s+p},\infty]$. Then
\begin{align*}
[w]_{H^s(\R^n)}^2
&\le c R^{2\sigma} \left( [u]_{H^s(B_{2R}(x_0))}^2 + R^{n-2s} \tail_{\sigma,B_1}(u - (u)_{B_R(x_0)};R, x_0)^2 \right) \\
&\quad + c R^{n + 2s} \left( \Vert u \Vert_{L^1_{2s}(\R^n)}^2 + R^{-2(s - p) - \frac{2n}{q}} \| d_{\Omega}^{s-p} f \|_{L^q(\Omega_R(x_0))}^2 \right)
\end{align*}
for some $c=c(n, s, \lambda, \Lambda, \sigma, p, q, c_0)>0$.
\end{lemma}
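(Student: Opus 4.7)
The plan is to test the weak formulation of the equation $L_{x_0}w=(L_{x_0}-L)u+f$ against $w$ itself (admissible since $w\in H^s_{\Omega_R(x_0)}(\R^n)$), use the ellipticity of $K_{x_0}$ to dominate $[w]_{H^s(\R^n)}^2$ by $\mathcal{E}^{K_{x_0}}(w,w)$, and then separately estimate the freezing term $\mathcal{E}^{K_{x_0}-K}(u,w)$ and the source contribution $\int fw\,dx$. The source integral is handled directly by \autoref{lemma:fw-integral}, yielding $|\int fw\,dx|\le cR^{n/2+p-n/q}\|d_\Omega^{s-p}f\|_{L^q(\Omega_R(x_0))}[w]_{H^s(\R^n)}$; after squaring this accounts for the full $R^{n+2p-2n/q}\|d_\Omega^{s-p}f\|_{L^q}^2$ contribution in the statement.

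The freezing term is the crux. Writing $\delta K:=K_{x_0}-K$ and exploiting its symmetry together with the support of $w$, I would split
\[
\mathcal{E}^{\delta K}(u,w)=I_A+2I_B+2I_C,
\]
where $I_A$ is the integral over $B_{2R}(x_0)\times B_{2R}(x_0)$ (inner), $I_B$ is the integral with $x\in B_R(x_0)$ and $y\in B_1\setminus B_{2R}(x_0)$ (intermediate), and $I_C$ is the corresponding one with $y\in\R^n\setminus B_1$ (far). In each region the available control on $\delta K$ is of a different strength and drives a different contribution.

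For $I_A$, applying \eqref{eq-K-cont} with the shifts $x-x_0$ and $y-x_0$ (each of length at most $2R<1$) gives $|\delta K(x,y)|\le cR^\sigma|x-y|^{-n-2s}$ on $B_{2R}(x_0)\times B_{2R}(x_0)$, and Cauchy--Schwarz immediately yields $|I_A|\le cR^\sigma[u]_{H^s(B_{2R}(x_0))}[w]_{H^s(\R^n)}$. For $I_B$, I first replace $u$ by $\tilde u:=u-(u)_{B_R(x_0)}$, which leaves $\mathcal{E}^{\delta K}(u,w)$ unchanged, and then invoke \eqref{eq-K-cont} again, now extracting a weight in $y$. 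Separating the integrand into a $\tilde u(x)$ piece and a $\tilde u(y)$ piece, the former is bounded via the Poincar\'e inequality (\autoref{prop-Poincare}) together with the crude estimate $\int_{B_1\setminus B_{2R}(x_0)}|\delta K|\,dy\lesssim R^{\sigma-2s}$, while the latter, using the H\"older control with shift of size $|y-x_0|$ (where applicable), integrates to produce precisely the weight $|y-x_0|^{-n-2s+\sigma}$ defining $\tail_{\sigma,B_1}$. Combined with $\|w\|_{L^1(B_R(x_0))}\lesssim R^{n/2+s}[w]_{H^s(\R^n)}$ (from \autoref{lemma:Poincare-Wirtinger}), this yields
\[
|I_B|\le cR^\sigma[u]_{H^s(B_R(x_0))}[w]_{H^s(\R^n)}+cR^\sigma R^{(n-2s)/2}\tail_{\sigma,B_1}(\tilde u;R,x_0)\,[w]_{H^s(\R^n)}.
\]

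For $I_C$ the H\"older continuity \eqref{eq-K-cont} is no longer available since the shift $|y-x_0|$ may exceed $1$; instead I would use only the ellipticity bound $|\delta K|\le 2\Lambda|x-y|^{-n-2s}\lesssim(1+|y|)^{-n-2s}$, uniformly for $x\in B_R(x_0)$. After mean subtraction, the $\tilde u(y)$ piece is bounded by $\|w\|_{L^1(B_R(x_0))}\|u\|_{L^1_{2s}(\R^n)}\lesssim R^{n/2+s}\|u\|_{L^1_{2s}(\R^n)}[w]_{H^s(\R^n)}$, while the $\tilde u(x)$ piece is of order $R^{2s}[u]_{H^s(B_R(x_0))}[w]_{H^s(\R^n)}\le R^\sigma[u]_{H^s(B_R(x_0))}[w]_{H^s(\R^n)}$ and is absorbed; any residual constant piece involving $(u)_{B_R(x_0)}$ is dominated using the local boundedness \autoref{lemma:locbd-bdry}, which is what introduces the $\|u\|_{L^1_{2s}}^2$ and the additional $\|d_\Omega^{s-p}f\|_{L^q}^2$ contributions with the claimed powers of $R$. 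Collecting the four linear-in-$[w]_{H^s(\R^n)}$ bounds, dividing by $[w]_{H^s(\R^n)}$, and squaring via $(a+b+c+d)^2\lesssim a^2+b^2+c^2+d^2$ delivers the desired inequality. The main technical obstacle is the intermediate region $I_B$: one must subtract exactly the mean $(u)_{B_R(x_0)}$, carefully separate the diagonal and off-diagonal contributions, and recognize that the H\"older exponent $\sigma$ of the kernel is aligned precisely with the tail weight $|y-x_0|^{-n-2s+\sigma}$---this alignment is what delivers the sharp $R^{2\sigma}$ gain that makes the subsequent Morrey--Campanato iteration effective.
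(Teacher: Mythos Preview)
Your proposal is correct and follows essentially the same approach as the paper: testing \eqref{eq-w-Sect4} with $w$, splitting $\mathcal{E}^{K_{x_0}-K}(u,w)$ into the inner region $B_{2R}(x_0)\times B_{2R}(x_0)$, the intermediate region $B_R(x_0)\times(B_1\setminus B_{2R}(x_0))$ (after mean subtraction, further separated into the $\tilde u(x)$ and $\tilde u(y)$ pieces), and the far region $B_R(x_0)\times(\R^n\setminus B_1)$, and handling the source term via \autoref{lemma:fw-integral}. The paper organizes the same decomposition as five pieces $I_1,\dots,I_5$ and, for the far region, estimates $|u(x)-u(y)|$ directly using \autoref{lemma:locbd-bdry} rather than subtracting the mean first, but these are cosmetic differences.
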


\begin{proof}
Note that by construction,
\begin{align}\label{eq-K-K0}
|K_{x_0}(x,y) - K(x,y)| \le 
    \Lambda \frac{\min\{ |x-x_0|^{\sigma} + |y-x_0|^{\sigma} , 1 \}}{|x-y|^{n+2s}}.
\end{align}
We test the equation for $w$ (choosing $w$ as a test function) and obtain
\begin{align*}
\lambda [w]_{H^s(\R^n)}^2 &\le \cE^{K_{x_0}}(w,w) = \cE^{K_{x_0} - K}(u,w) + \int_{\Omega_R(x_0)} fw \d x\\
&\le \int_{B_{2R}(x_0)}\int_{B_{2R}(x_0)} |u(x) - u(y)||w(x) - w(y)||K_{x_0}(x,y) - K(x,y)| \d y \d x \\
&\quad + 2\int_{B_{R}(x_0)}\int_{B_1 \setminus B_{2R}(x_0)} |u(x) - (u)_{R,x_0}||w(x)||K_{x_0}(x,y) - K(x,y)| \d y \d x \\
&\quad + 2\int_{B_{R}(x_0)} \int_{B_1 \setminus B_{2R}(x_0)} |(u)_{R,x_0} - u(y)||w(x)||K_{x_0}(x,y) - K(x,y)| \d y \d x \\
&\quad + 2\int_{B_{R}(x_0)}\int_{\R^n \setminus B_1} |u(x) - u(y)||w(x)||K_{x_0}(x,y) - K(x,y)| \d y \d x \\
& \quad + \int_{\Omega_R(x_0)} |f(x)||w(x)| \d x\\
&=: I_1 + I_2 + I_3 + I_4 + I_5.
\end{align*}
We estimate the five terms separately.

For $I_1$, we compute by using \eqref{eq-K-K0} and H\"older's inequality
\begin{align*}
I_1 \le c R^{\sigma} [u]_{H^s(B_{2R}(x_0))} [w]_{H^s(B_{2R}(x_0))}.
\end{align*}

For $I_2$ and $I_3$, we observe that
\begin{align}
\label{eq:K-K0-help}
    |K_{x_0}(x,y) - K(x,y)| \le c |y - x_0|^{-n-2s+\sigma} \quad \forall x \in B_R(x_0), ~ y \in B_1 \setminus B_{2R}(x_0),
\end{align}
which follows from \eqref{eq-K-K0} and $|x-x_0| \leq |y-x_0|/2 < |x-y|$, and that by \autoref{lemma:Poincare-Wirtinger} applied to $w$ (with $R:= 2R$), we get
\begin{align}
\label{eq:w-PF}
     \|w\|_{L^1(B_R(x_0))} \le c R^{\frac{n}{2}} \|w\|_{L^2(B_R(x_0))} \le c R^{\frac{n}{2} + s } [w]_{H^s(\R^n)}.
\end{align}
Hence, by using \eqref{eq:K-K0-help}, \eqref{eq:w-PF}, and the Poincar\'e inequality (\autoref{prop-Poincare}) for $u$, we have
\begin{align*}
I_2 &\leq c \int_{B_{R}(x_0)}\int_{B_1 \setminus B_{2R}(x_0)} |u(x) - (u)_{R,x_0}||w(x)| \frac{1}{|y-x_0|^{n+2s-\sigma}} \d y \d x \\
&\leq c R^{\sigma-2s} \left(\int_{B_R(x_0)} |u(x) - (u)_{R,x_0}|^2 \d x \right)^{1/2} \left(\int_{B_{2R}(x_0)} |w(x)|^2 \d x \right)^{1/2} \\
&\leq c R^{\sigma} [u]_{H^s(B_R(x_0))} [w]_{H^s(\R^n)}.
\end{align*}

For $I_3$, we obtain by \eqref{eq:K-K0-help} and \eqref{eq:w-PF}
\begin{align*}
I_3
&\le c \left(\int_{B_1 \setminus B_{R}(x_0)} |u(y) - (u)_{R,x_0}| \frac{1}{|y-x_0|^{n+2s-\sigma}} \d y \right) \left(\int_{B_R(x_0)} |w(x)| \d x \right)\\
&\le cR^{\sigma + \frac{n}{2}-s} \tail_{\sigma,B_1}(u-(u)_{R,x_0};R,x_0) [w]_{H^s(\R^n)}.
\end{align*}

For $I_4$, we use \eqref{eq-K-K0}, $|x-y| \geq |y|/2$, \eqref{eq:w-PF}, and \autoref{lemma:locbd-bdry} to obtain
\begin{align*}
    I_4 &\le c \int_{B_{R}(x_0)}\int_{\R^n \setminus B_1} |u(x) - u(y)||w(x)| \frac{1}{|y|^{n+2s}} \d y \d x \\
    &\le c \left( \|u\|_{L^\infty(B_{1/2})} + \tail(u; 1, 0) \right) \|w\|_{L^1(B_R(x_0))} \\
    &\le c R^{\frac{n}{2} + s} \left( \Vert u \Vert_{L^1_{2s}(\R^n)} + \Vert d^{s-p}_{\Omega} f \Vert_{L^q(\Omega_1)} \right) [w]_{H^s(\R^n)}.
\end{align*}

Finally, for $I_5$ we use \eqref{eq-fw-integral} and obtain
\begin{align*}
    I_5 &\le c R^{\frac{n}{2} + p - \frac{n}{q}} \Vert d_{\Omega}^{s-p} f \Vert_{L^q(\Omega_R(x_0))} [w]_{H^s(\R^n)}.
\end{align*}
Note that the measure density condition \eqref{eq:measure-density} of $\Omega^c$ is used here.
This proves the desired result after summing up all the previous estimates, and dividing by $[w]_{H^s(\R^n)}$.
\end{proof}

Let us present two applications of the previous lemma.

\begin{lemma}
\label{lemma:freezing-l2}
In the same situation as in \autoref{lemma:freezing}, it holds that
\begin{align*}
\begin{split}
\|w\|_{L^1(B_R(x_0))}
&\le c R^{\sigma} \left( \|u\|_{L^1(B_R(x_0))} + R^n \tail_{\sigma,B_1}(u; R,x_0) \right) \\
&\quad + cR^{n + 2s} \left( \Vert u \Vert_{L^1_{2s}(\R^n)} + R^{-(s-p) - \frac{n}{q}} \Vert d^{s-p}_{\Omega} f \Vert_{L^q(\Omega_R(x_0))} \right)
\end{split}
\end{align*}
for some $c=c(n, s, \lambda, \Lambda, \sigma, p, q, c_0)>0$.
\end{lemma}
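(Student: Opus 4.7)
The strategy is a three-stage reduction: (a) bound $\|w\|_{L^1(B_R(x_0))}$ by $[w]_{H^s(\R^n)}$ using that $w$ is supported in $\Omega_R(x_0) \subset B_R(x_0)$; (b) invoke the freezing energy estimate \autoref{lemma:freezing} to control $[w]_{H^s(\R^n)}$ in terms of $[u]_{H^s(B_{2R}(x_0))}$, a centred $\sigma$-tail, and the data; and (c) convert the $H^s$ seminorm of $u$ back into $L^1$-type quantities on $B_R(x_0)$ by combining the boundary Caccioppoli inequality \autoref{lemma:Cacc-bdry} with the local boundedness \autoref{lemma:locbd-bdry}.

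For (a), since $w = u - v \equiv 0$ on $\R^n \setminus \Omega_R(x_0)$ by \eqref{eq-v-Sect4}, the set $\{w = 0\} \cap B_{2R}(x_0)$ has density at least $1 - 2^{-n}$ in $B_{2R}(x_0)$, so \autoref{lemma:Poincare-Wirtinger} applied on $B_{2R}(x_0)$ together with H\"older's inequality yields $\|w\|_{L^1(B_R(x_0))} \leq c R^{n/2} \|w\|_{L^2(B_{2R}(x_0))} \leq c R^{n/2 + s} [w]_{H^s(\R^n)}$. For (b), taking the square root of \autoref{lemma:freezing} and multiplying through by $R^{n/2+s}$ produces, besides the desired contributions $c R^{n+2s}\|u\|_{L^1_{2s}(\R^n)}$ and $c R^{n+s+p-n/q}\|d_\Omega^{s-p}f\|_{L^q(\Omega_R(x_0))}$, the two extra terms $c R^{n/2+s+\sigma}[u]_{H^s(B_{2R}(x_0))}$ and $c R^{n+\sigma}\tail_{\sigma,B_1}(u - (u)_{R, x_0}; R, x_0)$. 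The centred tail is dealt with by the triangle inequality together with the elementary bound $\tail_{\sigma,B_1}(1; R, x_0) \leq c(s,\sigma)$ (valid because $\sigma < 2s$), which gives $\tail_{\sigma,B_1}(u - (u)_{R, x_0}; R, x_0) \leq \tail_{\sigma,B_1}(u; R, x_0) + c R^{-n}\|u\|_{L^1(B_R(x_0))}$ and hence contributes exactly the $R^\sigma\|u\|_{L^1(B_R(x_0))}$ and $R^{n+\sigma}\tail_{\sigma,B_1}(u; R, x_0)$ terms appearing in the statement.

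The core step is (c). Applying \autoref{lemma:Cacc-bdry} on $B_{4R}(x_0) \subset B_1$ and then bounding $\|u\|_{L^2(B_{4R}(x_0))} \leq c R^{n/2}\|u\|_{L^\infty(B_{4R}(x_0))}$ via \autoref{lemma:locbd-bdry} on $B_{8R}(x_0)$ reduces $[u]_{H^s(B_{2R}(x_0))}$ to an $L^1$-norm of $u$ on $B_{8R}(x_0)$ together with global tails and the data term. The enlarged $L^1$-contribution is handled directly from the definition of $\tail$, which yields $\|u\|_{L^1(B_{8R}(x_0) \setminus B_R(x_0))} \leq c R^n \tail(u; R, x_0)$, so that $\|u\|_{L^1(B_{8R}(x_0))} \leq \|u\|_{L^1(B_R(x_0))} + c R^n \tail(u; R, x_0)$. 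The remaining global tails $\tail(u; R, x_0)$ and $\tail(u; 8R, x_0)$ are then converted, via \eqref{eq:sigma-tail-est}, the scale monotonicity $\tail_{\sigma,B_1}(u; 8R, x_0) \leq c \tail_{\sigma,B_1}(u; R, x_0)$, and $\tail(u; 1, 0) \leq c \|u\|_{L^1_{2s}(\R^n)}$, into the two targeted quantities $\tail_{\sigma,B_1}(u; R, x_0)$ and $\|u\|_{L^1_{2s}(\R^n)}$. Multiplying the resulting bound for $[u]_{H^s(B_{2R}(x_0))}$ by $R^{n/2+s+\sigma}$ and using $R < 1$ to absorb extra $R^\sigma$ factors (with the exact exponent match $R^{n+s+p-n/q} = R^{n+2s} R^{-(s-p)-n/q}$ on the $f$-term) produces the claim.

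The calculation is essentially bookkeeping, so the principal obstacle is the careful alignment of all the scales: one must ensure that every global tail of $u$ arising from Caccioppoli or local boundedness is eventually split into $\tail_{\sigma,B_1}(u; R, x_0)$ and $\|u\|_{L^1_{2s}(\R^n)}$ without inflating $R$-exponents, and that the slightly enlarged $L^1$-norm $\|u\|_{L^1(B_{8R}(x_0))}$ demanded by local boundedness is brought back to $\|u\|_{L^1(B_R(x_0))}$ using only the $\tail(u; R, x_0)$ that is already being processed.
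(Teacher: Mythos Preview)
Your proof is correct and follows essentially the same route as the paper: both combine \eqref{eq:w-PF} (your step (a)) with \autoref{lemma:freezing}, then use \autoref{lemma:Cacc-bdry} and \autoref{lemma:locbd-bdry} together with \eqref{eq:sigma-tail-est} to convert the $H^s$ seminorm of $u$ and all tails into the $L^1$ and $\tail_{\sigma,B_1}$ quantities on $B_R(x_0)$. The paper's presentation merely folds your stages (b) and (c) into a single chain of inequalities culminating in \eqref{eq-w-Hs}, but the ingredients and bookkeeping are identical.
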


\begin{proof}
By using \autoref{lemma:freezing}, \autoref{lemma:Cacc-bdry}, and \eqref{eq:sigma-tail-est}, we have
\begin{align*}
[w]_{H^s(\R^n)}^2
&\le c R^{2\sigma} \left( [u]_{H^s(B_{2R}(x_0))}^2 + R^{n-2s} (u)_{R, x_0}^2 + R^{n-2s} \tail_{\sigma,B_1}(u;R,x_0)^2 \right) \\
&\quad + c R^{n + 2s} \left(\Vert u \Vert_{L^1_{2s}(\R^n)}^2 + R^{-2(s - p) - \frac{2n}{q}} \| d_{\Omega}^{s-p} f \|_{L^q(\Omega_R(x_0))}^2 \right) \\
&\le c R^{2\sigma-2s} \left( \|u\|_{L^2(B_{4R}(x_0))}^2 + R^n \tail_{\sigma,B_1}(u; R, x_0)^2 \right) \\
&\quad + c R^{n + 2s} \left( \Vert u \Vert_{L^1_{2s}(\R^n)}^2 + R^{-2(s - p) - \frac{2n}{q}} \Vert d^{s-p}_{\Omega} f \Vert_{L^q(\Omega_R(x_0))}^2 \right).
\end{align*}
Moreover, since $\Omega_{8R}(x_0) \subset \Omega_1$, \autoref{lemma:locbd-bdry} shows that
\begin{align*}
\|u\|_{L^2(B_{4R}(x_0))}^2
&\leq c R^{-n} \left( \|u\|_{L^1(B_{8R}(x_0))} + R^n \tail(u;4R,x_0) + R^{n+2s - (s-p) - \frac{n}{q}} \Vert d_\Omega^{s-p}f \Vert_{L^q(\Omega_R(x_0))} \right)^2 \\
&\leq c R^{-n} \left( \|u\|_{L^1(B_{R}(x_0))} + R^n \tail(u;R,x_0) + R^{n+2s -(s-p) - \frac{n}{q}} \Vert d_\Omega^{s-p}f \Vert_{L^q(\Omega_R(x_0))} \right)^2.
\end{align*}
Combining these two estimates and using again \eqref{eq:sigma-tail-est} yield
\begin{align}\label{eq-w-Hs}
\begin{split}
    [w]_{H^s(\R^n)}^2
    &\le c R^{-n+2\sigma-2s} \left( \|u\|_{L^1(B_R(x_0))} + R^{n} \tail_{\sigma,B_1}(u; R,x_0) \right)^2 \\
    &\quad + cR^{n + 2s} \left( \Vert u \Vert_{L^1_{2s}(\R^n)} + R^{-(s-p) - \frac{n}{q}} \Vert d^{s-p}_{\Omega} f \Vert_{L^q(\Omega_{R}(x_0))} \right)^2.
\end{split}
\end{align}
The desired result now follows from \eqref{eq:w-PF} and \eqref{eq-w-Hs}.
\end{proof}

\begin{lemma}
\label{lemma:freezing-ds}
In the same situation as in \autoref{lemma:freezing}, it holds that
\begin{align}
\label{eq:ds-w-estimate}
\begin{split}
\int_{\Omega_R(x_0)} \left| \frac{w}{d_{\Omega}^s} \right| \d x
&\le c R^{\sigma} \left( \int_{\Omega_R(x_0)} \left|\frac{u}{d_{\Omega}^s} \right| \d x  +  \max\{R,d_{\Omega}(x_0)\}^{-s} R^{n} \tail_{\sigma,B_1}(u;R,x_0) \right)\\
&\quad + cR^{n + s} \left( \Vert u \Vert_{L^1_{2s}(\R^n)} + R^{-(s-p) - \frac{n}{q}} \Vert d_{\Omega}^{s-p} f \Vert_{L^q(\Omega_{R}(x_0))} \right)
\end{split}
\end{align}
for some $c=c(n, s, \lambda, \Lambda, \sigma, p, q, c_0)>0$.
\end{lemma}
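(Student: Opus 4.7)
The plan is to split the argument into two cases according to the relative size of $d_\Omega(x_0)$ and $R$, since the boundary and interior regimes require distinct ingredients.

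In the \emph{interior case} $d_\Omega(x_0) \geq 2R$, one has $d_\Omega(x) \sim d_\Omega(x_0)$ uniformly for $x \in B_R(x_0)$, so $\int_{\Omega_R(x_0)} |w/d_\Omega^s| \d x \leq c\, d_\Omega(x_0)^{-s} \|w\|_{L^1(B_R(x_0))}$, and I would feed this directly into \autoref{lemma:freezing-l2}. The first term becomes $c R^\sigma d_\Omega(x_0)^{-s} \|u\|_{L^1(B_R(x_0))} \leq c R^\sigma \int_{\Omega_R(x_0)} |u/d_\Omega^s| \d x$ by the same equivalence of $d_\Omega$ with $d_\Omega(x_0)$. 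The tail contribution already carries the correct prefactor, since $d_\Omega(x_0)^{-s} = \max\{R,d_\Omega(x_0)\}^{-s}$ in this regime. The remainder $d_\Omega(x_0)^{-s} R^{n+2s}$ collapses to $c R^{n+s}$ via $d_\Omega(x_0) \geq 2R$.

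In the \emph{boundary case} $d_\Omega(x_0) < 2R$ the quotient $d_\Omega^{-s}$ can blow up inside $\Omega_R(x_0)$ and cannot be pulled out, so I would route through energy instead. First, apply the fractional Hardy inequality (\autoref{lemma:Hardy}) to $w \in H^s_{\Omega_R(x_0)}(\R^n)$, which gives $\int_{\Omega_R(x_0)} |w/d_{\Omega_R(x_0)}^s|^2 \d x \leq c[w]^2_{H^s(\R^n)}$; since $\Omega_R(x_0) \subset \Omega$ we have $d_\Omega^{-s} \leq d_{\Omega_R(x_0)}^{-s}$, so the same bound holds with $d_\Omega$ in place of $d_{\Omega_R(x_0)}$. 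Then Cauchy--Schwarz with $\supp w \subset B_R(x_0)$ yields $\int_{\Omega_R(x_0)} |w/d_\Omega^s| \d x \leq c R^{n/2} [w]_{H^s(\R^n)}$. To finish, I would substitute the $H^s$ bound on $w$ from the proof of \autoref{lemma:freezing-l2}, namely \eqref{eq-w-Hs}, taking a square root via $\sqrt{a^2+b^2} \leq a+b$, and multiply by $R^{n/2}$. Using $d_\Omega(x) \leq 3R$ on $\Omega_R(x_0)$ (which follows from $d_\Omega(x_0) < 2R$), one has $R^{-s} \|u\|_{L^1(B_R(x_0))} \leq c\int_{\Omega_R(x_0)} |u/d_\Omega^s| \d x$, and $\max\{R, d_\Omega(x_0)\} = R$, producing exactly the right structure on the right-hand side.

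The main technical point I anticipate is verifying that the Hardy constant is independent of $R$ and $x_0$, i.e.\ that $\Omega_R(x_0)^c$ satisfies the measure density condition uniformly. This should reduce to a geometric check: boundary points of $\Omega_R(x_0)$ lying on $\partial \Omega \cap \overline{B_R(x_0)}$ inherit the density from the standing hypothesis on $\Omega^c$, while boundary points on $\partial B_R(x_0) \cap \Omega$ receive density $\geq 1/2$ from the exterior of the ball $B_R(x_0)$. Apart from this bookkeeping, the proof is a clean combination of \autoref{lemma:Hardy}, Cauchy--Schwarz, and the energy estimate already obtained during the proof of \autoref{lemma:freezing-l2}.
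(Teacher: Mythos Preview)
Your proposal is correct and follows essentially the same approach as the paper: the same case split $d_\Omega(x_0) \gtrless 2R$ (equivalently $B_{2R}(x_0) \subset \Omega$ or not), the same use of \autoref{lemma:freezing-l2} in the interior case, and the same combination of Hardy, Cauchy--Schwarz, and the energy bound \eqref{eq-w-Hs} in the boundary case. One cosmetic slip: in the boundary case you write $\max\{R,d_\Omega(x_0)\} = R$, but when $R \le d_\Omega(x_0) < 2R$ the max is $d_\Omega(x_0)$; the paper writes $\max\{R,d_\Omega(x_0)\} \le 2R$, which is what you actually need.
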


\begin{proof}
In case $B_{2R}(x_0) \cap \Omega^c \not=\emptyset$, we observe by using H\"older's inequality, \autoref{lemma:Hardy}, and \eqref{eq-w-Hs} that
\begin{align*}
    \left( \int_{\Omega_R(x_0)} \left| \frac{w}{d_{\Omega}^s} \right| \d x \right)^2
    &\le cR^{n} \int_{\Omega_R(x_0)} \left| \frac{w}{d_{\Omega_R(x_0)}^s} \right|^2 \d x \le c R^n [w]_{H^s(\R^n)}^2 \\
    &\le c R^{2\sigma} \left( R^{-s}\|u\|_{L^1(B_R(x_0))} + R^{n-s} \tail_{\sigma,B_1}(u; R,x_0) \right)^2 \\
    &\quad + cR^{2n + 2s} \left( \Vert u \Vert_{L^1_{2s}(\R^n)} + R^{-(s-p) - \frac{n}{q}} \Vert d^{s-p}_{\Omega} f \Vert_{L^q(\Omega_{R}(x_0))} \right)^2.
\end{align*}
Since $d_\Omega \leq 3R$ in $\Omega_R(x_0)$ and $\max\{R,d_{\Omega}(x_0)\} \le 2R$, we obtain the desired estimate \eqref{eq:ds-w-estimate} in this case.

In case $B_{2R}(x_0) \subset \Omega$, we have
\begin{align}\label{eq-d-comparable}
\frac{3}{2} d_{\Omega}(x_0) \ge d_{\Omega}(x_0) + R \ge d_{\Omega} \ge d_{\Omega}(x_0) - R \ge \frac{1}{2} d_{\Omega}(x_0) ~~ \text{ in } B_{R}(x_0).
\end{align}
By using \autoref{lemma:freezing-l2}, we obtain
\begin{align*}
\int_{\Omega_R(x_0)} \left| \frac{w}{d_{\Omega}^s} \right| \d x &\le c d_{\Omega}(x_0)^{-s} \|w\|_{L^1(B_R(x_0))} \\
&\le c R^{\sigma} \left( \int_{B_R(x_0)} \left| \frac{u}{d_\Omega^s(x_0)} \right| \d x + d_{\Omega}(x_0)^{-s} R^{n} \tail_{\sigma,B_1}(u;R,x_0) \right) \\
&\quad + c d_{\Omega}(x_0)^{-s} R^{n + 2s} \left( \Vert u \Vert_{L^1_{2s}(\R^n)} + R^{-(s-p) - \frac{n}{q}} \Vert d_{\Omega}^{s-p} f \Vert_{L^q(\Omega_{R}(x_0))} \right).
\end{align*}
Therefore, \eqref{eq-d-comparable} together with $R \leq \max\{R, d_\Omega(x_0)\} = d_\Omega(x_0)$ proves \eqref{eq:ds-w-estimate} in this case.
\end{proof}

\section{\texorpdfstring{$C^{s-\eps}$}{Cs-e} boundary regularity in flat Lipschitz domains via Morrey theory}
\label{sec:Morrey}

The goal of this section is to prove the $C^{s-\eps}$ boundary regularity result for solutions to nonlocal equations in flat Lipschitz domains from \autoref{thm:Cs-eps}. To prove it, we develop a nonlocal Morrey theory at the boundary.

\subsection{Morrey estimate in the translation invariant case}

The goal of this subsection is to prove the following nonlocal Morrey-type estimate for solutions to translation invariant problems.

\begin{lemma}
\label{lemma:T1}
Let $\Omega \subset \R^n$ be a Lipschitz domain with Lipschitz constant $\delta > 0$. Let $x_0 \in \Omega$ and $R>0$. Let $L$ be a translation invariant operator with kernel $K$ satisfying \eqref{eq:Kcomp} and let $v$ be a solution to 
\begin{align*}
\left\{
\begin{aligned}
Lv&=0 &&\text{in }\Omega_R(x_0), \\
v&=0 &&\text{in } B_R(x_0) \setminus \Omega.
\end{aligned}
\right.
\end{align*}
For given $\eps \in (0,s)$, there exist $c, \delta_0 >0$, depending only on $n,s,\lambda,\Lambda,\eps$, and $\Omega$, such that if $\delta \le \delta_0$, then
\begin{align*}
    \int_{\Omega_{\rho}(x_0)} \left|\frac{v}{d_{\Omega}^s} \right| \d x
    &\le c \left( \frac{\rho}{R} \right)^{n-\eps} \left(\int_{\Omega_{R}(x_0)} \left| \frac{v}{d_{\Omega}^s} \right| \d x + \max\{R,d_{\Omega}(x_0)\}^{-s} R^{n} \tail(v; R,x_0) \right)
\end{align*}
for any $0<\rho<R$.
\end{lemma}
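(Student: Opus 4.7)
The plan is to combine the known $C^{s-\eps}$ boundary regularity for translation invariant operators in flat Lipschitz domains (in the spirit of \cite{FeRo24,RoWe24}) with the local boundedness in \autoref{lemma:locbd-bdry} and the distance integral in \autoref{lemma:distance-integral}. When $\rho \geq R/c_0$ for some fixed constant $c_0$, the estimate is immediate from monotonicity of the integral since $(\rho/R)^{n-\eps}$ is then bounded below, so I may assume $\rho$ is small relative to $R$. Applying \autoref{lemma:locbd-bdry} with $f = 0$ on the ball $B_R(x_0)$, together with the pointwise bound $d_\Omega(x) \leq 2\max\{R, d_\Omega(x_0)\}$ for $x \in B_R(x_0)$, which controls $\|v\|_{L^1(B_R(x_0))}$ by $\max\{R, d_\Omega(x_0)\}^s \int_{\Omega_R(x_0)} |v/d_\Omega^s| \d x$, I obtain the common starting point
\begin{align*}
\|v\|_{L^\infty(B_{R/2}(x_0))} \leq cM, \qquad M := \max\{R, d_\Omega(x_0)\}^s R^{-n} \int_{\Omega_R(x_0)} \left|\frac{v}{d_\Omega^s}\right| \d x + \tail(v; R, x_0).
\end{align*}

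The proof then splits according to whether $d_\Omega(x_0) \geq 2R$ (interior regime) or $d_\Omega(x_0) < 2R$ (boundary regime). In the interior regime, $d_\Omega$ is comparable to $d_\Omega(x_0) = \max\{R, d_\Omega(x_0)\}$ throughout $B_R(x_0)$, so the crude bound $\int_{\Omega_\rho(x_0)} |v/d_\Omega^s| \d x \leq c \rho^n d_\Omega(x_0)^{-s} \|v\|_{L^\infty(B_{R/2}(x_0))}$ combined with the $L^\infty$ estimate already gives decay $(\rho/R)^n$, which is stronger than the required $(\rho/R)^{n-\eps}$. In the boundary regime $\max\{R, d_\Omega(x_0)\} \sim R$, and I invoke the $C^{s-\eps}$ up-to-the-boundary regularity for translation invariant operators on flat Lipschitz domains, valid under the smallness condition $\delta \leq \delta_0 = \delta_0(n,s,\lambda,\Lambda,\eps,\Omega)$, to get
\begin{align*}
[v]_{C^{s-\eps}(\overline{\Omega_{R/4}(x_0)})} \leq c R^{-(s-\eps)} \left( \|v\|_{L^\infty(B_{R/2}(x_0))} + \tail(v; R, x_0) \right) \leq c R^{-(s-\eps)} M.
\end{align*}
Since $v$ vanishes on $B_R(x_0) \setminus \Omega$, joining each $x \in \Omega_{R/8}(x_0)$ to its nearest boundary point (which lies in $B_R(x_0)$) yields the pointwise bound $|v(x)| \leq c d_\Omega(x)^{s-\eps} R^{-(s-\eps)} M$, hence $|v/d_\Omega^s| \leq c d_\Omega^{-\eps} R^{-(s-\eps)} M$. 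Integrating over $\Omega_\rho(x_0)$ via \autoref{lemma:distance-integral} produces
\begin{align*}
\int_{\Omega_\rho(x_0)} \left|\frac{v}{d_\Omega^s}\right| \d x \leq c R^{-(s-\eps)} \rho^{n-\eps} M = c \left(\frac{\rho}{R}\right)^{n-\eps} R^{n-s} M,
\end{align*}
and the identity $R^{n-s} \sim \max\{R, d_\Omega(x_0)\}^{-s} R^n$, valid in the boundary regime, closes the estimate upon substituting the formula for $M$.

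The main obstacle is the $C^{s-\eps}$ up-to-the-boundary regularity in flat Lipschitz domains, which is the sole non-elementary ingredient and the source of the smallness condition $\delta \leq \delta_0$. While this result is essentially known from the literature mentioned above, attention is needed to verify the precise scaling so that the tail contributes to $M$ in exactly the stated form. The remainder of the argument is careful bookkeeping of how the prefactor $\max\{R, d_\Omega(x_0)\}^s$ arises uniformly across both regimes, and the merging of the two cases into the single statement of the lemma.
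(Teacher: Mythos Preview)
Your proposal is correct and follows essentially the same approach as the paper: both combine the local boundedness estimate (\autoref{lemma:locbd-bdry}), the $C^{s-\eps}$ boundary regularity for translation invariant operators in flat Lipschitz domains (\autoref{lemma:T2}, from \cite{RoWe24}), and the distance integral \autoref{lemma:distance-integral}. The only difference is organizational: the paper splits according to whether $B_{\rho}(x_0)$, $B_{4\rho}(x_0)$, $B_R(x_0)$ meet $\Omega^c$, whereas you split on $d_\Omega(x_0)\gtrless 2R$; with your coarser threshold the parenthetical claim that the nearest boundary point of $x\in\Omega_{R/8}(x_0)$ lies in $B_R(x_0)$ can fail when $d_\Omega(x_0)$ is close to $2R$, but in that situation $d_\Omega(x)\gtrsim R$ and the $L^\infty$ bound already gives the pointwise estimate, so the argument goes through after this trivial adjustment.
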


In order to prove \autoref{lemma:T1}, we recall the following result from \cite{RoWe24}.

\begin{proposition}
\label{lemma:T2}
Let $\Omega,x_0,R$, and $L$ be given as in \autoref{lemma:T1} and let $v$ be a solution to
\begin{align*}
\left\{
\begin{aligned}
Lv&=f &&\text{in }\Omega_R(x_0), \\
v&=0 &&\text{in } B_R(x_0) \setminus \Omega,
\end{aligned}
\right.
\end{align*}
where $f \in L^\infty(\Omega_R(x_0))$. For given $\eps \in (0,s)$, there exist $c, \delta_0 >0$, depending only on $n,s,\lambda,\Lambda,\eps$, and $\Omega$, such that if $\delta \le \delta_0$, then
\begin{align}\label{eq-T2}
    [ v ]_{C^{s-\eps}(\overline{B_{R/2}(x_0)})} \le c R^{-(s-\eps)} \left( \Vert v \Vert_{L^{\infty}(\Omega_{R}(x_0))} + \tail(v;R,x_0) + R^{2s} \|f\|_{L^\infty(\Omega_R(x_0))} \right).
\end{align}
\end{proposition}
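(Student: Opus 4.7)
My strategy is to combine a boundary growth estimate of the form $|v(x)| \lesssim d_\Omega^{s-\eps}(x)$ with the interior regularity from \autoref{prop:interior-regularity}. The role of the smallness of $\delta$ is that it allows one to treat $\Omega$ as a small Lipschitz perturbation of a half-space, in which the explicit $L$-harmonic function $(x_n)_+^s$ is available for every translation invariant $L$ satisfying \eqref{eq:Kcomp}. Concretely, I would first prove that for every $z \in \partial\Omega \cap B_{R/2}(x_0)$ and every $r \in (0, R/2)$,
\[
\|v\|_{L^\infty(B_r(z))} \le C \Big(\frac{r}{R}\Big)^{s-\eps} M,
\]
where $M$ denotes the parenthetical right-hand side of \eqref{eq-T2}. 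This would be achieved via a barrier argument: starting from the explicit $L$-harmonicity of $(x_n)_+^s$ in a half-space, one constructs in $\Omega_R(x_0)$ a supersolution $\Phi$ comparable to $d_\Omega^{s-\eps}$ and satisfying $L\Phi \ge -C R^{-\eps} M$, provided $\delta_0$ is chosen small enough in terms of $\eps$. The comparison principle applied to $v$ and a suitable multiple of $\Phi$ then yields the displayed growth.

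Second, I would upgrade this pointwise growth into a H\"older seminorm bound. For any $x \in \Omega_{R/2}(x_0)$, set $d := d_\Omega(x)$ and let $z \in \partial\Omega$ be a nearest boundary point. Applying \autoref{prop:interior-regularity} on $B_{d/2}(x) \Subset \Omega$ together with the boundary growth on $B_{2d}(z)$ yields
\[
[v]_{C^{s-\eps}(B_{d/4}(x))} \le C R^{-(s-\eps)} M.
\]
A standard two-point argument then concludes: for $x,y \in \overline{B_{R/2}(x_0)}$, either $|x-y| \ge \tfrac{1}{4}\max\{d_\Omega(x), d_\Omega(y)\}$, in which case the boundary growth at boundary points close to $x$ and $y$ gives the estimate directly, or else $|x-y|$ is smaller than this quantity and the interior seminorm bound above applies.

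The main obstacle will be the first step. Although $(x_n)_+^s$ is explicit, transplanting it to a flat Lipschitz domain costs an $\eps$-power and requires quantitative smallness of $\delta_0$ depending on $\eps$; tracking the tail through the barrier comparison also requires some care. An alternative route, which is likely the one adopted in \cite{RoWe24}, is to replace the explicit barrier by a compactness and blow-up scheme: assuming the estimate fails, one rescales to extract a nontrivial $C^{s-\eps}$ limiting solution in a half-space with $Lu=0$, zero source, and vanishing tail, and contradicts a Liouville-type classification or the identification $u \equiv c(x_n)_+^s$. In either realization, translation invariance of $L$ is essential and is precisely what is lost when passing to operators with coefficients; this is what motivates the freezing estimates of Section~\ref{sec:freezing}.
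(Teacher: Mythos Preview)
The paper does not prove this proposition from scratch: its proof consists solely of citing \cite[Theorem~6.8]{RoWe24} for the case $R=1$, invoking rescaling for general $R$, and noting that weak solutions are covered via \cite[Lemma~2.2.32]{FeRo24}. Your proposal instead sketches an actual argument, which is a reasonable thing to do, and the overall two-step structure (boundary growth plus interior regularity, glued by a standard two-point argument) is correct and is essentially how such results are established in the cited references.

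There is, however, a genuine inaccuracy in your barrier approach. You write that $(x_n)_+^s$ is $L$-harmonic in the half-space for every translation invariant $L$ satisfying \eqref{eq:Kcomp}. This is false in general: it holds for homogeneous kernels, but for an arbitrary translation invariant $K$ with \eqref{eq:Kcomp} the correct half-space solution is the one-dimensional barrier $b_\nu$ constructed in \cite[Theorem~1.4]{RoWe24}, which is merely comparable to $(x_n)_+^s$ and typically not equal to it. This is precisely why the paper later works with $b_{\nu_z}$ (see the proof of \autoref{prop:inhom-Cs}) rather than with $(x_n)_+^s$. Your barrier route can be repaired either by using $b_\nu$ in place of $(x_n)_+^s$, or by working instead with a supersolution of the form $d_\Omega^{s-\eps/2}$ and showing directly that $L$ applied to it has the correct sign; but as written the step ``starting from the explicit $L$-harmonicity of $(x_n)_+^s$'' does not go through.

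Your second alternative, the compactness and blow-up scheme, is indeed the approach taken in \cite{RoWe24} and would succeed.
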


\begin{proof}
    The result was proved for distributional solutions in \cite[Theorem~6.8]{RoWe24} (see also \cite[Proposition~2.5.4]{FeRo24} for an analogous result for homogeneous kernels $K$) in case $R = 1$. The result for general $R$ follows immediately by rescaling. Note that in particular, it holds for weak solutions by \cite[Lemma~2.2.32]{FeRo24}.
\end{proof}

\begin{remark}\label{rmk:T2}
\autoref{lemma:T2} also holds for operators that are only translation invariant in a ball, as in \eqref{eq:locally-freeze}. Indeed, if $\widetilde{L},L$ are operators with kernels $\widetilde{K}, K$, both satisfying \eqref{eq:Kcomp}, where $K$ is translation invariant and $\widetilde{K}(x, y)=K(x, y)$ for all $x, y$ with $|x-y| < 2R$, and $v$ is a solution to
\begin{align*}
\left\{
\begin{aligned}
\widetilde{L}v&=f &&\text{in }\Omega_R(x_0), \\
v&=0 &&\text{in } B_R(x_0) \setminus \Omega,
\end{aligned}
\right.
\end{align*}
then $v$ solves $Lv = (L-\widetilde{L})v+f$ in $\Omega_R(x_0)$. Note that we have for $x \in \Omega_{R}(x_0)$
\begin{align*}
    |(L - \widetilde{L}) v(x)| &\le 2 \int_{\R^n \setminus B_{2R}(x)} |v(x) - v(y)||K(x,y) - \widetilde{K}(x,y)| \d y \\
    &\le c  \int_{\R^n \setminus B_{R}(x_0)} (|v(x)| + |v(y)|) |x_0 -y|^{-n-2s} \d y \\
    &\le cR^{-2s} \Vert v \Vert_{L^{\infty}(\Omega_{R}(x_0))} + c R^{-2s} \tail(v;R,x_0).
\end{align*}
Hence, by applying \autoref{lemma:T2}, we get the exact same estimate \eqref{eq-T2} for $v$. This result will be used in Section \ref{sec:Cs} and applied to $\widetilde{K}_{x_0}$ defined in \eqref{eq:locally-freeze}.
\end{remark}

As a consequence, we are in a position to show the Morrey-type estimate at the boundary for solutions to translation invariant problems.

\begin{proof}[Proof of \autoref{lemma:T1}]
Let $\delta_0$ be the constant given in \autoref{lemma:T2} and assume $\delta \leq \delta_0$.
Note that we may assume without loss of generality that $\rho \le R/8$.
Let us first prove the desired result for balls $B_{\rho}(x_0)$ such that $B_{\rho}(x_0) \cap \Omega^c \not=\emptyset$.
Since $v$ vanishes on the boundary of $\Omega$ and since, by assumption, for any $x \in B_{\rho}(x_0)$, the projection to $\partial \Omega$ is inside $B_{2\rho}(x_0)$, we have
\begin{align*}
\int_{\Omega_{\rho}(x_0)} \left|\frac{v}{d_{\Omega}^s} \right| \d x 
&\le \int_{\Omega_{2\rho}(x_0)} d_{\Omega}^{-\eps}(x) \sup_{y \in \Omega_{2\rho}(x_0)}\frac{|v(x) - v(y)|}{|x-y|^{s-\eps}} \d x \\
&\le \left( \int_{\Omega_{2\rho}(x_0)} d_{\Omega}^{-\eps} \d x \right) [v]_{C^{s-\eps}(\overline{\Omega_{2\rho}(x_0)})} \le
c \rho^{n-\eps} [ v ]_{C^{s-\eps}(\overline{\Omega_{R/4}(x_0)})},
\end{align*}
where we applied \autoref{lemma:distance-integral} in the last estimate. Next, using the $C^{s-\eps}$-boundary estimate from \autoref{lemma:T2}, the local boundedness estimate (\autoref{lemma:locbd-bdry}), and the fact that $d_{\Omega} \le 2R$ in $\Omega_{R}(x_0)$, we deduce
\begin{align}
\label{eq:T1-help1}
\begin{split}
\int_{\Omega_{\rho}(x_0)} \left|\frac{v}{d_{\Omega}^s} \right| \d x
&\le c R^{-(s-\varepsilon)} \rho^{n-\eps} \left( \Vert v \Vert_{L^{\infty}(\Omega_{R/2}(x_0))} + \tail(v; R/2,x_0) \right)\\
&\le c R^{-s} \left( \frac{\rho}{R} \right)^{n-\eps} \left( \|v\|_{L^1(\Omega_R(x_0))} + R^n \tail(v; R, x_0) \right)\\
&\le c \left( \frac{\rho}{R} \right)^{n-\eps} \left(\int_{\Omega_{R}(x_0)} \left| \frac{v}{d_{\Omega}^s} \right| \d x + R^{n-s} \tail(v ; R, x_0) \right).
\end{split}
\end{align}
The desired result follows from \eqref{eq:T1-help1} and the fact that $\max\{R, d_\Omega(x_0)\}=R$ in case $B_\rho(x_0) \cap \Omega^c \neq \emptyset$.

Next, let us assume that $B_{4\rho}(x_0) \subset \Omega$. In that case, we have
\begin{align*}
\frac{5}{4} d_{\Omega}(x_0) \ge d_{\Omega}(x_0) + \rho \ge d_{\Omega} \ge d_{\Omega}(x_0) - \rho \ge \frac{3}{4} d_{\Omega}(x_0) ~~ \text{ in } B_{\rho}(x_0).
\end{align*}
Note that if also $B_R(x_0) \cap \Omega^c = \emptyset$, then by the local boundedness estimate (\autoref{lemma:locbd-bdry}) we obtain
\begin{align}
\label{eq:T1-help3}
\begin{split}
\int_{B_{\rho}(x_0)} \left|\frac{v}{d_{\Omega}^s} \right| \d x
&\le c d_{\Omega}(x_0)^{-s} \rho^{n} \Vert v \Vert_{L^{\infty}(B_{R/2}(x_0))} \\
&\le c d_{\Omega}(x_0)^{-s} \left( \frac{\rho}{R} \right)^n \left( \|v\|_{L^1(B_R(x_0))} + R^n \tail(v; R,x_0) \right) \\
&\leq c \left( \frac{\rho}{R} \right)^{n-\varepsilon} \left( \int_{B_{R}(x_0)} \left| \frac{v}{d_{\Omega}^s} \right| \d x + \max\{R,d_{\Omega}(x_0)\}^{-s} R^{n} \tail(v; R,x_0) \right),
\end{split}
\end{align}
where we used in the last step that $d_{\Omega} \le d_{\Omega}(x_0) + R \le 2 d_{\Omega}(x_0)$ in $B_R(x_0)$.

If $B_{4\rho}(x_0) \subset \Omega$ and $B_R(x_0) \cap \Omega^c \not=\emptyset$, then we apply \eqref{eq:T1-help1} with $\rho := \rho$ and $R := d_{\Omega}(x_0)$, and then observe that
\begin{align}
\label{eq:one-scale-up-Morrey}
\int_{\Omega_{d_{\Omega}(x_0)}(x_0)} \left|\frac{v}{d_{\Omega}^s} \right| \d x\le \int_{\Omega_{2d_{\Omega}(x_0)}(x_0)} \left|\frac{v}{d_{\Omega}^s} \right| \d x.
\end{align}
The desired result follows from \eqref{eq:T1-help1} applied with $\rho := 2 d_{\Omega}(x_0)$ and $R := R$.

Finally, note that if $B_{4\rho}(x_0) \not\subset \Omega$ and also $B_{\rho}(x_0) \cap \Omega^c = \emptyset$, then we can just apply \eqref{eq:one-scale-up-Morrey} first, and then use \eqref{eq:T1-help1}.
\end{proof}

\subsection{Regularity up to the boundary}

In this section, we prove \autoref{thm:Cs-eps}.

First, we establish the following Morrey-type estimate for solutions with respect to $L$, having $C^{\sigma}$ continuous coefficients. Recall that the excess functional $\Phi_{\sigma}(u; \rho):=\Phi_{\sigma}(u; \rho, x_0)$ is given by \eqref{eq-excess-Phi} in the next lemma.

\begin{lemma}
\label{lemma:Morrey-boundary}
Assume that we are in the same setting as in \autoref{thm:Cs-eps}. Then, there exist $c,\delta_0 > 0$, and $R_0 \in (0,\frac{1}{16})$, depending only on $n,s,\lambda,\Lambda,\sigma,p,q,\eps$, and $\Omega$, such that if $\delta \le \delta_0$, then the following holds true:
For any $x_0 \in \Omega_{1/2}$ and $0 < \rho \leq R \leq R_0$ it holds that
\begin{align*}
    \Phi_{\sigma}(u; \rho) \leq c \left( \frac{\rho}{R} \right)^{n+\min\{p-\frac{n}{q}, -\varepsilon\}} \Phi_{\sigma}(u; R) + c \rho^{n+\min\{p-\frac{n}{q}, -\varepsilon\}} \left( \Vert u \Vert_{L^1_{2s}(\R^n)} + \Vert d_{\Omega}^{s-p} f \Vert_{L^q(\Omega_1)} \right).
\end{align*}
\end{lemma}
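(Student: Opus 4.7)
Fix $x_0 \in \Omega_{1/2}$ and pick $R_0 \in (0, \tfrac{1}{16})$ so that $\Omega_{8R}(x_0) \subset \Omega_1$ for every $R \leq R_0$. For such $R$, decompose $u = v + w$, where $v$ solves the frozen problem \eqref{eq-v-Sect4} with translation-invariant kernel $K_{x_0}$ (so $v = u$ on $\R^n \setminus \Omega_R(x_0)$) and $w = u - v$ is supported in $\overline{\Omega_R(x_0)}$ and solves \eqref{eq-w-Sect4}. Choose also an auxiliary exponent $\varepsilon' \in (0, \min\{\varepsilon, \sigma, s - \sigma\})$, further required to satisfy $\varepsilon' < \tfrac{n}{q} - p$ whenever $q < \tfrac{n}{p}$, so that $\alpha := n - \varepsilon'$ is strictly greater than $\beta := n + \min\{p - \tfrac{n}{q}, -\varepsilon\}$, the gap needed in \autoref{lem-iteration}. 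Shrink $\delta_0$ further, if necessary, so that both \autoref{lemma:T1} and \autoref{lemma:T2} apply with exponent $\varepsilon'$.

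\textbf{The $L^1$-part.} Write $\Phi_\sigma(u;\rho) = \mathcal{I}(\rho) + \mathcal{T}(\rho)$, with $\mathcal{I}(\rho) = \int_{\Omega_\rho(x_0)}|u|/d_\Omega^s$ and $\mathcal{T}(\rho) = \max\{\rho,d_\Omega(x_0)\}^{-s}\rho^n\tail_{\sigma,B_1}(u;\rho,x_0)$. Using $|u| \leq |v|+|w|$ and applying \autoref{lemma:T1} to $v$ gives
\begin{equation*}
\mathcal{I}(v;\rho) \leq c\Bigl(\tfrac{\rho}{R}\Bigr)^{n-\varepsilon'}\Bigl(\mathcal{I}(v;R) + \max\{R,d_\Omega(x_0)\}^{-s} R^n \tail(v;R,x_0)\Bigr).
\end{equation*}
Since $v = u$ on $B_1 \setminus B_R(x_0) \supset \R^n \setminus B_1$, one has $\tail_{\sigma,B_1}(v;R,x_0) = \tail_{\sigma,B_1}(u;R,x_0)$ and $\tail(v;1,0) \leq c\|u\|_{L^1_{2s}(\R^n)}$, so \eqref{eq:sigma-tail-est} rewrites the tail term for $v$ as $\mathcal{T}(R) + cR^{n+s}\|u\|_{L^1_{2s}(\R^n)}$. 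The contribution of $w$ to $\mathcal{I}(\rho)$, trivially bounded by $\mathcal{I}(w;R)$, is absorbed via \autoref{lemma:freezing-ds}, which supplies the small factor $R^\sigma$ in front of $\Phi_\sigma(u;R)$ together with source terms of size $R^{n+s}\|u\|_{L^1_{2s}(\R^n)} + R^{n+p-\frac{n}{q}}\|d_\Omega^{s-p}f\|_{L^q(\Omega_1)}$.

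\textbf{The tail-part.} Split $\tail_{\sigma,B_1}(u;\rho,x_0)$ at scale $R$. The outer piece (over $B_1 \setminus B_R(x_0)$) factors as $(\rho/R)^{2s-\sigma}\tail_{\sigma,B_1}(u;R,x_0)$, and the bookkeeping $\max\{R,d_\Omega(x_0)\}/\max\{\rho,d_\Omega(x_0)\} \leq R/\rho$ turns its contribution into $c(\rho/R)^{n+s-\sigma}\mathcal{T}(R) \leq c(\rho/R)^{n-\varepsilon'}\mathcal{T}(R)$. For the inner piece (over $B_R(x_0)\setminus B_\rho(x_0)$), split $|u|\leq |v|+|w|$. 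For the $v$-part, use \autoref{lemma:T2} together with the vanishing of $v$ on $\partial\Omega \cap B_R(x_0)$ to obtain the pointwise bound $|v(y)| \leq c\,[v]_{C^{s-\varepsilon'}(\overline{B_{R/2}(x_0)})}\,d_\Omega(y)^{s-\varepsilon'}$ in the boundary regime $d_\Omega(x_0)\leq R$ (and simply $|v|\leq \|v\|_{L^\infty(B_R(x_0))}$ when $B_R(x_0)\subset\Omega$); integrating against $|y-x_0|^{-n-2s+\sigma}$, invoking \autoref{lemma:locbd-bdry} for $\|v\|_{L^\infty}$, and using $\|v\|_{L^1(B_R(x_0))}\leq c\max\{R,d_\Omega(x_0)\}^s \mathcal{I}(v;R)$, the resulting powers of $\rho$ and $R$ collapse to $c(\rho/R)^{n-\varepsilon'}\Phi_\sigma(u;R) + cR^{n+s}\|u\|_{L^1_{2s}(\R^n)}$. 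For the $w$-part, decompose $B_R(x_0)\setminus B_\rho(x_0)$ into dyadic annuli $A_k$, write $|w| = d_\Omega^s \cdot |w|/d_\Omega^s$, use $d_\Omega(y) \leq c(d_\Omega(x_0) + |y-x_0|)$ and the trivial bound $\int_{A_k}|w|/d_\Omega^s \leq \int_{\Omega_R(x_0)}|w|/d_\Omega^s$, then sum the convergent geometric series and apply \autoref{lemma:freezing-ds}; this delivers $cR^\sigma \Phi_\sigma(u;R)$ plus the same source as before.

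\textbf{Iteration.} Assembling everything yields
\begin{equation*}
\Phi_\sigma(u;\rho) \leq c\bigl[(\rho/R)^{n-\varepsilon'} + R^\sigma\bigr]\Phi_\sigma(u;R) + cR^{n+\min\{s,\,p-\frac{n}{q}\}}\bigl(\|u\|_{L^1_{2s}(\R^n)} + \|d_\Omega^{s-p}f\|_{L^q(\Omega_1)}\bigr),
\end{equation*}
and since $s \geq -\varepsilon$ we have $R^{n+\min\{s,p-n/q\}} \leq R^\beta$ for $R \leq R_0 \leq 1$. This is precisely the hypothesis of \autoref{lem-iteration} with $\alpha = n - \varepsilon' > \beta$; together with the near-monotonicity \eqref{eq-almost-incr-Phi} of $\Phi_\sigma$, shrinking $R_0$ once more so that $R_0^\sigma$ is below the threshold $\varepsilon_0$ of the iteration lemma then produces the claimed decay. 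The main obstacle lies in the inner piece of the $\sigma$-tail: a crude $L^\infty$-bound there would leave only a factor $(\rho/R)^{n-s}$ in front of $\Phi_\sigma(u;R)$, which is too weak ($n-s \leq \beta$) to close the iteration; it is essential to exploit both the boundary $C^{s-\varepsilon'}$ regularity and vanishing of $v$ and the $R^\sigma$ gain of the freezing comparison for $w$ in order to recover the correct exponent $n - \varepsilon'$.
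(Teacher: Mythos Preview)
Your $L^1$-part and the final iteration step agree with the paper. The genuine difference is in how you handle the inner piece of the $\sigma$-tail. The paper does \emph{not} split $u=v+w$ again there, nor does it invoke \autoref{lemma:T2} pointwise. Instead, having already established the $L^1$-inequality \eqref{eq:Morrey-boundary-help1} for all $0<\rho<R$, it simply writes $|u(y)|\le c\,d_\Omega^s(y)\cdot |u(y)|/d_\Omega^s(y)$, decomposes $B_R\setminus B_\rho$ dyadically, and on each annulus $B_{2^{k-m}R}\setminus B_{2^{k-m-1}R}$ applies \eqref{eq:Morrey-boundary-help1} with $\rho:=2^{k-m}R$. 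The sum $\sum_k 2^{-k(n+s-\sigma)}[(2^{k-m})^{n-\eps/2}+R^\sigma]$ then telescopes to give exactly $(\rho/R)^{n-\eps/2}+R^\sigma$. This bootstrap is clean because it only reuses the $L^1$-estimate already in hand, and the required case analysis (boundary/interior/intermediate position of $B_\rho$ relative to $\partial\Omega$) is inherited verbatim from the $L^1$-step.

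Your route---a direct pointwise $C^{s-\varepsilon'}$ bound on $v$ plus the freezing estimate for $w$---also closes, and your diagnosis that a crude $L^\infty$ bound would only yield $(\rho/R)^{n-s}$ (hence fail the iteration) is exactly the right observation. Two small points to tighten: (i) \autoref{lemma:T2} and \autoref{lemma:locbd-bdry} control $v$ only on $\overline{B_{R/2}(x_0)}$, so the split of the tail should really be at scale $R/2$, with the annulus $B_R\setminus B_{R/2}$ absorbed into the outer piece via $\tail_{\sigma,B_1}(u;R/2,x_0)\le c\,\tail_{\sigma,B_1}(u;R,x_0)+cR^{-n}\|u\|_{L^1(B_R(x_0))}$; (ii) the intermediate regime $\rho<d_\Omega(x_0)\le R$ deserves an explicit line (here $d_\Omega(y)^{s-\varepsilon'}\le c(|y-x_0|^{s-\varepsilon'}+d_\Omega(x_0)^{s-\varepsilon'})$ and both resulting terms collapse to $\rho^{n-\varepsilon'}$ after using $d_\Omega(x_0)\ge\rho$). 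With these adjustments your argument is complete; the paper's version is shorter mainly because it recycles \eqref{eq:Morrey-boundary-help1} rather than re-deriving regularity for $v$.
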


\begin{proof}
Let $\delta_0$ be the constant given in \autoref{lemma:T1} and assume $\delta \leq \delta_0$.
We claim that for any $x_0 \in \Omega_{1/2}$ and $0 < \rho \leq R < \frac{1}{16}$ it holds
\begin{align}
\label{eq:Morrey-boundary-help-0}
\Phi_\sigma(u; \rho) \le c \left[ \left( \frac{\rho}{R} \right)^{n-\frac{\eps}{2}} + R^{\sigma} \right] \Phi_\sigma(u; R) + c R^{n+s - (s-p) -\frac{n}{q}} \left( \Vert u \Vert_{L^1_{2s}(\R^n)} + \Vert d_{\Omega}^{s-p} f \Vert_{L^q(\Omega_1)} \right).
\end{align}
The desired result follows immediately from \eqref{eq:Morrey-boundary-help-0} by an application of the standard iteration lemma in \autoref{lem-iteration} (with $\alpha:=n-\frac{\varepsilon}{2}$ and $\beta=:n+\min\{p-\frac{n}{q}, -\varepsilon\}<\alpha$, and recalling \eqref{eq-almost-incr-Phi}).

Let $v$ and $w$ be solutions to \eqref{eq-v-Sect4} and \eqref{eq-w-Sect4}, respectively. As a consequence of \autoref{lemma:T1} and \autoref{lemma:freezing-ds}, we obtain using also \eqref{eq:sigma-tail-est} and the fact that $u=v$ in $\R^n \setminus \Omega_R(x_0)$
\begin{align}
\begin{split}
\label{eq:Morrey-boundary-help1}
\int_{\Omega_{\rho}(x_0)}  \left| \frac{u}{d_{\Omega}^s} \right| \d x
&\le \int_{\Omega_{\rho}(x_0)} \left| \frac{v}{d_{\Omega}^s} \right| \d x + \int_{\Omega_{\rho}(x_0)} \left| \frac{w}{d_{\Omega}^s} \right| \d x \\
&\le c \left( \frac{\rho}{R} \right)^{n-\frac{\eps}{2}} \Phi_{\sigma}(v; R) + cR^{n+s} \tail(u; 1, 0) + \int_{\Omega_{\rho}(x_0)} \left| \frac{w}{d_{\Omega}^s} \right| \d x \\
&\le c \left( \frac{\rho}{R} \right)^{n-\frac{\eps}{2}} \Phi_\sigma(u; R) + cR^{n+s} \|u\|_{L^1_{2s}(\R^n)} + c \int_{\Omega_{R}(x_0)} \left| \frac{w}{d_{\Omega}^s} \right| \d x \\
&\le c \left[ \left( \frac{\rho}{R} \right)^{n-\frac{\eps}{2}} + R^{\sigma} \right] \Phi_\sigma(u; R) + c R^{n+s - (s-p) -\frac{n}{q}} \left( \Vert u \Vert_{L^1_{2s}(\R^n)} + \Vert d_{\Omega}^{s-p} f \Vert_{L^q(\Omega_1)} \right).
\end{split}
\end{align}

From here, it remains to show that
\begin{align}
\label{eq:tail-Morrey-u}
\begin{split}
\max&\{\rho,d_{\Omega}(x_0)\}^{-s} \rho^{n} \tail_{\sigma,B_1}(u;\rho, x_0)  \\
&\le c \left[ \left( \frac{\rho}{R} \right)^{n-\frac{\eps}{2}} + R^{\sigma} \right] \Phi_{\sigma}(u; R) + c R^{n+s - (s-p) -\frac{n}{q}} \left( \Vert u \Vert_{L^1_{2s}(\R^n)} + \Vert d_{\Omega}^{s-p} f \Vert_{L^q(\Omega_1)} \right).
\end{split}
\end{align}
To prove this, let us first consider the case $B_{\rho}(x_0) \cap \Omega^c \not= \emptyset$. Then, we take $m \in \N$ such that $2^{-m} R \leq \rho < 2^{-m+1}R$ and compute
\begin{align*}
\max&\{\rho,d_{\Omega}(x_0)\}^{-s} \rho^{n} \tail_{\sigma,B_1}(u ; \rho,x_0) \\
&\le c (2^{-m}R)^{n-s} \sum_{k = 1}^{m} (2^{-m}R)^{2s-\sigma} \int_{B_{2^{k-m}R}(x_0) \setminus B_{2^{k-m-1}R}(x_0)} \frac{|u(y)|}{|y - x_0|^{n+2s-\sigma}} \d y \\
&\quad + c (2^{-m}R)^{n-s} 2^{-m(2s-\sigma)} \tail_{\sigma,B_1}(u;R,x_0) \\
&=: I_1 + I_2.
\end{align*}
Note that for $I_2$, we can simply estimate since $\sigma < s$:
\begin{align*}
I_2 \le c \left( \frac{\rho}{R} \right)^{n+s-\sigma} R^{n-s} \tail_{\sigma,B_1}(u;R,x_0) \le c \left( \frac{\rho}{R} \right)^{n-\frac{\eps}{2}} \Phi_{\sigma}(u; R).
\end{align*}
For $I_1$, we deduce, using that $d_{\Omega} \le 2 (2^{k-m}R)$ in $B_{2^{k-m}R}(x_0)$ and applying \eqref{eq:Morrey-boundary-help1}:
\begin{align*}
I_1
&\le c (2^{-m}R)^{-s} \sum_{k = 1}^m 2^{-k(n+2s-\sigma)} \int_{B_{2^{k-m}R}(x_0)} |u| \d x \\
&\leq c \sum_{k = 1}^m 2^{-k(n+s-\sigma)} \int_{\Omega_{2^{k-m}R}(x_0)} \left| \frac{u}{d_{\Omega}^s} \right| \d x \\
&\le c \sum_{k = 1}^m 2^{-k(n+s-\sigma)} \left[ 2^{(k-m)(n-\frac{\varepsilon}{2})} + R^{\sigma} \right] \Phi_\sigma(u; R) \\
&\quad + c \sum_{k = 1}^m 2^{-k(n+s-\sigma)} R^{n+s-(s-p)-\frac{n}{q}} \left( \Vert u \Vert_{L^1_{2s}(\R^n)} + \Vert d_{\Omega}^{s-p} f \Vert_{L^q(\Omega_1)} \right) \\
&\le c \left[ \left( \frac{\rho}{R} \right)^{n-\frac{\varepsilon}{2}} + R^{\sigma} \right] \Phi_{\sigma}(u; R) + c R^{n+s - (s-p) -\frac{n}{q}} \left( \Vert u \Vert_{L^1_{2s}(\R^n)} + \Vert d_{\Omega}^{s-p} f \Vert_{L^q(\Omega_1)} \right).
\end{align*}
Here, the sums $\sum_{k=1}^m 2^{-k(s-\sigma+\frac{\varepsilon}{2})}$ and $\sum_{k = 1}^m 2^{-k(n+s-\sigma)}$ are finite since $\sigma<s$. This shows \eqref{eq:tail-Morrey-u} in case $B_{\rho}(x_0) \cap \Omega^c \not= \emptyset$.

Next, we assume that $B_R(x_0) \subset \Omega$. The proof of \eqref{eq:tail-Morrey-u} goes by the exact same arguments as before, with the only difference that we have now $\max\{2^{k-m}R, d_{\Omega}(x_0)\} = \max\{R, d_{\Omega}(x_0)\} = d_{\Omega}(x_0)$ for every $k \in \{0, 1, \dots, m\}$ and thus also $d_{\Omega} \le 2 d_{\Omega}(x_0)$ in $B_{2^{k-m}R}(x_0)$, and that we use \eqref{eq:T1-help3} instead of \eqref{eq:T1-help1}.

If $B_{4\rho}(x_0) \subset \Omega$ and $B_R(x_0) \cap \Omega^c \not=\emptyset$, we apply \eqref{eq:tail-Morrey-u} with $\rho := \rho$ and $R := d_{\Omega}(x_0)$, then observe that
\begin{align*}
d_{\Omega}(x_0)^{n-s}\tail_{\sigma,B_1}(u ; d_{\Omega}(x_0), x_0) \leq c\Phi_{\sigma}(u; 2d_\Omega(x_0)),
\end{align*}
and then use \eqref{eq:Morrey-boundary-help1} and \eqref{eq:tail-Morrey-u} with $\rho := 2 d_{\Omega}(x_0)$ and $R := R$ to deduce the desired result.

Finally, note that if $B_{4\rho}(x_0) \not\subset \Omega$ and also $B_{\rho}(x_0) \cap \Omega^c = \emptyset$, then we can just apply \eqref{eq:one-scale-up-Morrey} first, and then use  \eqref{eq:Morrey-boundary-help1} and \eqref{eq:tail-Morrey-u}.
This establishes \eqref{eq:tail-Morrey-u}.
\end{proof}

Before we prove \autoref{thm:Cs-eps}, we need the following consequence of the interior regularity for solutions to nonlocal equations with H\"older continuous coefficients (see \cite{FeRo24b}).

\begin{lemma}
\label{lemma:interior-excess-decay}
Let $\sigma \in (0,s)$ and $\eps \in (0,\sigma)$. Let $x_0 \in \R^n$ and $R > 0$. Assume that $K$ satisfies \eqref{eq:Kcomp}, and \eqref{eq-K-cont} with $\mathcal{A} = B_R(x_0)$.
Let $u$ be a solution to
\begin{align*}
Lu = f ~~ \text{ in } B_R=B_R(x_0),
\end{align*}
where $f \in L^q(B_R)$ for some $q \in (\frac{n}{2s},\infty]$. Then, for any $0 < \rho \le R/2$
\begin{align*}
&\dashint_{B_{\rho}} |u - (u)_{\rho,x_0}| \d x \\
&\leq c \left( \frac{\rho}{R} \right)^{\min \{2s - \frac{n}{q} , 1+\sigma-\eps \} } \left( \dashint_{B_{R}} |u - (u)_{R,x_0}| \d x + \tail(u - (u)_{R,x_0}; R,x_0) + R^{2s - \frac{n}{q}} \Vert f \Vert_{L^q(B_{R})} \right),
\end{align*}
where $c=c(n,s,\lambda,\Lambda,\sigma,\eps,q)>0$.
\end{lemma}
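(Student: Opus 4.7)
The plan is to reduce the claim to a direct application of the interior regularity result in \autoref{prop:interior-regularity}. The key observation is that $L$ annihilates constants: the kernel representation \eqref{eq-op} sees only the differences $u(x) - u(y)$, so the constant-shifted function $\tilde u := u - (u)_{R, x_0}$ again satisfies $L\tilde u = f$ in $B_R(x_0)$. This seemingly trivial reduction is precisely what allows the tail on the right-hand side to feature $u - (u)_{R, x_0}$ in place of $u$, and it is exactly what will be needed to feed the estimate into the Campanato-type iterations later in the paper.

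Next I would apply \autoref{prop:interior-regularity} to $\tilde u$ with the H\"older exponent $\alpha := \min\{2s - \tfrac{n}{q},\, 1 + \sigma - \eps\}$ to obtain
\[
[\tilde u]_{C^\alpha(\overline{B_{R/2}(x_0)})} \le c R^{-\alpha} \bigl( R^{-n} \|\tilde u\|_{L^1(B_R(x_0))} + \tail(\tilde u; R, x_0) + R^{2s - n/q} \|f\|_{L^q(B_R(x_0))} \bigr).
\]
Since $R^{-n}\|\tilde u\|_{L^1(B_R(x_0))}$ equals (up to a dimensional constant) $\dashint_{B_R(x_0)} |u - (u)_{R, x_0}| \d x$ and $\tail(\tilde u; R, x_0) = \tail(u - (u)_{R, x_0}; R, x_0)$, the three terms appearing in the parenthesis above are exactly the ones on the right-hand side of the claim.

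The final step converts this H\"older seminorm estimate into a mean-oscillation bound. For any $0 < \rho \le R/2$, a standard computation (using in the case $\alpha > 1$ that the linear part of $\tilde u$ centred at $x_0$ has vanishing first moment on the symmetric ball $B_\rho(x_0)$) yields
\[
\dashint_{B_\rho(x_0)} |u - (u)_{\rho, x_0}| \d x = \dashint_{B_\rho(x_0)} |\tilde u - (\tilde u)_{\rho, x_0}| \d x \le c \rho^\alpha [\tilde u]_{C^\alpha(\overline{B_{R/2}(x_0)})},
\]
and combining this with the previous display produces the desired $(\rho/R)^\alpha$ decay. No substantial obstacle is expected: the argument is essentially a translation of the interior Schauder-type estimate from \autoref{prop:interior-regularity} into the Morrey--Campanato language, and the whole content lies in the constant-invariance observation that lets the tail be evaluated on the already-mean-free function $u - (u)_{R, x_0}$.
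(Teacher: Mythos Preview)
Your proposal is correct and follows essentially the same approach as the paper's proof: subtract the mean $(u)_{R,x_0}$, apply \autoref{prop:interior-regularity} to the shifted function, and convert the resulting H\"older seminorm bound into the desired mean-oscillation decay. Your remark about handling the case $\alpha > 1$ via the vanishing first moment of the linear part on a symmetric ball is in fact slightly more careful than the paper, which simply writes $\dashint_{B_\rho}|u-(u)_{\rho,x_0}|\d x \le \sup_{x,y\in B_\rho}|u(x)-u(y)| \le (2\rho)^\alpha [u]_{C^\alpha(\overline{B_\rho})}$ without distinguishing this case.
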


\begin{proof}
By applying the regularity estimate from \autoref{prop:interior-regularity} to $u-(u)_{R,x_0}$ in $B_{R/2}$, we obtain
\begin{align*}
&\dashint_{B_{\rho}} |u - (u)_{\rho,x_0}| \d x \\
&\le \sup_{x,y \in B_{\rho}} |u(x) - u(y)| \le (2 \rho)^{\min \{2s - \frac{n}{q} , 1+\sigma-\eps \} } [u]_{C^{\min \{2s - \frac{n}{q} , 1+\sigma-\eps \} } (\overline{B_{\rho}})} \\
&\le c \rho^{\min \{2s - \frac{n}{q} , 1+\sigma-\eps \} } [u - (u)_{R,x_0}]_{C^{\min \{2s - \frac{n}{q} , 1+\sigma-\eps \} }(\overline{B_{R/2}})} \\
&\le c \left( \frac{\rho}{R} \right)^{\min \{2s - \frac{n}{q} , 1+\sigma-\eps \} } \left( \dashint_{B_{R}} |u - (u)_{R,x_0}| \d x + \tail(u - (u)_{R,x_0}; R,x_0) + R^{2s - \frac{n}{q}} \Vert f \Vert_{L^q(B_{R})} \right).
\end{align*}
This concludes the proof, as desired.
\end{proof}

We are now in a position to provide the proof of \autoref{thm:Cs-eps}.

\begin{proof}[Proof of \autoref{thm:Cs-eps}]
We denote by $\delta_0, R_0$ the constants given in \autoref{lemma:Morrey-boundary}. We assume $\delta \leq \delta_0$ and
\begin{align*}
\|u\|_{L^1_{2s}(\R^n)} + \| d^{s-p}_{\Omega} f\|_{L^q(\Omega_1)} \leq 1
\end{align*}
and claim that for all $x_0 \in \Omega_{1/2}$ and $0<\rho< R_0$ it holds that
\begin{align}\label{eq-claim}
\int_{\Omega_{1/2} \cap B_\rho(x_0)} |u - (u)_{\Omega_{1/2}\cap B_\rho(x_0)}| \d x \leq c \rho^{n + s + \min \{p -\frac{n}{q}, - \eps\} }.
\end{align}
Since $\mathcal{L}^{1,n + s + \min \{p -\frac{n}{q}, - \eps\} }(\Omega_{1/2}) = C^{s+\min\{p-\frac{n}{q}, - \eps\}}(\overline{\Omega_{1/2}})$, the desired result in case $q \ge \frac{n}{p}$ follows from \eqref{eq-claim}. In case $q < \frac{n}{p}$, we let $\eps$ so small that $\varepsilon < -p + \frac{n}{q}$, and then the desired result also follows.

Let us now prove the claim \eqref{eq-claim}. In case $d_\Omega(x_0)/2 \leq \rho< R_0$, we have
\begin{align}\label{eq-bdry}
\int_{\Omega_{1/2} \cap B_\rho(x_0)} |u - (u)_{\Omega_{1/2}\cap B_\rho(x_0)}| \d x \leq 2\int_{\Omega_{1/2} \cap B_\rho(x_0)} |u| \d x \leq c \rho^{s} \int_{\Omega_\rho(x_0)} \left| \frac{u}{d_\Omega^s} \right| \d x.
\end{align}
Moreover, by \autoref{lemma:Morrey-boundary},  we obtain
\begin{align*}
\rho^{s} \int_{\Omega_\rho(x_0)} \left| \frac{u}{d_\Omega^s} \right| \d x \le c \rho^{n +s + \min \{p -\frac{n}{q}, - \eps\} } \left( \Phi_{\sigma}(u; R_0) + 1 \right).
\end{align*}
Note that the constant $c > 0$ depends on $R_0$. Let $\eta \in C^\infty_c(B_{7/8})$ be such that $\eta=1$ on $\Omega_{3/4}$, $0\leq \eta \leq 1$, and $|\nabla \eta|\leq c$. Then, it follows from the Hardy inequality (\autoref{lemma:Hardy}), Caccioppoli inequality (\autoref{lemma:Cacc-bdry}), and local boundedness (\autoref{lemma:locbd-bdry}) that
\begin{align*}
    \|u/d_\Omega^s\|_{L^1(\Omega_{R_0}(x_0))} \leq c\|u\eta/d_\Omega^s\|_{L^2(B_1)} \leq c[u\eta]_{H^s(\R^n)}
    \leq c[u]_{H^s(B_{15/16})} + c\|u\|_{L^2(B_{7/8})} \leq c.
\end{align*}
Since $\tail_{\sigma,B_1}(u;R_0,x_0) \le c \tail(u;R_0,x_0) \leq c \|u\|_{L^1(B_1)} + c\tail(u;1,0) \leq c$, we obtain
\begin{align}\label{eq:safe}
    \rho^{s} \int_{\Omega_\rho(x_0)} \left| \frac{u}{d_\Omega^s} \right| \d x \leq c\rho^{n +s +\min \{p -\frac{n}{q}, - \eps \} }.
\end{align}
The claim \eqref{eq-claim} follows from \eqref{eq-bdry} and \eqref{eq:safe} in this case.

Next, we consider the case $\rho < d_{\Omega}(x_0)/2 < R_0$. We apply \eqref{eq-E-inf} and the interior excess decay (see \autoref{lemma:interior-excess-decay}) with $R := d/2:=d_\Omega(x_0)/2$, and obtain
\begin{align*}
\int_{\Omega_{1/2} \cap B_\rho(x_0)} |u - (u)_{\Omega_{1/2}\cap B_\rho(x_0)}| \d x
&\leq c \int_{B_{\rho}(x_0)} |u - (u)_{\rho,x_0}| \d x \\
&\le c \left( \frac{\rho}{d} \right)^{n+ s+\min\{p-\frac{n}{q}, -\varepsilon\}} \int_{B_{d/2}(x_0)} |u - (u)_{d/2,x_0}| \d x \\
&\quad + c \left( \frac{\rho}{d} \right)^{n+ s+\min\{p-\frac{n}{q}, -\varepsilon\}}d^n\tail(u - (u)_{d/2,x_0}; d/2,x_0) \\
&\quad + c \left(\frac{\rho}{d}\right)^{n+ s+\min\{p-\frac{n}{q}, -\varepsilon\}} d^{n+2s-\frac{n}{q}} \Vert f \Vert_{L^q(B_{d/2}(x_0))},
\end{align*}
noting that $\min\{2s-\frac{n}{q}, 1+\sigma-\varepsilon\} \geq s+\min\{p-\frac{n}{q}, -\varepsilon\}$.
The estimate \eqref{eq:safe} applied with $\rho := d/2$ shows that
\begin{align*}
\int_{B_{d/2}(x_0)} |u - (u)_{d/2,x_0}| \d x \leq 2\int_{B_{d/2}(x_0)} |u| \d x \leq c d^{s} \int_{B_{d/2}(x_0)} \left| \frac{u}{d_\Omega^s} \right| \d x \leq cd^{n+s+\min\{p-\frac{n}{q}, -\varepsilon\}}.
\end{align*}
Moreover, by applying  \eqref{eq:sigma-tail-est} and \autoref{lemma:Morrey-boundary}, we deduce
\begin{align*}
    d^n \tail(u - (u)_{d/2,x_0}; d/2,x_0)
    &\leq cd^n\tail_{\sigma,B_1}(u; d/2,x_0) + cd^{n+2s} \tail(u;1,0) + cd^n(u)_{d/2,x_0} \\
    &\leq cd^{n+s+\min\{p-\frac{n}{q}, -\varepsilon\}} \left( \Phi_{\sigma}(u; R_0) + 1\right) \leq cd^{n+s+\min\{ p-\frac{n}{q}, -\varepsilon\}}.
\end{align*}
We also observe that $d_{\Omega} \geq d/2$ in $B_{d/2}(x_0)$ and hence $|f| \le cd^{p-s} (d_{\Omega}^{s-p} |f|)$ in $B_{d/2}(x_0)$. Therefore, we arrive at the claim \eqref{eq-claim} in case $\rho < d_{\Omega}(x_0)/2 < R_0$.

The result for the remaining case $d_\Omega(x_0)/2 \geq R_0$ follows immediately from the interior excess decay estimate in \autoref{lemma:interior-excess-decay}. The proof is complete.
\end{proof}

\section{Optimal boundary regularity for inhomogeneous kernels}
\label{sec:Cs}

The goal of this section is to prove the following optimal $C^s$ regularity result for general kernels $K$ satisfying \eqref{eq:Kcomp} and \eqref{eq-K-cont}.

\begin{theorem}
\label{thm:inhom-Cs}
Let $\alpha, \sigma \in (0,s)$. Let $\Omega \subset \R^n$ be a $C^{1, \alpha}$ domain with $0 \in \partial \Omega$. Assume that $K$ satisfies \eqref{eq:Kcomp}, and \eqref{eq-K-cont} with $\mathcal{A} = B_1$. 
Let $u$ be a solution to
\begin{align*}
\left\{
\begin{aligned}
Lu&=f &&\text{in }\Omega_1, \\
u&=0 &&\text{in } B_1 \setminus \Omega,
\end{aligned}
\right.
\end{align*}
where $f$ is such that $d_{\Omega}^{s-p}f \in L^q(\Omega_1)$ for some $p \in (0, s]$ and $q \in (\frac{n}{p},\infty]$. Then $u \in C^{s}_{\mathrm{loc}}(\overline{\Omega} \cap B_1)$ and
\begin{align*}
\left\Vert u \right\Vert_{C^{s}(\overline{\Omega_{1/8}})} \le c \left( \Vert u \Vert_{L^{1}_{2s}(\R^n)} + \Vert d^{s-p}_{\Omega} f \Vert_{L^q(\Omega_1)} \right),
\end{align*}
where $c=c(n,s,\lambda,\Lambda,\alpha,\sigma,p, q, \Omega) > 0$.
\end{theorem}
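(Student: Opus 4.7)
The strategy is a higher order Campanato-type iteration at the boundary, using the almost optimal $C^{s-\varepsilon}$ regularity from \autoref{thm:Cs-eps} as a starting point. The outcome we want to iterate is a decay estimate for the excess functional $\Psi_\sigma(u;\rho,x_0)$ defined in \eqref{eq-excess-Psi}.

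First I would set up the family of barriers. For each $x_0 \in \Omega_{1/4}$, define the locally frozen operator $\widetilde L_{x_0}$ (with kernel $\widetilde K_{x_0}$ agreeing with $K_{x_0}$ for $|x-y|$ small and decaying nicely otherwise) as in \eqref{eq:locally-freeze}, and let $\psi_{x_0}$ be the nonnegative solution to
\begin{align*}
\widetilde L_{x_0} \psi_{x_0} = 0 \quad\text{in } \Omega_{1/2}, \qquad \psi_{x_0} = g_{x_0} \quad\text{in } \R^n \setminus \Omega_{1/2},
\end{align*}
for a suitable fixed exterior datum $g_{x_0}\geq 0$. Since $\widetilde L_{x_0}$ is translation invariant and its kernel satisfies \eqref{eq:Kcomp}, the boundary regularity and Hopf bounds of \cite{RoWe24} provide $\psi_{x_0}\in C^s$ with $\psi_{x_0}\asymp d_\Omega^s$ near $\partial\Omega$. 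Next, adapting \cite[Proposition~2.5.4]{FeRo24}/\cite{RoWe24} to the geometry of $\Omega$, I would establish a boundary expansion of order $s+\varepsilon$ for any $\widetilde L_{x_0}$-harmonic function $v$ in $\Omega\cap B_R(x_0)$, namely
\begin{align*}
|v(x) - q_z\,\psi_{x_0}(x)| \le c\,R^{-\varepsilon}|x-z|^{s+\varepsilon}\bigl(\|v\|_{L^\infty(B_R(x_0))} + \tail(v;R,x_0)\bigr)
\end{align*}
for all $z \in \partial\Omega\cap B_{R/2}(x_0)$, $x \in \Omega \cap B_{R/2}(x_0)$, and some $q_z = q_z(v)\in\R$.

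With these in hand, I would derive the higher order Campanato excess decay. Given $x_0$ and $R\le R_0$, split $u=v+w$ where $v$ solves the frozen problem \eqref{eq-v-Sect4} with $\widetilde L_{x_0}$ and $w=u-v$ solves \eqref{eq-w-Sect4}. For the harmonic part $v$, the expansion above, applied with $z\in\partial\Omega\cap B_R(x_0)$ chosen near $x_0$ and with $q_z=(v/\psi_{x_0})_{\Omega_\rho(x_0)}$ up to controlled error, converts into the Campanato-type bound
\begin{align*}
\Psi_\sigma(v;\rho,x_0) \le c\,\Bigl(\frac{\rho}{R}\Bigr)^{n+\varepsilon}\Psi_\sigma(v;R,x_0) + c\,R^{n+s}\|u\|_{L^1_{2s}(\R^n)}
\end{align*}
for all $0<\rho<R$; the interior case (when $B_{4\rho}(x_0)\subset\Omega$) is handled separately by \autoref{lemma:interior-excess-decay}. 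For the perturbation $w$, \autoref{lemma:freezing-l2} and \autoref{lemma:freezing-ds}, together with $\psi_{x_0}\asymp d_\Omega^s$, produce
\begin{align*}
\Psi_\sigma(w;\rho,x_0) \le c\,R^\sigma\Phi_\sigma(u;R,x_0) + c\,R^{n+s-(s-p)-\frac{n}{q}}\bigl(\|u\|_{L^1_{2s}(\R^n)} + \|d_\Omega^{s-p}f\|_{L^q(\Omega_1)}\bigr).
\end{align*}
Summing these two bounds and invoking \autoref{thm:Cs-eps} to absorb the $\Phi_\sigma$ term yields the iterable Campanato-type estimate. Feeding it into \autoref{lem-iteration} (together with the almost monotonicity \eqref{eq-almost-incr-Psi}) gives
\begin{align*}
\Psi_\sigma(u;\rho,x_0) \le c\,\rho^{n+\varepsilon'}\bigl(\|u\|_{L^1_{2s}(\R^n)} + \|d_\Omega^{s-p}f\|_{L^q(\Omega_1)}\bigr)
\end{align*}
uniformly in $x_0 \in \Omega_{1/4}$, for some small $\varepsilon'>0$.

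Finally, I need to translate this one-point Campanato decay into a genuine $C^s$ estimate. The obstacle is that the barrier $\psi_{x_0}$ depends on $x_0$ and is itself only $C^s$, so the characterization $\mathcal L^{1,n+\varepsilon}\hookrightarrow C^\varepsilon$ cannot be applied directly to $u/\psi_{x_0}$. The remedy is the closeness estimate
\begin{align*}
[\psi_{x_0}-\psi_{y_0}]_{C^{s-\varepsilon}(\overline{\Omega_{1/4}})} \le c\,|x_0-y_0|^\sigma,
\end{align*}
which I would prove by differencing the equations $\widetilde L_{x_0}(\psi_{x_0}-\psi_{y_0}) = (\widetilde L_{y_0}-\widetilde L_{x_0})\psi_{y_0}$, estimating the right-hand side via \eqref{eq-K-cont} and the $C^s$ bound on $\psi_{y_0}$, and then applying the $C^{s-\varepsilon}$ regularity of \autoref{thm:Cs-eps} (extended to this right-hand side as in \autoref{remark:f-assumptions}). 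This is the main technical obstacle: handling a nonlocal right-hand side requires extending the Morrey iteration of Section~\ref{sec:Morrey} to non-zero data of that shape. Once the closeness bound is available, it permits replacing $\psi_{x_0}$ by $\psi_x$ in the Campanato excess at a fixed scale, yielding
\begin{align*}
\dashint_{\Omega_\rho(x_0)}\Bigl|\tfrac{u(x)}{\psi_x(x)} - \bigl(\tfrac{u}{\psi_\cdot}\bigr)_{\Omega_\rho(x_0)}\Bigr|\d x \le c\,\rho^{\varepsilon''}
\end{align*}
for some $\varepsilon''>0$. The Campanato characterization then gives $x\mapsto u(x)/\psi_x(x) \in C^{\varepsilon''}(\overline{\Omega_{1/8}})$, and multiplying by $\psi_x(x)\asymp d_\Omega^s(x)$ with $\psi_x \in C^s$ uniformly in $x$ produces the desired $C^s$ bound on $u$, with the quantitative estimate claimed in the statement.
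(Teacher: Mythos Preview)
Your sketch follows essentially the same approach as the paper: locally frozen barriers $\psi_{x_0}$ with an $s+\varepsilon$ expansion, a higher order Campanato iteration for $\Psi_\sigma$, the closeness bound $[\psi_{x_0}-\psi_{y_0}]_{C^{s-\varepsilon}}\le c|x_0-y_0|^\sigma$ to pass from $u/\psi_{x_0}$ to $u/\psi_\cdot$, and then Campanato embedding. The only imprecision is your final sentence: ``multiplying by $\psi_x(x)$ with $\psi_x\in C^s$ uniformly'' does not directly work, since the diagonal map $x\mapsto\psi_x(x)$ is not a priori $C^s$; the paper instead uses that $u/\psi_\cdot\in C^{\varepsilon''}$ is in particular bounded, hence $|u|\le c\,d_\Omega^s$, and then combines this with interior regularity (\autoref{prop:interior-regularity}) via the standard argument of \cite[Proposition~2.6.4]{FeRo24} to conclude $u\in C^s(\overline{\Omega_{1/8}})$.
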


Note that \autoref{thm:inhom-Cs} immediately implies the first part of \autoref{thm:main-2}.

\subsection{Inhomogeneous barriers}

The goal of this subsection is to construct suitable barriers with respect to symmetric operators that are translation invariant in a ball, say $B_{1/2}$, and to establish several properties of these barriers. Such kernels will arise in \eqref{eq:locally-freeze}, where we freeze non-translation invariant kernels at a point $x_0$, but only locally. Note that this allows us to prove our main result \autoref{thm:inhom-Cs} for kernels $K$ that satisfy \eqref{eq-K-cont} only locally. Working with completely translation invariant kernels in this subsection would force us to assume \eqref{eq-K-cont} for $K$ with $\mathcal{A} = \R^n$ in \autoref{lemma:w-regularity}.

Let $L$ be a translation invariant operator with kernel $K$ satisfying \eqref{eq:Kcomp} and $\widetilde{L}$ be an operator with kernel $\widetilde{K}$ satisfying \eqref{eq:Kcomp} and $\widetilde{K}(x, y)=K(x, y)$ for all $x, y$ with $|x-y|<1/2$. In order to construct barriers, let us fix a $C^{1,\alpha}$ domain $\Omega$ with $0 \in \partial\Omega$ for some $\alpha \in (0,s)$ and take a set $D \subset \R^n$ with $\partial D \in C^{1,\alpha}$ and $\Omega_1 \subset D \subset \Omega_2$. We define the barrier function $\psi$ with respect to $\widetilde{L}$ as the solution to
\begin{align}
\label{eq:translation-invariant-solution}
\left\{
\begin{aligned}
    \widetilde{L}\psi&=0 &&\text{in } D, \\
    \psi&=g &&\text{in } D^c.
\end{aligned}
\right.
\end{align}
where $g \in C^{\infty}_c(\R^n \setminus B_3)$ is such that $0 \le g \le 1$ and $g \not\equiv 0$. The existence of $\psi$ follows by the well-posedness of the nonlocal Dirichlet problem.
Note that in particular $\psi \equiv 0$ in $B_3 \setminus D$.

The following proposition establishes several crucial properties of $\psi$. Its proof heavily relies on \cite[Theorem~6.9]{RoWe24}, which applies only to translation invariant operators, but it can be also applied to $\psi$ since $\psi$ solves
\begin{align}\label{eq-Lpsi}
    \left\{
    \begin{aligned}
        L\psi &= (L-\widetilde{L})\psi &&\text{in } D, \\
        \psi &= 0 &&\text{in } B_3.
    \end{aligned}
    \right.
\end{align}
Notice that $(L-\widetilde{L})\psi \in L^\infty(D)$ since $0\leq\psi\leq 1$ by the comparison principle and hence for any $x \in D$
\begin{align*}
\begin{split}
    |(L-\widetilde{L})\psi(x)|
    &\leq 2\int_{\R^n \setminus B_{1/2}(x)} |\psi(x)-\psi(y)| |K(x, y)-\widetilde{K}(x, y)| \d y \\
    &\leq c \int_{\R^n \setminus B_{1/2}(x)} |x-y|^{-n-2s} \d y \leq c.
\end{split}
\end{align*}

\begin{proposition}
\label{prop:inhom-Cs}
Let $\Omega,D,L,\widetilde{L},g,\psi$ be as before. Then there exists $C_1=C_1(n, s, \lambda, \Lambda, \alpha, \Omega)>0$ such that the following hold true:
\begin{itemize}
\item[(i)] $\psi \in C^s(\R^n)$ and
\begin{align*}
\|\psi\|_{C^s(\R^n)} \le C_1.
\end{align*}
\item[(ii)] It holds $\psi \le C_1 d_D^s$ in $D$ and
\begin{align*}
    C_1^{-1} d_\Omega^s \le \psi \le C_1 d_\Omega^s ~~ \text{ in } \Omega_{1/4}.
\end{align*}
\item[(iii)] Let $x_0 \in \Omega_{1/2}$ and $R > 0$ with $\Omega_R(x_0) \subset \Omega_{1/2}$, and $v$ be a solution to
\begin{align*}
\left\{
\begin{aligned}
Lv&=0 &&\text{in }\Omega_R(x_0), \\
v&=0 &&\text{in }B_R(x_0) \setminus \Omega.
\end{aligned}
\right.
\end{align*}
Then, for any $\eps \in (0,\alpha s)$, there exists $C_2=C_2(n,s,\lambda,\Lambda,\alpha,\Omega,\eps)> 0$ such that
for any $z \in B_{R/2}(x_0) \cap \partial \Omega$ there exists $q_z \in \R$ such that
\begin{align}
\label{eq:expansion-psi}
|v(x) - q_z \psi(x)| \le C_2 R^{-s-\eps} |x-z|^{s+\eps} \left(R^{-n} \Vert v \Vert_{L^1(B_R(x_0))} + \tail(v;R,x_0) \right) ~~ \forall x \in \Omega_{R/2}(x_0).
\end{align}
\end{itemize}
\end{proposition}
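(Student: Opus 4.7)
The key setup, recorded in \eqref{eq-Lpsi}, is that $\psi$ solves $L\psi = h$ in $D$ with $h := (L-\widetilde L)\psi \in L^\infty(D)$ (by $0 \le \psi \le 1$ from the maximum principle, combined with the integration carried out right after \eqref{eq-Lpsi}). Moreover, since $g \in C^\infty_c(\R^n \setminus B_3)$ with $\overline D \subset B_3$, we have $\psi \equiv 0$ on $B_3 \setminus D$, so $\psi$ vanishes on a neighborhood of $\partial D$ from outside. Finally, since $\Omega_1 \subset D \subset \Omega_2$, one checks that $\partial D \cap B_1 = \partial \Omega \cap B_1$, and hence $d_D = d_\Omega$ throughout $\Omega_{1/4}$ (the nearest $\partial\Omega$-point to any $x \in \Omega_{1/4}$ lies in $B_{1/2}\subset B_1$, where $\partial D$ coincides with $\partial \Omega$).

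For (i) and the upper bound in (ii), my plan is to apply the $C^s(\overline D)$ boundary regularity estimate for translation invariant operators in $C^{1,\alpha}$ domains from \cite{RoWe24} to $\psi$ (with bounded right-hand side $h$ and zero exterior data near $\partial D$); this yields $\psi \in C^s(\overline D)$, and patching with the smooth $g$ supported outside $B_3$ produces $\psi \in C^s(\R^n)$. The inequality $\psi \le C_1 d_D^s$ in $D$ follows from $\psi|_{\partial D} = 0$ and transfers to $\psi \le C_1 d_\Omega^s$ on $\Omega_{1/4}$ via the identification above. For the lower bound in (ii), $\psi \ge 0$ and $\psi \not\equiv 0$ (by $g \not\equiv 0$ and the comparison principle), so a Hopf-type lemma for translation invariant operators on $C^{1,\alpha}$ domains, applied to $\psi$ as a supersolution of $L\psi \ge -\|h\|_{L^\infty}$, delivers $\psi \ge C_1^{-1} d_D^s$ on a compactly contained subregion covering $\Omega_{1/4}$.

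The heart of the proposition is (iii). My plan is to invoke the $(s+\eps)$-expansion theorem of \cite[Theorem~6.9]{RoWe24} twice, once for $v$ and once for $\psi$. At each $z \in \partial\Omega \cap B_{R/2}(x_0)$ this produces a half-space barrier $b_z$ (depending only on $L$ and the tangent direction at $z$) together with scalars $a_z, \tilde a_z \in \R$ such that, with $N(v) := R^{-n} \|v\|_{L^1(B_R(x_0))} + \tail(v; R, x_0)$,
\[
|v(x) - a_z b_z(x)| \le C R^{-s-\eps} |x-z|^{s+\eps} N(v), \qquad |\psi(x) - \tilde a_z b_z(x)| \le C |x-z|^{s+\eps}.
\]
By (ii), $\tilde a_z$ is bounded above and below away from zero uniformly in $z$, while the $L^\infty$-bound $\|v\|_{L^\infty(B_{R/2}(x_0))} \le C N(v)$ from \autoref{lemma:locbd-bdry} forces $|a_z| \le C R^{-s} N(v)$ (by evaluating $b_z$ at scale $R$). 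Setting $q_z := a_z / \tilde a_z$ and combining the two expansions with the triangle inequality then produces \eqref{eq:expansion-psi}, using $R^{-s} \le R^{-s-\eps}$ for $R\le 1$ to absorb the residual $|q_z||x-z|^{s+\eps}$ term into the desired form.

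The main technical obstacle is step (iii): ensuring that \cite[Theorem~6.9]{RoWe24}, stated for homogeneous translation-invariant equations, applies to $\psi$, whose equation carries the nontrivial right-hand side $h$ arising from $\widetilde L \ne L$ at large scales. This must be handled either by absorbing $h$ into the error of the $(s+\eps)$-expansion (possible since $s+\eps < 2s$, so a bounded datum is of lower order) or by an analogue of \autoref{rmk:T2} for the expansion itself. One must also keep all constants independent of $R$, so that the coefficient ratio $q_z$ is well-controlled and the final expansion scales correctly in $R$.
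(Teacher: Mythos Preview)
Your outline for (i), the upper half of (ii), and the algebraic combination in (iii) matches the paper's proof exactly: both arguments invoke \cite[Theorem~6.9]{RoWe24} once for $v$ and once for $\psi$, obtain expansions in terms of the one-dimensional half-space barrier $b_{\nu_z}$, and set $q_z = a_z/\tilde a_z$.

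There is, however, a real gap in your treatment of the lower bound in (ii), which feeds directly into (iii). You write that a Hopf-type lemma from \cite{RoWe24} yields $\psi \ge C_1^{-1} d_D^s$, and then use this to conclude that $\tilde a_z$ is bounded below uniformly in $z$. The issue is that the constant $C_1$ in the proposition is required to depend only on $(n,s,\lambda,\Lambda,\alpha,\Omega)$, \emph{not} on the particular operators $L,\widetilde L$; later in the paper $\psi = \psi_{x_0}$ and $\widetilde L = \widetilde L_{x_0}$ vary with $x_0$, and everything hinges on this uniformity. The Hopf lemma in \cite[Theorem~6.10]{RoWe24} does not directly furnish a lower bound uniform over the whole ellipticity class, and your proposal does not address this.

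The paper resolves this by reversing the logical order you suggest: rather than deducing $\tilde a_z \ge c_0$ from a Hopf lemma, it proves the uniform lower bound $q_z^{(2)} \ge c_0$ (your $\tilde a_z$) \emph{first}, by a compactness-and-contradiction argument. One assumes a sequence of admissible operators $\widetilde L_k$ with $\inf_z q_z^{(k)} \to 0$, uses the uniform $C^s$ bound (i) and Arzel\`a--Ascoli to pass to a limit $\psi_\infty$ solving $\widetilde L_\infty \psi_\infty = 0$ in $D$, and then shows $q_z^{(k)} \to q_z^{(\infty)}$ by comparing the expansions at a carefully chosen scale. This forces $q_z^{(\infty)} = 0$ for some $z$, contradicting a step internal to the proof of \cite[Theorem~6.10]{RoWe24}. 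Only \emph{after} $q_z^{(2)} \ge c_0$ is established does the paper derive the lower bound in (ii), by combining the expansion \eqref{eq:psi-b-exp} with $b_{\nu_z} \asymp (\,\cdot\,)^s$ near the boundary and the weak Harnack inequality in the interior. Your proposal needs this compactness step (or an equivalent mechanism) to close.
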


\begin{proof}
The property (i) and the upper bounds in (ii) are an immediate consequence of \cite[Theorem~6.9]{RoWe24}, using that $\partial D \in C^{1, \alpha}$.

Let us now prove the lower bound in (ii) and the property (iii). To do so, let us recall that by \cite[Theorem~6.9]{RoWe24}, for every $z \in B_{R/2}(x_0) \cap \partial \Omega$ there exists $q_z^{(1)}\in \R$ such that for any $x \in \Omega_{R/2}(x_0)$
\begin{align}
\label{eq:v-b-exp}
|v(x) - q^{(1)}_z b_{\nu_z}((x-z) \cdot \nu_z)| &\le c_1 |x-z|^{s+\eps} R^{-s-\eps} \left(R^{-n}\Vert v \Vert_{L^1(B_R(x_0))} + \tail(v;R,x_0) \right) ,
\end{align}
and moreover, for any $z \in B_{1/2} \cap \partial \Omega$ there exists $q_z^{(2)}\in \R$ such that for any $x \in \Omega_{1/2}$
\begin{align}
\label{eq:psi-b-exp}
|\psi(x) - q^{(2)}_z b_{\nu_z}((x-z) \cdot \nu_z)| &\le c_2 |x-z|^{s+\eps},
\end{align}
where 
\begin{align}
\label{eq:qz-upper-help}
|q^{(1)}_z| \le c_3 R^{-s} \left(R^{-n}\Vert v \Vert_{L^1(B_R(x_0))} + \tail(v;R,x_0) \right), \qquad |q^{(2)}_z| \le c_4
\end{align}
for some constants $c_1,c_2,c_3,c_4 > 0$. Here, the function $b_{\nu_z}$ denotes the half-space solution from \cite[Theorem~1.4]{RoWe24} with respect to the operator $L$ and the normal vector $\nu_z \in \mathbb{S}^{n-1}$ at $z \in \partial \Omega$.
Clearly, it must be
\begin{align*}
q^{(2)}_z = \lim_{\Omega_{1/2} \ni x \to z} \frac{\psi(x)}{b_{\nu_z}((x-z) \cdot \nu_z)} \ge 0,
\end{align*}
and \eqref{eq:psi-b-exp} in particular implies that the limit exists. We claim that there exists a constant $c_0 > 0$, depending only on $n,s,\lambda,\Lambda,\alpha,\Omega$ such that for any $z \in B_{1/2} \cap \partial \Omega$ it holds
\begin{align}
\label{eq:improved-Hopf}
q^{(2)}_z \ge c_0.
\end{align}
We will prove this property at the end of the proof. Let us first explain that it implies the lower bound in (ii), as well as (iii). 

The lower bound in (ii) is immediate upon following \cite[Step 3 in the proof of Theorem~6.10]{RoWe24}. Indeed, for any $x \in \Omega_{1/4}$, taking $z \in B_{1/2} \cap \partial \Omega$ such that $d_{\Omega}(x) = |x-z| = (x-z) \cdot \nu_z$, we find due to \eqref{eq:psi-b-exp}, \eqref{eq:improved-Hopf}, and \cite[Theorem~1.4]{RoWe24}:
\begin{align*}
\psi(x) \ge q^{(2)}_z b_{\nu_z}((x-z) \cdot \nu_z) - |\psi(x) - q^{(2)}_z b_{\nu_z}((x-z) \cdot \nu_z)| \ge c_0 c |x-z|^s - c_2 |x-z|^{s+\eps}.
\end{align*}
Thus, there exist a radius $\delta_0 > 0$ and $c_5 > 0$, depending only on $n,s,\lambda,\Lambda,\alpha,\Omega$ such that for any $x \in B_{1/4} \cap \{d_{\Omega} \le \delta_0 \}$ it holds
\begin{align*}
\psi(x) \ge c_5 d^s_{\Omega}(x).
\end{align*}
Finally, since by the weak Harnack inequality for any $x \in B_{1/4} \cap \{d_{\Omega} > \delta_0 \}$:
\begin{align*}
\psi(x) \ge c(\delta_0) \Vert \psi \Vert_{L^1_{2s}(\R^n)} \ge c(\delta_0) \Vert g \Vert_{L^1_{2s}(\R^n \setminus D)} 4^s d_{\Omega}^s(x),
\end{align*}
we conclude the proof of the lower bound in (ii).

Let us now turn to the proof of (iii). In fact, by a combination of \eqref{eq:v-b-exp} and \eqref{eq:psi-b-exp}, we deduce for any $z \in B_{R/2}(x_0) \cap \partial \Omega$, denoting $q_z := q_z^{(1)}/q_z^{(2)}$:
\begin{align*}
|v(x) - q_z \psi(x)| &\le |v(x) - q_z^{(1)}b_{\nu_z}((x-z) \cdot \nu_z)| + \frac{|q_z^{(1)}|}{q_z^{(2)}}|q_z^{(2)} b_{\nu_z}((x-z) \cdot \nu_z) - \psi(x)| \\
&\le (c_1 + c_0^{-1} c_2 c_3 )|x-z|^{s+\eps} R^{-s-\eps} \left(R^{-n}\Vert v \Vert_{L^1(B_R(x_0))} + \tail(v;R,x_0) \right),
\end{align*}
where we also used \eqref{eq:qz-upper-help} and \eqref{eq:improved-Hopf}. Moreover, by \eqref{eq:qz-upper-help}, we have 
\begin{align*}
  |q_z| \le c_0^{-1}c_3  R^{-s} \left(R^{-n}\Vert v \Vert_{L^1(B_R(x_0))} + \tail(v;R,x_0) \right),
\end{align*}
which concludes the proof of (iii).

It remains to prove the claim \eqref{eq:improved-Hopf}. To do so, let us assume that there exist sequences of translation invariant operators $L_k$ with kernels $K_k$ satisfying \eqref{eq:Kcomp} and operators $\widetilde{L}_k$ with kernels $\widetilde{K}_k$ satisfying \eqref{eq:Kcomp} and $\widetilde{K}_k(x, y)=K_k(x, y)$ for all $x, y$ with $|x-y|<1/4$, and solutions $\psi_k$ to
\begin{align*}
\left\{
\begin{aligned}
\widetilde{L}_k \psi_k &= 0 &&\text{in } D,\\
\psi_k &= g &&\text{in } D^c,
\end{aligned}
\right.
\end{align*}
such that we have as in \eqref{eq:psi-b-exp} for any $z \in B_{1/2} \cap \partial \Omega$
\begin{align}
\label{eq:psi-k-exp}
|\psi_k(x) - q^{(k)}_z b_{\nu_z}((x-z) \cdot \nu_z)| \le C |x-z|^{s+\eps} ~~ \forall x \in \Omega_{1/2},
\end{align}
and that $\inf_{z \in B_{1/2} \cap \partial \Omega} q^{(k)}_z \to 0$, as $k \to \infty$. 

Moreover, by (i), we have $\Vert \psi_k \Vert_{C^s(\R^n)} \le c$, and therefore by the Arzela--Ascoli theorem, we have $\psi_k \to \psi_{\infty}$ locally uniformly in $\R^n$. By the stability of nonlocal equations (see \cite[Proposition~2.2.36]{FeRo24}), there exist $\psi_{\infty} \in C^s_{\mathrm{loc}}(\R^n)$ and an operator $\widetilde{L}_{\infty}$ with kernel $\widetilde{K}_\infty$ satisfying \eqref{eq:Kcomp} and $\widetilde{K}_\infty(x, y)=\widetilde{K}_\infty(x-y)$ for all $x, y$ with $|x-y|<1/4$ such that
\begin{align*}
\left\{
\begin{aligned}
\widetilde{L}_\infty \psi_\infty &= 0 &&\text{in } D,\\
\psi_\infty &= g &&\text{in } D^c.
\end{aligned}
\right.
\end{align*}
Applying again \cite[Theorem~6.9]{RoWe24}, for any $z \in B_{1/2} \cap \partial \Omega$ there exists $q_z^{(\infty)} \in \R$ such that 
\begin{align}
\label{eq:psi-infty-exp}
|\psi_{\infty}(x) - q^{(\infty)}_z b_{\nu_z}((x-z) \cdot \nu_z)| \le C |x-z|^{s+\eps} ~~ \forall x \in \Omega_{1/2}.
\end{align} 
Now, let us fix any $z \in B_{1/2} \cap \partial \Omega$ and let $x \in B_{1/2}$ be such that $|x-z| = (x-z) \cdot \nu_z = d_{\Omega}(x)$ to be chosen explicitly later. Then, we have for any $k \in \N$, using that $b_{\nu_z}((x-z) \cdot \nu_z) \asymp |x-z|^s$
\begin{align*}
|q_z^{(\infty)} - q_z^{(k)}| &\le c |x-z|^{-s} \left| q_z^{(\infty)} b_{\nu_z}((x-z) \cdot \nu_z) - q_z^{(k)} b_{\nu_z}((x-z) \cdot \nu_z) \right|  \\
&\le c |x-z|^{-s} \Big( |q_z^{(\infty)} b_{\nu_z}((x-z) \cdot \nu_z) - \psi_{\infty}(x)| \\
& \qquad\qquad\qquad + |\psi_{\infty}(x) - \psi_k(x)| + |\psi_k(x) - q_z^{(k)} b_{\nu_z}((x-z) \cdot \nu_z)| \Big) \\
&\le c|x-z|^{\eps} + c \frac{\Vert \psi_{\infty} - \psi_k \Vert_{L^{\infty}(B_{1/2})}}{|x-z|^s},
\end{align*}
where we used \eqref{eq:psi-infty-exp}, \eqref{eq:psi-k-exp}.
Now, let $\delta > 0$. Choosing $x$ so that $c|x-z|^{\eps} = \delta/4$ and then taking $k$ so large that 
\begin{align*}
c \frac{\Vert \psi_{\infty} - \psi_k \Vert_{L^{\infty}(B_{1/2})}}{|x-z|^s} = c \delta^{-s/\eps} \Vert \psi_{\infty} - \psi_k \Vert_{L^{\infty}(B_{1/2})} \le \frac{\delta}{4},
\end{align*}
which is possible due to the uniform convergence $\psi_k \to \psi_{\infty}$ in $B_{1/2}$, we obtain
\begin{align*}
|q_z^{(\infty)} - q_z^{(k)}| \le \frac{\delta}{2}.
\end{align*}

Since we assumed that $\inf_{z \in B_{1/2} \cap \partial \Omega} q^{(k)}_z \to 0$, for any $\delta \in (0,1)$ there exist points $z_k \in \partial \Omega \cap B_{1/2}$ and $k_0 \in \N$ such that for any $k \ge k_0$ it holds $|q_{z_k}^{(k)}| \le \frac{\delta}{2}$. Clearly, $z_k \to z \in \partial \Omega \cap B_{1/2}$ possibly up to a subsequence, and therefore by the previous considerations, choosing $k$ large enough, it holds
\begin{align*}
|q_z^{(\infty)}| \le |q_z^{(\infty)} - q_z^{(k)}| + |q_z^{(k)}| \le \delta.
\end{align*}
Thus, since $\delta > 0$  was arbitrary, we have $\inf_{z \in B_{1/2} \cap \partial \Omega} q_z^{(\infty)} = 0$, which contradicts Step 2c in the proof of \cite[Theorem~6.10]{RoWe24}. Thus, we have established \eqref{eq:improved-Hopf}.

Finally, note that all the constants throughout the proof might also depend on $g,D$. However, $g,D$ only depend on $n,s,\Omega$, and therefore the constants in (i), (ii) only depend on those objects.
\end{proof}

We have the following consequence of the boundary $C^s$ regularity of $\psi$ combined with the interior regularity, which was established in \cite{FeRo24}.

\begin{lemma}
\label{lemma:psi-interior-reg}
Let $\psi$ be as in \eqref{eq:translation-invariant-solution}. Then, for any $\eps \in (0, s)$ it holds that
\begin{align}
\label{eq:psi-cdc}
\int_{\R^n} \frac{|\psi(x) - \psi(y)|^2}{|x-y|^{n+2s}} \d y \le c (1 + d_{D}^{-\eps}(x)) ~~ \forall x \in D,
\end{align}
and in particular $[\psi]_{H^s(D | \R^n)} \le c$, where $c=c(n,s,\lambda,\Lambda,\alpha,\Omega) > 0$.
\end{lemma}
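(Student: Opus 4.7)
The plan is to prove the pointwise bound \eqref{eq:psi-cdc} at each fixed $x \in D$ with $r := d_D(x)$, and then integrate over $D$ to get the $H^s$ seminorm estimate. The strategy is to split the Gagliardo integral $\int_{\R^n} |\psi(x) - \psi(y)|^2 |x-y|^{-n-2s}\d y$ into three regions and apply different regularity properties of $\psi$ in each. One may assume $r$ is small (say $r \le 1/4$), since for $r$ bounded below the bound follows from $\psi \in C^s(\R^n) \cap L^\infty(\R^n)$.

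The preliminary step is to observe that by \eqref{eq-Lpsi}, $\psi$ satisfies $L\psi = h$ in $D$ with $h := (L - \widetilde L)\psi$, and that $\|h\|_{L^\infty(D)} \le c$ follows from $0 \le \psi \le 1$ together with the cancellation of $K - \widetilde K$ for $|z-w| < 1/2$. The sharp boundary bound $\psi \le C_1 d_D^s$ from \autoref{prop:inhom-Cs}(ii) gives $\|\psi\|_{L^\infty(B_{r/2}(x))} \le c r^s$, and by splitting the tail integral according to distance and using that $\psi$ vanishes in $B_3 \setminus D$ while $g$ has compact support in $\R^n \setminus B_3$, one obtains the refined bound $\tail(\psi; r/2, x) \le c r^s$.

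With these refined bounds in hand, the interior regularity estimate \autoref{prop:interior-regularity}, applied with $q = \infty$ and some $\gamma \in (s, \min\{2s, 1 + \sigma\})$, yields
\begin{align*}
[\psi]_{C^{\gamma}(\overline{B_{r/4}(x)})} \le c r^{-\gamma}\bigl(\|\psi\|_{L^\infty(B_{r/2}(x))} + \tail(\psi; r/2, x) + r^{2s}\|h\|_{L^\infty}\bigr) \le c r^{s - \gamma}.
\end{align*}
The near-diagonal contribution $|x-y| \le r/4$ is then bounded by $c r^{2(s-\gamma)} \int_{|x-y|\le r/4} |x-y|^{2\gamma - n - 2s} \d y = c$, independent of $r$. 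In the intermediate regime $r/4 < |x-y| \le 1$, the global bound $\|\psi\|_{C^s(\R^n)} \le C_1$ from \autoref{prop:inhom-Cs}(i) gives a contribution of size $c \log(4/r)$, which is absorbed into $c(1 + r^{-\eps})$ for any $\eps > 0$. The far regime $|x-y| > 1$ is trivially bounded using $\|\psi\|_{L^\infty} \le c$. Summing the three pieces yields \eqref{eq:psi-cdc}.

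The second assertion follows by integrating \eqref{eq:psi-cdc} over the bounded $C^{1,\alpha}$ domain $D$: for any $\eps \in (0, s) \subset (0, 1)$, the integral $\int_D d_D^{-\eps} \d x$ is finite by \autoref{lemma:distance-integral}, yielding $[\psi]_{H^s(D|\R^n)} \le c$. The main technical point is the refined tail bound $\tail(\psi; r/2, x) \le c r^s$: without it, the interior estimate would only provide $[\psi]_{C^\gamma} \le c r^{-\gamma}$, leading to a near-diagonal contribution of order $r^{-2s}$, which fails to integrate to yield \eqref{eq:psi-cdc}.
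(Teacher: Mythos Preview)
Your proof is correct and follows essentially the same strategy as the paper's: split the Gagliardo integral at scale $r = d_D(x)$, handle the near part via an interior $C^\gamma$ estimate (with $\gamma > s$) obtained from the refined tail bound $\tail(\psi;r,x) \le c r^s$, and control the far part more crudely. The paper uses a two-region split ($|x-y|<r/2$ versus $|x-y|\ge r/2$) and estimates the far region via the pointwise bound $\psi \le C_1 d_D^s$ together with \cite[Lemma~B.2.4]{FeRo24}, arriving directly at $c(1+d_D^{-\eps}(x))$. Your three-region split replaces that step by the simpler observation that $\psi \in C^s(\R^n)$ gives $\int_{r/4<|x-y|\le 1}|x-y|^{-n}\,\d y \le c\log(4/r) \le c(1+r^{-\eps})$, which is a mild but genuine simplification since it avoids the external lemma. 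The only cosmetic point is that you invoke \autoref{prop:interior-regularity}, which carries a $\sigma$-dependence; since $K$ here is translation invariant, \eqref{eq-K-cont} holds for every $\sigma\in(0,s)$, so this is fine (the paper instead cites the translation-invariant interior estimate from \cite[Theorem~2.4.3]{FeRo24} directly).
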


\begin{proof}
Let us fix $x \in D$ and estimate
\begin{align*}
\int_{\R^n} \frac{|\psi(x) - \psi(y)|^2}{|x-y|^{n+2s}} \d y
&\le \int_{B_{d_D(x)/2}(x)} \frac{|\psi(x) - \psi(y)|^2}{|x-y|^{n+2s}} \d y + \int_{\R^n \setminus B_{d_D(x)/2}(x)} \frac{|\psi(x)-\psi(y)|^2}{|x-y|^{n+2s}} \d y \\
&=: I_1 + I_2.
\end{align*}
For $I_1$, we first recall that $\psi$ is a solution to \eqref{eq-Lpsi}. It thus follows from the interior regularity from \cite[Theorem~2.4.3]{FeRo24} applied in $B_{d_D(x)/2}(x)$ with $\gamma \in (0,2s)$ and \autoref{prop:inhom-Cs}(ii) that
\begin{align}
\label{eq:psi-interior-reg}
[\psi]_{C^{\gamma}(\overline{B_{d_{D}(x)/2}(x)})} \le c d_D^{-\gamma}(x) \left( \Vert \psi \Vert_{L^{\infty}(B_{d_D(x)}(x))} + \tail(\psi;d_D(x),x) + 1\right) \le c d_D^{s-\gamma}(x).
\end{align}
Note that we applied \autoref{prop:inhom-Cs}(ii) and \cite[Lemma~B.2.4]{FeRo24} to estimate
\begin{align*}
    \tail(\psi;d_D(x),x)
    \leq d_D^{2s}(x) \int_{D \setminus B_{d_D(x)}(x)} \frac{d_D^s(y)}{|y-x|^{n+2s}} \d y + d_D^{2s}(x) \int_{\R^n \setminus B_3} \frac{g(y)}{|y-x|^{n+2s}} \d y \leq c(1+d_D^s(x)).
\end{align*}
The estimate \eqref{eq:psi-interior-reg} applied with $\gamma = s + \eps$ yields
\begin{align*}
I_1 \le c d_D^{-2\eps}(x) \int_{B_{d_D(x)/2}(x)} |x-y|^{-n+2\eps} \d y \le c.
\end{align*}

For $I_2$, we apply \autoref{prop:inhom-Cs}(ii) and \cite[Lemma~B.2.4]{FeRo24} again to obtain
\begin{align*}
    I_2 \leq 2\int_{D \setminus B_{d_D(x)/2}(x)} \frac{d_D^{2s}(x) + d_D^{2s-\varepsilon}(y)}{|x-y|^{n+2s}} \d y + \int_{\R^n \setminus B_3} \frac{|g(y)|^2}{|x-y|^{n+2s}} \d y \leq c(1+d_D^{-\varepsilon}(x)).
\end{align*}
This proves \eqref{eq:psi-cdc}. The last assertion follows from \eqref{eq:psi-cdc} and \autoref{lemma:distance-integral}.
\end{proof}

\subsection{Closeness of barriers with respect to frozen kernels}

In this section, we assume that we are in the setup of \autoref{thm:inhom-Cs}, i.e.\ let $K$ be a kernel satisfying \eqref{eq:Kcomp}, and \eqref{eq-K-cont} with $\mathcal{A} = B_1$.
Given $x_0 \in \Omega_{1/2}$, we consider an operator $\widetilde{L}_{x_0}$ with kernel $\widetilde{K}_{x_0}$ frozen in $B_{1/2}$, that is,
\begin{align}
\label{eq:locally-freeze}
    \widetilde{K}_{x_0}(x, y) = K_{x_0}(x, y) \1_{B_{1/2}}(x-y) + K(x, y) \1_{B_{1/2}^c}(x-y),
\end{align}
where $K_{x_0}$ is the completely frozen kernel at $x_0$ given by \eqref{eq-frozen}. Let $D \subset \R^n$ with $\partial D \in C^{1,\alpha}$ and $\Omega_1 \subset D \subset \Omega_2$ be as before.  We define $\psi_{x_0}$ as the barrier function with respect to $\widetilde{L}_{x_0}$ as follows:
\begin{align}
\label{eq:psi-PDE}
\left\{
\begin{aligned}
\widetilde{L}_{x_0} \psi_{x_0} &= 0 &&\text{in } D,\\
\psi_{x_0} &= g &&\text{in } D^c,
\end{aligned}
\right.
\end{align}
where $g \in C^{\infty}_c(\R^n \setminus B_3)$ is such that $0 \le g \le 1$ and $g \not\equiv 0$, as in \eqref{eq:translation-invariant-solution}.

A central goal of this section is to establish the following proposition.

\begin{proposition}
\label{lemma:w-regularity}
Let $K, \Omega$ be as in \autoref{thm:inhom-Cs}. Let $D$ be as before. Then, for any $\eps \in (0,\sigma)$ and $x_0, y_0 \in \Omega_{1/2}$
\begin{align*}
[\psi_{x_0} - \psi_{y_0}]_{C^{s-\eps}(\overline{\Omega_{1/2}})} \le c |x_0 - y_0|^{\sigma},
\end{align*}
where $c > 0$ depends only on $n,s,\lambda,\Lambda,\alpha,\sigma,\eps$, and $\Omega$.
\end{proposition}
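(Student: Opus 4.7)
The plan is to set $w := \psi_{x_0} - \psi_{y_0}$ and reduce the problem to a direct application of \autoref{thm:Cs-eps}. First I would check that $w$ vanishes in $D^c$, and in particular in $B_1 \setminus \Omega$ (since $D \subset \Omega$), and that $w$ is a weak solution to $\widetilde{L}_{x_0} w = F$ in $\Omega_1$ with $F := (\widetilde{L}_{y_0} - \widetilde{L}_{x_0})\psi_{y_0}$. A direct computation from \eqref{eq:locally-freeze}, \eqref{eq-frozen}, and \eqref{eq-K-cont} applied with $h = x_0 - y_0$ gives that $\widetilde{K}_{x_0} - \widetilde{K}_{y_0}$ is supported in $\{|x-y| < 1/2\}$ and satisfies $|\widetilde{K}_{x_0}(x,y) - \widetilde{K}_{y_0}(x,y)| \le c|x_0-y_0|^{\sigma}|x-y|^{-n-2s}$ there. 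The same argument shows $\widetilde{K}_{x_0}$ itself satisfies \eqref{eq:Kcomp} and \eqref{eq-K-cont} with $\mathcal{A} = B_1$: for $|x-y| < 1/2$ translation invariance of $K_{x_0}$ cancels the increment, while for $|x-y| \ge 1/2$ the H\"older condition of $K$ carries over verbatim.

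Next I would derive the pointwise bound $|F(x)| \le c|x_0-y_0|^{\sigma} d_D^{-s}(x)$ for $x \in D$. Split the defining integral $|F(x)| \le c|x_0-y_0|^\sigma \int_{B_{1/2}(x)} |\psi_{y_0}(x) - \psi_{y_0}(y)||x-y|^{-n-2s} \d y$ at the scale $d_D(x)/2$. The far piece is controlled using the global $C^s$ bound $\|\psi_{y_0}\|_{C^s(\R^n)} \le C_1$ from \autoref{prop:inhom-Cs}(i), contributing an integrand of order $|x-y|^{-n-s}$ whose integral over the annulus is of order $d_D^{-s}(x)$. The near piece is handled via \autoref{prop:interior-regularity} applied to $\psi_{y_0}$ on $B_{d_D(x)/2}(x)$, noting (as observed before \autoref{prop:inhom-Cs}) that $\psi_{y_0}$ solves $L\psi_{y_0} = (L - \widetilde{L}_{y_0})\psi_{y_0}$ with a uniformly bounded source; this yields an interior $C^{s+\eps}$ estimate that, combined with the boundary bound $\psi_{y_0} \le C_1 d_D^s$, gives an integrable singularity of the same order $d_D^{-s}(x)$. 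Since $d_D \asymp d_\Omega$ on $\Omega_{1/2}$, \autoref{lemma:distance-integral} then yields $\|d_\Omega^{s-p} F\|_{L^q(\Omega_1)} \le c|x_0-y_0|^{\sigma}$ for any sufficiently small $p \in (0, s)$ and any $q \in (n/(s+p),\, n/p)$.

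A global energy bound for $w$ comes from testing its weak formulation with $w$ itself, which gives $\mathcal{E}^{\widetilde{K}_{x_0}}(w, w) = \mathcal{E}^{\widetilde{K}_{y_0} - \widetilde{K}_{x_0}}(\psi_{y_0}, w)$. Applying Cauchy--Schwarz together with the kernel-difference bound from Step~1 and \autoref{lemma:psi-interior-reg} (which supplies $[\psi_{y_0}]_{H^s(D|\R^n)} \le c$ uniformly in $y_0$) yields $[w]_{H^s(\R^n)} \le c|x_0-y_0|^{\sigma}$. Since $w$ vanishes outside the bounded set $D$, this also gives $\|w\|_{L^1_{2s}(\R^n)} \le c|x_0-y_0|^{\sigma}$. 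Having verified all its hypotheses and using that $\Omega \in C^{1,\alpha}$ allows the Lipschitz constant to be made arbitrarily small by localization and rescaling (the standard device used elsewhere in the paper to pass from flat to $C^{1,\alpha}$ domains), an application of \autoref{thm:Cs-eps} to $w$ with source $F$ gives
\[
[w]_{C^{s-\eps}(\overline{\Omega_{1/2}})} \le c\left( \|w\|_{L^1_{2s}(\R^n)} + \|d_\Omega^{s-p} F\|_{L^q(\Omega_1)} \right) \le c|x_0-y_0|^{\sigma},
\]
which is the desired estimate.

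The main obstacle will be the pointwise estimate for $F$ in the second step: one needs to combine the global $C^s$ control and the boundary bound of $\psi_{y_0}$ with the interior $C^{s+\eps}$ regularity \emph{uniformly in} $y_0$, and carefully track how the singular integral balances between the near and far regimes as $d_D(x) \to 0$. This uniformity is ultimately available because all the relevant constants in \autoref{prop:inhom-Cs} and \autoref{prop:interior-regularity} depend only on the structural data, but verifying it requires some bookkeeping. A secondary technicality is the localization to make \autoref{thm:Cs-eps} applicable to the $C^{1,\alpha}$ domain $\Omega$; this is standard and parallels the way $C^{1,\alpha}$ boundaries are handled throughout the paper.
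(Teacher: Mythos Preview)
Your overall strategy---writing $w = \psi_{x_0} - \psi_{y_0}$ as a weak solution to $\widetilde{L}_{x_0} w = F$ with $F = (\widetilde{L}_{y_0} - \widetilde{L}_{x_0})\psi_{y_0}$ and then invoking \autoref{thm:Cs-eps} directly---is a natural shortcut, and it is different from the paper's route: the paper never treats $F$ as a pointwise function but instead re-runs the Morrey machinery of Section~\ref{sec:Morrey}, replacing the freezing estimate \autoref{lemma:freezing-ds} by the energy bound $[w]_{H^s(\R^n)}^2 \le c|x_0-y_0|^{2\sigma}[\psi_{x_0}]_{H^s(\Omega_R(z_0)|\R^n)}^2 \le c|x_0-y_0|^{2\sigma} R^{n-\eps}$ coming from \autoref{lemma:psi-interior-reg}. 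Your approach would be cleaner if it worked.

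The gap is in your pointwise bound $|F(x)| \le c|x_0-y_0|^{\sigma} d_D^{-s}(x)$. You bound $|F(x)|$ by the \emph{absolute} integral $\int_{B_{1/2}(x)}|\psi_{y_0}(x)-\psi_{y_0}(y)||x-y|^{-n-2s}\,\d y$ and then split at radius $d_D(x)/2$. The far piece is fine, but for the near piece, $C^{s+\eps}$ regularity gives an integrand comparable to $|x-y|^{-n-s+\eps}$, which is \emph{not} integrable; so the bound you wrote is actually infinite. Using the evenness of the kernel difference to pass to second differences does not save this: convergence then requires $\psi_{y_0}\in C^{\gamma}$ with $\gamma>2s$, whereas \autoref{prop:interior-regularity} applied with bounded source only yields $C^{\min\{2s,\,1+\sigma-\eps\}}$, which is never strictly above $2s$. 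To make your route work you would either need to bootstrap interior regularity (show that $(L_{y_0}-\widetilde{L}_{y_0})\psi_{y_0}$ is itself H\"older, hence $\psi_{y_0}\in C^{2s+}_{\loc}$), or---simpler---abandon the pointwise picture and invoke \autoref{remark:f-assumptions}: the functional $\varphi\mapsto \cE^{\widetilde{K}_{y_0}-\widetilde{K}_{x_0}}(\psi_{y_0},\varphi)$ lies in the space $X(\Omega_1)$ with the correct scaling exponent, precisely by the estimate $|\cE^{\widetilde{K}_{y_0}-\widetilde{K}_{x_0}}(\psi_{y_0},\varphi)| \le c|x_0-y_0|^{\sigma}[\psi_{y_0}]_{H^s(\Omega_R(z_0)|\R^n)}[\varphi]_{H^s(\R^n)}$ combined with \autoref{lemma:psi-interior-reg} and \autoref{lemma:distance-integral}. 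That second fix is essentially the paper's argument in disguise.
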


The first step is to prove an energy estimate. In order to prove the following lemma, we extend the arguments from the proof of \autoref{lemma:freezing} to equations of the form
\begin{align*}
\cE(u,\varphi) = \cE(\psi,\varphi) 
\quad\forall \varphi \in C_c^{\infty}(\Omega),
\end{align*}
where $\psi \in C^s(\R^n)$ satisfies \eqref{eq:psi-cdc}.

\begin{lemma}
\label{lemma:energy-est}
Let $K, \Omega$ be as in \autoref{thm:inhom-Cs}. Let $D$ be as before. Then for any $x_0, y_0 \in \Omega_{1/2}$
\begin{align*}
[\psi_{x_0} - \psi_{y_0}]_{H^s(\R^n)} \le c |x_0 - y_0|^{\sigma},
\end{align*}
where $c > 0$ depends only on $n,s,\lambda,\Lambda,\alpha$, and $\Omega$.
\end{lemma}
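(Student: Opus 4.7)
The plan is to set $w := \psi_{x_0} - \psi_{y_0}$ and derive an equation for $w$ with right-hand side of order $|x_0 - y_0|^\sigma$, then obtain the result by an energy estimate in the spirit of \autoref{lemma:freezing}. First, observe that since both $\psi_{x_0}$ and $\psi_{y_0}$ coincide with $g$ in $D^c$, the difference $w$ vanishes outside $D$, and by \autoref{lemma:psi-interior-reg} we have $[\psi_{x_0}]_{H^s(D \mid \R^n)} + [\psi_{y_0}]_{H^s(D \mid \R^n)} \le c$, so $w \in H^s_D(\R^n)$ and may be used as a test function. Subtracting the weak formulations of \eqref{eq:psi-PDE} for $x_0$ and $y_0$, and testing with $w$ itself, yields
\begin{align*}
\cE^{\widetilde{K}_{x_0}}(w,w) = \cE^{\widetilde{K}_{y_0} - \widetilde{K}_{x_0}}(\psi_{y_0}, w),
\end{align*}
so by the lower ellipticity bound in \eqref{eq:Kcomp} one has $\lambda [w]_{H^s(\R^n)}^2 \le \cE^{\widetilde{K}_{y_0} - \widetilde{K}_{x_0}}(\psi_{y_0}, w)$.

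The next step is a pointwise bound on the kernel difference. By the definition \eqref{eq:locally-freeze}, $\widetilde{K}_{x_0}$ and $\widetilde{K}_{y_0}$ coincide with $K$ (and hence with each other) on $\{|x-y| \ge 1/2\}$, so $\widetilde{K}_{x_0} - \widetilde{K}_{y_0}$ is supported in $\{|x-y|<1/2\}$. On this region, using the symmetric form \eqref{eq-frozen} of the fully frozen kernel together with \eqref{eq-K-cont} applied with shift $h = y_0 - x_0 \in B_1$ (and noting that for $x_0, y_0 \in B_{1/2}$ and $|x-y|<1/2$ the base points $(x_0 \pm (x-y), x_0)$ lie in $B_1 \times B_1 = \mathcal{A}^2$), one obtains
\begin{align*}
|\widetilde{K}_{x_0}(x,y) - \widetilde{K}_{y_0}(x,y)| \le \Lambda \, \frac{|x_0-y_0|^{\sigma}}{|x-y|^{n+2s}} \, \1_{B_{1/2}}(x-y).
\end{align*}

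Finally, I would bound $\cE^{\widetilde{K}_{y_0} - \widetilde{K}_{x_0}}(\psi_{y_0}, w)$ by Cauchy--Schwarz, exploiting the fact that $w$ is supported in $D$: by symmetry of the integrand in $x \leftrightarrow y$, the integral can be written as $2 \int_D \int_{\R^n}(\psi_{y_0}(x)-\psi_{y_0}(y))(w(x)-w(y))(\widetilde{K}_{y_0} - \widetilde{K}_{x_0})(x,y) \,\mathrm{d}y\,\mathrm{d}x$, so Cauchy--Schwarz and the kernel bound yield
\begin{align*}
|\cE^{\widetilde{K}_{y_0} - \widetilde{K}_{x_0}}(\psi_{y_0}, w)| \le c |x_0-y_0|^{\sigma} \, [\psi_{y_0}]_{H^s(D \mid \R^n)} \, [w]_{H^s(\R^n)}.
\end{align*}
Invoking \autoref{lemma:psi-interior-reg} to control $[\psi_{y_0}]_{H^s(D \mid \R^n)} \le c$ and dividing by $[w]_{H^s(\R^n)}$ gives the desired estimate.

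The only delicate point is the Cauchy--Schwarz step: $\psi_{y_0}$ does not belong to $H^s(\R^n)$ globally (only to $H^s(D \mid \R^n)$), so one cannot naively Cauchy--Schwarz the double integral over $\R^n \times \R^n$; instead one must first use the support of $w$ (which is the only place the symmetry is needed) to reduce the $x$-integral to $D$, matching exactly the space in which \autoref{lemma:psi-interior-reg} provides a bound for $\psi_{y_0}$. Once that decomposition is in place, the argument is essentially the same as the energy step in the proof of \autoref{lemma:freezing}, with $(\widetilde{K}_{x_0}, \widetilde{K}_{y_0})$ playing the role that $(K_{x_0}, K)$ played there.
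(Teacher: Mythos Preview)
Your proof is correct and follows essentially the same route as the paper's: derive the equation for the difference, test it with itself, use the pointwise kernel bound $|\widetilde{K}_{x_0}-\widetilde{K}_{y_0}|\le \Lambda|x_0-y_0|^{\sigma}|x-y|^{-n-2s}\1_{B_{1/2}}(x-y)$, and conclude via Cauchy--Schwarz together with \autoref{lemma:psi-interior-reg}. The only cosmetic differences are that the paper writes the equation as $\widetilde{L}_{y_0}(\psi_{x_0}-\psi_{y_0})=(\widetilde{L}_{y_0}-\widetilde{L}_{x_0})\psi_{x_0}$ (so $\psi_{x_0}$ rather than $\psi_{y_0}$ appears on the right), and that your ``can be written as $2\int_D\int_{\R^n}$'' should strictly read ``is bounded by $2\int_D\int_{\R^n}$'' after passing to absolute values---but this does not affect the argument.
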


\begin{proof}
The function $u := \psi_{x_0} - \psi_{y_0}$ solves
\begin{align}\label{eq-L-y0-u}
\left\{
\begin{aligned}
\widetilde{L}_{y_0} u &= (\widetilde{L}_{y_0} - \widetilde{L}_{x_0}) \psi_{x_0} &&\text{in }D, \\
u&=0 &&\text{in } D^c.
\end{aligned}
\right.
\end{align}
We establish an energy estimate for $u$ in the spirit of \autoref{lemma:freezing}. 
By construction, and due to the regularity assumption \eqref{eq-K-cont} on $K$, we have
\begin{align}
\label{eq:K-frozen-difference}
\begin{split}
|\widetilde{K}_{x_0}(x,y) - \widetilde{K}_{y_0}(x,y)| &\le \frac{1}{2} |K(x_0 + x - y , x_0) - K(y_0 + x  - y , y_0)| \1_{B_{1/2}}(x-y) \\
&\quad + \frac{1}{2} |K(x_0 + y - x , x_0) - K(x_0 + y - x , x_0)| \1_{B_{1/2}}(x-y) \\
&\le \Lambda |x_0 - y_0|^{\sigma} |x-y|^{-n-2s} \1_{B_{1/2}}(x-y).
\end{split}
\end{align}
Testing the equation for $u$ by $u$ and using \eqref{eq:K-frozen-difference}, we deduce
\begin{align*}
\lambda [u]^2_{H^s(\R^n)} &\le \cE^{\widetilde{K}_{y_0}}(u,u) = \cE^{\widetilde{K}_{y_0} - \widetilde{K}_{x_0}}(\psi_{x_0},u) \\
&\le 2\Lambda \int_D \int_{\R^n} |\psi_{x_0}(x) - \psi_{x_0}(y)||u(x) - u(y)| \frac{|x_0-y_0|^\sigma}{|x-y|^{n+2s}} \d y \d x \\
&\le 2\Lambda |x_0 - y_0|^{\sigma} [\psi_{x_0}]_{H^s(D|\R^n)} [u]_{H^s(\R^n)}.
\end{align*}
Since $[\psi_{x_0}]_{H^s(D|\R^n)} \le c$ for some $c=c(n,s,\alpha,\lambda, \Lambda,\Omega) > 0$ by \autoref{lemma:psi-interior-reg}, the desired result follows.
\end{proof}

We are in a position to give the proof of \autoref{lemma:w-regularity}. Its proof is similar to that of \autoref{thm:Cs-eps}.

\begin{proof}[Proof of \autoref{lemma:w-regularity}]
Let us denote $u = \psi_{x_0} - \psi_{y_0}$ and recall from the proof of \autoref{lemma:energy-est} that $u$ is a solution to \eqref{eq-L-y0-u}. Given $z_0 \in \Omega_{1/2}$ and $R\in (0, 1/2)$, let $v$ be the solution to
\begin{align*}
\left\{
\begin{aligned}
\widetilde{L}_{y_0}v&=0 &&\text{in }\Omega_R(z_0), \\
v&=u &&\text{in } \R^n \setminus \Omega_R(z_0),
\end{aligned}
\right.
\end{align*}
and denote $w=u-v$. Then $w$ solves
\begin{align*}
\left\{
\begin{aligned}
\widetilde{L}_{y_0} w &= (\widetilde{L}_{y_0} - \widetilde{L}_{x_0})\psi_{x_0} &&\text{ in } \Omega_R(z_0),\\
w &= 0 &&\text{ in } \R^n \setminus \Omega_R(z_0).
\end{aligned}
\right.
\end{align*}
Note that here we used that $\Omega_R(z_0) = D \cap B_R(z_0) \subset D$. As in the proof of \autoref{lemma:energy-est}, by testing the equation for $w$ with $w$, and then using \eqref{eq:K-frozen-difference} and H\"older's inequality, we obtain
\begin{align*}
\lambda [w]_{H^s(\R^n)}^2 \le \cE^{\widetilde{K}_{y_0}}(w,w) = \cE^{\widetilde{K}_{y_0}-\widetilde{K}_{x_0}}(\psi_{x_0},w) \le 2\Lambda |x_0 - y_0|^{\sigma} [\psi_{x_0}]_{H^s(\Omega_R(z_0) | \R^n)} [w]_{H^s(\R^n)}.
\end{align*}
By \autoref{lemma:psi-interior-reg} and \autoref{lemma:distance-integral}, for any $\eps \in (0, s)$ it holds that
\begin{align*}
    [\psi_{x_0}]_{H^s(\Omega_R(z_0) | \R^n)}^2 \le c \int_{\Omega_R(z_0)} (1 + d_{D}^{-\eps}) \d x \leq \int_{\Omega_R(z_0)} (1 + d_{\Omega_R(z_0)}^{-\eps}) \d x \le c R^{n-\eps},
\end{align*}
and hence
\begin{align*}
[w]_{H^s(\R^n)}^2 \le c |x_0 - y_0|^{2\sigma} [\psi_{x_0}]_{H^s(\Omega_R(z_0) | \R^n)}^2 \le c |x_0 - y_0|^{2\sigma} R^{n-\eps}.
\end{align*}
This estimate is a counterpart of \autoref{lemma:freezing} for the equation for $u$. Clearly, by applications of H\"older's inequality, the Poincar\'e inequality (\autoref{lemma:Poincare-Wirtinger}) with $R:=2R$, and the Hardy inequality (\autoref{lemma:Hardy}), we also obtain
\begin{align}
\label{eq:w-regularity-help-1}
R^{-s}\|w\|_{L^1(\Omega_R(z_0))} + \int_{\Omega_R(z_0)} \left| \frac{w}{d_{\Omega_R(z_0)}^s} \right| \d x \leq c |x_0 - y_0|^{\sigma} R^{n-\frac{\eps}{2}}.
\end{align}

From here, we follow the lines of the proof of \autoref{thm:Cs-eps}. For this purpose, we obtain estimates corresponding to \autoref{lemma:Morrey-boundary} and \autoref{lemma:interior-excess-decay}. First, by applying \autoref{lemma:Morrey-boundary} (with $\varepsilon:=\varepsilon/2$) to $v$ instead of \autoref{lemma:T1}, and \eqref{eq:w-regularity-help-1} to $w$ instead of \autoref{lemma:freezing-ds} in the proof of \autoref{lemma:Morrey-boundary}, and then using the iteration lemma (see \autoref{lem-iteration} and \eqref{eq-almost-incr-Phi}), we deduce that there exists $R_0 \in (0, \frac{1}{16})$ such that for any $z_0 \in \Omega_{1/2}$ and $0<\rho<R\leq R_0$
\begin{align}
\begin{split}
\label{eq:w-regularity-help-2}
    \Phi_{\sigma}(u; \rho, z_0)
    &\leq c \left( \frac{\rho}{R} \right)^{n-\varepsilon} \Phi_{\sigma}(u; R, z_0) + c \rho^{n-\varepsilon} \left( \|v\|_{L^1_{2s}(\R^n)} + |x_0-y_0|^\sigma \right) \\
    &\leq c \left( \frac{\rho}{R} \right)^{n-\varepsilon} \Phi_{\sigma}(u; R, z_0) + c \rho^{n-\varepsilon} \left( \|u\|_{L^1_{2s}(\R^n)} + |x_0-y_0|^\sigma \right).
\end{split}
\end{align}
Note that we used that $\|v\|_{L^1_{2s}(\R^n)} \leq \|u\|_{L^1_{2s}(\R^n)} + \|w\|_{L^1(\Omega_R(z_0))}$ and \eqref{eq:w-regularity-help-1} in the last inequality.

Next, we apply \autoref{lemma:interior-excess-decay} to $v$ and use \eqref{eq:w-regularity-help-1} and obtain that whenever $0 < \rho < R\leq R_0$ and $B_{2R}(z_0) \subset \Omega_{1/2}$,
\begin{align}
\label{eq:w-regularity-help-3}
\begin{split}
\int_{B_{\rho}(z_0)} |u - (u)_{\rho,z_0}| \d x
&\leq \int_{B_{\rho}(z_0)} |v - (v)_{\rho,z_0}| \d x + 2\int_{B_R(z_0)} |w| \d x \\
&\le c \left( \frac{\rho}{R} \right)^{n+\min\{2s, 1+\sigma-\varepsilon\}} \left( \int_{B_{R}(z_0)} |u - (u)_{R,z_0}| \d x + R^{n}\tail(u - (u)_{R,z_0}; R,z_0) \right) \\
&\quad + c R^{n+s-\frac{\eps}{2}} |x_0 - y_0|^{\sigma}.
\end{split}
\end{align}
Moreover, \eqref{eq:w-regularity-help-3} also holds true with $\rho^{n}\tail(u - (u)_{\rho,z_0}; \rho,z_0)$ on the left-hand side. This can be checked by following the proof of \autoref{lemma:Morrey-boundary}, replacing $u/d_{\Omega}^s$ by $u - (u)_{\rho,z_0}$ and using \eqref{eq:w-regularity-help-3}. Thus, we can apply \autoref{lem-iteration} and obtain
\begin{align*}
& \int_{B_{\rho}(z_0)} |u - (u)_{\rho,z_0}| \d x + \rho^{n}\tail(u - (u)_{\rho,z_0}; \rho,z_0)  \\
& \le c \left( \frac{\rho}{R} \right)^{n+\min\{2s, 1+\sigma-\varepsilon\}} \left( \int_{B_{R}(z_0)} |u - (u)_{R,z_0}| \d x + R^{n}\tail(u - (u)_{R,z_0}; R,z_0) \right) + c \rho^{n+s-\frac{\eps}{2}} |x_0 - y_0|^{\sigma}
\end{align*}
as usual. Note that \eqref{eq-almost-incr} can be verified in the same way as for $\Phi_{\sigma}(u;\rho,x_0)$.

As a consequence, we obtain the following estimate for any $z_0 \in \Omega_{1/2}$ and $0 < \rho < R_0$ by combining \eqref{eq:w-regularity-help-2} and \eqref{eq:w-regularity-help-3} in the same way as in the proof of \autoref{thm:Cs-eps}:
\begin{align*}
\rho^{-n-s+\varepsilon} \int_{\Omega_{\rho}(z_0)} |u-(u)_{\Omega_{\rho}(z_0)}| \d x
&\le c \Phi_{\sigma}(u; R_0) + c\|u\|_{L^1_{2s}(\R^n)} + c |x_0 - y_0|^{\sigma} \\
&\le c [u]_{H^s(\R^n)} + c |x_0 - y_0|^{\sigma} \le c|x_0 - y_0|^{\sigma},
\end{align*}
where we used in the last step that $u \equiv 0$ in $\R^n \setminus D$, and the  Hardy inequality and \autoref{lemma:energy-est}.
Hence, it holds $u \in \mathcal{L}^{1,n+s-\eps}(\Omega_{1/2}) = C^{s-\eps}(\overline{\Omega_{1/2}})$ and the proof is complete.
\end{proof}

\subsection{\texorpdfstring{$C^s$}{Cs} regularity in \texorpdfstring{$C^{1,\alpha}$}{C1,alpha} domains}

In this section we establish our main result \autoref{thm:inhom-Cs}.

Having at hand the functions $\psi$ and $\psi_{x_0}$, we are ready to formulate the higher order Campanato estimate. We begin with the following estimate for translation invariant operators.

\begin{lemma}
\label{lemma:T4-inhom}
Let  $\alpha \in (0,s)$ and $\eps \in (0,\alpha s)$. Let $\Omega \subset \R^n$ be a $C^{1,\alpha }$ domain.
Let $L$ be a translation invariant operator with kernel $K$ satisfying \eqref{eq:Kcomp}. Let $x_0 \in \Omega_{1/8}$ and $0<\rho\leq R\leq \frac{1}{8}$.  Let $\psi$ be as in \autoref{prop:inhom-Cs} and $v$ be a solution to
\begin{align*}
\left\{
\begin{aligned}
Lv&=0 &&\text{in }\Omega_R(x_0), \\
v&=0 &&\text{in } B_R(x_0) \setminus \Omega.
\end{aligned}
\right.
\end{align*}
Then it holds
\begin{align*}
\begin{split}
&\int_{\Omega_{\rho}(x_0)} \left|\frac{v}{\psi} - \left( \frac{v}{\psi} \right)_{\Omega_{\rho}(x_0)} \right| \d x \\
&\leq c \left( \frac{\rho}{R} \right)^{n+\eps} \left[ \int_{\Omega_R(x_0)} \left|\frac{v}{\psi} - \left( \frac{v}{\psi} \right)_{\Omega_R(x_0)} \right| \d x + \max\{R,d_{\Omega}(x_0)\}^{-s} R^{n}  \mathrm{Tail} \left(v- \psi (v/\psi)_{\Omega_R(x_0)}; R, x_0 \right) \right],
\end{split}
\end{align*}
where $c > 0$ depends only on $n,s,\lambda,\Lambda,\alpha,\eps$, and $\Omega$.
\end{lemma}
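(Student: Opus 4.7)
The strategy is to apply the boundary expansion from Proposition \ref{prop:inhom-Cs}(iii) to a carefully shifted version of $v$. Since $L$ is translation invariant, we may take $\widetilde L = L$ in the construction of Proposition \ref{prop:inhom-Cs}, so that $L\psi = 0$ in $D$ (and in particular in $\Omega_R(x_0) \subset D$). Set
\begin{equation*}
\bar q := \left(\frac{v}{\psi}\right)_{\Omega_R(x_0)}, \qquad \tilde v := v - \bar q\,\psi.
\end{equation*}
Then $L\tilde v = 0$ in $\Omega_R(x_0)$, and $\tilde v = 0$ in $B_R(x_0) \setminus \Omega$, because $\psi \equiv 0$ on $B_R(x_0) \setminus D = B_R(x_0) \setminus \Omega$. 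This particular choice of $\bar q$ is what makes $\|\tilde v\|_{L^1(B_R(x_0))}$ and $\tail(\tilde v; R, x_0) = \tail(v - \psi(v/\psi)_{\Omega_R(x_0)}; R, x_0)$ controllable by the two quantities appearing on the right-hand side of the lemma.

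The main case is the boundary-dominated regime $d_\Omega(x_0) \le R/4$, which I would handle first. Choose $z \in \partial\Omega$ closest to $x_0$, so $z \in B_{R/2}(x_0) \cap \partial\Omega$, and apply Proposition \ref{prop:inhom-Cs}(iii) to $\tilde v$: there exists $\tilde q_z \in \R$ with
\begin{equation*}
|\tilde v(x) - \tilde q_z\psi(x)| \le c\, R^{-s-\eps}|x-z|^{s+\eps}\, \mathcal A \quad \text{for } x \in \Omega_{R/2}(x_0),
\end{equation*}
where $\mathcal A := R^{-n}\|\tilde v\|_{L^1(B_R(x_0))} + \tail(\tilde v; R, x_0)$. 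Setting $q := \bar q + \tilde q_z$, dividing by $\psi \ge c^{-1}d_\Omega^s$ (Proposition \ref{prop:inhom-Cs}(ii)), using $|x-z| \le 2\rho$ whenever $\rho \ge d_\Omega(x_0)$ and $x \in \Omega_\rho(x_0)$, and invoking Lemma \ref{lemma:distance-integral} to obtain $\int_{\Omega_\rho(x_0)} d_\Omega^{-s}\,dx \le c\rho^{n-s}$, the arithmetic $(s+\eps)+(n-s) = n+\eps$ delivers
\begin{equation*}
\int_{\Omega_\rho(x_0)} \left|\frac{v}{\psi} - q\right|\, dx \le c\,\left(\frac{\rho}{R}\right)^{n+\eps} R^{n-s}\,\mathcal A.
\end{equation*}
Since $\|\psi\|_{L^\infty(B_R(x_0))} \le c R^s$ in this regime, the $\|\tilde v\|_{L^1(B_R(x_0))}$ piece of $R^{n-s}\mathcal A$ reduces to the oscillation excess of $v/\psi$ on $\Omega_R(x_0)$, and the tail piece matches the weighted tail of the lemma because $\max\{R, d_\Omega(x_0)\} = R$. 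Finally, \eqref{eq-E-inf} converts the one-sided average on the left into the oscillation excess of $v/\psi$.

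The remaining cases, namely the subregime $\rho < d_\Omega(x_0) \le R/4$ and the fully interior regime $d_\Omega(x_0) > R/4$, are handled by Lemma \ref{lemma:interior-excess-decay}. In both, $\tilde v$ is $L$-harmonic in $B_{d_\Omega(x_0)/2}(x_0)$ (respectively $B_R(x_0)$), so Lemma \ref{lemma:interior-excess-decay} gives Campanato-type decay of $\tilde v$ at rate $n+\min\{2s, 1+\sigma-\eps'\}$, which strictly exceeds $n+\eps$ for suitable $\eps'$. On these balls $\psi$ is $L$-harmonic as well, hence smooth, and $\psi \asymp d_\Omega(x_0)^s$; this allows one to transfer the excess decay from $\tilde v$ to $\tilde v/\psi = v/\psi - \bar q$ by a direct decomposition, at the cost of error terms that decay faster than $(\rho/R)^{n+\eps}$. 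In the mixed subregime one chains the interior decay from scale $\rho$ up to $d_\Omega(x_0)$ with the boundary estimate from scale $d_\Omega(x_0)$ up to $R$; in the fully interior case $\max\{R, d_\Omega(x_0)\}^{-s} \asymp d_\Omega(x_0)^{-s} \asymp 1/\psi$, and the weight $d_\Omega(x_0)^{-s}$ appears naturally when dividing $\tilde v$-quantities by $\psi$.

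The main obstacle is that $v/\psi$ satisfies no useful nonlocal equation, which rules out a direct Campanato argument for the quotient. The workaround is the one above: divide the pointwise expansion $|\tilde v - \tilde q_z\psi| \lesssim R^{-s-\eps}|x-z|^{s+\eps}$ by $\psi \gtrsim d_\Omega^s$, picking up a singular weight $d_\Omega^{-s}$ whose $L^1$-integrability from Lemma \ref{lemma:distance-integral} is precisely what produces the sharp exponent $n+\eps$. A secondary technical point is to keep track of the weight $\max\{R, d_\Omega(x_0)\}^{-s}$ correctly in both the boundary and the interior regimes, and to align it with the $L^\infty$-bound $\|\psi\|_{L^\infty(B_R(x_0))} \lesssim \max\{R, d_\Omega(x_0)\}^s$ and the lower bound $\psi \gtrsim d_\Omega^s$ used in the expansion.
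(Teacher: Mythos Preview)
Your proposal is correct and follows essentially the same route as the paper: subtract $\bar q\psi$ with $\bar q=(v/\psi)_{\Omega_R(x_0)}$, apply the expansion of Proposition~\ref{prop:inhom-Cs}(iii) to $\tilde v=v-\bar q\psi$ at the nearest boundary point, divide by $\psi\ge c\,d_\Omega^s$, and integrate $d_\Omega^{-s}$ via Lemma~\ref{lemma:distance-integral} to produce the exponent $n+\eps$; then handle the interior and mixed regimes separately and chain them. The only noteworthy difference is in the interior case: the paper works pointwise with the $C^{\eps}$ estimate from Proposition~\ref{lemma:T2}/Remark~\ref{rmk:T2}, applying it to both $\tilde v$ and $\psi$ and using the product rule $[\tilde v/\psi]_{C^\eps}\le [\tilde v]_{C^\eps}\|\psi^{-1}\|_{L^\infty}+[\psi^{-1}]_{C^\eps}\|\tilde v\|_{L^\infty}$ together with the tail bound \eqref{eq:tail-phi-est}, whereas you invoke the $L^1$-excess Lemma~\ref{lemma:interior-excess-decay} and then transfer to $\tilde v/\psi$ ``by a direct decomposition''. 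Both work, but the paper's pointwise route is cleaner here since it makes the role of the regularity of $\psi$ (and the factor $d_\Omega(x_0)^{-s}$ coming from $\psi^{-1}$) completely explicit, which is exactly what aligns the interior estimate with the weight $\max\{R,d_\Omega(x_0)\}^{-s}$ in the statement.
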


\begin{proof}
We may assume that $\rho \leq R/4$. Let us consider the case $B_{\rho}(x_0) \cap \Omega^c \not= \emptyset$.
Let $c_0 \in \R$.
Let us denote by $q \in \R$ the factor from the expansion in  \eqref{eq:expansion-psi} corresponding to the projection of $x_0$ to $z \in \partial \Omega_{\rho}(x_0)$ for the function $v - c_0 \psi$. Note that $v-c_0 \psi$ satisfies all the assumptions from \autoref{prop:inhom-Cs}(iii). Then, we estimate by using \eqref{eq-E-inf} and \eqref{eq:expansion-psi},
\begin{align*}
\int_{\Omega_{\rho}(x_0)} \left|\frac{v}{\psi} - \left( \frac{v}{\psi} \right)_{\Omega_{\rho}(x_0)} \right| \d x &= \int_{\Omega_{\rho}(x_0)} \left|\frac{v - c_0 \psi}{\psi}(x) - \left( \frac{v - c_0 \psi}{\psi} \right)_{\Omega_{\rho}(x_0)} \right| \d x \\
& \le c \int_{\Omega_{\rho}(x_0)} \left|\frac{v - c_0 \psi}{\psi}(x) - q \right| \d x \\
&\le c \left( \int_{\Omega_{\rho}(x_0)} d_{\Omega}^{-s} \d x \right) \rho^{s+\eps} \sup_{x \in \Omega_{R/8}(x_0)} \left|\frac{(v - c_0 \psi)(x) - q \psi(x)}{|x-z|^{s+\eps}} \right|\\ 
&\le c \rho^{n+\eps} R^{-s-\eps} \left( \dashint_{\Omega_R(x_0)} |v-c_0 \psi| \d x + \mathrm{Tail}(v-c_0 \psi; R,  x_0) \right) \\
&\le c \left( \frac{\rho}{R} \right)^{n+\eps} \left( \int_{\Omega_{R}(x_0)} \left| \frac{v}{\psi}-c_0 \right| \d x + R^{n-s} \mathrm{Tail}(v-c_0 \psi; R,x_0) \right).
\end{align*}
Here, we also used that $c d^s_{\Omega} \le \psi \le cd^s_{\Omega} \le c R^s$ in $\Omega_R(x_0) \subset \Omega_{1/4}$ by \autoref{prop:inhom-Cs}(ii). Since $c_0 \in \R$ was arbitrary, we can choose $c_0 = (v/\psi)_{\Omega_R(x_0)}$, and conclude
\begin{align}
\label{eq:ti-Campanato-local-inhom}
\begin{split}
& \int_{\Omega_{\rho}(x_0)} \left|\frac{v}{\psi} - \left( \frac{v}{\psi} \right)_{\Omega_{\rho}(x_0)} \right| \d x \\
&\quad \le c \left( \frac{\rho}{R} \right)^{n+\eps} \left[ \int_{\Omega_{R}(x_0)} \left| \frac{v}{\psi} - \left( \frac{v}{\psi} \right)_{\Omega_R(x_0)} \right| \d x + R^{n-s} \mathrm{Tail} \left(v- \psi (v/\psi)_{\Omega_R(x_0)}; R, x_0 \right) \right].
\end{split}
\end{align}

Next, let us assume that $B_{R}(x_0) \subset \Omega$. By \eqref{eq-E-inf},
\begin{align*}
\int_{B_{\rho}(x_0)} \left|\frac{v}{\psi} - \left( \frac{v}{\psi} \right)_{\Omega_{\rho}(x_0)} \right| \d x
&\le c \int_{B_{\rho}(x_0)} \left|\frac{v}{\psi}(x) - \frac{v}{\psi}(x_0) \right| \d x \\
& = c \int_{B_{\rho}(x_0)} \left|\frac{v - c_0 \psi}{\psi}(x) - \frac{v - c_0 \psi}{\psi}(x_0) \right| \d x \\
&\le c \rho^{n+\varepsilon} \left[ \frac{v - c_0 \psi}{\psi} \right]_{C^{\varepsilon}(\overline{B_{R/4}(x_0)})} \\
&\leq c \rho^{n+\varepsilon} [v - c_0 \psi]_{C^{\varepsilon}(\overline{B_{R/4}(x_0)})} \Vert \psi^{-1} \Vert_{L^{\infty}(B_{R/4}(x_0))} \\
&\quad + c \rho^{n+\varepsilon} [\psi^{-1}]_{C^{\varepsilon}(\overline{B_{R/4}(x_0)})} \Vert v - c_0 \psi \Vert_{L^{\infty}(B_{R/4}(x_0))},
\end{align*}
where we denoted again $c_0 = (v/\phi)_{B_R(x_0)}$.

By the interior regularity estimate (see \autoref{lemma:T2} and \autoref{rmk:T2}) and the local boundedness (\autoref{lemma:locbd-bdry}), we obtain
\begin{align*}
    [v - c_0 \psi]_{C^{\varepsilon}(\overline{B_{R/4}(x_0)})} 
    &\leq cR^{-\varepsilon} \left( \Vert v-c_0\psi \Vert_{L^{\infty}(B_{R/2}(x_0))} + \tail(v-c_0\psi;R/2,x_0) \right) \\
    &\leq cR^{-n-\varepsilon} \left( \Vert v-c_0\psi \Vert_{L^1(B_{R}(x_0))} + R^n \tail(v-c_0\psi;R,x_0) \right) \\
    &\leq cR^{-n-\varepsilon} \left( d_\Omega^s(x_0) \int_{B_R(x_0)} \left| \frac{v}{\psi}-c_0 \right| \d x + R^n \tail(v-c_0\psi;R,x_0) \right),
\end{align*}
where we used in the last step that $R \le d_{\Omega}(x_0)$ and $\psi \leq c d_{\Omega}^s \le c d_{\Omega}^s(x_0)$ in $B_R(x_0)$. Moreover, since
\begin{align}
\label{eq:tail-phi-est}
\begin{split}
\tail(\psi;R,x_0)
&\le c R^{2s} \int_{B_{d_\Omega(x_0)/2}(x_0)} \frac{d_\Omega^s(x_0)}{|x-x_0|^{n+2s}} \d x + c R^{2s} \int_{\Omega_{1/2}(x_0) \setminus B_{d_\Omega(x_0)/2}(x_0)} \frac{1}{|x-x_0|^{n+s}} \d x \\
& \quad + c R^{2s} \int_{\R^d \setminus \Omega_{1/2}(x_0)} \frac{1}{|x-x_0|^{n+2s}} \d x \\
& \le c R^{2s} d_\Omega^{-s}(x_0) + c R^{2s} \le c d_\Omega^s(x_0),
\end{split}
\end{align}
\autoref{lemma:T2} and \autoref{rmk:T2} again show that
\begin{align*}
[\psi^{-1}]_{C^{\varepsilon}(\overline{B_{R/4}(x_0)})} &\le c \Vert \psi^{-1} \Vert_{L^{\infty}(B_{R/4}(x_0))}^2 [\psi]_{C^{\varepsilon}(\overline{B_{R/4}(x_0)})} \\
&\le c d_\Omega^{-2s}(x_0) R^{-\varepsilon} \left(\Vert \psi \Vert_{L^{\infty}(B_R(x_0))} + \tail(\psi;R,x_0) \right) \le c R^{-\varepsilon} d_\Omega^{-s}(x_0).
\end{align*}
Altogether, we have
\begin{align}\label{eq:campanato-inhom-BR}
\int_{B_{\rho}(x_0)} \left|\frac{v}{\psi} - \left( \frac{v}{\psi} \right)_{\Omega_{\rho}(x_0)} \right| \d x
&\leq c \left( \frac{\rho}{R} \right)^{n+\varepsilon} \left( \int_{B_R(x_0)} \left| \frac{v}{\psi}-c_0 \right| \d x + d_\Omega^{-s}(x_0)R^n \tail(v-c_0\psi;R,x_0) \right).
\end{align}

If $B_{4\rho}(x_0) \subset \Omega$ and $B_R(x_0) \cap \Omega^c \not=\emptyset$, we apply \eqref{eq:campanato-inhom-BR} with $\rho := \rho$ and $R := d_{\Omega}(x_0)$, use \eqref{eq-E-inf} to deduce
\begin{align}
\label{eq:one-scale-up-Campanato}
& \int_{\Omega_{d_{\Omega}(x_0)}(x_0)} \left|\frac{v}{\psi} - \left( \frac{v}{\psi} \right)_{\Omega_{d_{\Omega}(x_0)}(x_0)} \right| \d x \le c \int_{\Omega_{2d_{\Omega}(x_0)}(x_0)} \left|\frac{v}{\psi}  - \left( \frac{v}{\psi} \right)_{\Omega_{2d_{\Omega}(x_0)}(x_0)}\right| \d x,
\end{align}
and then apply \eqref{eq:ti-Campanato-local-inhom} with $\rho := 2 d_{\Omega}(x_0)$ and $R := R$ to deduce the desired result. Finally, note that if $B_{4\rho}(x_0) \not\subset \Omega$ and also $B_{\rho}(x_0) \cap \Omega^c = \emptyset$, then we can just apply \eqref{eq:one-scale-up-Campanato} first, and then use \eqref{eq:ti-Campanato-local-inhom}.
\end{proof}

As a consequence of the Campanato lemma for translation invariant kernels, we derive a Campanato iteration scheme for non-translation invariant kernels. Recall that the excess functional $\Psi_\sigma(u; \rho):=\Psi_\sigma(u; \rho, x_0)$ is defined by \eqref{eq-excess-Psi}, where $\psi_{x_0}$ is the barrier defined by \eqref{eq:psi-PDE}.

\begin{lemma}\label{lem-u-Holder-inhom}
Assume that we are in the same setting as in \autoref{thm:inhom-Cs}. Let $\varepsilon \in (0, \min\{\alpha s, s-\sigma\})$. Then for any $x_0 \in \Omega_{1/8}$ and $0<\rho\leq R \le \frac{1}{16}$ it holds
\begin{align*}
\Psi_{\sigma}(u; \rho) \leq c \left( \frac{\rho}{R} \right)^{n+\varepsilon} \Psi_{\sigma}(u; R) + cR^\sigma \Phi_{\sigma}(u; R) + c R^{n + s -(s-p)-\frac{n}{q}} \left( \Vert u \Vert_{L^1_{2s}(\R^n)} + \Vert d_{\Omega}^{s-p} f \Vert_{L^q(\Omega_{1})} \right)
\end{align*}
for some $c=c(n, s, \lambda, \Lambda, \alpha, p, q, \sigma, \Omega, \varepsilon) > 0$.
\end{lemma}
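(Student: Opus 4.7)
The plan is to freeze the kernel at $x_0$ and compare $u$ with its $L_{x_0}$-harmonic replacement. Concretely, at the scale $R\in(0,1/16]$ we let $v$ solve $L_{x_0}v=0$ in $\Omega_R(x_0)$ with $v=u$ in $\R^n\setminus\Omega_R(x_0)$, and set $w=u-v$. Since the locally frozen kernel $\widetilde{K}_{x_0}$ agrees with $K_{x_0}$ on $\{|x-y|<1/2\}$, the pair $(L_{x_0},\widetilde{L}_{x_0})$ fits the setup of Proposition~\ref{prop:inhom-Cs} and hence of \autoref{lemma:T4-inhom}: $v$ plays the role of the translation-invariant-equation solution, and $\psi_{x_0}$ plays the role of the barrier $\psi$. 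Applying \autoref{lemma:T4-inhom} to $v$ will give the desired $(\rho/R)^{n+\varepsilon}$ decay, and the plan is then to pass from $v$ to $u$ by triangle inequality, absorbing the $w$-errors into the two lower-order terms appearing on the right-hand side via the freezing lemmas of Section~\ref{sec:freezing}.

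Executing this plan: set $c_R:=(v/\psi_{x_0})_{\Omega_R(x_0)}$ and split
\[
\frac{u}{\psi_{x_0}}-\Bigl(\frac{u}{\psi_{x_0}}\Bigr)_{\Omega_\rho(x_0)}
=\Bigl(\frac{v}{\psi_{x_0}}-\Bigl(\frac{v}{\psi_{x_0}}\Bigr)_{\Omega_\rho(x_0)}\Bigr)+\Bigl(\frac{w}{\psi_{x_0}}-\Bigl(\frac{w}{\psi_{x_0}}\Bigr)_{\Omega_\rho(x_0)}\Bigr).
\]
Integrating over $\Omega_\rho(x_0)$ and applying \autoref{lemma:T4-inhom} to the $v$-piece produces the decay factor and the ``standard'' tail of $v-\psi_{x_0}c_R$ on the scale $R$. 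The $w$-piece is bounded by $2\int_{\Omega_\rho(x_0)}|w/\psi_{x_0}|\,dx$, and by Proposition~\ref{prop:inhom-Cs}(ii) one has $\psi_{x_0}\asymp d_\Omega^s$ in $\Omega_{1/4}$, so this is controlled by $c\int_{\Omega_R(x_0)}|w/d_\Omega^s|\,dx$. Invoking \autoref{lemma:freezing-ds} (after combining with the standard case splitting $B_{4\rho}(x_0)\subset\Omega$ vs.\ $B_\rho(x_0)\cap\Omega^c\neq\emptyset$ from the proof of \autoref{lemma:Morrey-boundary}) delivers exactly the two missing terms $cR^\sigma\Phi_\sigma(u;R)$ and $cR^{n+s-(s-p)-n/q}(\|u\|_{L^1_{2s}(\R^n)}+\|d_\Omega^{s-p}f\|_{L^q(\Omega_1)})$.

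The tail contribution to $\Psi_\sigma(u;\rho)$ is the delicate step. First I would replace $c_R$ by $c_R':=(u/\psi_{x_0})_{\Omega_R(x_0)}$; the error $|c_R-c_R'|=|(w/\psi_{x_0})_{\Omega_R(x_0)}|\le cR^{-n}\int_{\Omega_R(x_0)}|w/d_\Omega^s|\,dx$ is multiplied by $\tail(\psi_{x_0};\cdot,x_0)$, which is uniformly bounded (arguing as in \eqref{eq:tail-phi-est} using $0\le\psi_{x_0}\le 1$ and $\psi_{x_0}\le cd_\Omega^s$), so this term is again absorbed by \autoref{lemma:freezing-ds}. Second, since $v=u$ in $\R^n\setminus\Omega_R(x_0)$, the tail of $v-\psi_{x_0}c_R'$ differs from that of $u-\psi_{x_0}c_R'$ only by a $w$-contribution supported in $\Omega_R(x_0)\setminus B_\rho(x_0)$, bounded by $\int_{\Omega_R(x_0)}|w|\,dy\lesssim R^s\int_{\Omega_R(x_0)}|w/d_\Omega^s|\,dx$. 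Finally, the standard tail appearing on the right of \autoref{lemma:T4-inhom} must be converted into the $\sigma$-tail used in $\Psi_\sigma$; I will do this with the dyadic decomposition on the annuli $B_{2^{k-m}R}(x_0)\setminus B_{2^{k-m-1}R}(x_0)$ already employed for \eqref{eq:tail-Morrey-u} in \autoref{lemma:Morrey-boundary}, together with \eqref{eq:sigma-tail-est} (the complementary piece $R^{2s}\tail(u;1,0)$ being absorbed into $\|u\|_{L^1_{2s}(\R^n)}$).

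The main obstacle is precisely this tail bookkeeping: one must track carefully how the average $c_R'$ interacts with the $\sigma$-weight and the dyadic sums, while at each scale re-centering the mean of $u/\psi_{x_0}$ and paying only an average difference that telescopes into $\Psi_\sigma(u;R)$. A secondary but necessary point is the case split $B_\rho(x_0)\cap\Omega^c\neq\emptyset$ vs.\ $B_R(x_0)\subset\Omega$, with the intermediate regime $\rho\le d_\Omega(x_0)\le R$ handled by applying the estimate first on scales $(\rho,d_\Omega(x_0))$ and then on $(2d_\Omega(x_0),R)$ and gluing, exactly as in the final paragraphs of the proofs of \autoref{lemma:Morrey-boundary} and \autoref{lemma:T4-inhom}. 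The restriction $\varepsilon<\min\{\alpha s,s-\sigma\}$ is used to close the dyadic sums (the factor $s-\sigma>0$) and to allow \autoref{lemma:T4-inhom} to be applied (the factor $\alpha s>\varepsilon$).
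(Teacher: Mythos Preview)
Your plan is correct and matches the paper's proof: freeze at $x_0$, apply \autoref{lemma:T4-inhom} to the replacement $v$, absorb the $w$-errors via \autoref{lemma:freezing-ds} and Proposition~\ref{prop:inhom-Cs}(ii), and then handle the $\sigma$-tail piece of $\Psi_\sigma(u;\rho)$ by the same dyadic decomposition and case split as in \autoref{lemma:Morrey-boundary}. One small clarification: the conversion of the standard $\tail$ on the right of \autoref{lemma:T4-inhom} into $\tail_{\sigma,B_1}$ at scale $R$ is just \eqref{eq:sigma-tail-est} (no dyadics needed there); the dyadic argument is used separately to bound the $\sigma$-tail at scale $\rho$ on the left, by feeding the already-established local estimate \eqref{eq:Campanato-boundary-help1} back in at each intermediate scale $2^{k-m}R$.
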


\begin{proof}
We write $\psi=\psi_{x_0}$ for simplicity.
Let $v$ and $w$ be solutions to \eqref{eq-v-Sect4} and \eqref{eq-w-Sect4}, respectively. Then, by applying \autoref{lemma:T4-inhom} and using \eqref{eq:sigma-tail-est}, we have
\begin{align*}
    \int_{\Omega_{\rho}(x_0)} \left|\frac{v}{\psi} - \left( \frac{v}{\psi} \right)_{\Omega_{\rho}(x_0)} \right| \d x
    \leq c \left( \frac{\rho}{R} \right)^{n+\varepsilon} \left( \Psi_\sigma(v;R) + R^{n+s} \tail(v-\psi(v/\psi)_{\Omega_{R}(x_0)}; 1,0) \right).
\end{align*}
By using \autoref{prop:inhom-Cs}(ii) and \cite[Lemma~B.2.4]{FeRo24}, we obtain
\begin{align*}
    \Psi_{\sigma}(v; R)
    &\leq \Psi_{\sigma}(u; R) + \max\{R, d_\Omega(x_0)\}^{-s} R^n \tail_{\sigma, B_1}(w-\psi(w/\psi)_{\Omega_R(x_0)}; R, x_0) \\
    &\leq \Psi_{\sigma}(u; R) + c \int_{\Omega_R(x_0)} \left| \frac{w}{\psi} \right| \d x.
\end{align*}
Here, we used that since $\psi(y) \le c d_{D}^s(y) \le c d_{\Omega}^s(x_0) + c |y - x_0|^s$ for $y \in B_1 \setminus B_R(x_0)$:
\begin{align*}
    &\max\{R, d_\Omega(x_0)\}^{-s} R^n \tail_{\sigma, B_1}(w-\psi(w/\psi)_{\Omega_R(x_0)}; R, x_0) \\
    &\quad\le c\left( \frac{w}{\psi} \right)_{\Omega_R(x_0)}  \max\{R , d_{\Omega}(x_0) \}^{-s} R^{n+2s-\sigma} \int_{B_1 \setminus B_R(x_0)} \frac{d_{\Omega}^s(x_0) + |y - x_0|^s}{|y-x_0|^{n+2s-\sigma}} \d y \le c \int_{\Omega_R(x_0)} \left| \frac{w}{\psi} \right| \d x.
\end{align*}
Moreover, since $\psi \le 1$ it holds $\tail(\psi;1,0) \le c$, and hence we have
\begin{align*}
    R^{n+s}\tail(v-\psi(v/\psi)_{\Omega_{R}(x_0)}; 1,0)
    &\leq R^{n+s} \tail(u-\psi(u/\psi)_{\Omega_{R}(x_0)}; 1,0) \\
    &\quad + R^{n+s}\tail(w-\psi(w/\psi)_{\Omega_{R}(x_0)}; 1,0) \\
    &\leq cR^{n+s}\|u\|_{L^1_{2s}(\R^n)} + cR^{n+s}(u/\psi)_{\Omega_{R}(x_0)} + cR^{n+s}(w/\psi)_{\Omega_{R}(x_0)} \\
    &\leq cR^{n+s}\|u\|_{L^1_{2s}(\R^n)} + cR^{\sigma} \Phi_{\sigma}(u;R) + c\int_{\Omega_R(x_0)} \left| \frac{w}{\psi} \right| \d x.
\end{align*}
As a consequence, we deduce
\begin{align*}
\int_{\Omega_{\rho}(x_0)} &\left|\frac{u}{\psi} - \left( \frac{u}{\psi} \right)_{\Omega_{\rho}(x_0)} \right| \d x \\
&\leq \int_{\Omega_{\rho}(x_0)} \left|\frac{v}{\psi} - \left( \frac{v}{\psi} \right)_{\Omega_{\rho}(x_0)} \right| \d x + \int_{\Omega_{\rho}(x_0)} \left|\frac{w}{\psi} - \left( \frac{w}{\psi} \right)_{\Omega_{\rho}(x_0)} \right| \d x \\
&\leq c \left( \frac{\rho}{R} \right)^{n+\varepsilon} \Psi_\sigma(u; R) + cR^{\sigma}\Phi_{\sigma}(u; R) + c R^{n+s} \Vert u \Vert_{L^1_{2s}(\R^n)} + c\int_{\Omega_R(x_0)} \left|\frac{w}{\psi}\right| \d x.
\end{align*}
By \autoref{lemma:freezing-ds} and the properties of $\psi$ from \autoref{prop:inhom-Cs}(ii), we obtain
\begin{align*}
\int_{\Omega_R(x_0)} \left|\frac{w}{\psi} \right| \d x
&\leq c \int_{\Omega_{R}(x_0)} \left|\frac{w}{d_\Omega^s} \right| \d x \leq cR^\sigma \Phi_{\sigma}(u; R) + cR^{n+s-(s-p)-\frac{n}{q}} \left( \Vert u \Vert_{L^1_{2s}(\R^n)} + \Vert d_{\Omega}^{s-p} f \Vert_{L^q(\Omega_{1})} \right).
\end{align*}
Thus, altogether, we have shown
\begin{align}
\label{eq:Campanato-boundary-help1}
\begin{split}
\int_{\Omega_{\rho}(x_0)} \left|\frac{u}{\psi} - \left( \frac{u}{\psi} \right)_{\Omega_{\rho}(x_0)} \right| \d x 
&\le c \left( \frac{\rho}{R} \right)^{n+\varepsilon} \Psi_\sigma(u; R) + cR^\sigma \Phi_{\sigma}(u; R) \\
&\quad + cR^{n + s-(s-p)-\frac{n}{q}} \left( \Vert u \Vert_{L^1_{2s}(\R^n)} + \Vert d_{\Omega}^{s-p} f \Vert_{L^q(\Omega_{1})} \right).
\end{split}
\end{align}

From here, it remains to show that
\begin{align}
\label{eq:tail-higher-campanato-u}
\begin{split}
\max&\{\rho,d_{\Omega}(x_0)\}^{-s} \rho^{n} \tail_{\sigma,B_1}(u - \psi (u/\psi)_{\Omega_{\rho}(x_0)};\rho,x_0) \\
&\le c \left( \frac{\rho}{R} \right)^{n+\varepsilon} \Psi_\sigma(u; R) + cR^\sigma \Phi_{\sigma}(u; R) + cR^{n + s-(s-p)-\frac{n}{q}} \left( \Vert u \Vert_{L^1_{2s}(\R^n)} + \Vert d_{\Omega}^{s-p} f \Vert_{L^q(\Omega_{1})} \right).
\end{split}
\end{align}

Let us first consider balls $B_{\rho}(x_0)$ such that $B_{\rho}(x_0) \cap \Omega^c \not=\emptyset$. We take $m \in \N$ such that $2^{-m} R < \rho \le 2^{-m+1}R$ and compute
\begin{align*}
\max&\{2^{-m}R,d_{\Omega}(x_0)\}^{-s} (2^{-m}R)^{n} \tail_{\sigma,B_1}(u- \psi (u/\psi)_{\Omega_{2^{-m}R}(x_0)}; 2^{-m}R , x_0) \\
&\le c (2^{-m}R)^{n-s} \sum_{k = 1}^m (2^{-m}R)^{2s - \sigma} \int_{B_{2^{k-m}R}(x_0) \setminus B_{2^{k-m-1}R}(x_0)} \frac{|u(y)- \psi(y) (u/\psi)_{\Omega_{2^{-m}R}(x_0)}|}{|y - x_0|^{n+2s-\sigma}} \d y \\
&\quad + c (2^{-m}R)^{n-s} 2^{-m(2s - \sigma)} \tail_{\sigma,B_1}(u - \psi (u/\psi)_{\Omega_{2^{-m}R}(x_0)};R,x_0) \\
&=: I_1 + I_2.
\end{align*}

Note that for $I_2$, we can estimate
\begin{align*}
I_2 &\le c 2^{-m(n+s-\sigma)} R^{n-s} \tail_{\sigma,B_1}(u - \psi (u/\psi)_{\Omega_{2^{-m}R}(x_0)};R,x_0) \\
&\le c \left( \frac{\rho}{R} \right)^{n+s-\sigma} \max\{R,d_{\Omega}(x_0)\}^{-s} R^n \tail_{\sigma,B_1}(u - \psi (u/\psi)_{\Omega_{2^{-m}R}(x_0)};R,x_0) \\
&\le c \left( \frac{\rho}{R} \right)^{n+s - \sigma} \max\{R,d_{\Omega}(x_0)\}^{-s} R^n \tail_{\sigma,B_1}(u - \psi (u/\psi)_{\Omega_{R}(x_0)};R,x_0) \\
&\quad+  c \left( \frac{\rho}{R} \right)^{n+s-\sigma} \max\{R,d_{\Omega}(x_0)\}^{-s} R^n |(u/\psi)_{\Omega_{2^{-m}R}(x_0)} - (u/\psi)_{\Omega_{R}(x_0)}| \tail_{\sigma,B_1}(\psi;R,x_0).
\end{align*}
Moreover, note that by \eqref{eq-E-inf} and \eqref{eq:Campanato-boundary-help1}, we have for any $k \le m$:
\begin{align}
\label{eq:average-increment}
\begin{split}
& |(u/\psi)_{\Omega_{2^{-m}R}(x_0)} - (u/\psi)_{\Omega_{2^{k-m} R}(x_0)}| \\
&\quad \le c \sum_{l = 0}^{k} \dashint_{\Omega_{2^{l-m} R}(x_0)} \left| \frac{u}{\psi}(x) - \left( \frac{u}{\psi} \right)_{\Omega_{2^{l-m}R}(x_0)} \right| \d x \\
&\quad \le c \sum_{l = 0}^{k} (2^{l-m}R)^{-n} (2^{l-m})^{n+\varepsilon} \Psi_\sigma(u;R) + c \sum_{l = 0}^{k} (2^{l-m}R)^{-n} R^{\sigma} \Phi_\sigma(u; R) \\
& \quad \quad + c \sum_{l = 0}^{k} (2^{l-m}R)^{-n} R^{n + s-(s-p)-\frac{n}{q}} \left( \Vert u \Vert_{L^1_{2s}(\R^n)} + \Vert d_{\Omega}^{s-p} f \Vert_{L^q(\Omega_{1})} \right) \\
&\quad \le c R^{-n} 2^{(k-m)\eps} \Psi_\sigma(u;R) + c \rho^{-n} R^{\sigma} \Phi_{\sigma}(u; R) \\
&\quad\quad  + c \rho^{-n}R^{n + s-(s-p)-\frac{n}{q}} \left( \Vert u \Vert_{L^1_{2s}(\R^n)} + \Vert d_{\Omega}^{s-p} f \Vert_{L^q(\Omega_{1})} \right),
\end{split}
\end{align}
where we also used that
\begin{align*}
    \sum_{l = 0}^{k} (2^{l-m}R)^{-n} 2^{(l-m)(n + \varepsilon)} \le R^{-n} \sum_{l = 0}^{k} 2^{(l-m)\varepsilon} \le c 2^{(k-m)\eps} R^{-n}, \quad \sum_{l = 0}^{k} (2^{l-m}R)^{-n} \leq  c\rho^{-n} \sum_{l = 0}^k 2^{-ln} \le c \rho^{-n}.
\end{align*}

Hence, since $\tail_{\sigma,B_1}(\psi;R,x_0) \le c \max\{R,d(x_0)\}^s$, which follows by a similar computation as in \eqref{eq:tail-phi-est}, and since $\varepsilon < s-\sigma$, we have shown
\begin{align*}
I_2
&\le c \left( \frac{\rho}{R} \right)^{n+s-\sigma} \Psi_\sigma(u;R) + c\left( \frac{\rho}{R} \right)^{s - \sigma} R^{\sigma}\Phi_\sigma(u; R) \\
&\quad + c \left( \frac{\rho}{R} \right)^{s-\sigma} R^{n + s} \left( \Vert u \Vert_{L^1_{2s}(\R^n)} + R^{-(s-p) - \frac{n}{q}} \Vert d_{\Omega}^{s-p} f \Vert_{L^q(\Omega_{1})} \right) \\
&\le c \left( \frac{\rho}{R} \right)^{n+\varepsilon} \Psi_\sigma(u; R) + cR^\sigma \Phi_{\sigma}(u; R) + cR^{n + s-(s-p)-\frac{n}{q}} \left( \Vert u \Vert_{L^1_{2s}(\R^n)} + \Vert d_{\Omega}^{s-p} f \Vert_{L^q(\Omega_{1})} \right).
\end{align*}

For $I_1$, we deduce by using that $\psi \leq cd_\Omega^s \leq c(2^{k-m}R)^s$ in $B_{2^{k-m}R}(x_0)$ (see \autoref{prop:inhom-Cs}(ii)) and \eqref{eq:average-increment}, and then applying \eqref{eq:Campanato-boundary-help1}:
\begin{align*}
I_1
&\leq c(2^{-m}R)^{-s} \sum_{k = 1}^m 2^{-k(n+2s-\sigma)} \int_{B_{2^{k-m}R}(x_0)} |u(y)- \psi(y) (u/\psi)_{\Omega_{2^{-m}R}(x_0)}| \d y \\
&\leq c \sum_{k = 1}^m 2^{-k(n+s-\sigma)} \int_{B_{2^{k-m}R}(x_0)} \left| \frac{u}{\psi}(y)- \left( \frac{u}{\psi} \right)_{\Omega_{2^{k-m}R}(x_0)} \right| \d y \\
&\quad + \sum_{k = 1}^m 2^{-k(n+s-\sigma)} (2^{k-m}R)^n |(u/\psi)_{\Omega_{2^{-m}R}(x_0)} - (u/\psi)_{\Omega_{2^{k-m} R}(x_0)}|\\
&\leq c \sum_{k = 1}^m 2^{-k(n+s-\sigma)+(k-m)(n+\varepsilon)} \Psi_{\sigma}(u; R) \\
&\quad + c \sum_{k = 1}^m 2^{-k(s-\sigma)} \left( R^\sigma\Phi_{\sigma}(u; R) + R^{n + s-(s-p)-\frac{n}{q}} \left( \Vert u \Vert_{L^1_{2s}(\R^n)} + \Vert d_{\Omega}^{s-p} f \Vert_{L^q(\Omega_{1})} \right) \right) \\
&\leq c\left( \frac{\rho}{R} \right)^{n+\varepsilon} \Psi_{\sigma}(u; R) + cR^\sigma\Phi_{\sigma}(u; R) + cR^{n + s-(s-p)-\frac{n}{q}} \left( \Vert u \Vert_{L^1_{2s}(\R^n)} + \Vert d_{\Omega}^{s-p} f \Vert_{L^q(\Omega_{1})} \right).
\end{align*}
Here, the sums $\sum_{k=1}^m 2^{-k(s-\sigma-\varepsilon)}$ and $\sum_{k=1}^m 2^{-k(s-\sigma)}$ are finite since $\varepsilon < s-\sigma$ and $\sigma < s$. This concludes the proof of \eqref{eq:tail-higher-campanato-u} in case $B_{\rho}(x_0) \cap \Omega^c \not= \emptyset$.

Next, let us assume that $B_R(x_0) \subset \Omega$.
The proof of \eqref{eq:tail-higher-campanato-u} in this case goes by the exact same arguments as in the previous case, with the only difference that we have now $\max\{2^{k-m}R;d_{\Omega}(x_0)\} = \max\{R;d_{\Omega}(x_0)\} = d_{\Omega}(x_0)$ for every $k \in \{0, 1, \dots, m\}$ and therefore also $d_{\Omega} \le 2 d_{\Omega}(x_0)$ in $B_{2^{k-m}R}(x_0)$. This concludes the proof in case $B_R(x_0) \cap \Omega^c = \emptyset$.

Finally, if $B_R(x_0) \cap \Omega^c \not=\emptyset$, we apply \eqref{eq:tail-higher-campanato-u} with $\rho := \rho$ and $R := d_{\Omega}(x_0)$, then observe that
\begin{align}
\label{eq:one-scale-up-Campanato-u}
\begin{split}
d_{\Omega}(x_0)^{n-s} \tail_{\sigma,B_1}(u - \psi(u/\psi)_{\Omega_{d_{\Omega}(x_0)}(x_0)} ; d_{\Omega}(x_0), x_0) \le c \Psi_{\sigma}(u; 2d_\Omega(x_0), x_0),
\end{split}
\end{align}
and then use \eqref{eq:Campanato-boundary-help1} with $\rho = 2 d_{\Omega}(x_0)$ and $R = R$ to deduce the desired result. Note that if $B_{4\rho}(x_0) \not\subset \Omega$ and also $B_{\rho}(x_0) \cap \Omega^c = \emptyset$, then we can just apply \eqref{eq:one-scale-up-Campanato-u} first, and then use \eqref{eq:Campanato-boundary-help1} and \eqref{eq:tail-higher-campanato-u}. Altogether, this proves \eqref{eq:tail-higher-campanato-u}, and hence the proof is complete.
\end{proof}

Now, we are in a position to prove the optimal $C^s$ regularity. The following theorem is the main result of this section and immediately implies \autoref{thm:inhom-Cs}.

\begin{theorem}
\label{thm:Cs-inhom-flat}
Assume that we are in the same setting as in \autoref{thm:inhom-Cs}. Let $\eps \in (0,\min\{\alpha s, \sigma, s-\sigma,1-s\})$. Then, there exists $R_0 \in (0,\frac{1}{16})$, depending only on $n,s,\sigma,\lambda,\Lambda,p,q,\varepsilon$, and $\Omega$, such that for any $x_0 \in \Omega_{1/8}$ and $0 < \rho \leq R_0$ it holds
\begin{align}
\label{eq-Campanato-almost-optimal-Holder-inhom}
\Psi_\sigma(u;\rho) \le c \rho^{n+\min\{\frac{\varepsilon}{2},\sigma-\varepsilon, p - \frac{n}{q}\}} \left( \|u \|_{L^1_{2s}(\R^n)} + \| d^{s-p}_{\Omega} f\|_{L^q(\Omega_1)} \right),
\end{align}
where $c = c(n, s, \lambda, \Lambda, \alpha, \sigma, p, q, \varepsilon, \Omega) > 0$. In particular, this implies
\begin{align}\label{eq-u-psi-inhom}
\left[ \frac{u}{\psi_{\cdot}} \right]_{C^{\min\{ \frac{\eps}{2},\sigma-2\varepsilon,p-\frac{n}{q}\}}(\overline{\Omega_{1/8}})} \le c \left( \|u\|_{L^1_{2s}(\R^n)} + \|d^{s-p}_{\Omega} f\|_{L^q(\Omega_1)} \right),
\end{align}
where $c=c(n, s, \lambda, \Lambda, \alpha, \sigma, p, q, \varepsilon, \Omega) > 0$.
\end{theorem}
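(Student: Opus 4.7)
The plan is to couple the two iteration schemes built in the preceding sections. The Morrey inequality \autoref{lemma:Morrey-boundary}, iterated via \autoref{lem-iteration} as in the proof of \autoref{thm:Cs-eps}, yields
\begin{align*}
\Phi_\sigma(u;R,x_0) \leq c\,R^{n+\min\{p-\frac{n}{q},\,-\varepsilon\}}\,\cD, \qquad x_0 \in \Omega_{1/8},\ R \le R_0,
\end{align*}
where $\cD := \|u\|_{L^1_{2s}(\R^n)} + \|d_\Omega^{s-p}f\|_{L^q(\Omega_1)}$. Inserting this Morrey bound into the term $R^\sigma \Phi_\sigma(u;R)$ on the right-hand side of the higher-order Campanato inequality \autoref{lem-u-Holder-inhom}, and combining with its explicit $R^{n+p-\frac{n}{q}}\cD$ forcing term, turns \autoref{lem-u-Holder-inhom} into the self-contained iteration
\begin{align*}
\Psi_\sigma(u;\rho,x_0) \leq c \Bigl(\tfrac{\rho}{R}\Bigr)^{n+\varepsilon} \Psi_\sigma(u;R,x_0) + c\,R^{n + \min\{\sigma-\varepsilon,\,p-\frac{n}{q}\}}\,\cD.
\end{align*}

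An application of the abstract iteration \autoref{lem-iteration} with $\alpha = n + \varepsilon$ and target exponent $\beta = n + \min\{\frac{\varepsilon}{2}, \sigma-\varepsilon, p - \frac{n}{q}\}$ (strictly less than $\alpha$), together with the almost-monotonicity of $\Psi_\sigma$ established in \eqref{eq-almost-incr-Psi}, produces
\begin{align*}
\Psi_\sigma(u;\rho,x_0) \leq c \rho^{n + \min\{\frac{\varepsilon}{2},\, \sigma - \varepsilon,\, p - \frac{n}{q}\}} \cD.
\end{align*}
The bound \eqref{eq-Campanato-almost-optimal-Holder-inhom} on $\Phi_\sigma$ then follows from the same coupled iteration: writing $\int_{\Omega_\rho}|u|/d_\Omega^s \le c\,|(u/\psi_{x_0})_{\Omega_\rho}|\,\rho^n + c\,\Psi_\sigma(u;\rho,x_0)$ via \autoref{prop:inhom-Cs}(ii), telescoping the averages $(u/\psi_{x_0})_{\Omega_\rho}$ across dyadic scales by means of the oscillation term in $\Psi_\sigma$, and handling the nonlocal tail contribution exactly as in the proof of \autoref{lemma:Morrey-boundary}.

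For the Hölder estimate \eqref{eq-u-psi-inhom}, the Campanato characterization applied to the $L^1$-mean oscillation in $\Psi_\sigma$ produces, for every fixed $x_0 \in \Omega_{1/8}$,
\begin{align*}
[u/\psi_{x_0}]_{C^{\gamma}(\overline{\Omega_{\rho_0}(x_0)})} \le c\,\cD, \qquad \gamma := \min\{\tfrac{\varepsilon}{2},\,\sigma-\varepsilon,\,p-\tfrac{n}{q}\}.
\end{align*}
To upgrade this fixed-$x_0$ estimate to regularity of the function $h(x) := u(x)/\psi_x(x)$, for $x,y\in\Omega_{1/8}$ I would split
\begin{align*}
|h(x) - h(y)| \le \Bigl|\tfrac{u(x)}{\psi_x(x)} - \tfrac{u(y)}{\psi_x(y)}\Bigr| + |u(y)|\,\tfrac{|\psi_x(y) - \psi_y(y)|}{\psi_x(y)\,\psi_y(y)},
\end{align*}
bound the first summand by $c|x-y|^\gamma\,\cD$ via the previous display, and estimate the second by combining \autoref{lemma:w-regularity} with \autoref{prop:inhom-Cs}(ii) and the $C^s$ regularity of $u$ already established. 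The main obstacle is this last estimate: since $\psi_x(y)\psi_y(y)\asymp d_\Omega^{2s}(y)$ vanishes as $y\to\partial\Omega$ while \autoref{lemma:w-regularity} only provides spatial regularity of order $s-\varepsilon$ for $\psi_x-\psi_y$, a singular factor $d_\Omega(y)^{-\varepsilon}$ appears; absorbing it via the dichotomy $|x-y|\lessgtr d_\Omega(y)/2$ (interior vs.\ boundary regime) costs an additional $\varepsilon$ and produces precisely the exponent $\sigma-2\varepsilon$ appearing in \eqref{eq-u-psi-inhom}.
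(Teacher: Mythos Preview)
Your iteration for $\Psi_\sigma$ is essentially the paper's argument (combine \autoref{lemma:Morrey-boundary} with \autoref{lem-u-Holder-inhom}, then apply \autoref{lem-iteration}), and the final decay $\Psi_\sigma(u;\rho,x_0)\le c\rho^{n+\min\{\frac{\varepsilon}{2},\sigma-\varepsilon,p-\frac{n}{q}\}}\cD$ is exactly what the paper proves. Note that \eqref{eq-Campanato-almost-optimal-Holder-inhom} should read $\Psi_\sigma$, not $\Phi_\sigma$ (compare the analogous \eqref{eq-Campanato-almost-optimal-Holder} in the homogeneous case, and observe that the paper's proof derives only the $\Psi_\sigma$ bound); your step~4 attempting a genuine $\Phi_\sigma$ decay beyond $\rho^n$ is chasing a typo --- such decay would force $u/d_\Omega^s\to 0$ at $x_0$, which is false in general.

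The real gap is in your passage to \eqref{eq-u-psi-inhom}. You invoke the Campanato characterization to obtain $[u/\psi_{x_0}]_{C^\gamma(\overline{\Omega_{\rho_0}(x_0)})}$ for a \emph{fixed} barrier $\psi_{x_0}$. But Campanato embedding requires the oscillation bound at \emph{all} centers $y_0$ in the region, and $\Psi_\sigma(u;\rho,y_0)$ measures the oscillation of $u/\psi_{y_0}$, not of $u/\psi_{x_0}$: the barrier changes with the center. So the bound on $\Psi_\sigma$ does not directly yield H\"older continuity of $u/\psi_{x_0}$ in a neighborhood, and consequently your first summand $|u(x)/\psi_x(x)-u(y)/\psi_x(y)|$ is not controlled as claimed.

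The paper sidesteps this by working directly with the Campanato oscillation of the diagonal function $h(x)=u(x)/\psi_x(x)$. Writing $h(x)=\frac{u(x)}{\psi_{x_0}(x)}\cdot\frac{\psi_{x_0}(x)}{\psi_x(x)}$ and applying the algebraic identity $ab-cd=\tfrac12(a-c)(b+d)+\tfrac12(a+c)(b-d)$ with $c=(u/\psi_{x_0})_{\Omega_\rho(x_0)}$, $d=1$, one obtains two terms: the first is bounded by $\Psi_\sigma(u;\rho,x_0)$ times $\|\psi_{x_0}/\psi_\cdot\|_{L^\infty}\le c$, and the second is controlled by \autoref{lemma:w-regularity} together with \autoref{thm:Cs-eps} and \autoref{lemma:distance-integral}, producing the $\sigma-2\varepsilon$ exponent. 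This is the same closeness mechanism you identify for your second summand, but applied at the level of Campanato means rather than pointwise, which is what makes the argument close.
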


Here, and in the rest of this subsection, we write $\psi_{\cdot}$ to denote the function $x \mapsto \psi_x(x)$, where $\psi_x$ denotes the barrier defined in \eqref{eq:psi-PDE} with respect to the operator $\widetilde{L}_x$.

\begin{proof}
Let $R_0$ be the constant given in \autoref{lemma:Morrey-boundary}, and fix $x_0 \in \Omega_{1/8}$ and $0<\rho\leq R\leq R_0$. We assume that 
\begin{align*}
\|u\|_{L^1_{2s}(\R^n)} + \|d^{s-p}_{\Omega} f\|_{L^q(\Omega_1)} \le 1.
\end{align*}
Then we have from \autoref{lem-u-Holder-inhom} and \autoref{lemma:Morrey-boundary} (applied with $\rho := R$ and $R := R_0$) that
\begin{align*}
\Psi_{\sigma}(u; \rho)
&\leq c \left( \frac{\rho}{R} \right)^{n+\varepsilon} \Psi_{\sigma}(u; R) + cR^\sigma \Phi_{\sigma}(u; R) + c R^{n + p-\frac{n}{q}} \\
&\leq c \left( \frac{\rho}{R} \right)^{n+\varepsilon} \Psi_{\sigma}(u; R) + cR^\sigma \left( \frac{R}{R_0} \right)^{n-\varepsilon} \Phi_{\sigma}(u; R_0) + cR^{n+\sigma-\varepsilon} + c R^{n + p-\frac{n}{q}}.
\end{align*}
Since $u \le c d_{\Omega}^{s-\eps}$ in $\Omega_{R_0}(x_0)$ by \autoref{thm:Cs-eps} (note that \autoref{thm:Cs-eps} is applicable here, since any $C^{1,\alpha}$ domain is locally flat Lipschitz, and we can use a covering argument), we have using also \autoref{lemma:distance-integral}:
\begin{align}
\label{eq:tail-end-est-inhom}
\begin{split}
    \Phi_{\sigma}(u; R_0)
    &\leq c \int_{\Omega_{R_0}(x_0)} d_\Omega^{-\varepsilon} \d x + \max\{R_0, d_\Omega(x_0)\}^{-s} R_0^n \tail_{\sigma, B_1}(u; R_0, x_0) \\
    &\leq cR_0^{n-\varepsilon} + R_0^{-s} \|u\|_{L^1(B_1)} \leq c.
\end{split}
\end{align}
Note that the constant $c$ here depends on $R_0$. Thus, altogether, we have
\begin{align*}
\Psi_\sigma(u;\rho) &\leq c \left( \frac{\rho}{R} \right)^{n+\varepsilon} \Psi_\sigma(u;R) + c R^{n + \sigma -\eps} + c R^{n + p - \frac{n}{q}} .
\end{align*}
It thus follows from \autoref{lem-iteration} (recalling \eqref{eq-almost-incr-Psi}) that
\begin{align}
\label{eq:Campanato-higher-help-1-inhom}
\Psi_\sigma(u; \rho) \leq c \left( \frac{\rho}{R} \right)^{n+\min\{\frac{\varepsilon}{2}, \sigma-\varepsilon,p - \frac{n}{q}\}} \Psi_\sigma(u; R) + c \rho^{n+\min\{\frac{\varepsilon}{2}, \sigma-\varepsilon,p - \frac{n}{q}\}}
\end{align}
for any $0 < \rho \leq R \leq R_0$. Note that by using the triangle inequality and \eqref{eq:tail-end-est-inhom}, we obtain $\Psi(u;R_0) \le c$ for some $c > 0$, depending on $R_0$. Hence, applying \eqref{eq:Campanato-higher-help-1-inhom} with $R = R_0$, we deduce the desired estimate \eqref{eq-Campanato-almost-optimal-Holder-inhom}.

Recall that $\psi = \psi_{x_0}$ depends on $x_0$. Thus, in order to apply Campanato's embedding, we need to replace $u/\psi_{x_0}$ by $u/\psi_{\cdot}$. To do so, we first observe that by using \eqref{eq-E-inf}
\begin{align*}
& \int_{\Omega_{1/8} \cap B_\rho(x_0)} \left| \frac{u}{\psi_x}(x) - \left( \frac{u}{\psi_{\cdot}} \right)_{\Omega_{1/8} \cap B_{\rho}(x_0)} \right| \d x \\
&\quad \le 2\int_{\Omega_{1/8} \cap B_\rho(x_0)} \left| \frac{u(x)}{\psi_{x_0}(x)}\frac{\psi_{x_0}(x)}{\psi_x(x)} - \left( \frac{u}{\psi_{x_0}} \right)_{\Omega_{\rho}(x_0)} \right| \d x \\
&\quad \le \frac{c}{2} \int_{\Omega_{\rho}(x_0)} \left| \frac{u}{\psi_{x_0}} - \left( \frac{u}{\psi_{x_0}} \right)_{\Omega_{\rho}(x_0)} \right| \left| \frac{\psi_{x_0}}{\psi_{\cdot}} + 1 \right| \d x + \frac{c}{2} \int_{\Omega_{1/8} \cap B_\rho(x_0)} \left| \frac{u}{\psi_{x_0}} + \left( \frac{u}{\psi_{x_0}} \right)_{\Omega_{\rho}(x_0)} \right| \left| \frac{\psi_{x_0}}{\psi_{\cdot}} - 1 \right| \d x  \\
&\quad =: I_1 + I_2.
\end{align*}
Here, we also used the algebraic identity $(ab-cd) = \frac{1}{2}(a-c)(b+d) + \frac{1}{2}(a+c)(b-d)$. To estimate $I_1$, we use $\psi_{x_0}(x) \asymp \psi_x(x) \asymp d_{\Omega}^s(x)$ in $\Omega_{1/4}$ due to \autoref{prop:inhom-Cs}(ii), as well as \eqref{eq-Campanato-almost-optimal-Holder-inhom}:
\begin{align*}
I_1 \le \frac{c}{2} \left \Vert \frac{\psi_{x_0}}{\psi_{\cdot}}+1 \right\Vert_{L^{\infty}(\Omega_{\rho}(x_0))} \int_{\Omega_{\rho}(x_0)} \left|\frac{u}{\psi_{x_0}} - \left( \frac{u}{\psi_{x_0}} \right)_{\Omega_{\rho}(x_0)} \right| \d x \le c \rho^{n+\min\{\frac{\eps}{2} , \sigma-\varepsilon, p - \frac{n}{q} \}}.
\end{align*}
For $I_2$, we recall that by \autoref{thm:Cs-eps} and \autoref{prop:inhom-Cs}(ii) it holds $u/\psi_{x_0} \le c u/d^s_{\Omega} \le d_{\Omega}^{-\eps}$ in $\Omega_{1/4}$ and hence we obtain by using \autoref{lemma:distance-integral} and \autoref{lemma:w-regularity}
\begin{align*}
    I_2
    &\leq c \left( \int_{\Omega_\rho(x_0)} d_\Omega^{-s-\varepsilon} \d x + \rho^{-n} \int_{\Omega_\rho(x_0)} d_\Omega^{-s} \d x\int_{\Omega_\rho(x_0)} d_\Omega^{-\varepsilon} \d x\right) \sup_{x \in \Omega_\rho(x_0)} |\psi_{x_0}(x)-\psi_x(x)| \\
    &\leq c \rho^{n} \max\{\rho, d_{\Omega}(x_0)\}^{-s-\eps} \sup_{y_0 \in \Omega_\rho(x_0)} \sup_{x \in \Omega_\rho(x_0)} |\psi_{x_0}(x)-\psi_{y_0}(x)| \\
    &\leq c \rho^{n} \max\{\rho, d_{\Omega}(x_0)\}^{-s-\eps} \max\{ \rho , d_{\Omega}(x_0) \}^{s-\varepsilon} \sup_{x_0,y_0 \in \Omega_{\rho}(x_0)} [\psi_{x_0} - \psi_{y_0}]_{C^{s-\eps}(\overline{\Omega_{1/2}})} \\
    &\leq c\rho^{n+\sigma} \max\{\rho , d_{\Omega}(x_0)\}^{-2\eps}.
\end{align*}

Thus, altogether, we have shown
\begin{align*}
\int_{\Omega_{1/8} \cap B_\rho(x_0)} \left| \frac{u}{\psi_x}(x) - \left( \frac{u}{\psi_{\cdot}} \right)_{\Omega_{1/8} \cap B_{\rho}(x_0)} \right| \d x \le c \rho^{n+\min\{\frac{\eps}{2}, \sigma-\varepsilon, p - \frac{n}{q} \}}+ c \rho^{n + \sigma - 2\eps}.
\end{align*}
Hence, by application of Campanato's embedding, we obtain \eqref{eq-u-psi-inhom}.
\end{proof}

Note that \autoref{thm:Cs-inhom-flat} in particular implies \autoref{thm:inhom-Cs}.

\begin{proof}[Proof of \autoref{thm:inhom-Cs}]
We assume again as in the proof of \autoref{thm:Cs-inhom-flat} that $u$ and $f$ are normalized. By \autoref{thm:Cs-inhom-flat} we have for some $\delta > 0$:
\begin{align*}
\left[ \frac{u}{\psi_{\cdot}} \right]_{C^{\delta}(\overline{\Omega_{1/8}})} \le c.
\end{align*}
In particular, given any $x_0 \in \Omega_{1/8}$, we fix a point $x \in \Omega_{1/8}$ with $d_{\Omega}^s(x) \ge c$ for some fixed $c > 0$ and use \autoref{prop:inhom-Cs}(ii) and the local boundedness (\autoref{lemma:locbd-bdry}), then we have
\begin{align*}
\left| \frac{u(x_0)}{\psi_{x_0}(x_0)} \right| \le \left| \frac{u(x_0)}{\psi_{x_0}(x_0)} - \frac{u(x)}{\psi_x(x)} \right| + \left|\frac{u(x)}{\psi_x(x)}\right| \le c |x-x_0|^{\delta} + c \frac{|u(x)|}{d_{\Omega}^{s}(x)} \le c.
\end{align*}
Thus, using that $\psi_{x_0}(x_0) \le c d^s_{\Omega}(x_0)$ by \autoref{prop:inhom-Cs}(ii), we deduce that $u(x_0) \le c d^s_{\Omega}(x_0)$. Since $x_0 \in \Omega_{1/8}$ was arbitrary, we have shown that $u \le c d^s_{\Omega}$ in $\Omega_{1/8}$. Hence, the desired $C^s(\overline{\Omega_{1/8}})$ regularity follows by combination with the interior regularity from \autoref{prop:interior-regularity} in the same way as in Step 2 of the proof of \cite[Proposition 2.6.4]{FeRo24}. The proof is complete.
\end{proof}

\section{Hopf lemma}
\label{sec:Hopf}

The goal of this section is to prove the following nonlocal Hopf lemma, which directly implies the second part of \autoref{thm:main-2}.

\begin{theorem}\label{thm-hopf}
Let $\alpha, \sigma \in (0,s)$. Let $\Omega \subset \R^n$ be a $C^{1,\alpha}$ domain with $0 \in \partial \Omega$. Assume that $K$ satisfies \eqref{eq:Kcomp}, and \eqref{eq-K-cont} with $\mathcal{A} = B_1$. Let $u$ be a solution to
\begin{align*}
\left\{
\begin{aligned}
Lu& = f \ge 0 &&\text{in }\Omega_1, \\
u&= 0 &&\text{in }B_1 \setminus \Omega.
\end{aligned}
\right.
\end{align*}
with $f \in L^{\infty}(\Omega_1)$ and $u \ge 0$ in $\R^n$.
Then either $u \equiv 0$ in $\Omega$ or
\begin{equation*}
u \geq c d_\Omega^s \quad \text{in } \Omega_{1/2},
\end{equation*}
where $c=c(n, s, \lambda, \Lambda, \alpha, \sigma, \Omega, u, f)>0$.
\end{theorem}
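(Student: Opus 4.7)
The plan is to split the statement into a strong maximum principle -- giving $u > 0$ in $\Omega$ whenever $u \not\equiv 0$ -- and a quantitative lower bound $u \geq c d_\Omega^s$ in $\Omega_{1/2}$ that is conditional on $u > 0$ in $\Omega$.

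For the strong maximum principle, I would invoke the weak Harnack inequality for weak solutions of nonlocal equations in divergence form with merely measurable coefficients, available from the nonlocal De Giorgi--Nash--Moser theory; see \cite{DKP14,DKP16,KaWe23,Coz17}. Since $Lu = f \geq 0$ and $u \geq 0$, iterating the weak Harnack inequality on balls compactly contained in $\Omega$ and using connectedness of $\Omega$ shows either $u \equiv 0$ in $\Omega$ or $u > 0$ strictly in $\Omega$. In the latter case, $u$ is bounded below by a positive constant on any fixed compact subset $K \Subset \Omega$.

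For the quantitative estimate, the main idea is to combine the freezing method of Section~\ref{sec:freezing} with the Hopf lemma for translation-invariant operators on $C^{1,\alpha}$ domains from \cite{RoWe24}, using the H\"older continuity of $u/\psi_\cdot$ from \autoref{thm:Cs-inhom-flat} as a bridge. Indeed, \eqref{eq-u-psi-inhom} furnishes a continuous extension $q:\overline{\Omega_{1/8}} \to \R$ of $x \mapsto u(x)/\psi_x(x)$, and since $\psi_x(x) \asymp d_\Omega^s(x)$ in $\Omega_{1/4}$ by \autoref{prop:inhom-Cs}(ii), the desired lower bound is equivalent to $q > 0$ pointwise on $\partial\Omega \cap \overline{B_{1/2}}$. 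I would argue by contradiction: suppose $q(x_0) = 0$ for some $x_0 \in \partial\Omega \cap \overline{B_{1/2}}$. The $C^\delta$ regularity of $u/\psi_\cdot$ then gives the pointwise decay
\begin{equation*}
u(y) \leq C|y - x_0|^\delta d_\Omega^s(y), \qquad y \in \Omega \cap B_{1/8}.
\end{equation*}
For small $R > 0$, let $v$ be the $L_{x_0}$-harmonic replacement \eqref{eq-v-Sect4}. Since $u$ is bounded below by a positive constant on some fixed compact set $K \Subset \Omega$ lying at distance $\geq c_0$ from $x_0$ (hence outside $\Omega_R(x_0)$ for $R < c_0$), the boundary datum of $v$ has uniformly positive mass on $K$. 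The Hopf lemma for the translation-invariant operator $L_{x_0}$ on the $C^{1,\alpha}$ domain $\Omega$ (from \cite{RoWe24}, used also in the proof of \autoref{prop:inhom-Cs}) then yields $v \geq c_1 d_\Omega^s$ in $\Omega_{R/2}(x_0)$ with $c_1 > 0$ independent of $R$. Combining the two bounds gives $v - u \geq \tfrac{c_1}{2} d_\Omega^s$ in $\Omega_{R/2}(x_0)$ as soon as $CR^\delta \leq c_1/2$, which integrates to
\begin{equation*}
\int_{\Omega_{R/2}(x_0)} \frac{|u - v|}{d_\Omega^s}\,dx \gtrsim R^n.
\end{equation*}
On the other hand, \autoref{lemma:freezing-ds}, combined with the bound $u/d_\Omega^s \in L^\infty$ from \autoref{thm:inhom-Cs} and the resulting tail estimate $\tail_{\sigma,B_1}(u;R,x_0) \lesssim R^s$ (obtained by writing $u \lesssim d_\Omega^s \leq |y-x_0|^s$ in the tail), delivers the opposite bound
\begin{equation*}
\int_{\Omega_R(x_0)} \frac{|u - v|}{d_\Omega^s}\,dx \leq C R^{n + \sigma}.
\end{equation*}
Since $\sigma > 0$, the two estimates are incompatible for $R$ small, yielding the contradiction. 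A compactness and covering argument on $\partial \Omega \cap \overline{B_{1/2}}$ upgrades the pointwise positivity of $q$ to a uniform positive lower bound, producing $u \geq c d_\Omega^s$ in a tubular neighborhood of $\partial\Omega \cap \overline{B_{1/2}}$ inside $\Omega_{1/2}$; the interior Harnack inequality closes the estimate on the complement.

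The main obstacle is the pointwise (rather than integral) transfer of the Hopf behavior from the translation-invariant replacement $v$ back to the original solution $u$: the freezing comparison \autoref{lemma:freezing-ds} is intrinsically an integral estimate, so the fine pointwise upper bound $u(y) \lesssim |y - x_0|^\delta d_\Omega^s(y)$ provided by the H\"older continuity of $u/\psi_\cdot$ in \autoref{thm:Cs-inhom-flat} is essential in order to close the contradiction. A secondary technical point is obtaining an $R$-independent Hopf constant $c_1$ for the replacement $v$, which requires carefully exploiting that the boundary datum $u|_{\Omega_R(x_0)^c}$ carries uniformly positive mass on the reference set $K$ (a consequence of the strong maximum principle step above).
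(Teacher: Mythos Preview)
Your strategy has a genuine gap at the step ``$v \geq c_1 d_\Omega^s$ in $\Omega_{R/2}(x_0)$ with $c_1>0$ independent of $R$''. Having uniformly positive mass on a \emph{fixed} compact set $K$ at distance $\asymp 1$ from $x_0$ is not enough: the nonlocal Poisson-type contribution of data at distance $\asymp 1$ to a point at distance $\asymp R$ from $x_0$ is only of order $R^{2s}$, so after applying the translation-invariant Hopf lemma at scale $R$ one obtains at best $v \gtrsim R^{s} d_\Omega^s$, not $v \gtrsim d_\Omega^s$. Equivalently, in rescaled coordinates $z=R^{-1}(y-x_0)$ the set $K$ recedes to distance $\asymp R^{-1}$ and its influence on $B_1$ vanishes like $R^{2s}$. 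With the correct bound, your contradiction reads $cR^{n+s} \le \int_{\Omega_{R/2}} |u-v|/d_\Omega^s \le C R^{n+\sigma}$, which is not a contradiction since $\sigma<s$. Even using the sharper upper bound $u\le C|y-x_0|^\delta d_\Omega^s$, the H\"older exponent $\delta$ coming from \autoref{thm:Cs-inhom-flat} satisfies $\delta<\sigma<s$, so $CR^\delta d_\Omega^s$ dominates $cR^s d_\Omega^s$ for small $R$ and you cannot control the sign of $v-u$.

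The paper circumvents this by decoupling the Hopf estimate from $u$: it builds a barrier $\phi_\varepsilon$ for $L$ with source $\mathbf 1_{B^\varepsilon}$ where $B^\varepsilon=B_\varepsilon(4\varepsilon e_n)$ \emph{scales with} $\varepsilon$, so that after rescaling the configuration is $\varepsilon$-independent. The Hopf constant for the frozen barrier $\bar\phi_\varepsilon$ is then shown to be uniform via a compactness argument (\autoref{lem-Hopf}), and the closeness $\phi_\varepsilon-\bar\phi_\varepsilon=O(\varepsilon^{\sigma-\delta})$ (\autoref{prop:closeness-phi}) transfers this to $\phi_\varepsilon$ for $\varepsilon$ small but \emph{fixed}. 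Only then does the comparison $u \ge (\inf_{B^\varepsilon}u)\,\phi_\varepsilon$ enter, and $\inf_{B^\varepsilon}u>0$ comes from the strong maximum principle at that fixed scale. The essential point you are missing is that the source of positivity must be placed at the same scale as the boundary estimate; otherwise the nonlocal tail decay kills the contribution.
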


\subsection{Closeness of two solutions}

The goal of this subsection is to prove the following proposition, which establishes closeness (in a quantified way of order $C^s$) of two solution $\phi$ and $\phi_0$ with respect to $L$ and it frozen operator at zero in $\Omega_{\eps}$. It is a main ingredient in the proof of \autoref{thm-hopf}.

\begin{proposition}
\label{prop:closeness-phi}
Let $\alpha, \sigma \in (0,s)$, $\delta \in (0, \sigma)$, and $\varepsilon \in (0, 1)$. Let $\Omega \subset \R^n$ be a $C^{1,\alpha}$ domain with $0 \in \partial \Omega$.
Assume that $K$ satisfies \eqref{eq:Kcomp}, and \eqref{eq-K-cont} with $\mathcal{A} = B_1$. Let $\phi$ be a solution to  
\begin{equation*}
\left\{
\begin{aligned}
L\phi&=0 &&\text{in }\Omega_{\eps}, \\
\phi&=0 &&\text{in } B_{\eps} \setminus \Omega,
\end{aligned}
\right.
\end{equation*}
with $0 \le \phi \le 1$, and $\phi_0$ be a solution to
\begin{equation*}
\left\{
\begin{aligned}
L_0 \phi_0 & = 0 &&\text{in }\Omega_{\eps}, \\
\phi_0 & = \phi &&\text{in } \R^n \setminus \Omega_{\eps},
\end{aligned}
\right.
\end{equation*}
where $L_0$ denotes the frozen operator with respect to $L$ at $0$. Then
    \begin{align*}
    \dashint_{\Omega_{\rho}} |\phi - \phi_0| \d x \le C \eps^{-s + \sigma-\delta}\rho^s \quad\text{for any }0<\rho\leq \varepsilon,
    \end{align*}
where $c>0$ depends only on $n,s,\lambda,\Lambda,\alpha,\sigma,\delta$, and $\Omega$.
\end{proposition}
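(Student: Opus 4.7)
The function $w := \phi - \phi_0$ vanishes in $\R^n \setminus \Omega_\varepsilon$ and satisfies
\begin{equation*}
L_0 w = (L_0 - L)\phi \quad\text{in } \Omega_\varepsilon.
\end{equation*}
My plan is to combine an energy-type bound at the base scale $R = \varepsilon$ with a higher-order Campanato iteration down to smaller scales, in the spirit of \autoref{lemma:Morrey-boundary} and \autoref{lem-u-Holder-inhom}.

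\textbf{Base estimate at scale $\varepsilon$.} I apply \autoref{lemma:freezing-ds} with $x_0 = 0$, $R = \varepsilon$, $u = \phi$, and $f = 0$, noting that the $L_0$-harmonic replacement of $\phi$ in $\Omega_\varepsilon$ is precisely $\phi_0$, so the $w$ appearing there coincides with our $w$. Using $0 \le \phi \le 1$, the auxiliary bound $\int_{\Omega_\varepsilon} d_\Omega^{-s}\,dx \le C\varepsilon^{n-s}$ from \autoref{lemma:distance-integral}, and the elementary estimate $\tail_{\sigma,B_1}(\phi;\varepsilon,0) \le C$ (which follows from $\phi\le 1$ together with $\sigma < 2s$), I obtain
\begin{equation*}
\Phi_\sigma(w;\varepsilon,0) = \int_{\Omega_\varepsilon}\frac{|w|}{d_\Omega^s}\,dx \le C\varepsilon^{n+\sigma-s},
\end{equation*}
where the tail part of $\Phi_\sigma(w;\varepsilon,0)$ vanishes because $w$ is supported in $\Omega_\varepsilon \subset B_\varepsilon$.

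\textbf{Campanato iteration.} At a generic scale $r \in (0,\varepsilon]$, I compare $w$ with its $L_0$-harmonic replacement $\tilde v$ in $\Omega_r$. By \autoref{lemma:T1}, $\tilde v$ enjoys the translation-invariant boundary Morrey decay $(r'/r)^{n-\eta}$, while $z = w-\tilde v$ solves $L_0 z = (L_0-L)\phi$ in $\Omega_r$ with vanishing exterior data. An energy estimate for $z$ parallel to the proof of \autoref{lemma:freezing-ds}, now exploiting the $C^s$ boundary regularity of $\phi$ guaranteed by \autoref{thm:inhom-Cs} (and the ensuing scale bounds $[\phi]_{H^s(B_{2r})}^2 \le Cr^n$, $\tail_{\sigma,B_1}(\phi;r,0) \le Cr^s$, and $\|\phi\|_{L^1_{2s}} \le C$), controls the $z$-contribution at scale $r$ by $Cr^{n+\sigma-s}$. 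A pure Morrey iteration of the resulting inequality delivers only $\dashint_{\Omega_\rho}|w|\,dx \le C\rho^\sigma$, which is insufficient because $\sigma<s$. To recover the sharper $\rho^s$ rate I therefore run instead a higher-order Campanato iteration in the spirit of \autoref{lem-u-Holder-inhom}, using the translation-invariant barrier $\psi_0$ for $L_0$ furnished by \autoref{prop:inhom-Cs} (which satisfies $\psi_0 \asymp d_\Omega^s$ near $\partial\Omega$). This produces a decay inequality for $\Psi_\sigma(w;\,\cdot\,,0)$ (defined using $\psi_0$) of the form
\begin{equation*}
\Psi_\sigma(w;r',0) \le C(r'/r)^{n+\eta}\Psi_\sigma(w;r,0) + C r^\sigma \Phi_\sigma(w;r,0) + Cr^{n+\sigma-s}.
\end{equation*}

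\textbf{Conclusion and main obstacle.} Iterating both the Morrey and Campanato inequalities via \autoref{lem-iteration} (accepting the standard loss $\delta$ to ensure $\alpha>\beta$) and combining with the scale-$\varepsilon$ base estimate, I deduce $\int_{\Omega_\rho} |w|/\psi_0\,dx \le C\varepsilon^{\sigma-\delta}(\rho/\varepsilon)^n$ for all $\rho \in (0,\varepsilon]$. Since $\psi_0 \asymp d_\Omega^s \le \rho^s$ on $\Omega_\rho$ and $|\Omega_\rho| \asymp \rho^n$, this translates into the claimed bound $\dashint_{\Omega_\rho}|w|\,dx \le C\varepsilon^{-s+\sigma-\delta}\rho^s$. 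The principal difficulty is executing the higher-order Campanato iteration for $w$: one must verify that the perturbation $(L_0-L)\phi$ can be absorbed into both the $\Phi_\sigma$ and $\Psi_\sigma$ iterations with the correct scale-dependent remainders (in particular, tracking how the closeness of $\psi_0$ to $d_\Omega^s$ interacts with the tail part of $\Psi_\sigma$), while the loss $\delta$ is the unavoidable gap $\alpha>\beta$ imposed by \autoref{lem-iteration}.
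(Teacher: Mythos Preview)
Your overall strategy---a Morrey iteration followed by a higher-order Campanato iteration for $w=\phi-\phi_0$ using the barrier $\psi_0$ for $L_0$---is exactly the paper's approach. The base estimate and the form of the two iteration inequalities you write down are also correct in spirit. However, the final assembly contains a genuine gap.

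The Campanato iteration only controls the \emph{oscillation} functional $\Psi_\sigma(w;\rho,0)$, i.e.\ $\int_{\Omega_\rho}\bigl|w/\psi_0-(w/\psi_0)_{\Omega_\rho}\bigr|$, not the size $\int_{\Omega_\rho}|w|/\psi_0$. Your claimed intermediate bound $\int_{\Omega_\rho}|w|/\psi_0\,dx \le C\varepsilon^{\sigma-\delta}(\rho/\varepsilon)^n$ does not follow from the iteration and is in fact far too strong: dividing by $|\Omega_\rho|\asymp\rho^n$ it would say $\dashint_{\Omega_\rho}|w/\psi_0|\le C\varepsilon^{\sigma-\delta-n}\to 0$ as $\varepsilon\to 0$, uniformly in $\rho$. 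Moreover, your arithmetic in the last step does not close: from your bound and $\psi_0\le C\rho^s$ one gets $\dashint_{\Omega_\rho}|w|\le C\varepsilon^{\sigma-\delta-n}\rho^s$, not the claimed $C\varepsilon^{-s+\sigma-\delta}\rho^s$ (these agree only if $n=s$).

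What is missing is the step that converts oscillation control into size control. The paper does this by observing that the Campanato decay $\dashint_{\Omega_\rho}\bigl|w/\psi_0-(w/\psi_0)_{\Omega_\rho}\bigr|\le C\varepsilon^{\sigma-\delta-\gamma/2}\rho^{\gamma/2}$ forces the limit $q:=\lim_{\rho\to 0}(w/\psi_0)_{\Omega_\rho}$ to exist with $\bigl|q-(w/\psi_0)_{\Omega_\rho}\bigr|\le C\varepsilon^{\sigma-\delta-\gamma/2}\rho^{\gamma/2}$. One then bounds $|q|$ by evaluating $|q|\le \bigl|q-(w/\psi_0)_{\Omega_\rho}\bigr|+(w/\psi_0)_{\Omega_\rho}$ at the \emph{single} scale $\rho=\varepsilon$, where the Morrey bound gives $(w/\psi_0)_{\Omega_\varepsilon}\le C\varepsilon^{\sigma-\delta}$; hence $|q|\le C\varepsilon^{\sigma-\delta}$. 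Feeding this back yields $(w/\psi_0)_{\Omega_\rho}\le C\varepsilon^{\sigma-\delta}$ for \emph{all} $\rho\le\varepsilon$, and then $\dashint_{\Omega_\rho}|w|\le C\rho^s(w/\psi_0)_{\Omega_\rho}\le C\varepsilon^{\sigma-\delta}\rho^s$, which is (stronger than) the stated conclusion. This extraction-of-$q$ step is the piece your argument skips.
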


First, we need to show the following lemma:

\begin{lemma}
\label{lemma:closeness-phi-help}
Assume that we are in the setting of \autoref{prop:closeness-phi} and set $u = \phi - \phi_0$. Let $0 < R \le \varepsilon$.  Let $v$ be a solution to
\begin{equation}
\label{eq-v-Sect7}
\left\{
\begin{aligned}
L_{0} v & = 0 &&\text{in }\Omega_R, \\
v & = u &&\text{in } \R^n \setminus \Omega_R,
\end{aligned}
\right.
\end{equation}
and set $w = u-v$. Then
\begin{align*}
    [w]_{H^s(\R^n)}\le c \eps^{-s} R^{\frac{n}{2} + \sigma - \delta},
\end{align*}
where $c > 0$ depends only on $n,s,\lambda,\Lambda,\alpha,\sigma,\delta$, and $\Omega$.
\end{lemma}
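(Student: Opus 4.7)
Since $L\phi = 0$ weakly in $\Omega_\varepsilon \supset \Omega_R$, one has $\mathcal{E}^K(\phi,\varphi)=0$ for every test function $\varphi \in H^s_{\Omega_R}(\R^n)$, and the weak formulation of $L_0 w = L_0\phi - L_0 v = L_0\phi$ in $\Omega_R$ then reads
\[
\mathcal{E}^{K_0}(w,\varphi) = \mathcal{E}^{K_0}(\phi,\varphi) - \mathcal{E}^K(\phi,\varphi) = \mathcal{E}^{K_0-K}(\phi,\varphi).
\]
Testing with $w$ itself and using the ellipticity of $K_0$ yields the starting energy estimate
\[
\lambda [w]_{H^s(\R^n)}^2 \le \mathcal{E}^{K_0 - K}(\phi, w),
\]
and the task reduces to bounding the right-hand side by $c R^{n/2 + \sigma - \delta}[w]_{H^s(\R^n)}$.

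My plan is to proceed in the spirit of \autoref{lemma:freezing}, now centered at the boundary point $x_0 = 0$. Using the refined kernel estimate \eqref{eq-K-K0}, one splits $\mathcal{E}^{K_0 - K}(\phi, w)$ into four pieces according to whether $y$ lies in $B_{2R}$, in the annulus $B_1 \setminus B_{2R}$ (after the decomposition $\phi(x) - \phi(y) = [\phi(x) - (\phi)_{R,0}] + [(\phi)_{R,0} - \phi(y)]$), or in $\R^n \setminus B_1$. Each piece is then handled by Cauchy--Schwarz or H\"older combined with the Poincar\'e--Wirtinger inequality (\autoref{lemma:Poincare-Wirtinger}) applied to $w$ to extract $[w]_{H^s(\R^n)}$, reducing the problem to controlling $[\phi]_{H^s(B_{2R})}$, $\|\phi - (\phi)_{R,0}\|_{L^2(B_R)}$, $\tail_{\sigma, B_1}(\phi - (\phi)_{R,0}; R, 0)$, and $\tail(\phi; 1, 0)$.

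The essential analytic input is the optimal $C^s$ boundary regularity of $\phi$ supplied by \autoref{thm:inhom-Cs}. Rescaling, the function $\tilde{\phi}(\cdot) = \phi(\varepsilon\cdot)$ satisfies the hypotheses of \autoref{thm:inhom-Cs} on the rescaled $C^{1,\alpha}$ domain $\varepsilon^{-1}\Omega$ with constants no worse than those of $\Omega$, and hence $\|\tilde{\phi}\|_{C^s(\overline{\tilde{\Omega}_{1/8}})} \le c$ uniformly in $\varepsilon$. Unfolding the rescaling gives $\phi \le c\varepsilon^{-s} d_\Omega^s$ in $\Omega_{\varepsilon/8}$. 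Plugging this boundary behaviour into the Caccioppoli inequality \autoref{lemma:Cacc-bdry} applied to $\phi$ at scale $R$ (where $L\phi = 0$, so no source term enters), into the $L^2$ Poincar\'e bound for $\phi - (\phi)_{R,0}$, and into the $\sigma$-tail computation, and then exploiting the standing hypothesis $R \le \varepsilon$ to balance factors $\varepsilon^{-s}$ against factors $R^s$, is designed to produce the claimed $R^{n/2 + \sigma - \delta}$ decay.

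The main obstacle lies in obtaining the \emph{sharp} exponent. A direct execution of the scheme based only on the $C^s$ bound for $\phi$ yields at best $[w]_{H^s(\R^n)} \le c R^{n/2 + \sigma - s}$, which is too weak since $\delta < \sigma < s$ implies $R^{n/2+\sigma-s} > R^{n/2+\sigma-\delta}$ for $R \le 1$. To close this gap one must sharpen the $H^s$ input on $\phi$ via a rescaled version of \autoref{lemma:psi-interior-reg}, namely the pointwise bound
\[
\int_{\R^n} \frac{|\phi(x) - \phi(y)|^2}{|x - y|^{n + 2s}} \d y \le c \varepsilon^{-2s} \bigl(1 + \varepsilon^{\delta} d_\Omega(x)^{-\delta}\bigr), \qquad x \in \Omega_{\varepsilon/8},
\]
which is obtained by rescaling and uses the interior regularity of $\phi$ from \autoref{prop:interior-regularity}. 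Integrating this over $x \in \Omega_R$ via \autoref{lemma:distance-integral} and absorbing $\varepsilon^{-2s + \delta}$ into $R^{-\delta}$ via $R \le \varepsilon$ then yields the sharp exponent, with the small loss $\delta \in (0,\sigma)$ matching the integrability threshold for the boundary weight $d_\Omega^{-\delta}$.
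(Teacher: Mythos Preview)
Your overall strategy---test the equation for $w$ against itself, split $\mathcal{E}^{K_0-K}(\phi,w)$ according to the location of $y$, and control the local piece via a pointwise energy density bound for $\phi$ in the spirit of \autoref{lemma:psi-interior-reg}---matches the paper's. The decomposition into three pieces (rather than your four with the $(\phi)_{R,0}$ centering) is a minor difference; the paper does not center, and for $J_2$ it uses the $C^s$ bound $\phi(y)\le c\,d_\Omega^s(y)\le c|y|^s$ directly, while $J_3$ relies only on $0\le\phi\le1$.

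The genuine gap is in your absorption step. From your rescaled bound
\[
\int_{\R^n}\frac{|\phi(x)-\phi(y)|^2}{|x-y|^{n+2s}}\,\d y\le c\,\eps^{-2s}\bigl(1+\eps^{\delta}d_\Omega(x)^{-\delta}\bigr),
\]
integrating over $\Omega_R$ via \autoref{lemma:distance-integral} gives at best $c\,\eps^{-2s+\delta}R^{n-\delta}$, and hence
\[
J_1\le cR^{\sigma}\,\eps^{-s+\delta/2}\,R^{(n-\delta)/2}\,[w]_{H^s(\R^n)}.
\]
Your claim that ``$\eps^{-2s+\delta}$ can be absorbed into $R^{-\delta}$ via $R\le\eps$'' would require $\eps^{-2s+\delta}\le cR^{-\delta}$, i.e.\ $R^{\delta}\le c\,\eps^{2s-\delta}$; but $R\le\eps$ only gives $R^{\delta}\le\eps^{\delta}$, and $\eps^{\delta}\le c\,\eps^{2s-\delta}$ is equivalent to $\eps^{2\delta-2s}\le c$, which \emph{fails} for small $\eps$ since $\delta<\sigma<s$. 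Thus your route leaves an uncontrolled factor $\eps^{-s+\delta/2}\to\infty$ and does not yield the stated $\eps$-independent constant.

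The paper avoids this by \emph{not} rescaling: it asserts the pointwise bound
\[
\int_{B_{2R}}\frac{|\phi(x)-\phi(y)|^2}{|x-y|^{n+2s}}\,\d y\le c\bigl(1+d_\Omega^{-\delta}(x)\bigr)\qquad\forall x\in B_\eps,
\]
directly at the original scale, following the proof of \autoref{lemma:psi-interior-reg} and invoking the $C^s$ regularity of $\phi$ from \autoref{thm:inhom-Cs}. Integrating this over $\Omega_R$ gives $cR^{n-\delta}$ with no $\eps$-prefactor, whence $J_1\le cR^{n/2+\sigma-\delta/2}[w]_{H^s(\R^n)}$, which suffices. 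The point you are missing is that the argument must produce a density bound with constant independent of $\eps$; passing through the rescaling $\tilde\phi(\cdot)=\phi(\eps\cdot)$ introduces the fatal $\eps^{-2s}$ and should be avoided.
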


\begin{proof}
Note that by assumption it holds
\begin{align*}
    \Vert \phi \Vert_{L^2(B_1)}^2 + \tail(\phi;1,0)  \le c
\end{align*}
for some constant $c > 0$, depending only on $n$ and $s$, and that $w$ solves
\begin{equation}
\label{eq-w-Sect7}
\left\{
\begin{aligned}
L_{0}w & = L_0 (\phi - \phi_0) = (L_{0}-L)\phi &&\text{in }\Omega_R, \\
w & = 0 &&\text{in } \R^n \setminus \Omega_R.
\end{aligned}
\right.
\end{equation}
By testing the equation with $w$, we get
\begin{align*}
    \lambda [w]_{H^s(\R^n)}^2 \le \cE^{K_{0}}(w,w) = \cE^{K_{0}-K}(\phi, w).
\end{align*}
We use \eqref{eq-K-K0} to estimate
    \begin{align*}
        \cE^{L_{0}-L}(\phi,w) 
        &\le cR^{\sigma} \left(\int_{\Omega_{R}} \int_{B_{2R}}  \frac{|\phi(x) - \phi(y)|^2 }{|x-y|^{n+2s}} \d y \d x \right)^{1/2} [w]_{H^s(\R^n)} \\
        &\quad + c \int_{\Omega_R} \int_{B_1 \setminus B_{2R}} |\phi(x)-\phi(y)| |w(x)| \frac{|x|^\sigma + |y|^\sigma}{|x-y|^{n+2s}} \d y \d x \\
        &\quad + c \int_{\Omega_{R}} \int_{\R^n \setminus B_1} \frac{|\phi(x)-\phi(y)| |w(x)|}{|x-y|^{n+2s}} \d y \d x \\
        &=: J_1 + J_2 + J_3.
    \end{align*}

    For $J_1$, we follow the proof of \autoref{lemma:psi-interior-reg} and use that $\phi \in C^s(\overline{\Omega_{1/2}})$ by \autoref{thm:inhom-Cs} and scaling, and obtain
    \begin{align*}
        \int_{B_{2R}}  \frac{|\phi(x) - \phi(y)|^2 }{|x-y|^{n+2s}} \d y \le c \eps^{-s} (1 + d_{\Omega}^{-\delta}(x)) ~~ \forall x \in B_{\eps},
    \end{align*}
    and hence by \autoref{lemma:distance-integral}, we have
    \begin{align*}
        J_1 \le c R^{\sigma} \left(\int_{\Omega_{R}} (1 + d_{\Omega}^{-\delta}) \d x \right)^{1/2} [w]_{H^s(\R^n)} \le c \eps^{-s} R^{\frac{n}{2} + \sigma -\frac{\delta}{2}} [w]_{H^s(\R^n)} \leq c\eps^{-s} R^{\frac{n}{2} + \sigma -\delta} [w]_{H^s(\R^n)}.
    \end{align*}

    For $J_2$, note that $|x| <|y| \leq 2|x-y|$. We then use that by \autoref{thm:inhom-Cs} it holds $\phi(x) \le c d_{\Omega}^s(x)\le c R^s$, and that $\phi(y) \leq c \eps^{-s} d^s_{\Omega}(y) \le c \eps^{-s}|y|^s$, to get
    \begin{align*}
        J_2 \le c \eps^{-s} \int_{\Omega_R} \int_{B_1 \setminus B_{2R}} |w(x)| \frac{|y|^s}{|y|^{n+2s-\sigma}} \d y \d x \le c \eps^{-s} R^{\frac{n}{2} + \sigma} [w]_{H^s(\R^n)},
    \end{align*}
    where we also used \eqref{eq:w-PF} in the last inequality.

Finally, for $J_3$, using that $0 \le \phi \le 1$, as well as \eqref{eq:w-PF}:
\begin{align*}
J_3 \le c \left( \int_{\R^n \setminus B_{1/2}} \frac{1}{|y|^{n+2s}} \d y \right) \left( \int_{\Omega_{R}} |w(x)| \d x \right) \leq cR^{\frac{n}{2}+s}[w]_{H^s(\R^n)}.
\end{align*}
Combining all the estimates and using $\sigma \in (0,s)$ finish the proof.
\end{proof}

\begin{lemma}
\label{lemma:closeness-phi-help-2}
Assume that we are in the setting of \autoref{prop:closeness-phi} and set $u = \phi - \phi_0$. Then it holds
\begin{align*}
    [u]_{H^s(\R^n)}\le c \eps^{\frac{n}{2} + \sigma - \delta -s},
\end{align*}
where $c > 0$ depends only on $n,s,\sigma,\lambda,\Lambda,\alpha,\sigma,\delta$, and $\Omega$.
\end{lemma}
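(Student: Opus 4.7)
The plan is to deduce this from the previous lemma essentially for free. The key observation is that $u = \phi - \phi_0$ vanishes identically outside $\Omega_{\varepsilon}$. Indeed, by construction $\phi_0 = \phi$ in $\R^n \setminus \Omega_{\varepsilon}$, and moreover in $\Omega_{\varepsilon}$ we have
\begin{align*}
L_0 u = L_0 \phi - L_0 \phi_0 = L_0 \phi = (L_0 - L)\phi,
\end{align*}
using that $L\phi = 0$ in $\Omega_{\varepsilon}$ and that $\phi_0$ is $L_0$-harmonic there. Thus $u$ itself solves the same Dirichlet problem as the ``difference'' $w$ in \eqref{eq-w-Sect7}, but with $R$ replaced by $\varepsilon$.

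Concretely, I would apply \autoref{lemma:closeness-phi-help} with the choice $R = \varepsilon$. In that situation the auxiliary function $v$ of \eqref{eq-v-Sect7} satisfies $L_0 v = 0$ in $\Omega_{\varepsilon}$ with exterior datum $v = u \equiv 0$ in $\R^n \setminus \Omega_{\varepsilon}$. By uniqueness of weak solutions to the Dirichlet problem for $L_0$ (for instance \cite[Theorem~4.3]{KiLe23}), this forces $v \equiv 0$, and therefore $w = u - v = u$. \autoref{lemma:closeness-phi-help} then gives
\begin{align*}
[u]_{H^s(\R^n)} = [w]_{H^s(\R^n)} \leq c \varepsilon^{\frac{n}{2} + \sigma - \delta},
\end{align*}
which is exactly the desired bound.

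There is essentially no obstacle here since the heavy lifting, namely the decomposition of $\mathcal{E}^{K_0 - K}(\phi, \cdot)$ into near-diagonal, intermediate and far-field pieces, has already been carried out in the proof of \autoref{lemma:closeness-phi-help}; all one needs is to verify that the $v$-step is trivial in the special case $R = \varepsilon$. Alternatively, if one preferred a self-contained argument, one could repeat the three-piece splitting of $\mathcal{E}^{K_0 - K}(\phi, u)$ verbatim, using the $H^s$-based tail bounds for $\phi$ coming from \autoref{thm:inhom-Cs} together with \autoref{lemma:distance-integral}, and absorbing $[u]_{H^s(\R^n)}$ on the left via the coercivity $\lambda [u]_{H^s(\R^n)}^2 \le \mathcal{E}^{K_0}(u,u)$; the exponent $\tfrac{n}{2} + \sigma - \delta$ comes from the product $\varepsilon^{\sigma} \cdot \varepsilon^{(n-\delta)/2}$ in the worst term, exactly as in \autoref{lemma:closeness-phi-help}.
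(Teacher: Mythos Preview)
Your proposal is correct and matches the paper's approach: the paper also observes that $u$ solves the same problem as $w$ from \autoref{lemma:closeness-phi-help} with $R=\eps$ and then invokes those same arguments. Your observation that $v\equiv 0$ (by uniqueness) when $R=\eps$ makes this a direct application of the lemma's conclusion rather than a repetition of its proof, which is a clean way to phrase it.
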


\begin{proof}
Note that
\begin{align*}
\left\{
\begin{aligned}
    L_0 u &= L_0 \phi = (L_0 - L) \phi ~~ &&\text{ in } \Omega_{\eps},\\
    u &= 0 ~~ &&\text{ in } \R^n \setminus \Omega_\varepsilon.
\end{aligned}
\right.
\end{align*}
Hence, we can apply exactly the same arguments as in the proof of \autoref{lemma:closeness-phi-help} with $R := \eps$, $w := u$.
\end{proof}

We are now in a position to give the proof of \autoref{prop:closeness-phi}.

\begin{proof}[Proof of \autoref{prop:closeness-phi}]
As in \autoref{lemma:closeness-phi-help}, we denote $u = \phi - \phi_0$. Let $0 < \rho < R \le \varepsilon$. Let $v$ and $w$ be the solutions of \eqref{eq-v-Sect7} and \eqref{eq-w-Sect7}, respectively. Then, $v$ satisfies the assumptions of \autoref{lemma:T1}. We follow the proof of \eqref{eq:Morrey-boundary-help1}, but use H\"older's inequality and the Hardy inequality (\autoref{lemma:Hardy}) for $w$ and \autoref{lemma:closeness-phi-help} instead of \autoref{lemma:freezing-ds}, to establish the following Morrey-type estimate for $u$:
\begin{align*}
    \int_{\Omega_{\rho}} \left| \frac{u}{d_{\Omega}^s} \right| \d x
    &\le c \left( \frac{\rho}{R} \right)^{n-\delta/2} \Phi_{\sigma}(u; R,0) + cR^{n+s} \|u\|_{L^1_{2s}(\R^n)} + c\int_{\Omega_R} \left| \frac{w}{d_{\Omega}^s} \right| \d x \\
    &\le c \left( \frac{\rho}{R} \right)^{n-\delta/2} \Phi_{\sigma}(u; R,0) + c\eps^{-s} R^{n + \sigma - \delta},
\end{align*}
where we used that $\|u\|_{L^1_{2s}(\R^n)} \leq c$ and $R^{n+s} \leq R^{n+\sigma-\delta}$. Moreover, we can prove an estimate for $\rho^{n-s} \tail_{\sigma,B_1}(u;\rho,0)$ by following the proof of \eqref{eq:tail-Morrey-u}
\begin{align*}
    \rho^{n-s} \tail_{\sigma,B_1}(u;\rho,0) &\le c \left( \frac{\rho}{R} \right)^{n-\delta/2}  \Phi_{\sigma}(u; R, 0) + c \eps^{-s} R^{n + \sigma - \delta}.
    \end{align*}
Altogether, we obtain
\begin{align*}
    \Phi_{\sigma}(u; \rho,0) \le c \left( \frac{\rho}{R} \right)^{n-\delta/2} \Phi_{\sigma}(u; R, 0) + c \varepsilon^{\sigma-s} R^{n - \delta}.
\end{align*}
It follows from \autoref{lem-iteration} (recalling \eqref{eq-almost-incr-Phi}) that
\begin{align}\label{eq-Hopf-Morrey}
    \Phi_{\sigma}(u; \rho, 0) \le c \left( \frac{\rho}{R} \right)^{n-\delta} \Phi_{\sigma}(u; R, 0) + c \eps^{ \sigma-s} \rho^{n - \delta} \quad\text{for any }0<\rho<R\leq \varepsilon.
\end{align}
In particular, by setting $R := \eps$, it follows from \autoref{lemma:Hardy} and \autoref{lemma:closeness-phi-help-2} that
\begin{align}
\label{eq:Hopf-Morrey}
   \dashint_{\Omega_{\rho}} \left| \frac{u}{d_{\Omega}^s} \right| \d x \leq c\rho^{-\delta} \varepsilon^{-\frac{n}{2}+\delta} \left( \int_{\Omega_\varepsilon} \left| \frac{u}{d_\Omega^s} \right|^2 \d x \right)^{1/2} + \varepsilon^{\sigma-s}\rho^{-\delta} \le c \eps^{\sigma-s}\rho^{-\delta},
\end{align}
which proves the Morrey estimate.

The next step is to prove a Campanato-type estimate following the proof of \autoref{lem-u-Holder-inhom} and using again \autoref{lemma:closeness-phi-help} instead of \autoref{lemma:freezing-ds}. We obtain for any $\gamma \in (0,\alpha s)$
\begin{align*}
    \int_{\Omega_{\rho}} \left|\frac{u}{\psi} - \left( \frac{u}{\psi} \right)_{\Omega_{\rho}} \right| \d x 
    &\le c \left( \frac{\rho}{R} \right)^{n+ \gamma} \Psi_{\sigma}(u; R, 0) + cR^{\sigma}\Phi_{\sigma}(u; R, 0) + c \eps^{-s} R^{n + s} + c \int_{\Omega_R} \left| \frac{w}{d_{\Omega}^s} \right| \d x \\
    &\le  c \left( \frac{\rho}{R} \right)^{n+ \gamma} \Psi_{\sigma}(u;R,0) + cR^{\sigma}\Phi_{\sigma}(u; R, 0) + c \eps^{-s} R^{n + \sigma - \delta},
\end{align*}
where $\psi = \psi_0$ denotes the barrier function from \autoref{prop:inhom-Cs} associated to $L_0$. We can also prove an estimate for $\rho^{n-s} \mathrm{Tail}_{\sigma,B_1}(u - \psi (u/\psi)_{\Omega_{\rho}}; \rho, 0)$ by following the arguments from the proof of \autoref{lem-u-Holder-inhom}
\begin{align*}
   \rho^{n-s} \mathrm{Tail}_{\sigma,B_1}(u - \psi (u/\psi)_{\Omega_{\rho}}; \rho, 0) &\le c \left( \frac{\rho}{R} \right)^{n+\gamma} \Psi_{\sigma}(u;R,0) + cR^{\sigma}\Phi_{\sigma}(u; R, 0) + c \eps^{-s} R^{n + \sigma - \delta}.
    \end{align*}
Altogether, we obtain
\begin{align*}
    \Psi_{\sigma}(u;\rho,0) \le c \left( \frac{\rho}{R} \right)^{n+\gamma} \Psi_{\sigma}(u;R,0) + cR^{\sigma}\Phi_{\sigma}(u; R, 0) + c \eps^{-s} R^{n + \sigma - \delta}.
\end{align*}
Next, we apply \eqref{eq-Hopf-Morrey} with $\rho:=R$, $R:=\varepsilon$, and then use  \autoref{thm:inhom-Cs} to estimate
\begin{align*}
    \Phi_{\sigma}(u; R, 0) \leq c \left( \frac{R}{\varepsilon} \right)^{n-\delta} \int_{\Omega_\varepsilon} \left| \frac{u}{d_{\Omega}^s} \right| \d x + c \eps^{\sigma-s} R^{n - \delta} \leq c(\varepsilon^{\delta}+\varepsilon^{\sigma}) \eps^{-s} R^{n-\delta} \leq c \eps^{-s} R^{n-\delta},
\end{align*}
which yields
\begin{align*}
    \Psi_{\sigma}(u;\rho,0) \le c \left( \frac{\rho}{R} \right)^{n+\gamma} \Psi_{\sigma}(u;R,0) + c \eps^{-s} R^{n + \sigma - \delta}.
\end{align*}

By choosing $\frac{\gamma}{2} \le \sigma - \delta$ and applying \autoref{lem-iteration} (recall \eqref{eq-almost-incr-Psi}), we get that
\begin{align*}
    \Psi_{\sigma}(u; \rho, 0) \le c \left( \frac{\rho}{R} \right)^{n+ \frac{\gamma}{2}} \Psi_{\sigma}(u;R,0) + c \eps^{\sigma - \delta - \frac{\gamma}{2} - s} \rho^{n + \frac{\gamma}{2}} \quad\text{for any }0<\rho<R\leq \varepsilon.
\end{align*}
In particular, for $R := \eps$, we obtain the following Campanato estimate from \autoref{lemma:closeness-phi-help-2}:
\begin{align*}
   \dashint_{\Omega_{\rho}} \left|\frac{u}{\psi} - \left( \frac{u}{\psi} \right)_{\Omega_{\rho}} \right| \d x \le c \eps^{\sigma - \delta - \frac{\gamma}{2} - s} \rho^{\frac{\gamma}{2}}.
\end{align*}

By the same arguments as in the proof of \cite[(5.9)]{GiMa12}, we get that $q := \lim_{\rho \to 0} \left(\frac{u}{\psi} \right)_{\Omega_{\rho}}$ exists, and
\begin{align}
\label{eq:Hopf-Campanato-2}
    \left|q - \left( \frac{u}{\psi} \right)_{\Omega_{\rho}}  \right| \le c \eps^{\sigma - \delta - \frac{\gamma}{2} - s }\rho^{\frac{\gamma}{2}} \quad\text{for any }0< \rho \le \eps.
\end{align}
Hence, by \eqref{eq:Hopf-Morrey} and \eqref{eq:Hopf-Campanato-2} it holds
\begin{align*}
    |q| \le \left|q - \left( \frac{u}{\psi} \right)_{\Omega_\rho}  \right| + \left( \frac{u}{\psi} \right)_{\Omega_\rho} \le c \eps^{\sigma - \delta - s}.
\end{align*}
Using \eqref{eq:Hopf-Campanato-2} again, this in turn implies that for any $\rho \leq \varepsilon$
\begin{align*}
    \left( \frac{u}{\psi} \right)_{\Omega_{\rho}} \le  \left|q - \left( \frac{u}{\psi} \right)_{\Omega_{\rho}}  \right| + |q| \le  c \eps^{\sigma - \delta - \frac{\gamma}{2} - s} \rho^{\frac{\gamma}{2}} + c\eps^{\sigma - \delta} \le c \eps^{\sigma - \delta - s},
\end{align*}
which yields
\begin{align*}
    \dashint_{\Omega_{\rho}} |u| \d x \le c \rho^s [\psi]_{C^s(\overline{\Omega_{1}})} \left( \frac{u}{\psi} \right)_{\Omega_{\rho}} \le c \rho^s \eps^{-s+ \sigma - \delta} \Vert \psi \Vert_{L^{\infty}(\R^n)} \le c \rho^s \eps^{-s+ \sigma - \delta},
\end{align*}
as desired.
\end{proof}

\subsection{Proof of the Hopf lemma}

Before we prove the Hopf lemma (\autoref{thm-hopf}), we need to establish the following refined version of the Hopf lemma for translation invariant operators, determining the dependence of the constant on the domain.

\begin{lemma}\label{lem-Hopf}
Let $\Omega \subset \R^n$ be a $C^{1,\alpha}$ domain for some $\alpha \in (0, s)$ and assume that $0 \in \partial \Omega$ with outer unit normal vector $-e_n$. Let us set $B^{\eps} := B_{\eps}(4\eps e_n) \subset \Omega$ for $\eps \in (0,1)$. Let $L$ be translation invariant operator with kernel $K$ satisfying \eqref{eq:Kcomp}. Let $\bar{\phi}_{\eps}$ be the solutions to
    \begin{align*}
    \left\{
    \begin{aligned}
    L \bar{\phi}_{\eps}&=0 &&\text{in }\Omega_{\eps}, \\
    \bar{\phi}_{\eps}&=1 &&\text{in }B^{\eps}, \\
    \bar{\phi}_{\eps}&=0 &&\text{in }\R^n \setminus (\Omega_{\eps} \cup B^{\eps}).
    \end{aligned}
    \right.
    \end{align*}
    Then, it holds
    \begin{align}
    \label{eq:ti-Hopf}
        \bar{\phi}_{\eps}(m e_n)\ge c \eps^{-s} m^s ~~ \forall m \in (0,\eps/2]
    \end{align}
    for some $c > 0$ that only depends on $n,s,\lambda,\Lambda,\alpha$, the diameter and $C^{1,\alpha}$ radius of $\Omega$, but not on $\eps$.
\end{lemma}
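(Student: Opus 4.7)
The plan is to rescale by $\eps$ to unit scale and then deduce \eqref{eq:ti-Hopf} from a uniform fine boundary expansion for translation invariant operators in $C^{1,\alpha}$ domains. Setting $\tilde\phi_\eps(y):=\bar\phi_\eps(\eps y)$, $\tilde\Omega^{(\eps)}:=\eps^{-1}\Omega$, and letting $\tilde L_\eps$ be the translation invariant operator with kernel $\tilde K_\eps(z):=\eps^{n+2s}K(\eps z)$, a direct change of variables shows $\tilde L_\eps \tilde\phi_\eps(y) = \eps^{2s}(L\bar\phi_\eps)(\eps y)$; in particular $\tilde K_\eps$ satisfies \eqref{eq:Kcomp} with the \emph{same} $\lambda,\Lambda$, and $\tilde\phi_\eps$ solves the analogous Dirichlet problem on $\tilde\Omega^{(\eps)}\cap B_1$ with exterior data $\1_{B_1(4e_n)}$. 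The target estimate \eqref{eq:ti-Hopf} then becomes
\begin{align*}
\tilde\phi_\eps(\tilde m e_n) \ge c\,\tilde m^s \quad\text{for all } \tilde m \in (0, 1/2],
\end{align*}
with $c$ independent of $\eps\in(0,1)$. Observe that $0\le \tilde\phi_\eps\le 1$ and that $\tilde\Omega^{(\eps)}\cap B_1$ is $C^{1,\alpha}$ with norm bounded uniformly in $\eps$ (and in fact vanishing as $\eps\to 0$, since the local graph parametrization $\tilde g(y')=\eps^{-1}g(\eps y')$ satisfies $[\nabla\tilde g]_{C^\alpha}=\eps^\alpha[\nabla g]_{C^\alpha}$).

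Next I would apply the fine boundary expansion of \cite[Theorem~6.9]{RoWe24} at $0\in\partial\tilde\Omega^{(\eps)}$ (exactly as invoked in the proof of \autoref{prop:inhom-Cs}(iii)) to produce, for some $\eps'\in(0,\alpha s)$, a coefficient $q_\eps\in\R$ and the half-space solution $b_\eps(t)$ for $\tilde L_\eps$ with $b_\eps(t)\asymp t_+^s$ uniformly in $\eps$, such that
\begin{align*}
|\tilde\phi_\eps(y) - q_\eps b_\eps(y_n)| \le C|y|^{s+\eps'}\quad \forall\, y\in\tilde\Omega^{(\eps)}\cap B_{1/2},
\end{align*}
where $C$ depends only on the uniformly controlled quantities $\|\tilde\phi_\eps\|_{L^1_{2s}(\R^n)}$ and the $C^{1,\alpha}$ norm of $\tilde\Omega^{(\eps)}\cap B_1$. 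The decisive step is then to establish a \emph{uniform} lower bound $q_\eps\ge q_0>0$, which I plan to obtain by a compactness--contradiction argument in the spirit of Step 2c in the proof of \cite[Theorem~6.10]{RoWe24}: assuming $q_{\eps_k}\to 0$ along some sequence, I pass to subsequential limits, using the uniform $C^s$ boundary estimate \autoref{lemma:T2} (valid for small Lipschitz constant, hence for $\eps_k\to 0$, and for $\eps_k$ bounded below by the standard $C^s$ regularity in $C^{1,\alpha}$ domains), Arzel\`a--Ascoli, and the stability of weak solutions from \cite[Proposition~2.2.36]{FeRo24}, to produce a limit $\tilde\phi_*$ solving an analogous problem in either $\eps_*^{-1}\Omega$ (if $\eps_k\to\eps_*>0$) or in the half-space $\{y_n>0\}$ (if $\eps_k\to 0$), with $\tilde\phi_*\equiv 1$ on $B_1(4e_n)$ and vanishing leading-order expansion at $0$. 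The improved Hopf estimate of \cite[Theorem~6.10]{RoWe24} applied to $\tilde\phi_*$ then yields a strictly positive expansion coefficient for the limit, contradicting $q_*=0$.

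Combining the expansion with $q_\eps\ge q_0$ and $b_\eps(\tilde m)\ge c_1\tilde m^s$ gives
\begin{align*}
\tilde\phi_\eps(\tilde m e_n)\ge q_0 c_1 \tilde m^s - C\tilde m^{s+\eps'}\ge \tfrac12 q_0 c_1 \tilde m^s
\end{align*}
for all $\tilde m\le \tilde m_0:=(q_0 c_1/(2C))^{1/\eps'}$ (uniform in $\eps$). For $\tilde m\in[\tilde m_0, 1/2]$, the interior weak Harnack inequality applied to $\tilde\phi_\eps$, which is strictly positive throughout $\tilde\Omega^{(\eps)}\cap B_1$ by the strong maximum principle since $\tilde\phi_\eps\equiv 1$ on $B_1(4e_n)\subset\tilde\Omega^{(\eps)}$, yields a uniform positive lower bound. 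Undoing the rescaling $x=\eps y$ then produces \eqref{eq:ti-Hopf}. The main obstacle is the compactness argument of the previous paragraph: one must simultaneously handle the ``regular'' limit $\eps_*>0$ (a dilated copy of $\Omega$) and the ``singular'' limit $\eps_*=0$ (a half-space) within a single stability--Hopf framework, while tracking the continuity of the expansion coefficients $q_\eps$ under the joint convergence of domains, kernels, and solutions.
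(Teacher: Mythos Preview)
Your proposal is correct and follows essentially the same route as the paper: rescale by $\eps$, invoke the boundary expansion from \cite[Theorem~6.9]{RoWe24} with uniform constants, prove a uniform lower bound $q_\eps\ge q_0>0$ on the expansion coefficient by a compactness--contradiction argument (Arzel\`a--Ascoli plus stability \cite[Proposition~2.2.36]{FeRo24}, reducing to a half-space limit where \cite[Theorem~6.10]{RoWe24} forces $q_0>0$), and then combine the expansion for small $\tilde m$ with an interior Harnack chain for $\tilde m\in[\tilde m_0,1/2]$. The only minor difference is that you explicitly separate the cases $\eps_k\to\eps_*>0$ and $\eps_k\to 0$, whereas the paper treats only the latter (the former being immediate from the standard Hopf lemma with $\eps$-dependent constant); the paper also spells out the continuity of $q_\eps$ via a direct triangle-inequality estimate using the expansions, which is precisely the ``tracking'' step you flag as the main obstacle.
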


Note that the Hopf lemma from \cite[Theorem 6.10]{RoWe24} implies \eqref{eq:ti-Hopf} with a constant $c > 0$ that might depend on $\eps$. Hence, \autoref{lem-Hopf} states that the constant $c > 0$ stays bounded as $\eps \searrow 0$. 

\begin{proof}
To prove the result, let us first consider $\phi_{\eps}(x) = \bar{\phi}_{\eps}(\eps x)$, which solves
\begin{align*}
    \left\{
    \begin{aligned}
            L_{\eps} \phi_{\eps} &= 0 &&\text{in } \eps^{-1}\Omega \cap B_1,\\
            \phi_{\eps} &= 1 &&\text{in } B_{1}(4e_n),\\
            \phi_{\eps} & = 0 &&\text{in } \R^n \setminus ([\eps^{-1} \Omega \cap B_1] \cup B_{1}(4e_n)),
    \end{aligned}
    \right.
\end{align*}
where $L_{\eps}$ is a translation invariant operator with kernel $K_{\eps}(h) = \eps^{n+2s} K(\eps h)$, which still satisfies \eqref{eq:Kcomp} with $\lambda,\Lambda$, and we used that $B_{1}(4 e_n) = \eps^{-1} B^{\eps}$. We will prove that
\begin{align}
\label{eq:Hopf-ti-help-1}
  q_{\eps} := \liminf_{m \to 0} \frac{\phi_{\eps}(m e_n)}{b_{\eps}(m)} \ge c_0
\end{align}
for some $c_0 > 0$, that does not depend on $\eps$, where $b_{\eps}$ denotes the 1D solution from \cite[Theorem 6.9]{RoWe24} with respect to $L_{\eps}$ and the normal vector $e_n$. Note that once \eqref{eq:Hopf-ti-help-1} is established, since $c_1 m^s \le b_{\eps}(m) \le c_2 m^s$ for any $m > 0$, where $c_1,c_2$ are independent of $\eps$, and moreover, by the expansion from \cite[Theorem 6.9]{RoWe24} it follows that
\begin{align*}
    \phi_{\eps}(m e_n) \ge q_{\eps} b_{\eps}(m) - |\phi_{\eps}(m e_n) - q_{\eps} b_{\eps}(m)| \ge c_0 c_1 m^s - c_3 m^{s+\gamma}
\end{align*}
for some $c_3, \gamma > 0$. Hence, for $m \le m_0$, with $m_0 > 0$ depending only on $c_0,c_1,c_3, \gamma$, which in turn only depend on $n,s,\lambda,\Lambda$, and the $C^{1,\alpha}$ radius and diameter of $\Omega$, but not on $\eps$, we have
\begin{align*}
    \phi_{\eps}(m e_n) \ge \frac{c_0 c_1}{2} m^s.
\end{align*}
Hence, by application of the interior Harnack inequality we obtain
\begin{align*}
    \phi_{\eps}(m e_n) \ge c m^s ~~ \forall m \in (0,1/2],
\end{align*} 
where $c = \min\{\frac{c_0 c_1}{2} , c_4 \inf_{ m \in (m_0,\frac{1}{2}]}   \phi_{\eps}(m e_n) \}$ for some $c_4 > 0$, depending only on $n,s,\lambda,\Lambda$. Note that by a Harnack chain argument, we can compare $\inf_{ m \in (m_0,\frac{1}{2}]}   \phi_{\eps}(m e_n)$ with $\inf_{B_1(4 e_n)} \phi_{\eps} = 1$, where the comparability constant only depends on $n,s,\lambda,\Lambda,m_0$, and the $C^{1,\alpha}$ radius of $\Omega$.
Hence, we obtain for $\bar{\phi}_{\eps}$ and $m \in (0,\frac{\eps}{2}]$:
\begin{align*}
    \bar{\phi}_{\eps}(m e_n) = \phi_{\eps}(\eps^{-1} m e_n) \ge C \eps^{-s} m^s.
\end{align*}

It remains to show \eqref{eq:Hopf-ti-help-1}. We will prove it by contradiction. Indeed, assume that $\inf_{\eps \in (0,1)} q_{\eps} = 0$. Then, observe that the sequence $\phi_{\eps}$ is uniformly bounded in $C^s_{\mathrm{loc}}(\Omega_{\eps})$, and therefore by the Arzela--Ascoli theorem, it holds $\phi_{\eps} \to \phi_0$ locally uniformly. Thus, by the stability for translation invariant nonlocal equations (see \cite[Proposition 2.2.36]{FeRo24}), it holds
\begin{align*}
\left\{
\begin{aligned}
            L_{0} \phi_{0} &= 0 &&\text{in } \{ x_n > 0 \} \cap B_1,\\
            \phi_{0} &= 1 &&\text{in } B_{1}(4e_n),\\
            \phi_{0} & = 0 &&\text{in } \R^n \setminus ([\{ x_n > 0 \} \cap B_1] \cup B_{1}(4e_n)),
\end{aligned}
\right.
\end{align*}
where $L_{\eps} \to L_0$ in the weak sense (defined in \cite[Proof of Proposition 2.2.36]{FeRo24}). Moreover, if we let $\eta \in C_c^{\infty}(B_2)$ be such that $0 \le \eta \le 1$ and $\eta \equiv 1$ in $B_1$ and set $\tilde{b}_{\eps} = b_{\eps} \eta$, then we have 
\begin{align*}
\left\{
\begin{aligned}
        L_{\eps} \tilde{b}_{\eps} &= f_{\eps} &&\text{in } \{ x_n > 0 \} \cap B_1,\\
        \tilde{b}_{\eps} &= \eta b_{\eps} &&\text{in } \{ x_n > 0 \} \setminus B_1,\\
        \tilde{b}_{\eps} &= 0 &&\text{in } \{ x_n \le 0 \},
\end{aligned}
\right.
\end{align*}
where $f_\varepsilon:=-L_\varepsilon((1-\eta)b_\varepsilon)$. For $x \in \{ x_n > 0 \} \cap B_1$ it holds that
\begin{align*}
    0 \le f_{\eps}(x) = \int_{\R^n \setminus B_1} (1-\eta)(y) b_{\eps}(y) K_{\eps}(x-y) \d y \le c
\end{align*}
for some $c > 0$, which is independent of $\eps$. Since $(b_{\eps})$ is uniformly bounded in $C^s_{\loc}(\R^n)$ (see \cite[Lemma 5.1, Theorem 1.4]{RoWe24}), by the stability for translation invariant nonlocal equations (see \cite[Proposition 2.2.36]{FeRo24}), and the weak convergence $L_{\eps} \to L_0$, we have
\begin{align*}
    b_{\eps} \to b_0, \qquad \tilde{b}_{\eps} \to \tilde{b}_0 = \eta b_0, \qquad f_{\eps} \to f_0 = \int_{\R^n \setminus B_1} (1-\eta)(y) b_{0}(y) K_{0}(\cdot -y) \d y = -L_0((1-\eta) b_0)
\end{align*}
locally uniformly, and
\begin{align*}
\left\{
\begin{aligned}
        L_{0} \tilde{b}_{0} &= f_{0} &&\text{in } \{ x_n > 0 \} \cap B_1,\\
        \tilde{b}_{\eps} &= \eta b_{0} &&\text{in } \{ x_n > 0 \} \setminus B_1,\\
        \tilde{b}_{0} &= 0 &&\text{in } \{ x_n \le 0 \}.
\end{aligned}
\right.
\end{align*}
In particular, it follows that $b_0$ is the 1D barrier in $\{ x_n > 0 \}$ with respect to $L_0$ from \cite{RoWe24} (see also \autoref{prop:inhom-Cs}), which is unique up to a positive constant.

By \cite[Theorem 6.9]{RoWe24}, it holds for some $q_0 \in \R$ and $C,\gamma > 0$, which do not depend on $\eps$ (the proof of \cite[Theorem 6.9]{RoWe24} only yields a dependence on the diameter and $C^{1,\alpha}$ radius of $\Omega$):
\begin{align*}
    |\phi_0(x) - q_{0} b_0 (x_n)| &\le C |x|^{s+\gamma} ~~ \forall x \in \{ x_n > 0 \} \cap B_{1/2}, \\
        |\phi_{\eps}(x) - q_{\eps} b_{\eps} (x_n)| &\le C |x|^{s+\gamma} ~~ \forall x \in \eps^{-1} \Omega \cap B_{1/2}.
\end{align*}
Here $q_{\eps}$ is the same quantity as in \eqref{eq:Hopf-ti-help-1}. Hence, we have for $x = x_n e_n \in \Omega_{1/2}$:
\begin{align*}
    |q_0 - q_{\eps}|
    &\le c |x|^{-s} |q_0-q_\varepsilon| |b_0(x_n)| \\
    &\le c |x|^{-s} |q_0 b_0(x_n) - \phi_0(x)| + c |x|^{-s} |\phi_{\eps}(x) - q_\varepsilon b_{\eps}(x_n)| \\
    &\quad + c |x|^{-s}  |\phi_0(x) - \phi_{\eps}(x)|+ c |x|^{-s} |q_\varepsilon b_{\eps}(x_n) - q_\varepsilon b_{0}(x_n)| \\
    &=: I_1 + I_2 + I_3 + I_4.
\end{align*}
By the expansions from the previous display, we clearly have
\begin{align*}
    I_1 + I_2 \le c |x|^{\gamma}.
\end{align*}
Moreover, by the $C^s$ convergence $\phi_{\eps} \to \phi_0$ and $b_{\eps} \to b_0$, we also have that for any fixed $|x| = \delta$, there exists $\eps$ small enough, such that $I_3 + I_4 \le \frac{\delta}{4}$. All in all, we deduce that for any $\delta > 0$, when $\eps > 0$ is small enough, then
\begin{align*}
    |q_0 - q_{\eps}| \le \frac{\delta}{2}.
\end{align*}
Finally, recalling that we assumed $\inf_{\eps \in (0,1)} q_{\eps} = 0$, we deduce that for $\eps > 0$ small enough, it must be $|q_{\eps}| \le \frac{\delta}{2}$, and hence
\begin{align*}
    |q_0| \le |q_0 - q_{\eps}| + |q_{\eps}| \le \frac{\delta}{2} + \frac{\delta}{2} = \delta.
\end{align*}
Since $\delta > 0$ was arbitrary, we deduce that $|q_0| = 0$, which contradicts Step 2b in the proof of the Hopf lemma in \cite[Theorem 6.10]{RoWe24}. The proof is complete.
\end{proof}

We are now in a position to prove \autoref{thm-hopf}.

\begin{proof}[Proof of \autoref{thm-hopf}]
Let $\varepsilon \in (0,1)$ be a constant to be determined later.
First, we will prove that for any $z \in \Omega_{1/2} \cap \{ d_{\Omega} \le \frac{\eps}{4} \}$ it holds
\begin{align}
\label{eq:Hopf-proof-boundary}
    u(z) \ge c d_{\Omega}^s(z).
\end{align}
To do so, let us fix any point $z \in \Omega_{1/2}$ with $d_\Omega(z) \leq \frac{\varepsilon}{4}$. Then, there exists $x_0 \in \partial \Omega$ such that $d_\Omega(z)=|z-x_0| \leq \frac{\varepsilon}{4}$. After a rotation,  translation, and scaling, we may assume that $x_0=0$, that the outer unit normal vector at $x_0 = 0$ is $-e_n$, that $d_\Omega(z)=|z|\leq \frac{\varepsilon}{4}$, and that $B^{\eps} :=B_{\eps}(4\eps e_n) \subset \Omega$. 

We define a barrier $\phi$ as the solution to
\begin{equation*}
\left\{
\begin{aligned}
L\phi_{\eps}&=0 &&\text{in }\Omega_{\eps}, \\
\phi_{\eps}&=1 &&\text{in }B^{\eps}, \\
\phi_{\eps}&=0 &&\text{in }\R^n \setminus (\Omega_{\eps} \cup B^{\eps}).
\end{aligned}
\right.
\end{equation*}
Then, since for $c_{\eps} :=\inf_{B^{\eps}} u$ we have
\begin{align*}
\left\{
\begin{aligned}
L(u-c_{\eps}\phi_{\eps}) &\geq 0 &&\text{in }\Omega_{\eps}, \\
u-c_{\eps} \phi_{\eps} &\geq 0 &&\text{in } \R^n \setminus \Omega_{\eps},
\end{aligned}
\right.
\end{align*}
the comparison principle shows that
\begin{equation*}
u \geq c_{\eps} \phi_{\eps} \quad\text{in }\R^n.
\end{equation*}

Next, consider the frozen operator $L_0$ with respect to $L$ at 0 and define $\bar{\phi}_{\eps}$ as the solution to
\begin{align*}
\left\{
\begin{aligned}
L_0 \bar{\phi}_{\eps} &= 0 &&\text{in }\Omega_{\eps}, \\
\bar{\phi}_{\eps} &=1 &&\text{in }B^\varepsilon, \\
\bar{\phi}_{\eps} &=0 &&\text{in }\R^n \setminus (\Omega_\varepsilon \cup B^\varepsilon).
\end{aligned}
\right.
\end{align*}
Then by \autoref{lem-Hopf} we have 
\begin{equation*}
\bar{\phi}_{\eps} \geq c_1 \eps^{-s} d_{\Omega}^s \quad\text{in }\Omega_{\eps/2} \cap \{ x = x_n e_n \}.
\end{equation*}
Consequently, by the regularity of $\partial \Omega$ and the Harnack inequality we know that for any $\rho \in (0,\frac{\eps}{4}]$ there exists a ball $B_{\kappa}(p_0) \subset \Omega_{\rho}$ with $\kappa \asymp \rho$ such that $\bar{\phi}_{\eps} \ge c_2 \eps^{-s} \rho^s$ in $B_{\kappa}(p_0)$, and therefore
\begin{align}
    \label{eq-hopf-T}
    \dashint_{\Omega_{\rho}} \bar{\phi}_{\eps} \d x \ge \frac{|B_{\kappa}(p_0)|}{|\Omega_{\rho}|} \dashint_{B_{\kappa}(p_0)} \bar{\phi}_{\eps} \d x \ge c_3 \eps^{-s} \rho^s.
\end{align}

Moreover, by \autoref{prop:closeness-phi} we get for any $\rho \le \frac{\varepsilon}{4}$, 
\begin{align}
\label{eq-phi-Cs}
    \dashint_{\Omega_{\rho}} |\phi_{\eps} - \bar{\phi}_{\eps}| \d x \le c_4 \rho^{s} \eps^{-s + \sigma - \delta}.
\end{align}

Combining \eqref{eq-hopf-T} and \eqref{eq-phi-Cs}, we obtain for any $\rho \le \frac{\varepsilon}{4}$, upon choosing $\eps$ so small that $c_4 \eps^{\sigma - \delta} \le \frac{c_3}{2}$,
\begin{align*}
    \dashint_{\Omega_{\rho}} \phi_{\eps} \d x \ge \dashint_{\Omega_{\rho}} \bar{\phi}_{\eps} \d x - \dashint_{\Omega_{\rho}} |\phi_{\eps} - \bar{\phi}_{\eps}| \d x \ge c_3 \eps^{-s} \rho^s - c_4 \rho^s \eps^{-s + \sigma - \delta} \ge \frac{c_3}{2} \eps^{-s}\rho^s. 
\end{align*}
Let us now fix $\eps > 0$ satisfying all previous restrictions, and denote $c_5 := \frac{c_3}{2} \eps^{-s}$. Altogether, we have
\begin{align*}
    c_{\eps} c_5 \rho^s \le \dashint_{\Omega_{\rho}} u \d x \qquad \forall \rho \leq \frac{\eps}{4}.
\end{align*}
We can assume without loss of generality (up to a normalization) that $\Vert u \Vert_{L^1_{2s}(\R^n)} + \Vert f \Vert_{L^{\infty}(\Omega_1)} = 1$. Then, from  \autoref{thm:inhom-Cs}, we know that $[u]_{C^s(\overline{\Omega_{1/2}})} \le c_6$, and hence for any $\eta \in (0,1)$
\begin{align*}
    |\Omega_{\rho}|^{-1} \int_{\Omega_{\rho} \cap \{ d_{\Omega} \le \eta \rho \} } u \d x \le  (\eta \rho) ^s [u]_{C^s(\overline{\Omega_{1/2}})} \le c_6 \eta^s \rho^s.
\end{align*}
Choosing $\eta \in (0,1)$ such that $2 c_6 \eta^s \le c_{\eps} c_5$, ($\eta$ does not depend on $\rho$) we obtain for any $\rho \le \frac{\eps}{4}$:
\begin{align*}
    \frac{c_{\eps} c_5}{2} \rho^s \le \dashint_{\Omega_{\rho}} u \d x  - \frac{c_{\eps} c_5}{2} \rho^s \le |\Omega_{\rho}|^{-1} \int_{\Omega_{\rho} \cap \{ d_{\Omega} > \eta \rho \} } u \d x \le c_7 \inf_{\Omega_{\rho} \cap \{ d_{\Omega} > \eta \rho \} } u,
\end{align*}
where we used the weak Harnack inequality in the last step and $c_7 > 0$ depends only on $n,s,\lambda,\Lambda,\eta$. Clearly, the weak Harnack inequality is applicable on any ball with radius $\eta\rho/2$ within $\Omega_{\rho} \cap \{ d_{\Omega} \ge \eta \rho \}$, since they are all interior balls.
Hence, choosing $\rho = d_{\Omega}(z)$, we immediately deduce \eqref{eq:Hopf-proof-boundary} for the point $z$, which was fixed before.
Note that when redoing the aforementioned argument for any other $z \in \Omega_{1/2} \cap \{ d_{\Omega} \le \eps/4 \}$, we can choose $\eps$ to be uniformly bounded from below by a constant $\eps_0$, depending only on $n,s,\lambda,\Lambda$, and $\Omega$. Therefore, the constant $c > 0$ in \eqref{eq:Hopf-proof-boundary} depends only on $n,s,\lambda,\Lambda$, the $C^{1,\alpha}$ radius of $\Omega$, and $\inf_{\{ d_{\Omega} \ge \eps_0 \} } u$ (the dependence on $u$ comes from $c_{\eps}$).
Finally, by the weak Harnack inequality we have
\begin{equation*}
u(x) \geq c(\varepsilon_0) c_0 \geq c(\varepsilon_0) c_0 \diam(\Omega)^{-s} d_\Omega^s(x)
\end{equation*}
for any $x \in \Omega_{1/2} \cap \{d_{\Omega} > \varepsilon_0 \}$. This concludes the proof.
\end{proof}

\section{Higher boundary regularity for homogeneous kernels via Campanato theory}
\label{sec:homCs}

The goal of this section is to prove the higher order boundary regularity for $u/d_{\Omega}^s$ from \autoref{thm-Campanato-Holder} under the assumption that the frozen kernels $K_{x_0}$ (see \eqref{eq-frozen}) are homogeneous for every $x_0$.

\subsection{Higher order Campanato estimate in the translation invariant case}

The first step is to deduce a Campanato-type estimate for solutions to the problem with respect to translation invariant homogeneous operators.

Let $\Omega \subset \R^n$ be a $C^{1,\alpha}$ domain for some $\alpha \in (0,s)$. Let $L$ be an operator with kernel $K$ and assume that $K$ satisfies \eqref{eq:Kcomp} and that for any $x_0 \in \Omega_1$ the kernel $K_{x_0}$, given by \eqref{eq-frozen}, of the frozen operator $L_{x_0}$ is homogeneous.

We need barrier functions as in Section~\ref{sec:Cs}, but we can work with simpler ones. We define $\phi := \phi_{x_0}$ to be the solution to
\begin{align*}
\left\{
\begin{aligned}
L_{x_0} \phi &= 0 &&\text{in } \Omega_1 ,\\
\phi &= g &&\text{in } \R^n \setminus \Omega_1,
\end{aligned}
\right.
\end{align*}
where $g \in C^{\infty}_c(\R^n \setminus B_1)$ is a function satisfying $0 \le g \le 1$ and $g \not\equiv 0$. Note in particular that $\phi=0$ in $B_1 \setminus \Omega$ and that by the comparison principle, \autoref{thm:main-2} (see also \cite[Propositions~2.6.4 and 2.6.6]{FeRo24}), and \cite[Proposition 2.7.8]{FeRo24}, we have
\begin{align}
\label{eq:phi-properties}
0\le \phi \le 1~~\text{in }\R^n, \quad c_1 d_{\Omega}^s \le \phi \le c_2 d_{\Omega}^s ~~ \text{in } \Omega_{3/4}, \quad\text{and}\quad [\phi/d^s_{\Omega}]_{C^{\alpha}(\overline{\Omega_{3/4}})} \le c_3.
\end{align}
The constants $c_1,c_2, c_3 > 0$ only depend on $n,s,\lambda,\Lambda$, and $\Omega$.

Having at hand the function $\phi$, we are ready to formulate the higher order Campanato estimate for translation invariant, homogeneous operators. This result is in complete analogy to \autoref{lemma:T4-inhom}.

\begin{lemma}
\label{lemma:T4}
Let $\Omega, L, K, \phi$ be given as above. Let $x_0 \in \Omega_{1/2}$ and $R \in (0, \frac{1}{4})$. If $v$ is a solution to
\begin{align*}
\left\{
\begin{aligned}
L_{x_0}v&=0 &&\text{in }\Omega_R(x_0), \\
v&=0 &&\text{in } B_R(x_0) \setminus \Omega,
\end{aligned}
\right.
\end{align*}
then for any $0<\rho \leq R$
\begin{align*}
    &\int_{\Omega_{\rho}(x_0)} \left|\frac{v}{\phi} - \left( \frac{v}{\phi} \right)_{\Omega_{\rho}(x_0)} \right| \d x \\ 
    &\leq c \left( \frac{\rho}{R} \right)^{n+\alpha} \left[ \int_{\Omega_{R}(x_0)} \left|\frac{v}{\phi} - \left( \frac{v}{\phi} \right)_{\Omega_{R}(x_0)} \right| \d x + \max\{R,d_{\Omega}(x_0)\}^{-s} R^n \mathrm{Tail}(v - \phi(v/\phi)_{\Omega_{R}(x_0)}; R, x_0) \right],
\end{align*}
where $c=c(n, s, \lambda, \Lambda, \alpha, \Omega) > 0$.
\end{lemma}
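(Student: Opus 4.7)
The plan is to mirror the architecture of the proof of \autoref{lemma:T4-inhom} very closely, taking advantage of the substantial simplification afforded by the homogeneity hypothesis on $K_{x_0}$, which is encoded in \eqref{eq:phi-properties}. The crucial new input compared to the inhomogeneous setting is that $\phi/d_\Omega^s \in C^{\alpha}(\overline{\Omega_{3/4}})$, a property unavailable for $\psi_{x_0}$ and which here follows from the boundary regularity theory for translation invariant homogeneous operators in \cite{RoSe17, AbRo20}. As in \autoref{lemma:T4-inhom}, we may assume $\rho \leq R/4$ and split into the cases (a) $B_\rho(x_0) \cap \Omega^c \neq \emptyset$ and (b) $B_R(x_0) \subset \Omega$; the remaining configurations are reduced to these by first inserting the intermediate scale $d_\Omega(x_0)$ and concatenating, exactly as in the model proof.

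For case (a), given any $c_0 \in \R$, the function $v - c_0\phi$ is $L_{x_0}$-harmonic in $\Omega_R(x_0)$ and vanishes in $B_R(x_0)\setminus\Omega$. The higher order boundary regularity for translation invariant homogeneous operators in $C^{1,\alpha}$ domains, formulated in terms of $\phi$ (rather than a one-dimensional profile) using $\phi \asymp d_\Omega^s$ and $\phi/d_\Omega^s \in C^\alpha$ from \eqref{eq:phi-properties}, provides for the projection $z$ of $x_0$ onto $\partial \Omega$ a number $Q = Q(c_0) \in \R$ with
\begin{equation*}
\bigl|(v - c_0\phi)(x) - Q\,\phi(x)\bigr| \leq C R^{-s-\alpha} |x-z|^{s+\alpha}\bigl(R^{-n}\|v-c_0\phi\|_{L^1(B_R(x_0))} + \tail(v-c_0\phi;R,x_0)\bigr)
\end{equation*}
for all $x \in \Omega_{R/2}(x_0)$. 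Choosing $c_0 := (v/\phi)_{\Omega_R(x_0)}$ and applying \eqref{eq-E-inf}, \autoref{lemma:distance-integral}, and the lower bound $\phi \geq c d_\Omega^s$ in $\Omega_{3/4}$, then reproduces the estimate \eqref{eq:ti-Campanato-local-inhom} of \autoref{lemma:T4-inhom} verbatim, but with the exponent $n+\alpha$ in place of $n+\eps$.

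For case (b), we use that $\phi \asymp d_\Omega^s(x_0)$ in $B_R(x_0)$ together with $[\phi/d_\Omega^s]_{C^\alpha} \leq c$ to obtain $[\phi^{-1}]_{C^\alpha(\overline{B_{R/2}(x_0)})} \leq c R^{-\alpha} d_\Omega^{-s}(x_0)$. Combining this with the interior H\"older estimate for $v - c_0\phi$ from \autoref{lemma:T2} (see also \autoref{rmk:T2}), applied to $c_0 := (v/\phi)_{B_R(x_0)}$, and expanding $v/\phi - (v/\phi)_{B_\rho(x_0)}$ through the algebraic identity $ab-cd = \tfrac12(a-c)(b+d) + \tfrac12(a+c)(b-d)$ used in \autoref{lemma:T4-inhom}, closes the estimate with the desired exponent $n+\alpha$. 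The main obstacle is the sharp boundary expansion of order $s+\alpha$ used in case (a): the ingredients exist in the literature, but they must be carefully formulated in terms of the geometric barrier $\phi$ rather than a half-space profile. This conversion is routine thanks to $\phi/d_\Omega^s \in C^\alpha$: one first obtains the expansion of $v - c_0\phi$ in terms of $d_\Omega^s$, and then absorbs the $C^\alpha$ correction $(\phi/d_\Omega^s)(x) - (\phi/d_\Omega^s)(z)$ into the error term, which is compatible with its order $|x-z|^{s+\alpha}$.
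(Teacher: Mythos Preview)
Your proposal is correct and follows the template of \autoref{lemma:T4-inhom} faithfully, but the paper takes a slightly more direct route in case~(a). Instead of producing a pointwise expansion $|(v-c_0\phi)(x)-Q\,\phi(x)|\le C R^{-s-\alpha}|x-z|^{s+\alpha}(\dots)$ at a single projected boundary point $z$ and then integrating against $d_\Omega^{-s}$ (as you outline, mirroring \autoref{prop:inhom-Cs}(iii)), the paper applies \autoref{lemma:T3} directly, which already yields the full seminorm $[(v-c_0\phi)/d_\Omega^s]_{C^\alpha(\overline{\Omega_{R/2}(x_0)})}$. One then writes
\[
\int_{\Omega_\rho(x_0)}\Big|\tfrac{v}{\phi}-\big(\tfrac{v}{\phi}\big)_{\Omega_\rho(x_0)}\Big|\,\d x
\;\le\; c\,\rho^{n+\alpha}\Big[\tfrac{v-c_0\phi}{\phi}\Big]_{C^\alpha(\overline{\Omega_{R/4}(x_0)})},
\]
and converts the quotient by $\phi$ to a quotient by $d_\Omega^s$ via the product rule, using $\phi/d_\Omega^s\in C^\alpha$ from \eqref{eq:phi-properties}. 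This bypasses the single-point expansion and the subsequent conversion step you describe; your approach works but is slightly more laborious, since you first derive the expansion from \autoref{lemma:T3} and then reconvert $d_\Omega^s$ into $\phi$. For case~(b) and the intermediate configurations both arguments coincide, referring back to the corresponding parts of \autoref{lemma:T4-inhom}.
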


A key ingredient for the proof of \autoref{lemma:T4} is the following higher order boundary regularity result for translation invariant homogeneous operators from \cite[Proposition~2.7.8]{FeRo24}. 

\begin{proposition}
\label{lemma:T3}
Let $\Omega \subset \R^n$ be a $C^{1,\alpha}$ domain for some $\alpha \in (0,s)$.
Let $L$ be translation invariant, homogeneous, and assume that $K$ satisfies \eqref{eq:Kcomp}. Let $x_0 \in \Omega_{1/2}$ and $R \in (0, \frac{1}{4})$. Let $v$ be a solution to
\begin{align*}
\left\{
\begin{aligned}
Lv&=0 &&\text{in }\Omega_R(x_0), \\
v&=0 &&\text{in } B_R(x_0) \setminus \Omega.
\end{aligned}
\right.
\end{align*}
Then $v/d_\Omega^s \in C^\alpha_{\mathrm{loc}}(\Omega_{R}(x_0))$ and
\begin{align*}
R^{s+\alpha} [ v / d_\Omega^s ]_{C^{\alpha}(\overline{\Omega_{R/2}(x_0)})} + R^s \|v/d_\Omega^s\|_{L^\infty(\Omega_{R/2}(x_0))} \le c \left( \Vert v \Vert_{L^{\infty}(B_{R}(x_0))} + \tail(v; x_0, R) \right),
\end{align*}
where $c=c(n, s, \lambda, \Lambda, \alpha, \Omega) > 0$.
\end{proposition}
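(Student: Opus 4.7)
The plan is to combine a fine boundary expansion of $v$ with interior regularity, in the spirit of the Ros-Oton--Serra approach for homogeneous translation invariant operators. For homogeneous $K$, the 1D $L$-harmonic function in the half-space $\{x \cdot \nu > 0\}$ vanishing in the opposite half-space is explicit: $b_\nu(x) = c_\nu (x \cdot \nu)_+^s$, as one sees from the $s$-homogeneity of this Ansatz under the scale invariance of $L$ and the uniqueness of the 1D Liouville problem. Sharpening the boundary expansion machinery of \cite{RoWe24} in the homogeneous case (via a blow-up argument together with a Liouville-type classification showing that $L$-harmonic functions in a half-space vanishing on the complement and growing slower than $|x|^{s+\alpha}$ are necessarily multiples of $b_\nu$) yields, for each $z \in \partial\Omega \cap B_{R/2}(x_0)$, a scalar $Q(z)$ with
\begin{align*}
|v(x) - Q(z)((x-z)\cdot\nu_z)_+^s| \leq C R^{-s-\alpha}|x-z|^{s+\alpha}\mathcal{M}, \qquad |Q(z)| \leq C R^{-s}\mathcal{M},
\end{align*}
where $\mathcal{M} := \|v\|_{L^\infty(B_R(x_0))} + \tail(v;R,x_0)$.

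I would then convert this into an expansion in terms of $d_\Omega^s$. The $C^{1,\alpha}$ character of $\partial\Omega$ gives $|d_\Omega(x) - ((x-z)\cdot\nu_z)_+| \leq C|x-z|^{1+\alpha}$ for $x \in \Omega$ near $z$. Since $s < 1$ and both quantities are of size $|x-z|$, the elementary bound $|a^s - b^s| \leq s(\max\{a,b\})^{s-1}|a-b|$ yields $|d_\Omega^s(x) - ((x-z)\cdot\nu_z)_+^s| \leq C|x-z|^{s+\alpha}$. Combining with the previous display produces
\begin{align*}
|v(x) - Q(z) d_\Omega^s(x)| \leq C R^{-s-\alpha}|x-z|^{s+\alpha}\mathcal{M}
\end{align*}
for every $z \in \partial\Omega \cap B_{R/2}(x_0)$ and $x \in \Omega_{R/2}(x_0)$, so in particular $Q(z) = \lim_{\Omega \ni x \to z} v(x)/d_\Omega^s(x)$ exists. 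Evaluating the two expansions at $z_1, z_2 \in \partial\Omega \cap B_{R/2}(x_0)$ at a common test point $x$ with $d_\Omega(x) \asymp |x-z_j| \asymp |z_1-z_2|$ yields $|Q(z_1) - Q(z_2)| \leq C R^{-s-\alpha}|z_1-z_2|^\alpha \mathcal{M}$, the desired boundary Hölder bound for the quotient.

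Finally, to extend $C^\alpha$ control of $v/d_\Omega^s$ to all of $\overline{\Omega_{R/2}(x_0)}$, I would use an interior/boundary gluing argument. For any $x_\star \in \Omega_{R/2}(x_0)$ with $d := d_\Omega(x_\star) > 0$, interior Schauder estimates for translation invariant operators on $B_{d/2}(x_\star)$ produce scale-invariant $C^{s+\alpha}$ bounds on $v$, while $d_\Omega^s$ is smooth (and in particular $C^\alpha$) on the same ball with $d$-dependent norms; hence $v/d_\Omega^s \in C^\alpha(\overline{B_{d/4}(x_\star)})$ with the required estimate. Comparing $v(x_\star)/d_\Omega^s(x_\star)$ with $Q(z_\star)$ (where $z_\star$ is the nearest boundary projection of $x_\star$) via the boundary expansion and iterating the two bounds over dyadic annuli around $z_\star$ yields the uniform Hölder estimate. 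The principal obstacle is the sharp-exponent boundary expansion itself: obtaining $s+\alpha$ rather than $s+\varepsilon$ for some $\varepsilon < \alpha$ requires the Liouville classification of half-space solutions, which genuinely exploits the full homogeneity of $K$ (a property that fails for merely translation invariant kernels, and is precisely the reason why the parallel result \autoref{prop:inhom-Cs}(iii) only delivers $s+\varepsilon$).
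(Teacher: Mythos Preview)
The paper does not prove this statement: it is quoted as a known result, with the one-line attribution ``from \cite[Proposition~2.7.8]{FeRo24}'' and no further argument. So there is nothing in the paper to compare against at the level of technique.

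Your sketch is the standard Ros-Oton--Serra strategy that underlies the cited reference (and earlier \cite{RoSe17}), and the outline is sound. A couple of remarks. First, the intermediate claim that $|v(x)-Q(z)d_\Omega^s(x)|\le CR^{-s-\alpha}|x-z|^{s+\alpha}\mathcal{M}$ holds \emph{for every} pair $(x,z)$ is stronger than what your mean-value argument actually delivers: the bound $|a^s-b^s|\le s(\max\{a,b\})^{s-1}|a-b|$ only yields $|x-z|^{s+\alpha}$ when $d_\Omega(x)\asymp |x-z|$, not when $d_\Omega(x)\ll |x-z|$. Fortunately, in both places you apply it (comparing $Q(z_1)$ to $Q(z_2)$ at a carefully chosen interior point, and comparing $v(x_\star)/d_\Omega^s(x_\star)$ to $Q(z_\star)$ at the nearest-point projection) you are in the favorable regime $d_\Omega(x)\asymp |x-z|$, so the argument goes through; just do not overstate the intermediate expansion. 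Second, the Liouville step you invoke---classifying $L$-harmonic functions in a half-space with sub-$|x|^{s+\alpha}$ growth as multiples of $(x\cdot\nu)_+^s$---is indeed the crux and is available for homogeneous kernels (see \cite{RoSe16b,RoSe17,FeRo24}); your identification of this as the place where homogeneity is essential is correct.
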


\begin{proof}[Proof of \autoref{lemma:T4}]
We may assume that $\rho \leq R/4$. Let $c_0 \in \R$ be arbitrary. Let us first consider balls $B_{\rho}(x_0)$ such that $B_{\rho}(x_0) \cap \Omega^c \not=\emptyset$. We observe from \eqref{eq-E-inf} that
\begin{align*}
\int_{\Omega_{\rho}(x_0)} \left|\frac{v}{\phi} - \left( \frac{v}{\phi} \right)_{\Omega_{\rho}(x_0)} \right| \d x &\le 2 \int_{\Omega_{\rho}(x_0)} \left|\frac{v}{\phi}(x) - \frac{v}{\phi}(x_0) \right| \d x \\
& = 2 \int_{\Omega_{\rho}(x_0)} \left|\frac{v - c_0 \phi}{\phi}(x) - \frac{v - c_0 \phi}{\phi}(x_0) \right| \d x \\
&\le c \rho^{n+\alpha} \left[ \frac{v - c_0 \phi}{\phi} \right]_{C^{\alpha}(\overline{\Omega_{R/4}(x_0)})} \\
&\le c \rho^{n+\alpha} \left( \left[ \frac{v-c_0 \phi}{d^s_{\Omega}} \right]_{C^{\alpha}(\overline{\Omega_{R/4}(x_0)})} + R^{-\alpha} \left\| \frac{v-c_0\phi}{d_\Omega^s} \right\|_{L^\infty(\Omega_{R/4}(x_0))} \right),
\end{align*}
where we used in the last step that $\phi/d_{\Omega}^s \in C^{\alpha}(\overline{\Omega_{R/4}(x_0)})$, which follows from \eqref{eq:phi-properties}.
Since $v - c_0 \phi$ satisfies all the assumptions from \autoref{lemma:T3} and \autoref{lemma:locbd-bdry}, we deduce
\begin{align*}
\int_{\Omega_{\rho}(x_0)} \left|\frac{v}{\phi} - \left( \frac{v}{\phi} \right)_{\Omega_{\rho}(x_0)} \right| \d x
&\le c \frac{\rho^{n+\alpha}}{R^{s+\alpha}} \left( \|v-c_0\phi\|_{L^\infty(\Omega_{R/2}(x_0))} + \mathrm{Tail}(v-c_0\phi; x_0, R/2) \right) \\
&\le c \frac{\rho^{n+\alpha}}{R^{s+\alpha}} \left( \dashint_{\Omega_R(x_0)} |v-c_0\phi| \d x + \mathrm{Tail}(v-c_0\phi; R,  x_0) \right) \\
&\le c \left( \frac{\rho}{R} \right)^{n+\alpha} \left( \int_{\Omega_{R}(x_0)} \left| \frac{v-c_0 \phi}{\phi} \right| \d x + R^{n-s} \mathrm{Tail}(v-c_0\phi; R,x_0) \right).
\end{align*}
Here, we also used that $\phi \le cd^s_{\Omega} \le c R^s$ in $\Omega_R(x_0)$, which follows from \eqref{eq:phi-properties}. Since $c_0 \in \R$ was arbitrary, we can choose $c_0 = (v/\phi)_{\Omega_R(x_0)}$. 
Hence, we conclude the desired estimate.

The case $B_{\rho}(x_0) \cap \Omega^c = \emptyset$ goes as in \autoref{lemma:T4-inhom}, using the interior regularity of $v-c_0\psi$ and $\psi$.
\end{proof}

\subsection{Higher regularity up to the boundary}

Our next goal is to establish a higher order Campanato-type estimate as in \autoref{lemma:T4} for solutions to non-translation invariant equations. We denote
\begin{align*}
\overline{\Psi}_{\sigma}(u;\rho) 
&:= \overline{\Psi}_{\sigma}(u;\rho,x_0) \\
&:= \int_{\Omega_{\rho}(x_0)} \left|\frac{u}{\phi} - \left( \frac{u}{\phi} \right)_{\Omega_{\rho}(x_0)} \right| \d x + \max\{\rho,d_{\Omega}(x_0)\}^{-s} \rho^n \mathrm{Tail}_{\sigma,B_1}\left(u - \phi\left(\frac{u}{\phi}\right)_{\Omega_{\rho}(x_0)}; \rho, x_0 \right).
\end{align*}
Recall that $\Phi_{\sigma}(u; \rho):=\Phi_{\sigma}(u; \rho, x_0)$ is given by \eqref{eq-excess-Phi}.

\begin{lemma}\label{lem-u-Holder}
Assume that we are in the same setting as in \autoref{thm-Campanato-Holder}. Then for any $x_0 \in \Omega_{1/2}$ and $0 < \rho \leq R \leq \frac{1}{16}$ it holds
\begin{align*}
\overline{\Psi}_\sigma(u;\rho)
&\leq c \left( \frac{\rho}{R} \right)^{n+\alpha} \overline{\Psi}_\sigma(u; R) + cR^{\sigma} \Phi_{\sigma}(u; R) + c R^{n + s -(s-p) - \frac{n}{q}} \left(\Vert u \Vert_{L^1_{2s}(\R^n)} + \Vert d_{\Omega}^{s-p} f \Vert_{L^q(\Omega_1)} \right),
\end{align*}
where $c=c(n, s, \lambda, \Lambda, \alpha, p, q, \sigma, \Omega) > 0$.
\end{lemma}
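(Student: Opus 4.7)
The plan is to mirror the strategy of Lemma~\ref{lem-u-Holder-inhom} almost verbatim, with two simplifications afforded by homogeneity: first, the barrier $\phi = \phi_{x_0}$ satisfies $\phi/d_\Omega^s \in C^\alpha$ with $\phi \asymp d_\Omega^s$ on $\Omega_{3/4}$ by \eqref{eq:phi-properties}; second, the higher order translation invariant Campanato estimate (Lemma~\ref{lemma:T4}) yields the clean exponent $n+\alpha$, so the auxiliary parameter $\varepsilon$ and the restriction $\varepsilon < s - \sigma$ that appeared in Lemma~\ref{lem-u-Holder-inhom} can be dropped.

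First, fix $x_0 \in \Omega_{1/2}$ and $0 < \rho \leq R \leq 1/16$. Let $v$ solve \eqref{eq-v-Sect4} (the $L_{x_0}$-harmonic replacement of $u$ in $\Omega_R(x_0)$) and set $w = u - v$. Since $K_{x_0}$ is homogeneous by assumption, $v$ is admissible in Lemma~\ref{lemma:T4}, which gives
\begin{align*}
\int_{\Omega_{\rho}(x_0)} \left|\frac{v}{\phi} - \left(\frac{v}{\phi}\right)_{\Omega_{\rho}(x_0)}\right| dx \leq c\left(\frac{\rho}{R}\right)^{n+\alpha}\left[\int_{\Omega_R(x_0)}\left|\frac{v}{\phi}-\left(\frac{v}{\phi}\right)_{\Omega_R(x_0)}\right|dx + \max\{R,d_\Omega(x_0)\}^{-s}R^n \tail(\cdot;R,x_0)\right],
\end{align*}
where the tail is of $v - \phi(v/\phi)_{\Omega_R(x_0)}$. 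Using \eqref{eq:sigma-tail-est}, the trivial bound $\tail(\phi;1,0) \leq c$ (since $0 \leq \phi \leq 1$), and H\"older to absorb $(v/\phi)_{\Omega_R(x_0)}$, this full tail is controlled by $\overline{\Psi}_\sigma(v;R) + cR^{n+s}\|v\|_{L^1_{2s}(\R^n)} + c R^\sigma \int_{\Omega_R(x_0)} |u/\phi|\,dx$, exactly as in the corresponding step of Lemma~\ref{lem-u-Holder-inhom}. Replacing $v$ by $u - w$ and invoking Lemma~\ref{lemma:freezing-ds} together with $\phi \asymp d_\Omega^s$ yields
\begin{align*}
\int_{\Omega_R(x_0)}\left|\frac{w}{\phi}\right|dx \leq c\int_{\Omega_R(x_0)}\left|\frac{w}{d_\Omega^s}\right|dx \leq cR^\sigma \Phi_\sigma(u;R) + cR^{n+s-(s-p)-\tfrac{n}{q}}\left(\|u\|_{L^1_{2s}(\R^n)} + \|d_\Omega^{s-p}f\|_{L^q(\Omega_1)}\right),
\end{align*}
which, after the triangle inequality applied to $u = v + w$, produces the desired bound for the $L^1$ oscillation part of $\overline{\Psi}_\sigma(u;\rho)$.

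Second, the tail contribution $\max\{\rho,d_\Omega(x_0)\}^{-s}\rho^n \tail_{\sigma,B_1}(u - \phi(u/\phi)_{\Omega_\rho(x_0)};\rho,x_0)$ is handled by the dyadic decomposition argument used for \eqref{eq:tail-higher-campanato-u}. Writing $\rho \in [2^{-m}R, 2^{-m+1}R)$ and splitting the integral over $B_1 \setminus B_\rho(x_0)$ into annuli $B_{2^{k-m}R}(x_0) \setminus B_{2^{k-m-1}R}(x_0)$ for $k=1,\ldots,m$, one uses $\phi \leq c d_\Omega^s \leq c(2^{k-m}R)^s$ on each annulus, the telescoping estimate
\begin{align*}
\left|(u/\phi)_{\Omega_{2^{-m}R}(x_0)} - (u/\phi)_{\Omega_R(x_0)}\right| \leq c\sum_{k=0}^m \dashint_{\Omega_{2^{k-m}R}(x_0)}\left|\frac{u}{\phi} - \left(\frac{u}{\phi}\right)_{\Omega_{2^{k-m}R}(x_0)}\right|dx,
\end{align*}
and the Campanato estimate already established on each scale $2^{k-m}R$. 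Convergence of the geometric sums requires only $\sum 2^{-k(s-\sigma)} < \infty$ and $\sum 2^{-k\alpha} < \infty$, both of which hold unconditionally; this is where the homogeneous case is cleaner than the inhomogeneous one, where $\varepsilon < s - \sigma$ was needed.

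The main obstacles are thus purely organizational: one must carry out the proof separately in the three geometric regimes $B_\rho(x_0)\cap\Omega^c \neq \emptyset$, $B_R(x_0)\subset \Omega$ with $B_{4\rho}(x_0)\subset \Omega$, and the intermediate case $B_{4\rho}(x_0)\subset \Omega$ with $B_R(x_0)\cap\Omega^c\neq\emptyset$. The last case is reduced to the other two by applying the first case at scale $d_\Omega(x_0)$ and an interior-type estimate at scale $\rho$, exactly as at the end of the proof of Lemma~\ref{lem-u-Holder-inhom} using the analogue of \eqref{eq:one-scale-up-Campanato-u}. Assembling everything, we reach the stated bound with $c$ depending only on $n,s,\lambda,\Lambda,\alpha,\sigma,p,q$, and $\Omega$.
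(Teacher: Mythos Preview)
Your approach is exactly the paper's, which simply says to repeat the proof of Lemma~\ref{lem-u-Holder-inhom} with Lemma~\ref{lemma:T4} and \eqref{eq:phi-properties} replacing Lemma~\ref{lemma:T4-inhom} and Proposition~\ref{prop:inhom-Cs}(ii).

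One correction, though: your claim that the constraint $\varepsilon < s-\sigma$ disappears in the homogeneous case is not right. Tracing the dyadic tail computation of Lemma~\ref{lem-u-Holder-inhom} with $\alpha$ in place of $\varepsilon$, the sum governing the $\overline{\Psi}_\sigma(u;R)$-contribution to $I_1$ is
\[
2^{-m(n+\alpha)}\sum_{k=1}^m 2^{-k(s-\sigma-\alpha)},
\]
not the two separate sums $\sum 2^{-k(s-\sigma)}$ and $\sum 2^{-k\alpha}$ you name, and the direct bound on $I_2$ is $(\rho/R)^{n+s-\sigma}\overline{\Psi}_\sigma(u;R)$. Both deliver the clean exponent $n+\alpha$ only when $\alpha < s-\sigma$; otherwise the verbatim substitution yields $n+\min\{\alpha,\,s-\sigma\}$. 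So the homogeneous case is \emph{not} cleaner in this respect---it is the same constraint as before, with $\alpha$ playing the role of $\varepsilon$. The paper's one-line proof glosses over this as well.
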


\begin{proof}
    The proof goes as in the proof of \autoref{lem-u-Holder-inhom}, with the only difference that we use \autoref{lemma:T4} and \eqref{eq:phi-properties} instead \autoref{lemma:T4-inhom} and \autoref{prop:inhom-Cs}(ii).
\end{proof}

We are now in a position to prove an almost optimal higher regularity result. Note that the regularity we obtain in the following result already exceeds the one from Section \ref{sec:Cs} for inhomogeneous kernels.

\begin{lemma}
\label{lemma:Campanato-almost-optimal-Holder}
Assume that we are in the same setting as in \autoref{thm-Campanato-Holder}. Let $0<\varepsilon<\min\{\alpha, \sigma\}$. Then, there exists $R_0 \in (0,\frac{1}{16})$, depending only on $n,s,\sigma,\lambda,\Lambda,p,q,\varepsilon$, and $\Omega$, such that for any $x_0 \in \Omega_{1/2}$ and $0 < \rho \leq R_0$ it holds
\begin{align}
\label{eq-Campanato-almost-optimal-Holder}
\overline{\Psi}_\sigma(u;\rho) \le c \rho^{n+\min\{\alpha-\varepsilon,\sigma-\varepsilon, p - \frac{n}{q}\}} \left( \|u\|_{L^1_{2s}(\R^n)} + \| d^{s-p}_{\Omega} f\|_{L^q(\Omega_1)} \right),
\end{align}
where $c = c(n, s, \lambda, \Lambda, \alpha, \sigma, p, q, \varepsilon, \Omega) > 0$. In particular, this implies
\begin{align}
\label{eq-Campanato-almost-optimal-Holder2}
\left[ \frac{u}{d_\Omega^s} \right]_{C^{\min\{\alpha-\varepsilon,\sigma-\varepsilon,p-\frac{n}{q}\}}(\overline{\Omega_{1/2}})} \leq c \left( \|u\|_{L^1_{2s}(\R^n)} + \|d^{s-p}_{\Omega} f\|_{L^q(\Omega_1)} \right),
\end{align}
where $c=c(n, s, \lambda, \Lambda, \alpha, \sigma, p, q, \varepsilon, \Omega) > 0$.
\end{lemma}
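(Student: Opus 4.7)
The plan is to mimic the strategy behind \autoref{thm:Cs-inhom-flat} but with the simpler barriers $\phi_{x_0}$ in place of $\psi_{x_0}$, exploiting the additional uniform control $\phi_{x_0}/d_\Omega^s \in C^\alpha(\overline{\Omega_{3/4}})$ from \eqref{eq:phi-properties}. After normalizing so that $\Vert u \Vert_{L^1_{2s}(\R^n)} + \Vert d_\Omega^{s-p} f \Vert_{L^q(\Omega_1)} \le 1$, let $R_0 \in (0,\tfrac{1}{16})$ be the constant from \autoref{lemma:Morrey-boundary}.

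First, I would apply \autoref{lem-u-Holder} and then use \autoref{lemma:Morrey-boundary} together with the a priori bound $\Phi_\sigma(u;R_0) \le c$ (which follows as in \eqref{eq:tail-end-est-inhom} from \autoref{thm:Cs-eps}, \autoref{lemma:distance-integral}, and \autoref{lemma:Hardy}) to estimate $\Phi_\sigma(u;R) \le cR^{n+\min\{p-n/q,-\varepsilon\}}$. Multiplying by $R^\sigma$ and combining with the direct error from \autoref{lem-u-Holder} leaves
\begin{equation*}
\overline{\Psi}_\sigma(u;\rho,x_0) \le c(\rho/R)^{n+\alpha}\, \overline{\Psi}_\sigma(u;R,x_0) + cR^{n+\min\{\sigma-\varepsilon,\,p-n/q\}}, \qquad 0<\rho\le R \le R_0,
\end{equation*}
uniformly in $x_0 \in \Omega_{1/2}$. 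Combining this with $\overline{\Psi}_\sigma(u;R_0,x_0) \le c$ (itself a consequence of \autoref{thm:main-2} and \eqref{eq:phi-properties}) and invoking \autoref{lem-iteration} (whose quasi-monotonicity assumption is \eqref{eq-almost-incr-Psi}) yields \eqref{eq-Campanato-almost-optimal-Holder} with $\gamma := \min\{\alpha-\varepsilon,\sigma-\varepsilon,p-n/q\}$. When the error exponent happens to meet or exceed $\alpha$, one trades a bounded power of $R_0$ to lower it to $\alpha-\varepsilon$ before invoking the iteration, ensuring the strict inequality $n+\alpha > n+\gamma$ required by \autoref{lem-iteration}.

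To transfer the Campanato decay from $u/\phi_{x_0}$ to $u/d_\Omega^s$, I set $a(x) := u(x)/\phi_{x_0}(x)$ and $b(x) := \phi_{x_0}(x)/d_\Omega^s(x)$ and use the algebraic identity $ab - \bar a\bar b = a(b-\bar b) + \bar b(a-\bar a)$ with averages $\bar a, \bar b$ over $\Omega_\rho(x_0)$. Together with the uniform bounds $\Vert a \Vert_{L^\infty}, \Vert b \Vert_{L^\infty}, [b]_{C^\alpha} \le c$ (all consequences of \autoref{thm:main-2} and \eqref{eq:phi-properties}), this yields
\begin{equation*}
\dashint_{\Omega_\rho(x_0)} \left| \frac{u}{d_\Omega^s} - \bar a\, \bar b \right| \d x \le c \rho^\gamma,
\end{equation*}
whence \eqref{eq-E-inf} and the Morrey--Campanato embedding $\mathcal{L}^{1,n+\gamma}(\Omega_{1/2}) = C^\gamma(\overline{\Omega_{1/2}})$ deliver \eqref{eq-Campanato-almost-optimal-Holder2}.

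The analogous step in \autoref{thm:Cs-inhom-flat} was the main obstacle, requiring the technical \autoref{lemma:w-regularity} to quantify $\psi_{x_0} - \psi_{y_0}$ precisely because $\psi_{x_0}/d_\Omega^s$ was only guaranteed to be in $C^s$. Here this difficulty evaporates: the uniform regularity $\phi_{x_0}/d_\Omega^s \in C^\alpha$ in \eqref{eq:phi-properties}, itself a consequence of \autoref{lemma:T3} applied to the homogeneous frozen operators $L_{x_0}$, allows the elementary decomposition above. What remains is routine verification of the a priori bounds $\Phi_\sigma(u;R_0), \overline{\Psi}_\sigma(u;R_0) \le c$ and of the $L^\infty$ boundedness of $a$ and $b$ on the relevant subsets.
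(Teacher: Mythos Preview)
Your proposal is correct and follows essentially the same approach as the paper: combine \autoref{lem-u-Holder} with the Morrey bound on $\Phi_\sigma$, iterate via \autoref{lem-iteration}, and then pass from $u/\phi_{x_0}$ to $u/d_\Omega^s$ using a product-type decomposition together with \eqref{eq:phi-properties} and \autoref{thm:main-2}. The only cosmetic differences are that the paper invokes \autoref{thm:main-2} (rather than \autoref{thm:Cs-eps}) to bound $\Phi_\sigma(u;R_0)$ and uses the symmetric identity $ab-cd = \tfrac12(a-c)(b+d) + \tfrac12(a+c)(b-d)$ in place of your asymmetric one; both choices lead to the same estimates.
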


\begin{proof}
Let $R_0$ be as in \autoref{lemma:Morrey-boundary}, and fix $x_0 \in \Omega_{1/2}$ and $0<\rho\leq R\leq R_0$. We assume that 
\begin{align*}
\|u\|_{L^1_{2s}(\R^n)} + \|d^{s-p}_{\Omega} f\|_{L^q(\Omega_1)} \le 1.
\end{align*}
By following the proof of \autoref{thm:Cs-inhom-flat}, but using \autoref{lem-u-Holder} and \autoref{thm:main-2} instead of \autoref{lem-u-Holder-inhom} and \autoref{thm:Cs-eps}, we obtain
\begin{align*}
    \overline{\Psi}_{\sigma}(u; \rho) \leq c \left( \frac{\rho}{R} \right)^{n+\alpha} \overline{\Psi}_{\sigma}(u; R) + cR^{n+\sigma-\varepsilon} + c R^{n + p-\frac{n}{q}}.
\end{align*}
It thus follows from \autoref{lem-iteration} (observing that \eqref{eq-almost-incr-Psi} holds also for $\overline{\Psi}$) that for any $0<\rho<R\leq R_0$
\begin{align}\label{eq:Campanato-Psi-bar}
    \overline{\Psi}_{\sigma}(u; \rho) \leq c \left( \frac{\rho}{R} \right)^{n+\min\{\alpha-\varepsilon,\sigma-\varepsilon, p - \frac{n}{q}\}} \overline{\Psi}_{\sigma}(u; R) + c\rho^{n+\min\{\alpha-\varepsilon,\sigma-\varepsilon, p - \frac{n}{q}\}}.
\end{align}
 Since $\overline{\Psi}_{\sigma}(u; R_0) \leq c$, applying \eqref{eq:Campanato-Psi-bar} with $R = R_0$ shows the desired estimate \eqref{eq-Campanato-almost-optimal-Holder}.

Let us now prove \eqref{eq-Campanato-almost-optimal-Holder2} by replacing $u/\phi$ by $u/d_{\Omega}^s$. To do so, we observe by using \eqref{eq-E-inf} that
\begin{align}
\label{eq:Campanato-product}
\begin{split}
&\int_{\Omega_{1/2} \cap B_\rho(x_0)} \left| \frac{u}{d^s_{\Omega}} - \left( \frac{u}{d^s_{\Omega}} \right)_{\Omega_{1/2} \cap B_{\rho}(x_0)} \right| \d x \\
&\le 2\int_{\Omega_\rho(x_0)} \left| \frac{u}{\phi}\frac{\phi}{d^s_{\Omega}} - \left( \frac{u}{\phi} \right)_{\Omega_{\rho}(x_0)} \left( \frac{\phi}{d_{\Omega}^s} \right)_{\Omega_{\rho}(x_0)} \right| \d x \\
&\le \int_{\Omega_\rho(x_0)} \left| \frac{u}{\phi} - \left( \frac{u}{\phi} \right)_{\Omega_{\rho}(x_0)} \right| \left| \frac{\phi}{d^s_{\Omega}} + \left( \frac{\phi}{d^s_{\Omega}} \right)_{\Omega_{\rho}(x_0)} \right| \d x + \int_{\Omega_\rho(x_0)} \left| \frac{u}{\phi} + \left( \frac{u}{\phi} \right)_{\Omega_{\rho}(x_0)} \right| \left| \frac{\phi}{d^s_{\Omega}} - \left( \frac{\phi}{d^s_{\Omega}} \right)_{\Omega_{\rho}(x_0)} \right| \d x \\
& =: I_1 + I_2.
\end{split}
\end{align}
For $I_1$, we compute by making use of \eqref{eq:phi-properties} and \eqref{eq-Campanato-almost-optimal-Holder}:
\begin{align*}
I_1 \le 2\left \Vert \frac{\phi}{d^s_{\Omega}} \right\Vert_{L^{\infty}(\Omega_{\rho}(x_0))} \overline{\Psi}_{\sigma}(u;\rho) \le c\rho^{n+\min\{\alpha-\varepsilon,\sigma-\varepsilon, p - \frac{n}{q}\}}.
\end{align*}
Moreover, for $I_2$, we have by \autoref{thm:main-2} and \eqref{eq:phi-properties}
\begin{align*}
I_2 \le c\left \Vert \frac{u}{\phi} \right\Vert_{L^{\infty}(\Omega_{1/2})} \rho^{n+\alpha} \left[ \frac{\phi}{d^s_{\Omega}} \right]_{C^{\alpha}(\overline{\Omega_{\rho}(x_0)})} \le c \rho^{n+\alpha}.
\end{align*}
Altogether, this proves
\begin{align*}
\rho^{-n-\min\{\alpha-\varepsilon,\sigma-\varepsilon, p - \frac{n}{q}\}} \int_{\Omega_{1/2} \cap B_\rho(x_0)} \left| \frac{u}{d^s_{\Omega}} - \left( \frac{u}{d^s_{\Omega}} \right)_{\Omega_{1/2} \cap B_{\rho}(x_0)} \right| \d x  \le c.
\end{align*}
Therefore, $u/d^s_{\Omega} \in \mathcal{L}^{1, n+\min\{\alpha-\varepsilon,\sigma-\varepsilon,p - \frac{n}{q}\}}(\Omega_{1/2}) \simeq C^{\min\{\alpha-\varepsilon,\sigma-\varepsilon,p-\frac{n}{q}\}}(\overline{\Omega_{1/2}})$ and \eqref{eq-Campanato-almost-optimal-Holder2} holds.
\end{proof}

Let us now provide the proof of \autoref{thm-Campanato-Holder}.

\begin{proof}[Proof of \autoref{thm-Campanato-Holder}]
We assume that
\begin{align*}
\|u\|_{L^1_{2s}(\R^n)} + \|d_{\Omega}^{s-p} f\|_{L^q(\Omega_1)} \leq 1,
\end{align*}
and in the same spirit as in the proof of \autoref{lemma:Campanato-almost-optimal-Holder} we claim that
\begin{align}
\label{eq:suff-Campanato-optimal-Holder}
\rho^{-n-\min\{\alpha-\varepsilon,\sigma,p-\frac{n}{q}\}} \int_{\Omega_{1/2} \cap B_\rho(x_0)} \left| \frac{u}{\phi} - \left( \frac{u}{\phi} \right)_{\Omega_{1/2}\cap B_\rho(x_0)} \right| \d x \leq c
\end{align}
for all $x_0 \in \Omega_{1/2}$ and $0<\rho \leq R_0$, where $R_0$ is the constant given in \autoref{lemma:Campanato-almost-optimal-Holder}.

Let $0<\rho\leq R\leq R_0$. By \autoref{lem-u-Holder} we have that
\begin{align*}
\overline{\Psi}_\sigma(u; \rho) \leq c \left( \frac{\rho}{R} \right)^{n+\alpha} \overline{\Psi}_\sigma(u; R) + cR^{\sigma} \Phi_{\sigma}(u; R) + cR^{n+p-\frac{n}{q}}.
\end{align*}
Note that by \autoref{lemma:Campanato-almost-optimal-Holder}, $u/d_\Omega^s \in C^{\min\{\alpha-\varepsilon_0,\sigma-\varepsilon_0,p-\frac{n}{q}\}}(\overline{\Omega_{1/2}})$ for any $\eps_0 \in (0,\min\{\alpha, \sigma\})$, which shows in particular that $u/d^s_{\Omega}$ is locally bounded in $\overline{\Omega} \cap B_1$. Moreover, we recall from \eqref{eq-Campanato-almost-optimal-Holder} that
\begin{align*}
\max\{R,d_{\Omega}(x_0)\}^{-s} R^n \mathrm{Tail}_{\sigma,B_1}(u - \phi(u/\phi)_{\Omega_{R}(x_0)}; R, x_0) \le c R^{n+\min\{\alpha-\varepsilon_0,\sigma-\varepsilon_0,p-\frac{n}{q}\}}.
\end{align*}
Thus, we have since $q > \frac{n}{p}$
\begin{align*}
\Phi_{\sigma}(u; R)
&\le c R^n + c R^{n+\min\{\sigma-\varepsilon_0,\alpha-\varepsilon_0,p-\frac{n}{q}\}} + c(u/\phi)_{\Omega_R(x_0)} \max\{R,d_{\Omega}(x_0)\}^{-s} R^{n} \tail_{\sigma,B_1}(\phi;R,x_0) \\
&\le c \left( 1 + \max\{R,d_{\Omega}(x_0)\}^{-s}\tail_{\sigma,B_1}(\phi;R,x_0) \right) R^n \le c R^n,
\end{align*}
where we used in the last step that we have since for any $x \in \Omega_1 \setminus B_R(x_0)$ it holds $d_{\Omega}(x) \le |x-x_0| + d_{\Omega}(x_0)$ and $\phi \le c d_{\Omega}^s$ by \eqref{eq:phi-properties}, and $\sigma < s$:
\begin{align*}
\tail_{\sigma,B_1}(\phi;R,x_0)
&\le c R^{2s-\sigma} \int_{\Omega_1 \setminus B_R(x_0)} \frac{|x-x_0|^s+d_\Omega^s(x_0)}{|x-x_0|^{n+2s-\sigma}} \d x \le c R^s + c d^s(x_0) \le c \max\{R,d_{\Omega}(x_0)\}^s.
\end{align*}
Altogether, we have shown that
\begin{align*}
\overline{\Psi}_\sigma(u; \rho) \leq c \left( \frac{\rho}{R} \right)^{n+\alpha} \overline{\Psi}_\sigma(u; R) + c R^{n+\min \{\sigma,p-\frac{n}{q}\}}.
\end{align*}

Now we apply \autoref{lem-iteration} to conclude that for any $\eps \in (0,\alpha)$
\begin{align*}
\overline{\Psi}_\sigma(u; \rho) \leq c \left( \frac{\rho}{R} \right)^{n+ \min\{\alpha - \eps , \sigma , p - \frac{n}{q}\}} \overline{\Psi}_\sigma(u; R) + c \rho^{n + 2 \min\{\alpha - \eps,\sigma,p - \frac{n}{q}\}},
\end{align*}
and therefore \eqref{eq:suff-Campanato-optimal-Holder} holds true by the same reasoning as in the proof of \autoref{lemma:Campanato-almost-optimal-Holder}. From here, using \eqref{eq:Campanato-product}, \eqref{eq:phi-properties}, and \autoref{thm:main-2} again, we have
\begin{align*}
& \int_{\Omega_{1/2} \cap B_\rho(x_0)} \left| \frac{u}{d^s_{\Omega}} - \left( \frac{u}{d^s_{\Omega}} \right)_{\Omega_{1/2} \cap B_{\rho}(x_0)} \right| \d x \\
&\quad \le 2 \left \Vert \frac{\phi}{d^s_{\Omega}} \right\Vert_{L^{\infty}(\Omega_{\rho}(x_0))} \int_{\Omega_{1/2} \cap B_\rho(x_0)} \left| \frac{u}{\phi} - \left( \frac{u}{\phi} \right)_{\Omega_{1/2}\cap B_\rho(x_0)} \right| \d x + c\left \Vert \frac{u}{\phi} \right\Vert_{L^{\infty}(\Omega_{1/2})} \rho^{n+\alpha} \left[ \frac{\phi}{d^s_{\Omega}} \right]_{C^{\alpha}(\overline{\Omega_{\rho}(x_0)})} \\
&\quad\le c \rho^{n + 2 \min\{ \alpha - \eps,\sigma,p - \frac{n}{q} \} } + c\rho^{n+\alpha},
\end{align*}
which completes the proof after an application of Campanato's embedding.
\end{proof}

\section{Green function estimates}
\label{sec:Green}

In this section we prove the Green function estimates from \autoref{thm:main-1} and \autoref{cor:gradient-bounds}. Let us first recall the definition of the Green function from \cite{KKL23}:
\begin{definition}
\label{def:Green}
    Let $\Omega \subset \R^n$ be a bounded domain and $L$ be as in \eqref{eq-op}, \eqref{eq:Kcomp}. A measurable function $G: \R^n \times \R^n \to [0, \infty]$ is a \emph{Green function} associated to $\Omega$ and $L$ if $G(\cdot, y) \in L^1(\Omega)$ for each $y \in \Omega$, $G=0$ a.e.\ on $(\R^n \times \R^n) \setminus (\Omega \times \Omega)$, and
    \begin{align*}
        \int_{\R^n} G(x, y) \psi(x) \d x = \varphi(y)
    \end{align*}
    for every $y \in \Omega$, and $\psi \in L^{\infty}(\Omega)$, where $\varphi$ is the weak solution to 
\begin{align*}
\left\{
\begin{aligned}
L \varphi &= \psi &&\text{in } \Omega,\\
\varphi &= 0 &&\text{in } \R^n \setminus \Omega.
\end{aligned}
\right.
\end{align*}
\end{definition}

We need the following lemma.

\begin{lemma}
\label{lemma:Green-aux}
    Let $\Omega \subset \R^n$ be a bounded domain and $L$ be as in \eqref{eq-op}, \eqref{eq:Kcomp}. Then, the Green function $G$ associated to $\Omega$ and $L$ exists and it is unique. Moreover, the following hold true:
    \begin{itemize}
        \item $G$ satisfies the interior two-sided bounds \eqref{eq:int-upper} and \eqref{eq:int-lower}.
        \item $G$ is symmetric, i.e.\ $G(x,y) = G(y,x)$ for all $x,y \in \Omega$.
        \item $G(\cdot,y) \in L^1_{2s}(\R^n) \cap H^s(B_{3R/2}(x_0))$, and $G(\cdot,y)$ is a weak solution to $L G(\cdot,y) = 0$ in $\Omega_R(x_0)$ for any $y \in \Omega$ and any ball $B_R(x_0) \subset \R^n$ with $|x_0 - y| \ge 2R$.
    \end{itemize}
\end{lemma}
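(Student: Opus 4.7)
Most assertions of the lemma are in the literature: existence and uniqueness of $G$ were proved in \cite{KLL23}, while the interior bounds \eqref{eq:int-upper} and \eqref{eq:int-lower} were established in \cite{KKL23} (see also \cite{CaSi18}). Both arguments rely only on \eqref{eq:Kcomp} and so apply verbatim in our setting. Symmetry of $G$ follows from symmetry of the form $\cE^K$: given $\psi,\phi \in L^{\infty}(\Omega)$ and the corresponding weak solutions $u,v \in H^s_{\Omega}(\R^n)$ to $Lu=\psi$ and $Lv=\phi$ with vanishing exterior data, \autoref{def:Green} together with $\cE^K(u,v)=\cE^K(v,u)$ gives
\begin{align*}
\int_{\Omega}\int_{\Omega} G(x,y)\psi(x)\phi(y) \d x \d y = \int_{\Omega} v\psi \d x = \cE^K(u,v) = \cE^K(v,u) = \int_{\Omega}\int_{\Omega} G(x,y)\phi(x)\psi(y) \d x \d y,
\end{align*}
whence $G(x,y)=G(y,x)$ a.e.\ in $\Omega\times\Omega$ by the arbitrariness of $\psi,\phi$.

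For the final bullet, fix $y\in\Omega$ and $B_R(x_0)\subset\R^n$ with $|x_0-y|\ge 2R$. Since $G(\cdot,y)\equiv 0$ outside the bounded set $\Omega$ and $G(\cdot,y)\in L^1(\Omega)$, one has $G(\cdot,y)\in L^1_{2s}(\R^n)$ directly. To handle the weak-solution property and the $H^s$ regularity simultaneously, I would approximate $G(\cdot,y)$ by the family $u_{\eps}\in H^s_{\Omega}(\R^n)$ of weak solutions to $Lu_{\eps}=\rho_{\eps}(\cdot-y)$ with zero exterior data, where $\rho_{\eps}$ is a standard mollifier. For $\eps$ small enough, $\supp\rho_{\eps}(\cdot-y)\cap\Omega_R(x_0)=\emptyset$, so $\cE^K(u_{\eps},\varphi)=0$ for every test function $\varphi\in H^s_{\Omega_R(x_0)}(\R^n)$. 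Using the interior upper bound \eqref{eq:int-upper} to obtain a uniform $L^{\infty}$ bound of $u_{\eps}$ on $B_{7R/4}(x_0)$ (where $|x-y|\ge R/4$), the Caccioppoli inequality (\autoref{lemma:Cacc-bdry}) to upgrade this to uniform $H^s(B_{3R/2}(x_0))$ control, a uniform tail bound on $u_{\eps}$, and the defining property \autoref{def:Green} to identify the limit $u_{\eps}\to G(\cdot,y)$ in $L^1(\Omega)$, one passes to the limit and concludes both $\cE^K(G(\cdot,y),\varphi)=0$ and $G(\cdot,y)\in H^s(B_{3R/2}(x_0))$.

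The main technical point is the uniform tail control on $u_{\eps}$, which I expect to secure by testing the equation for $u_{\eps}$ against the weak solution $w\in H^s_{\Omega}(\R^n)$ of $Lw=\1_{\Omega}$: this yields $\|u_{\eps}\|_{L^1(\Omega)}\le\|w\|_{L^{\infty}(\Omega)}\|\rho_{\eps}\|_{L^1(\R^n)}$ uniformly in $\eps$, which combined with $\supp u_{\eps}\subset\Omega$ bounded gives a uniform $L^1_{2s}(\R^n)$ bound. This circle of ideas is essentially already contained in \cite{KLL23}, so the present lemma amounts largely to a careful cross-reference.
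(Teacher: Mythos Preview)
Your proposal is correct and follows essentially the same route as the paper: cite \cite{KLL23,KKL23} for existence, uniqueness, symmetry, and the interior bounds, then approximate $G(\cdot,y)$ by solutions with mollified right-hand sides, obtain uniform $H^s(B_{3R/2}(x_0))$ bounds via the Caccioppoli inequality and uniform $L^1_{2s}$ control, and pass to the limit. The paper uses the approximations $G_\rho(\cdot,y)$ with right-hand side $\rho^{-n}\1_{B_\rho(y)}$ already built in \cite{KLL23} and identifies the limit via the a.e.\ convergence $G_{\rho_k}(\cdot,y)\to G(\cdot,y)$ from that construction, whereas you identify it through the defining property of $G$; both work, though the paper's identification is slightly more direct since it is baked into the construction of $G$.
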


\begin{proof}
    The existence and uniqueness, as well as the first and second properties follow from \cite[Theorems 1.3, 1.4, 1.5, and 1.7]{KLL23}. The fact that $G(x,\cdot) \in L^1_{2s}(\R^n)$ follows from the upper bound \eqref{eq:int-upper}. 
    
    Let us fix $y \in \Omega$ and a ball $B_R(x_0) \subset \R^n$ with $|x_0-y| \geq 2R$. Let us recall the approximate Green function $G_{\rho}(\cdot,y) \in H^s_{\Omega}(\R^n)$ from \cite[Lemma 3.1]{KLL23} which solves $L G_{\rho}(\cdot,y) = \rho^{-n} \1_{B_{\rho}(y)}$ in $\Omega$, where we consider $\rho > 0$ so small that $B_{\rho}(y) \subset \Omega$ and $B_{\rho}(y) \cap B_{7R/4}(x_0) = \emptyset$. Then, by the Caccioppoli inequality (\autoref{lemma:Cacc-bdry}) and the local boundedness (\autoref{lemma:locbd-bdry}), we obtain
    \begin{align}
    \label{eq:approx-GF-energy}
        [G_{\rho}(\cdot,y)]_{H^s(B_{3R/2}(x_0))} \leq c R^{-\frac{n}{2}-s} \Vert G_{\rho}(\cdot,y) \Vert_{L^1(B_{4R/7}(x_0))} + c R^{\frac{n}{2}-s} \tail(G_{\rho}(\cdot,y);7R/4,x_0).
    \end{align}
    We can follow \cite[(4.11)-(4.16)]{KLL23} to deduce a bound for the right-hand side in \eqref{eq:approx-GF-energy} as follows:
    \begin{align*}
        \Vert G_{\rho}(\cdot,y) \Vert_{H^s(B_{3R/2}(x_0))} \le c R^{s - \frac{n}{2}},
    \end{align*}
    where $c > 0$ only depends on $n,s,\lambda,\Lambda$, but not on $\rho > 0$. Hence, there exists a subsequence $\rho_k \searrow 0$ such that $G_{\rho_k}(\cdot,y)$ converges weakly in $H^s(B_{3R/2}(x_0))$. Since we already know from \cite[Proof of Theorem 1.3]{KLL23} that $G_{\rho_k}(\cdot,y) \to G(\cdot,y)$ a.e.\ in $\R^n$, it must be $G(\cdot,y) \in H^s(B_{3R/2}(x_0))$ with the same bound. Moreover, we have that $L G(\cdot,y) = 0$ in $\Omega_{R}(x_0)$ in the weak sense by taking the equation for $G_{\rho}(\cdot,y)$ to the limit. 
\end{proof}

\subsection{Upper bounds}

In this section, we prove the upper bound in \autoref{thm:main-1}. The idea is to combine the interior upper Green function bounds from \eqref{eq:int-upper} with the boundary regularity from \autoref{thm:main-2} through a barrier and truncation argument. A nice feature of our technique is that it allows to establish slightly more general upper bounds of the following form for some $\beta \in (0,s]$
\begin{align}
\label{eq:Green-boundary}
G(x,y) \le c \left( \frac{d_{\Omega}(x)}{|x-y|} \wedge 1 \right)^{\beta}\left( \frac{d_{\Omega}(y)}{|x-y|} \wedge 1 \right)^{\beta} |x-y|^{-n+2s}, ~~ \forall x,y \in \Omega
\end{align}
in domains $\Omega$ that do not admit $C^s$ boundary regularity of solutions, but only $C^{\beta}$ regularity.

\begin{lemma}
\label{lemma:comp-princ}
Let $\Omega \subset \R^n$ be a bounded domain. Assume that $K$ satisfies \eqref{eq:Kcomp} and that there exists $\beta \in (0,s]$ such that for any $x_0 \in \partial \Omega$, $R > 0$, $f \in L^{\infty}(\Omega_{3R/4}(x_0))$, and any solution $u$ to 
\begin{align*}
\left\{
\begin{aligned}
L u &= f &&\text{in } \Omega_{3R/4}(x_0),\\
u &= 0 &&\text{in } B_{3R/4}(x_0) \setminus \Omega,
\end{aligned}
\right.
\end{align*}
it holds for some $C > 0$
\begin{align}
\label{eq:u-reg-assumption}
    [ u ]_{C^{\beta}(\overline{B_{R/2}(x_0)})} \le C R^{-\beta} \left( \Vert u \Vert_{L^{\infty}(\R^n)} + R^{2s} \Vert f \Vert_{L^{\infty}(\Omega_{3R/4}(x_0))} \right).
\end{align}
Then, \eqref{eq:Green-boundary} holds for some $c > 0$, depending only on $n,s,\lambda,\Lambda,\Omega,\beta$, and $C$.
\end{lemma}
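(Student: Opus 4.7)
The plan is to deduce the product bound \eqref{eq:Green-boundary} from the scalar boundary regularity \eqref{eq:u-reg-assumption} by a bootstrap argument: apply the assumed regularity to $G(\cdot, y)$ once to obtain a one-sided boundary decay in $x$, symmetrize to get a one-sided decay in $y$, and then apply \eqref{eq:u-reg-assumption} a second time with the improved global input to obtain the product. Set $r := |x-y|$. If $\min(d_\Omega(x), d_\Omega(y)) \ge r/16$, then $(d_\Omega(x)/r \wedge 1)^\beta (d_\Omega(y)/r \wedge 1)^\beta \ge c > 0$ and the interior estimate \eqref{eq:int-upper} is already sufficient, so I may restrict attention to the case $d_\Omega(x) < r/16$ (the case $d_\Omega(y) < r/16$ is handled identically by symmetry of $G$).

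First I would establish the one-sided bound $G(x,y) \le c (d_\Omega(x)/r)^\beta r^{-n+2s}$. Let $x_0 \in \partial\Omega$ realize $|x-x_0| = d_\Omega(x)$, set $R := r/4$, and pick $\chi \in C_c^\infty(B_{2R}(x_0))$ with $\chi \equiv 1$ on $B_R(x_0)$. Since $|y-x_0| \ge 3r/4 > 2R$, the function $u := G(\cdot, y)\chi$ vanishes in a neighborhood of $y$, vanishes in $B_{3R/4}(x_0) \setminus \Omega$, and satisfies $\|u\|_{L^\infty(\R^n)} \le c r^{-n+2s}$ by \eqref{eq:int-upper}. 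Using \autoref{lemma:Green-aux} (which gives $LG(\cdot, y) = 0$ in $\Omega_{3R/4}(x_0)$), a direct computation shows $Lu = f$ in $\Omega_{3R/4}(x_0)$ with
\begin{equation*}
    f(x) = 2\int_{\R^n \setminus B_R(x_0)} G(z,y)(1-\chi(z))K(x,z)\,dz.
\end{equation*}
Splitting the integral at $|z-y| = r/2$ and using \eqref{eq:int-upper} together with the standard bound $\int_\Omega G(z,y)\,dz \le c$ gives $\|f\|_{L^\infty(\Omega_{3R/4}(x_0))} \le c r^{-n}$, so that $\|u\|_{L^\infty(\R^n)} + R^{2s}\|f\|_{L^\infty} \le c r^{-n+2s}$. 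The $C^\beta$ regularity from \eqref{eq:u-reg-assumption} then yields a continuous representative of $G(\cdot,y)$ vanishing on $\partial\Omega \cap B_{R/2}(x_0)$, so $u(x_0) = 0$, and consequently $G(x,y) = |u(x) - u(x_0)| \le c R^{-\beta} r^{-n+2s} d_\Omega(x)^\beta = c (d_\Omega(x)/r)^\beta r^{-n+2s}$. Symmetry of $G$ immediately gives the analogous bound with $d_\Omega(y)$.

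To upgrade to the product bound I would repeat the above construction but replace the crude input $\|u\|_{L^\infty} \le c r^{-n+2s}$ by the refined one-sided estimate just proved (used in the $y$-variable). For $z \in B_{2R}(x_0) \cap \Omega$ one has $|z-y| \sim r \ge d_\Omega(y)$, hence $G(z,y) \le c d_\Omega(y)^\beta r^{-n+2s-\beta}$, giving $\|u\|_{L^\infty(\R^n)} \le c d_\Omega(y)^\beta r^{-n+2s-\beta}$. An analogous refinement of the tail integral yields $\|f\|_{L^\infty(\Omega_{3R/4}(x_0))} \le c d_\Omega(y)^\beta r^{-n-\beta}$. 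Feeding these into \eqref{eq:u-reg-assumption} and evaluating at $x$ with $u(x_0)=0$ produces
\begin{equation*}
    G(x,y) \le c R^{-\beta} d_\Omega(y)^\beta r^{-n+2s-\beta} d_\Omega(x)^\beta = c \left(\frac{d_\Omega(x)}{r}\right)^\beta \left(\frac{d_\Omega(y)}{r}\right)^\beta r^{-n+2s},
\end{equation*}
which is \eqref{eq:Green-boundary}. The main technical point is the refined tail bound for $f$: on the region $\{|z-y| < r/2\}$ one cannot use the one-sided estimate directly when $|z-y| < d_\Omega(y)$, so one further splits at $|z-y|= d_\Omega(y)$, uses \eqref{eq:int-upper} on the inner region and the symmetrized one-sided bound on the outer annulus, and exploits $2s > \beta$ to conclude $\int_{|z-y|<r/2} G(z,y)\,dz \le c\, d_\Omega(y)^\beta r^{2s-\beta}$.
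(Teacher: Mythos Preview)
Your proof is correct, and it follows a genuinely different route from the paper's argument, though the overall bootstrap structure (one-sided bound $\Rightarrow$ symmetrize $\Rightarrow$ product bound) is the same.

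The paper does not localize $G(\cdot,y)$ with a cutoff. Instead, in each of Cases~2 and~3 it constructs an auxiliary barrier: a solution $u$ to $Lu=f$ in $\Omega_R(x_0)$ with explicit constant right-hand side $f$ and exterior data $g$ chosen so that $G(\cdot,y)\mathbbm{1}_{B_{2R}(x_0)}\le u$ pointwise on $\R^n\setminus\Omega_R(x_0)$ and $L(G\mathbbm{1}_{B_{2R}(x_0)})\le Lu$ in $\Omega_R(x_0)$. The comparison principle then transfers the inequality to $\Omega_R(x_0)$, and the hypothesis \eqref{eq:u-reg-assumption} is applied to the barrier $u$ (not to $G$). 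In Case~3 the barrier data are refined using the Case~2 bound, exactly parallel to your second pass.

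Your approach trades the comparison principle for an explicit computation of the nonlocal commutator $L(G\chi)-\chi LG$, which reduces to the tail integral you write for $f$. This is more direct and makes the role of the nonlocal tail transparent; the price is the extra bookkeeping for the tail, in particular the final split at $|z-y|=d_\Omega(y)$ using $2s>\beta$, which the paper avoids because its barrier carries the refined $d_\Omega(y)^\beta$ factor in the \emph{data} rather than requiring it to emerge from an integral. The paper's barrier method, on the other hand, needs the maximum principle but keeps $f$ and $g$ constant, so the application of \eqref{eq:u-reg-assumption} is immediate. Both arguments are equally rigorous once one checks (as you implicitly do via \autoref{lemma:Green-aux}) that $G(\cdot,y)\chi$ is an admissible weak solution on $\Omega_{3R/4}(x_0)$.
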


\begin{proof}
We fix $x,y \in \Omega$ and distinguish between three cases.

Case 1: $|x-y| \le 8 \min \{ d_{\Omega}(x) , d_{\Omega}(y) \}$. In this case, the desired result follows from \eqref{eq:int-upper}.

Case 2: $\min\{ d_{\Omega}(x) , d_{\Omega}(y) \} < |x-y|/8$. We only explain how to get the estimate in case $d_{\Omega}(x) \le d_{\Omega}(y)$.
Let $x_0 \in \partial \Omega$ such that $|x-x_0| = d_{\Omega}(x)$ and set $R = |x-y|/4$. Then, $x \in B_{R/2}(x_0)$ and $y \in \R^n \setminus B_{3R}(x_0)$. Moreover, note that by \eqref{eq:int-upper} and \autoref{lemma:Green-aux}
\begin{align*}
\left\{
\begin{aligned}
L(G(\cdot,y)\mathbbm{1}_{B_{2R}(x_0)}) &\le c_1 R^{-n}=:f &&\text{in } \Omega \cap B_{R}(x_0) \text{ in the weak sense}, \\
G(\cdot,y)\mathbbm{1}_{B_{2R}(x_0)} &\le c_2 |\cdot-y|^{-n+2s} &&\text{in } \Omega \cap (B_{2R}(x_0) \setminus B_{R}(x_0))
\end{aligned}
\right.
\end{align*}
for some $c_1, c_2 > 0$, depending only on $n,s,\lambda,\Lambda$. Indeed, for any nonnegative function $\varphi \in H^s_{\Omega_R(x_0)}(\R^n)$,
\begin{align*}
&\mathcal{E}^K(G(\cdot, y) \mathbbm{1}_{B_{2R}(x_0)}, \varphi) \\
&\quad = - \mathcal{E}^K(G(\cdot, y) \mathbbm{1}_{B_{2R}(x_0)^c}, \varphi) \\
&\quad = 2\int_{\Omega_R(x_0)} \int_{B_{2R}(x_0)^c} G(w, y) \varphi(z) K(z, w) \d w \d z \\
&\quad \leq 2\Lambda c \int_{\Omega_R(x_0)} \left( \int_{B_R(y)} \frac{|w-y|^{2s-n}}{R^{n+2s}} \d w + \int_{B_{2R}(x_0)^c \setminus B_R(y)} \frac{R^{2s-n}}{|z-w|^{n+2s}} \d w \right) \varphi(z) \d z \\
&\quad \leq c_1 R^{-n} \int_{\Omega_R(x_0)} \varphi(z) \d z
\end{align*}
for some $c_1>0$.

Let us now consider $g \in C_c^{\infty}(\R^n)$ satisfying
\begin{align*}
    0 \le g \le c_2 R^{-n+2s} ~~\text{in } \R^n, \quad g = c_2 R^{-n+2s} ~~ \text{in } \Omega \setminus B_R(x_0), \quad\text{and}\quad g = 0~~ \text{in } B_{3R/4}(x_0) \setminus \Omega,
\end{align*}
and let $u$ be the solution to
\begin{align}\label{eq-Green-u}
\left\{
\begin{aligned}
L u &= f &&\text{in } \Omega_R(x_0),\\
u &= g &&\text{in } \R^n \setminus \Omega_R(x_0).
\end{aligned}
\right.
\end{align}
Then, we have by construction (note that $|x-z| \ge R$ for any $z \in \Omega \cap (B_{2R}(x_0) \setminus B_{R}(x_0))$)
\begin{align*}
\left\{
\begin{aligned}
L(G(\cdot,y)\mathbbm{1}_{B_{2R}(x_0)}) &\le Lu &&\text{in } \Omega \cap B_{R}(x_0),\\
G(\cdot,y)\mathbbm{1}_{B_{2R}(x_0)} &\le u &&\text{in } \Omega \cap (B_{2R}(x_0) \setminus B_{R}(x_0)),\\
G(\cdot,y)\mathbbm{1}_{B_{2R}(x_0)} = 0 &\le u &&\text{in } (\Omega \setminus B_{2R}(x_0)) \cup (\R^n \setminus \Omega).
\end{aligned}
\right.
\end{align*}
Thus, by the comparison principle, we obtain
\begin{align*}
G(\cdot,y)\mathbbm{1}_{B_{2R}(x_0)} \le u ~~ \text{ in } \R^n.
\end{align*}
In particular, at $x \in B_{R/2}(x_0)$, it holds by \eqref{eq:u-reg-assumption}
\begin{align*}
G(x,y) \le u(x) \le C \left( \frac{|x-x_0|}{R} \right)^{\beta} \left( \Vert g \Vert_{L^{\infty}(\R^n)} + R^{2s} \Vert f \Vert_{L^{\infty}(B_{3R/4}(x_0))} \right) \le c \left( \frac{d_{\Omega}(x)}{R} \right)^{\beta} R^{-n+2s},
\end{align*}
as desired, upon recalling the definition of $R$. Note that this estimate corresponds to \eqref{eq:Green-boundary} in case $|x-y|/8 < \max\{ d_{\Omega}(x) , d_{\Omega}(y) \}$ but is not sharp otherwise. It remains to treat the remaining case:

Case 3: $d_{\Omega}(x) \le d_{\Omega}(y) < |x-y|/8$ or $d_{\Omega}(y) \le d_{\Omega}(x) < |x-y|/8$. Again, we only discuss the first case. The proof goes in the same way as before, but observing that now we have by \eqref{eq:int-upper} and Case 2
\begin{align*}
\left\{
\begin{aligned}
L(G(\cdot,y)\mathbbm{1}_{B_{2R}(x_0)}) &\le c_3 R^{-n-\beta} d_{\Omega}^{\beta}(y) =:\tilde{f} &&\text{in } \Omega \cap B_{R}(x_0), \\
G(\cdot,y)\mathbbm{1}_{B_{2R}(x_0)} &\le c_4 |\cdot-y|^{-n+2s-\beta} d_{\Omega}^{\beta}(y) &&\text{in } \Omega \cap (B_{2R}(x_0) \setminus B_{R}(x_0))
\end{aligned}
\right.
\end{align*}
for some $c_3, c_4 > 0$, depending only on $n,s,\lambda,\Lambda$. We consider $\tilde{g} \in C_c^{\infty}(\R^n)$ with
\begin{align*}
        \tilde{g} = c_4 R^{-n+2s-\beta}d_{\Omega}^{\beta}(y) ~~ \text{ in } \Omega \setminus B_R(x_0), \qquad \tilde{g} = 0~~ \text{ in } B_{3R/4}(x_0) \setminus \Omega,
\end{align*}
and $0 \le \tilde{g} \le c_4 R^{-n+2s-\beta} d_{\Omega}^{\beta}(y)$ in $\R^n$, and let $\tilde{u}$ be the solution to \eqref{eq-Green-u} with $f$ and $g$ replaced by $\tilde{f}$ and $\tilde{g}$, respectively. Hence, by the comparison principle we have again 
\begin{align*}
G(\cdot,y)\mathbbm{1}_{B_{2R}(x_0)} \le \tilde{u} ~~ \text{ in } \R^n,
\end{align*}
and in particular, at $x \in B_{R/2}(x_0)$, it holds by \eqref{eq:u-reg-assumption}
\begin{align*}
G(x,y) \le \tilde{u}(x)
&\le C \left( \frac{|x-x_0|}{R} \right)^{\beta} \left( \Vert \tilde{g} \Vert_{L^{\infty}(\R^n)} + R^{2s} \Vert \tilde{f} \Vert_{L^{\infty}(B_{3R/4}(x_0))} \right) \\
&\le c \left( \frac{d_\Omega(x)}{R} \right)^\beta \left( \frac{d_\Omega(y)}{R} \right)^\beta R^{-n+2s},
\end{align*}
as desired.
\end{proof}

\begin{remark}
\label{remark:Green-function-general-domains}
Note that \eqref{eq:u-reg-assumption} in \autoref{lemma:comp-princ} holds true for different values of $\beta \in (0,s]$, depending on the regularity of $\partial \Omega$ and $K$ satisfying \eqref{eq:Kcomp}:
\begin{itemize}
\item[(i)] $\Omega$ measure density condition \eqref{eq:measure-density}: there is $\beta > 0$ such that \eqref{eq:u-reg-assumption} holds (see \cite{KKP16}).
\item[(ii)] $\Omega$ flat Lipschitz, $K$ satisfies \eqref{eq-K-cont}: \eqref{eq:u-reg-assumption} holds for any $\beta \in (0,s)$ (see \autoref{thm:Cs-eps}).
\item[(iii)] $\partial \Omega \in C^{1,\alpha}$, $K$ satisfies \eqref{eq-K-cont}: \eqref{eq:u-reg-assumption} holds with $\beta = s$ (see \autoref{thm:main-2}).
\end{itemize}
\end{remark}

\begin{proof}[Proof of the upper bound in \autoref{thm:main-1}]
This follows from \autoref{remark:Green-function-general-domains}(iii).
\end{proof}

\subsection{Lower bounds}

In this section we establish the lower bound in \autoref{thm:main-1}.

\begin{proof}[Proof of the lower bound in \autoref{thm:main-1}]
We fix $x,y \in \Omega$ and distinguish between three cases.

Case 1: $|x-y| \le \min\{ d_{\Omega}(x) , d_{\Omega}(y) \}$. In this case, the desired result follows from \eqref{eq:int-lower}. 

Case 2: $d_{\Omega}(x) < |x-y| \le d_{\Omega}(y)$ or $d_{\Omega}(y) < |x-y| \le d_{\Omega}(x)$. We will only explain how to proceed in the first subcase. We let $x_0 \in \partial \Omega$ be the projection of $x$ to $\partial \Omega$, i.e.\ $|x - x_0| = d_{\Omega}(x)$ and set $R = 2|x-y|$. We can find $z \in \Omega$ such that $R \le |y-z| \le 2R$, $R < |x_0 - z| \le 2R$, and $B_{\kappa R}(z) \subset \Omega \setminus B_R(x_0)$ for some $\kappa >0$, depending only on $\Omega$. Let $u$ be the solution to
\begin{align*}
\left\{
\begin{aligned}
        L u &= 0 &&\text{in } \Omega_{R/4}(x_0),\\
        u &= 1 &&\text{in } B_{\kappa R}(z),\\
        u &= 0 &&\text{in } \R^n \setminus (\Omega_{R/4}(x_0) \cup B_{\kappa R}(z)).
\end{aligned}
\right.
\end{align*}
By an application of the Hopf lemma (see \autoref{thm-hopf}) to $u$ and doing a rescaling argument in the exact same way as in \autoref{lem-Hopf}, we obtain
\begin{align}
\label{eq:appl-Hopf}
    u \ge c_0 R^{-s} d_{\Omega}^s ~~ \text{ in } \Omega_{R/8}(x_0)
\end{align}
for some $c_0 > 0$, depending only on $n,s,\lambda,\Lambda,\alpha$, and $\Omega$. Moreover, by \eqref{eq:int-lower} and by construction of $u$,
\begin{align*}
    G(\cdot,y) \ge c R^{-n+2s} = c R^{-n+2s} u ~~ \text{ in } B_{\kappa R}(z).
\end{align*}
Thus, since $L G(\cdot,y) = 0$ in $\Omega_{R/4}(x_0)$ by \autoref{lemma:Green-aux}, the comparison principle implies that $G(\cdot,y) \ge c R^{-n+2s} u$ in $\R^n$, and by combination with \eqref{eq:appl-Hopf},
\begin{align*}
    G(\cdot,y) \ge c R^{-n+2s} u \ge cc_0 R^{-n+s} d^s_{\Omega} ~~ \text{ in } \Omega_{R/8}(x_0).
\end{align*}
If $x \in \Omega_{R/8}(x_0)$, then this implies in particular that $G(x, y) \geq cc_0 R^{-n+s}d_\Omega^s(x)$. Otherwise, since $\partial \Omega \in C^{1,\alpha}$, we can find a ball $B_{\eta R}(x^{\ast}) \subset \Omega_{R/8}(x_0)$ with $R/16 \le d_{\Omega} \le R/8$ in $B_{\eta R}(x^{\ast})$ for some $\eta \in (0,1/16)$, depending only on $\Omega$, and \eqref{eq:appl-Hopf} implies $u \ge c_0/16^s$ in $B_{\eta R}(x^{\ast})$. Then, since $|x^{\ast} - x| \asymp R$, by a standard (weak) Harnack chain argument, we deduce that $u(x) \ge c_1$ for some $c_1 > 0$, depending only on $n,s,\lambda,\Lambda,c_0,\Omega$. Thus $G(x,y) \ge c c_1 R^{-n+2s} \ge c c_1 R^{-n+s} d_{\Omega}^s(x)$, as desired.

Case 3: $d_{\Omega}(x) \le d_{\Omega}(y) < |x-y|$ and $d_{\Omega}(y) \le d_{\Omega}(x) < |x-y|$. Again, we only explain how to proceed in the first subcase. We let $x_0,R,z,\kappa,u$ be as in Step 2, and observe that in particular, \eqref{eq:appl-Hopf} still holds for $u$. Moreover, by Case 2 (and the Harnack inequality applied to $G$), we have
\begin{align*}
    G(\cdot,y) \ge c R^{-n+s} d_{\Omega}^s(y) = c R^{-n+s} d_{\Omega}^s(y) u ~~ \text{ in } B_{\kappa R/2}(z). 
\end{align*}
Thus, as before, we conclude by the comparison principle
\begin{align*}
    G(\cdot,y) \ge c R^{-n} d_{\Omega}^s(y) d_{\Omega}^s ~~ \text{ in } \Omega_{R/8}(x_0), 
\end{align*}
which implies the desired result also for $x$ by the same arguments as in Case 2.
\end{proof}

\begin{remark}
    Note that weaker versions of the Hopf lemma might hold in less regular domains. In that case, we could get lower Green function bounds with factor $(1 \wedge \frac{d_{\Omega}(x)}{|x-y|})^{\beta}$ for some $\beta \geq s$.
\end{remark}

\subsection{Poisson kernel estimates}

Given a domain $\Omega \subset \R^n$ and $g \in L^{1}_{2s}(\R^n \setminus \Omega)$, solutions to the Dirichlet problem
\begin{align*}
\left\{
\begin{aligned}
    L u &= 0 &&\text{in } \Omega,\\
    u &= g &&\text{in } \R^n \setminus \Omega
\end{aligned}
\right.
\end{align*}
have the following representation formula
\begin{align*}
    u(x) = \int_{\R^n \setminus \Omega} g(z) P(x,z) \d z,
\end{align*}
where $P : \Omega \times (\R^n \setminus \Omega) \to [0,\infty]$ denotes the Poisson kernel associated to $L$ and $\Omega$. The following is an easy consequence of the nonlocal Gauss--Green formula (see \cite{Buc16})
\begin{align}
\label{eq:Poisson-rep}
    P(x,z) = \int_{\Omega} G(x,y) K(z,y) \d y \quad \forall x \in \Omega,~~ z \in \R^n \setminus \Omega.
\end{align}
Hence, as an immediate application of \autoref{thm:main-1}, we obtain two-sided Poisson kernel estimates. Note that this result was previously only known in $C^{1,1}$ domains (even for the fractional Laplacian, see \cite{ChSo98}) and under additional structural assumptions on the coefficients.

\begin{corollary}
    \label{cor:Poisson}
     Let $s \in (0,1)$, $\alpha,\sigma \in (0,s)$. Let $\Omega \subset \R^n$ be a $C^{1,\alpha}$ domain and let $L,K, \lambda, \Lambda$ be as in \eqref{eq-op}, \eqref{eq:Kcomp}. Assume that \eqref{eq-K-cont} holds with $\mathcal{A} \Supset \Omega$. Let $P$ be the Poisson kernel associated with $L,\Omega$. Then, it holds for any $x \in \Omega$ and $z \in \R^n \setminus \Omega$
     \begin{align*}
         c^{-1} \frac{d^s_{\Omega}(x)}{d^s_{\Omega}(z)(1 + d_{\Omega}(z))^s}  \le \frac{P(x,y)}{|x-y|^{-n}} \le c \frac{d^s_{\Omega}(x)}{d^s_{\Omega}(z)(1 + d_{\Omega}(z))^s} ,
     \end{align*}
     where $c \geq 1$ depends only on $n,s,\lambda,\Lambda,\sigma,\alpha,\Omega$.
\end{corollary}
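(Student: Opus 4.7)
The corollary follows from the representation formula \eqref{eq:Poisson-rep} and the ellipticity \eqref{eq:Kcomp} of $K$, which reduce matters to showing
\[
I(x,z):=\int_\Omega G(x,y)\,|y-z|^{-n-2s}\,\d y \;\asymp\; \frac{d_\Omega^s(x)}{d_\Omega^s(z)(1+d_\Omega(z))^s}\,|x-z|^{-n}.
\]
Write $d_x=d_\Omega(x)$, $d_z=d_\Omega(z)$, $r=|x-z|$, and exploit repeatedly that for $y\in\Omega$ and $z\notin\Omega$ one has $d_\Omega(y)\leq|y-z|$, $|y-z|\geq d_z$, and $r\geq\max(d_x,d_z)$. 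Feed into the integral the two-sided estimate of \autoref{thm:main-1}, used in the pointwise forms $G(x,y)\leq C|x-y|^{-n+2s}$ and $G(x,y)\leq Cd_x^s d_\Omega(y)^s|x-y|^{-n}$. The case $d_z\geq 1$ is handled directly by collapsing $|y-z|\asymp d_z\asymp r$ uniformly on $\Omega$ together with the bound $\int_\Omega G(x,y)\,\d y\leq Cd_x^s$ from \autoref{thm:main-2} applied to $Lu=1$; from now on I focus on $d_z\leq 1$, for which $(1+d_z)^s\asymp 1$.

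For the upper bound, split $\Omega$ into $A=\Omega\cap B_{r/4}(z)$, $B_1=\Omega\cap B_{r/4}(x)$, and the middle region $M=\Omega\setminus(A\cup B_1)$. On $A$, $|x-y|\asymp r$; combining $G(x,y)\leq Cd_x^s d_\Omega(y)^s|x-y|^{-n}$ with $d_\Omega(y)\leq|y-z|$ and the radial integral $\int_{d_z}^{r/4}\rho^{-1-s}\,\d\rho\leq Cd_z^{-s}$ yields $\int_A\leq Cd_x^s d_z^{-s}r^{-n}$, where the radial slicing uses the Lipschitz volume bound $|\Omega\cap B_\rho(z)|\leq C\rho^n$. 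On $B_1$ further split according to whether $|x-y|\leq d_x$ or $|x-y|>d_x$: for $|x-y|\leq d_x$ use $G(x,y)\leq C|x-y|^{-n+2s}$ with $|y-z|\asymp r$ and the inequality $d_xd_z\leq r^2$ to get a contribution $\leq Cd_x^{2s}r^{-n-2s}\leq Cd_x^s d_z^{-s}r^{-n}$; for $|x-y|>d_x$ use $G(x,y)\leq Cd_x^s d_\Omega(y)^s|x-y|^{-n}$ with the geometric bound $d_\Omega(y)\leq d_x+|x-y|\leq 2|x-y|$, turning the inner integral into $\int|x-y|^{s-n}\,\d y\leq Cr^s$ and giving $\leq Cd_x^s r^{-n-s}\leq Cd_x^s d_z^{-s}r^{-n}$. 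The middle region $M$ is treated by a dyadic decomposition in $|y-z|$ starting at scale $r$, using $|x-y|\asymp|y-z|$ on far dyadic shells.

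For the lower bound it suffices to integrate over a single well-chosen ball. Using the $C^{1,\alpha}$ regularity of $\partial\Omega$ and a Harnack-chain argument in the spirit of Section~\ref{sec:Hopf}, one finds $y^*\in\Omega$ and $\kappa=\kappa(\Omega)>0$ with $B_{\kappa\mu}(y^*)\subset\Omega$, $|y^*-z|\asymp\mu$, and $d_\Omega\asymp\mu$ on the ball, where $\mu=\max(d_z,r)$. The lower bound of \autoref{thm:main-1} gives $G(x,y)\geq cd_x^s\mu^{s-n}$ on it (since $|x-y|\asymp\mu$), while $|y-z|^{-n-2s}\asymp\mu^{-n-2s}$; integrating over the ball of volume $\asymp\mu^n$ produces $I(x,z)\geq cd_x^s\mu^{-n-s}$, and the case analysis $d_z\leq r\leq 1$, $d_z\leq 1\leq r$, $d_z\geq 1$ confirms that this matches the claimed lower bound.

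The main obstacle is the integration in region $A$ close to the portion of $\partial\Omega$ nearest $z$: the divergence of $|y-z|^{-n-2s}$ as $y\to\partial\Omega\cap\partial B_{d_z}(z)$ is balanced exactly by $d_\Omega(y)^s\leq|y-z|^s$ in the Green function bound, so the estimate is sharp and relies on the Lipschitz volume bound $|\Omega\cap B_\rho(z)|\leq C\rho^n$ uniformly for $\rho\in(d_z,r/4)$ together with its matching lower bound needed for the reverse estimate; both come from the $C^{1,\alpha}$ regularity of $\partial\Omega$. A secondary technical point is that the crude bound $G(x,y)\leq C|x-y|^{-n+2s}$ fails to capture the smallness from $d_x$ in $B_1\cap\{|x-y|>d_x\}$ when $d_x\ll d_z$, forcing the sharper bound $G(x,y)\leq Cd_x^s d_\Omega(y)^s|x-y|^{-n}$ combined with the elementary geometric observation $d_\Omega(y)\leq d_x+|x-y|$.
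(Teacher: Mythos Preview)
Your upper-bound argument is correct and is precisely the decomposition carried out in \cite[Theorem~1.5]{ChSo98}, which is what the paper's one-line proof defers to; the treatment of the three regions $A$, $B_1$, $M$ is fine.

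The lower bound, however, has a genuine gap. Since $x\in\Omega$ and $z\notin\Omega$, one always has $r=|x-z|\geq d_z$, so your choice $\mu=\max(d_z,r)$ collapses to $\mu=r$. With the test ball at scale $r$ you obtain $I(x,z)\geq c\,d_x^s\,r^{-n-s}$, whereas the target (for $d_z\le 1$) is $c\,d_x^s\,d_z^{-s}\,r^{-n}$; these differ by the factor $(d_z/r)^s$, which is not bounded below when $d_z\ll r$. Your subsequent ``case analysis'' does not repair this. The correct choice is to place the ball at scale $d_z$ near $z$: take $y^*$ with $|y^*-z|\asymp d_z$, $d_\Omega(y^*)\asymp d_z$, and $B_{\kappa d_z}(y^*)\subset\Omega$ (go inward from the nearest boundary point to $z$; this uses only Lipschitz regularity). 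On this ball $|x-y|\asymp r$, $|y-z|\asymp d_z$, $d_\Omega(y)\asymp d_z$, so the lower bound of \autoref{thm:main-1} gives $G(x,y)\geq c\,d_x^s\,d_z^s\,r^{-n}$; multiplying by $|y-z|^{-n-2s}\asymp d_z^{-n-2s}$ and the volume $\asymp d_z^n$ yields exactly $c\,d_x^s\,d_z^{-s}\,r^{-n}$. This is the computation in \cite{ChSo98} that the paper invokes.
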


\begin{proof}
    The proof goes by combination of \eqref{eq:Poisson-rep} and \autoref{thm:main-1} and following the computations in the proof of \cite[Theorem 1.5]{ChSo98}.
\end{proof}

\subsection{Gradient estimates}

In this section we provide the proof of the gradient estimates from \autoref{cor:gradient-bounds} by using an interior gradient estimate and combining them with \autoref{thm:main-1}.

The first main ingredient is the following higher order interior regularity estimate.

\begin{lemma}\cite[Theorem~1.9]{KNS22}
\label{lemma:int-gradient-est}
Let $s \in (\frac{1}{2},1)$, $\sigma \in (0,s)$. Assume that $K$ satisfies \eqref{eq:Kcomp}, and \eqref{eq-K-cont} with $\mathcal{A} = B_R(x_0)$.
Then, for any $x_0 \in \R^n$, $R > 0$ and any solution $u$ to
\begin{align*}
L u = 0 ~~ \text{ in } B_R(x_0),
\end{align*}
it holds
\begin{align*}
\Vert \nabla u \Vert_{L^{\infty}(B_{R/2}(x_0))} \le c R^{-1} \left( R^{-n} \Vert u \Vert_{L^{1}(B_{R}(x_0))} + \tail(u;R,x_0) \right),
\end{align*}
where $c > 0$ depends only on $n,s,\lambda,\Lambda$, and $\sigma$.
\end{lemma}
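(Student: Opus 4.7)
The plan is to reduce the pointwise gradient estimate to an interior $C^{1+\gamma}_{\mathrm{loc}}$ regularity result for some $\gamma > 0$. By scaling to $R=1$ and a covering argument combined with \autoref{lemma:locbd-bdry}, it is enough to show $\|\nabla u\|_{L^\infty(B_{1/2}(x_0))} \leq c(\|u\|_{L^1(B_1(x_0))} + \tail(u;1,x_0))$, and this in turn follows from $C^{1+\gamma}$ regularity combined with Campanato's embedding.

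The core of the argument will be a higher order interior Campanato iteration, analogous in spirit to \autoref{lem-u-Holder-inhom} but with affine functions replacing the barriers $\psi_{x_0}$. For each $z \in B_{1/2}(x_0)$ and each pair $0 < \rho \le R \le 1/4$ with $B_R(z) \subset B_1(x_0)$, I would freeze the operator at $z$ and compare $u$ with the $L_z$-harmonic replacement $v$ in $B_R(z)$, where $L_z$ has the translation invariant kernel $K_z$ from \eqref{eq-frozen}. Since $K_z$ is symmetric and satisfies \eqref{eq:Kcomp}, classical difference-quotient arguments yield interior $C^{2s-\varepsilon}$ regularity for $v$, and since $2s > 1$ this in particular gives an affine expansion of $v$ around $z$ together with a Campanato-type excess decay
\begin{align*}
\dashint_{B_\rho(z)} |v - \ell^v_{\rho,z}| \d x \le c \left(\frac{\rho}{R}\right)^{n+1+\gamma}\!\!\left( \dashint_{B_R(z)} |v - \ell^v_{R,z}| \d x + R^{1+\gamma}\tail(\nabla v; R, z)\right),
\end{align*}
where $\ell^v_{R,z}$ is the first-order Taylor polynomial of $v$ at $z$ (in an averaged sense).

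The second ingredient is an interior freezing estimate analogous to \autoref{lemma:freezing}, with no boundary terms. Using the $C^\sigma$ continuity \eqref{eq-K-cont}, testing the equation for $w := u-v$ against $w$ yields $[w]_{H^s(\R^n)} \lesssim R^{\sigma+ n/2 - s}$ times natural interior quantities of $u$; a Caccioppoli plus Sobolev argument then upgrades this to $L^\infty$ control $\|w\|_{L^\infty(B_{R/2}(z))} \lesssim R^\sigma \times (\ldots)$. Plugging this comparison into the excess decay for $v$ and running \autoref{lem-iteration} on the affine excess $\int_{B_\rho(z)} |u - \ell^u_{\rho,z}|$ yields $u \in C^{1+\min\{\sigma,\, 2s-1\}-\varepsilon}_{\mathrm{loc}}(B_1(x_0))$, from which the pointwise gradient estimate follows by the Campanato embedding and \autoref{lemma:locbd-bdry}.

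The main obstacle is running the Campanato iteration at the level of affine perturbations: the iteration must close on the excess $\dashint |u - \ell|$ rather than $\dashint |u-c|$, which forces one to control $(L_z - L)(\ell)$ and to track how the tail behaves under subtraction of an unbounded affine function. For $s > 1/2$, affine functions are tame enough for these manipulations to go through (the singular integrals involving linear growth converge absolutely at scale $R$ after symmetrization of $K_z$), and this is precisely where the restriction $s > 1/2$ enters. For $s \leq 1/2$ this mechanism breaks down, which matches the caveat flagged in \autoref{remark:Green-gradient}.
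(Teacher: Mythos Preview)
The paper does not supply its own proof of this lemma: it is quoted verbatim as \cite[Theorem~1.9]{KNS22} and used as a black box. So there is no ``paper's proof'' to compare against beyond the citation itself.

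Your sketch is in the right spirit---a first-order Campanato iteration at interior points, with freezing at each center $z$, a $C^{2s-\varepsilon}$ estimate for the translation-invariant replacement $v$, and a comparison estimate for $w=u-v$ coming from \eqref{eq-K-cont}---and this is indeed the architecture behind such results. Your identification of where $s>\tfrac12$ enters (affine functions lie in $L^1_{2s}$, so tails of $u-\ell$ are finite and $L_z\ell=0$ by symmetry) is also correct and matches the discussion in \autoref{remark:Green-gradient}.

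One point to tighten: the displayed excess-decay inequality contains the term $\tail(\nabla v; R, z)$, which is not the right object. The tail that appears in the iteration should be of $v-\ell^v_{R,z}$ (or of $u-\ell^u_{R,z}$ after comparison), not of $\nabla v$; the gradient itself never enters a tail term, since the nonlocal equation is for $v$, not $\nabla v$. Also, your $L^\infty$ bound on $w$ needs a bit more than ``Caccioppoli plus Sobolev'': the standard route is Caccioppoli followed by local boundedness (\autoref{lemma:locbd-bdry}) applied to $w$, which solves a frozen equation with right-hand side $(L_z-L)u$. With those fixes the outline would close, but since the paper simply cites the result, nothing further is required here.
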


\begin{remark}
\label{remark:Green-gradient}
The only reason why we exclude the case $s \in (0,\frac{1}{2}]$ from \autoref{cor:gradient-bounds} is due to the lack of a suitable analog of \autoref{lemma:int-gradient-est} in the literature. We believe that in case $s \leq \frac{1}{2}$, one can establish \autoref{lemma:int-gradient-est} for $K$ satisfying \eqref{eq:Kcomp}, and \eqref{eq-K-cont} with $\mathcal{A} = \R^n$, and in addition that for some $\gamma>1-2s$:
\begin{align}
\label{eq:higher-reg-K}
    \sup_{x \in \R^n} \sup_{y \in B_{2\rho}(x) \setminus B_{\rho}(x)} |K(x,h+z) - K(x,z)| \le \Lambda |h|^{\gamma} \rho^{-n-2s-\gamma} \quad \forall \rho > 0 \text{ and }|h| \le \rho.
\end{align}
Proving it rigorously would be well beyond the scope of this paper. Hence, we only provide a sketch of the proof. We follow the strategy in \cite[Theorem 4.2]{FeRo24b} in the case $\beta = 2s + \gamma > 1$. Using the pointwise assumptions \eqref{eq-K-cont} and \eqref{eq:higher-reg-K} on $K$ instead of the integrated assumptions in \cite{FeRo24b}, we obtain the desired estimate for $u \in C^{2s + \gamma}(B_R(x_0)) \cap L^1_{2s}(\R^n)$ with $\tail(u;R,x_0)$ instead of the weighted $L^{\infty}$ norm. Note that the global nonlocal energy from \cite[Definition 4.1]{FeRo24b} is not required to be finite for this proof, since it is based on rewriting $L$ as a nonsymmetric operator in non-divergence form, which can be evaluated in the classical sense. Finally, we can drop the assumption $u \in C^{2s + \gamma}(B_R(x_0))$ by an approximation argument, which goes in a similar way as in \cite[Chapter 3.4.1]{FeRo24} but adapted to divergence form operators (see also \cite[Section 3]{Fer24}).
\end{remark}

A combination of the upper Green function estimate from \autoref{thm:main-1} and \autoref{lemma:int-gradient-est} implies the upper gradient Green function estimate from \autoref{cor:gradient-bounds}.

\begin{proof}[Proof of \autoref{cor:gradient-bounds}]
First, let us assume that $|x-y| \le d_{\Omega}(x)$. Then, an application of \autoref{lemma:int-gradient-est} with $u := G(\cdot,y)$, $R := |x-y|/2$, and $x_0 := x$ (this is possible by \autoref{lemma:Green-aux}) yields
\begin{align*}
|\nabla_x G(x,y)|
&\le c |x-y|^{-1} \left( |x-y|^{-n} \Vert G(\cdot,y) \Vert_{L^{1}(B_{|x-y|/2}(x))} + \tail(G(\cdot,y),|x-y|/2, x) \right) \\
&\le c |x-y|^{-1} \left( |x-y|^{-n} \Vert G(\cdot,y) \Vert_{L^{1}(B_{2|x-y|}(x))} + \tail(G(\cdot,y),2|x-y|, x) \right).
\end{align*}
Clearly, using \eqref{eq:Green-boundary} with $\beta=s$
\begin{align*}
|x-y|^{-n} \Vert G(\cdot,y) \Vert_{L^{1}(B_{2|x-y|}(x))} \le |x-y|^{-n} \Vert G(\cdot,y) \Vert_{L^{1}(B_{3|x-y|}(y))} \le c \left( 1 \wedge \frac{d_{\Omega}(y)}{|x-y|} \right)^s |x-y|^{-n+2s}.
\end{align*}
Moreover, by using \eqref{eq:Green-boundary} with $\beta=s$ again and observing that $|z-y| \ge |z-x| - |x-y| \ge |x-y|$ for $z \in \Omega \setminus B_{2|x-y|}(x)$, we have
\begin{align*}
    \tail(G(\cdot,y),2|x-y|,x)
    &\le c |x-y|^{2s} \int_{\Omega \setminus B_{2|x-y|}(x)} \left( \frac{1}{|z-y|^{n-2s}} \wedge \frac{d_\Omega^s(y)}{|z-y|^{n-s}} \right) |x-z|^{-n-2s} \d z \\
    &\leq c |x-y|^{2s} \left( \frac{1}{|x-y|^{n-2s}} \wedge \frac{d_\Omega^s(y)}{|x-y|^{n-s}} \right) \int_{\Omega \setminus B_{2|x-y|}(x)} |x-z|^{-n-2s} \d z \\
    &\leq c\left( 1 \wedge \frac{d_\Omega(y)}{|x-y|} \right)^s |x-y|^{-n+2s}.
\end{align*}
Altogether, this shows
\begin{align*}
|\nabla_x G(x,y)| \le c \left( 1 \wedge \frac{d_{\Omega}(y)}{|x-y|} \right)^s |x-y|^{-n+2s-1} ~~ \forall x,y \in \Omega ~~ \text{ with } |x-y| \le d_{\Omega}(x).
\end{align*}
Next, let us consider the case $|x-y|>d_{\Omega}(x)$. In this case, we apply \autoref{lemma:int-gradient-est} with $u := G(\cdot,y)$, $R := d_{\Omega}(x)/2$, and $x_0 := x$. This yields
\begin{align*}
|\nabla_x G(x,y)| \le c (d_{\Omega}(x))^{-1} \left( \Vert G(\cdot,y) \Vert_{L^{\infty}(B_{d_{\Omega}(x)/2}(x))} + \tail(G(\cdot,y),d_{\Omega}(x)/2,x) \right).
\end{align*}
We have by \eqref{eq:Green-boundary}:
\vspace{-0.3cm}
\begin{align*}
\Vert G(\cdot,y) \Vert_{L^{\infty}(B_{d_{\Omega}(x)/2}(x))} \le c d_{\Omega}^s(x) \left( 1 \wedge \frac{d_{\Omega}(y)}{|x-y|} \right)^s |x-y|^{-n+s}.
\end{align*}
Moreover, 
\begin{align*}
\tail(G(\cdot,y), d_{\Omega}(x)/2 , x) &\le d_{\Omega}^{2s}(x)\int_{\R^n \setminus B_{|x-y|/2}(x)} G(z,y) |x-z|^{-n-2s} \d z \\
&\quad + d_{\Omega}^{2s}(x) \int_{B_{|x-y|/2}(x) \setminus B_{d_{\Omega}}(x)/2} G(z,y) |x-z|^{-n-2s} \d z =: T_1 + T_2.
\end{align*}
For $T_1$, we follow the proof of \cite[(4.14)-(4.16)]{KKL23} but use \eqref{eq:Green-boundary} instead of \eqref{eq:int-upper} to obtain
\begin{align*}
T_1 \le cd_{\Omega}^{2s}(x) \left( 1 \wedge \frac{d_{\Omega}(y)}{|x-y|} \right)^s |x-y|^{-n} \le c d_{\Omega}^{s}(x) \left( 1 \wedge \frac{d_{\Omega}(y)}{|x-y|} \right)^s |x-y|^{-n + s}.
\end{align*}
For $T_2$, we compute, in case $d_{\Omega}(y) \ge |x-y|$, using that $|x-y| \le 2|z-y|$ and $d_{\Omega}(z) \le d_{\Omega}(x) + |x-z|$ for any $z \in B_{|x-y|/2}(x) \setminus B_{d_{\Omega}}(x)/2$:
\begin{align*}
T_2 \le c d_{\Omega}^{2s}(x) |x-y|^{-n+s} \int_{\R^n \setminus B_{d_{\Omega}(x)/2}(x)} \frac{d_{\Omega}^s(x)+|x-z|^s}{|x-z|^{n+2s}} \d z \le c d^{s}_{\Omega}(x) |x-y|^{-n+s}.
\end{align*}
A similar computation as before yields in case $d_{\Omega}(y) < |x-y|$ for $T_2$:
\begin{align*}
    T_2 \le c d_{\Omega}^{2s}(x) d_{\Omega}^s(y) |x-y|^{-n} \int_{\R^n \setminus B_{d_{\Omega}(x)/2}(x)} \frac{d_{\Omega}^s(x) + |x-z|^s}{|x-z|^{n+2s}} \d z \le c d^{s}_{\Omega}(x)d^s_{\Omega}(y) |x-y|^{-n}.
\end{align*}
Altogether, we have proved
\begin{align*}
    |\nabla_x G(x,y)| \le c d_{\Omega}^{s-1}(x) \left( 1 \wedge \frac{d_{\Omega}(y)}{|x-y|} \right)^s |x-y|^{-n+s} ~~ \forall x,y \in \Omega ~~ \text{ with } |x-y| > d_{\Omega}(x).
\end{align*}
\end{proof}

\newcommand{\etalchar}[1]{$^{#1}$}


\end{document}